\author{Amir Akbar Tabatabai}
\theoremstyle{plain} 
\newtheorem{thm}{Theorem}[section]
\newtheorem{lem}[thm]{Lemma}
\theoremstyle{definition}
\newtheorem{dfn}[thm]{Definition}
\newtheorem{exam}[thm]{Example}
\newtheorem{rem}[thm]{Remark}
\newtheorem{nota}[thm]{Notation}
\def\PA{\mathrm{PA}}
\def\Pr{\mathrm{Pr}}
\def\Prf{\mathrm{Prf}}
\def\S4{\mathrm{S4}}
\def\Cons{\mathrm{Cons}}
\def\Rfn{\mathrm{Rfn}}
\begin{document}
\title{Provability Interpretation of Propositional and Modal Logics} 

\author{Amirhossein Akbar Tabatabai \footnote{The author is supported by the ERC Advanced Grant 339691 (FEALORA)}\\
Institute of Mathematics\\
Academy of Sciences of the Czech Republic\\
tabatabai@math.cas.cz}

\date{\today}

\maketitle

\begin{abstract}
In 1933 \cite{G}, G\"{o}del introduced a provability interpretation of the propositional intuitionistic logic to establish a formalization for the BHK interpretation. He used the modal system, \textbf{S4}, as a formalization of the intuitive concept of provability and then translated \textbf{IPC} to \textbf{S4}. His work suggested the problem to find a concrete provability interpretation of the modal logic \textbf{S4}. In this paper, we will try to answer this problem. In fact, we will generalize Solovay's provability interpretation of the modal logic \textbf{GL} to capture other modal logics such as \textbf{K4}, \textbf{KD4} and \textbf{S4}. Then we will use these results to find a formalization for the BHK interpretation and we will show that with different interpretations of the BHK interpretation, we can capture some of the propositional logics such as Intuitionistic logic, minimal logic and Visser-Ruitenburg's basic logic.\\
Moreover, we will show that there is no provability interpretation for any extension of \textbf{KD45} and also there is no BHK interpretation for the classical propositional logic.
\end{abstract}

\newpage
 
\tableofcontents
 
\newpage
 
\section{Introduction}
\subsection{BHK Interpretation}
In the intuitionistic tradition, mathematics is considered as a theory of mental constructions and hence, truth naturally means the existence of a proof. Thus, provability is the core stone of the whole intuitionistic paradigm. With this fact in mind, like any other logic, the intuitionistic logic would be a calculus to describe the behavior of truth, which in this case, is the concept of provability. In other words, intuitionistic logic is a meta-theory of the concept of provability. Let us explain the role of connectives in this logic. Again, like any other logic, a connective is an operation on the truth content of its inputs, which in the case of intuitionistic logic means the operations on the proofs. If we want an intuitive semantics for intuitionistic logic, we have to find out what the meaning of a connective is. The answer to this question is the well-known BHK interpretation. Its propositional part is the following:\\
\\
$\bullet$ a proof for $A \wedge B$ is a pair of a proof for $A$ and a proof for $B$.\\
$\bullet$ a proof for $A \vee B$ is a proof for $A$ or a proof for $B$.\\
$\bullet$ a proof for $A \rightarrow B$ is a construction which transforms any proof of $A$ to a proof for $B$.\\
$\bullet$ a proof for $\neg A$ is a construction which transforms any proof of $A$ to a proof for $\bot$.\\
$\bullet$ $\bot$ does not have any proof.\\

Clearly, what we proposed as the BHK interpretation is just an informal interpretation and we need to find its exact formalization if we want to use it as a mathematical tool. For instance, if we want to establish an argument which shows that Heyting's formalization of $\mathbf{IPC}$ is an adequate formalization of intuitionistic viewpoint, we have to prove the soundness and completeness of $\mathbf{IPC}$ with respect to the BHK interpretation and this obviously needs an exact formalizion. Now, to formalize the interpretation, we firstly need a formalization of the concept of proof. Based on the extensive works in proof theory that have been done so far, it seems quite possible to find an appropriate formalization of the proof and hence of the BHK interpretation. But, unfortunately, despite all the attempts that have been made, the BHK interpretation has not been formalized so far (for an extensive history of the problem see \cite{Art}). Why does this natural and simple interpretation resist to become formalized? To find an answer to this question, let us investigate one of the key properties of the interpretation. Think of a proposition $A \rightarrow B$. Its proof is a construction that transforms any proof of $A$ to a proof of $B$. It is clear that this construction would be a meta-proof and not just a proof, because it talks about proofs and therefore it should belong to the meta-language of $A$ and $B$. In other words, we could claim that the act of introducing an implication increases the layer of the meta-language which we are arguing in. Therefore, in BHK interpretation all levels of our meta-languages are involved and this is the reason why this interpretation is so complex to be formalized. Since we need to formalize the meaning of a proof, we have to extend our task to find a meaning of a proof at any level of the meta-languages. \\

There are two different approaches to implement this idea. In the first approach, we could be faithful to the intuitionistic paradigm and find an intuitionistically valid interpretation of the proofs. However, in the second approach we could change our viewpoint and construct a bridge to find an appropriate classical interpretation of the concept of a proof to formalize the BHK interpretation. The first approach is Heyting's approach and the second one is Kolmogorov's. At first glance, the first approach seems very natural to try but there is a huge problem there; a conceptual vicious circle which forces us to understand the semantics of the paradigm, the BHK interpretation, in terms of itself and it makes the whole process very complicated. We want to emphasize that this vicious circle does not mean that the first approach is philosophically invalid, but it just shows how complex it could be. (Think of classical logic and its semantics which is based on the classical meta-theory. This is an obvious vicious circle, but these kinds of vicious circles are the inherent properties of any paradigm in the philosophy of mathematics and we have to deal with them.) In this paper we follow the second approach and interpret all proofs as the classical proofs in different layers of meta-languages. But this is not an easy task to do and in the forthcoming part of the Introduction we will investigate the problems in this approach.\\ 

The last thing we want to mention here is that what we are going to formalize, is actually an implicit version of the BHK interpretation, instead of the original one. In the original interpretation we interpret all the connectives as operations on explicitly mentioned proofs. But we could somehow eliminate the \textit{proofs} from the interpretation and just talk about the \textit{provability} of a sentence. For instance, the disjunction case in the original BHK interpretation transforms to the following one: $A \vee B$ is provable if $A$ is provable or $B$ is provable. The problem here, is the case of implication which is not reducible to a simpler one. In order to solve this problem, we need a primitive connective to formalize the concept of provability. A role which would be played by the connective ``box" in modal logics and this is one of the most important contributions to the problem, which was made by Kurt G\"{o}del. Now, G\"{o}del's contribution.

\subsubsection*{G\"{o}del's Translation}
In 1933 \cite{G}, G\"{o}del introduced a provability interpretation of $\mathbf{IPC}$ that can be seen as an implicit version of the well-known BHK interpretation of the intuitionistic logic. By this interpretation he could justify the fact that Heyting's formalization of $\mathbf{IPC}$ is sound and complete for its intended semantics which is the BHK interpretation. Let us review some steps of his work.\\

1. Giving a proof interpretation: Before Giving any provability interpretation of $\mathbf{IPC}$, we should explain our intention of the concept of \textit{provability} and the properties that we want to have. As you expect, G\"{o}del began his work exactly from this point. He used the language of modal logics, in which the symbol ``$\Box$"  is interpreted as a \textit{provability} predicate. In the next step, he formalized the expected properties of this provability predicate by some axioms which have made the well-known modal system $\mathbf{S4}$. Notice that in contrast with using a concrete interpretation of provability, he used a theory for formalizing this concept ($\mathbf{S4}$). In fact, his system just characterizes the properties of our intuitive provability predicate by some formal system, and is totally silent about its real nature.\\
After this introduction, we are ready to give the definition of his interpretation. Consider the translation function 
$ b:\mathcal{L}\to \mathcal{L}_{\Box}$
as follows:\\
$ \mathcal{L}$
and
$ \mathcal{L}_{\Box}$
are the languages of 
$\mathbf{IPC}$
and
$\mathbf{S4}$
respectively.
\footnote{ In fact, our translation is different from the translation of the paper \cite{G}. The differences are the following:
$ p^b=p$, $\bot^b=\bot$, 
$ (A\to B)^b=\Box A^b\to \Box B^{b}$,
and
$ (\neg A)^b=\neg \Box A^b$.
While, both of these two translations basically do the same task, we use the first one, because it is more compatible with our intuition of intuitionistic semantics and it is adequate for the systems weaker than $\mathbf{S4}$.
}
\begin{description}
\item[$(i)$]
$ p^b=\Box p$
and
$ \bot^b=\Box \bot$
\item[$(ii)$]
$(A\wedge B)^b= A^b \wedge B^b$
\item[$(iii)$]
$(A\vee B)^b=A^b\vee B^b$
\item[$(iv)$]
$(A\to B)^b=\Box(A^b \to B^b)$
\item[$(v)$]
$(\neg A)^b=\Box (A^b \rightarrow \Box \bot)$
\end{description}
It is clear that $A^b$ is the implicit BHK interpretation of $A$. In fact, the definition of $b$ is the natural paraphrase of the original BHK interpretation in terms of provability instead of proofs.\\
It is time to investigate the soundness-completeness property of the interpretation.\\
 
2. Soundness and Completeness: Consider the following theorem:
\begin{thm} \label{t1-1} 
For any proposition
$ A\in\mathcal L$,
$\mathbf{IPC} \vdash A$
iff
$\mathbf{S4} \vdash A^b$.
\end{thm}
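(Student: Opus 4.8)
The plan is to establish the two implications separately: soundness ($\mathbf{IPC}\vdash A \Rightarrow \mathbf{S4}\vdash A^b$) by induction on $\mathbf{IPC}$-derivations, and completeness ($\mathbf{S4}\vdash A^b \Rightarrow \mathbf{IPC}\vdash A$) by contraposition using the Kripke semantics of both logics.

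For soundness, the crucial auxiliary fact is the \emph{stability lemma}: for every $A\in\mathcal{L}$ one has $\mathbf{S4}\vdash A^b \leftrightarrow \Box A^b$. This is proved by induction on $A$: for atoms and $\bot$ it is an instance of $\Box p \leftrightarrow \Box\Box p$ and $\Box\bot\leftrightarrow\Box\Box\bot$; for $\wedge$ it follows from the $\mathbf{S4}$-theorem $\Box X\wedge\Box Y\leftrightarrow\Box(X\wedge Y)$ together with the induction hypothesis; for $\vee$, from $A^b\to\Box A^b$ and $B^b\to\Box B^b$ we get $A^b\vee B^b\to\Box(A^b\vee B^b)$, and the converse is the $T$-axiom; and for $\to$ and $\neg$ the translated formula is already of the form $\Box X$, so the claim is again $\Box X\leftrightarrow\Box\Box X$. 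Granting this lemma, one checks that the $b$-translation of each axiom of $\mathbf{IPC}$ is an $\mathbf{S4}$-theorem and that the translation is compatible with modus ponens. The latter uses that $(A\to B)^b=\Box(A^b\to B^b)$, so from $\mathbf{S4}\vdash A^b$ and $\mathbf{S4}\vdash\Box(A^b\to B^b)$ we obtain $\mathbf{S4}\vdash B^b$ by the $T$-axiom and modus ponens. The axiom checks are routine applications of necessitation, the $K$-axiom, and the stability lemma; for instance the implication axioms reduce, after stripping the outer box and using stability to replace each $A^b$ occurring in antecedent position by $\Box A^b$, to necessitated propositional tautologies.

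For completeness we argue contrapositively. If $\mathbf{IPC}\not\vdash A$ then, by completeness of $\mathbf{IPC}$ for intuitionistic Kripke semantics, there is a Kripke model $\mathcal{M}=(W,\leq,V)$ with $V$ persistent and a node $w$ with $\mathcal{M},w\not\Vdash A$. Regard $(W,\leq)$ as an $\mathbf{S4}$-frame ($\leq$ being reflexive and transitive) with the same valuation $V$. The key step is the \emph{matching lemma}: for every $B\in\mathcal{L}$ and every $v\in W$, $\mathcal{M},v\Vdash B$ (intuitionistic forcing) iff $\mathcal{M},v\models B^b$ ($\mathbf{S4}$-forcing). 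The base case $B=p$ holds because persistence gives $\mathcal{M},v\Vdash p$ iff $p\in V(v')$ for all $v'\geq v$ iff $\mathcal{M},v\models\Box p=p^b$; the case $B=\bot$ holds since reflexivity makes $\Box\bot$ false everywhere; the $\wedge$ and $\vee$ cases are immediate; and the $\to$ and $\neg$ cases follow because the intuitionistic clause for implication at $v$ quantifies over $v'\geq v$ in exactly the way the outer $\Box$ of $(A\to B)^b=\Box(A^b\to B^b)$ does, so the induction hypothesis closes them. Applying the matching lemma at $w$ gives $\mathcal{M},w\not\models A^b$, whence by soundness of $\mathbf{S4}$ for its Kripke semantics $\mathbf{S4}\not\vdash A^b$.

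The main obstacle is the completeness half, and within it the correct formulation and verification of the matching lemma: one must check that the modified translation $b$ (with boxed atoms and $\bot^b=\Box\bot$) still exactly mirrors the upward-persistent semantics of $\mathbf{IPC}$, the delicate points being the atomic and $\bot$ clauses, where the built-in persistence of intuitionistic valuations has to be reproduced by the explicit $\Box$. The soundness direction, by contrast, is a finite and essentially mechanical verification once the stability lemma is in place. One could alternatively run the whole argument algebraically, replacing Kripke models by Heyting algebras and their interior-algebra ($\mathbf{S4}$-algebra) counterparts, but the semantic route above is the most transparent.
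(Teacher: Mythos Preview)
Your proof is correct. The paper itself does not actually prove this theorem: its ``proof'' consists of a reference to G\"odel \cite{G}, together with a footnote recording that the soundness direction is a straightforward induction on the length of the $\mathbf{IPC}$-proof and that completeness was established by McKinsey and Tarski in 1947 via the algebraic semantics for $\mathbf{S4}$.

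Your soundness argument is exactly the kind of induction the paper alludes to, and your stability lemma is the standard device that makes it go through. For completeness you take the Kripke-semantic route rather than the algebraic one the paper cites: you pass an intuitionistic countermodel directly to an $\mathbf{S4}$-model on the same frame and prove a node-by-node matching lemma. You yourself note the algebraic alternative at the end. Both approaches are standard; the Kripke argument is arguably more transparent and self-contained, while the McKinsey--Tarski route is historically the original one and connects to the topological/interior-algebra picture. Either way, nothing is missing from your argument.
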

\begin{proof}
For the complete investigation of this theorem and some related results\footnote{While this theorem is the heart of G\"{o}del's work, he only stated it and left it without any proof. The soundness part is an easy consequence of induction on the length of the proof, but the completeness part was finally proved in 1947 by Tarski and McKinsey by the algebraic semantics for $\mathbf{S4}$.} see \cite{G}.
\end{proof}
We have the system $\mathbf{S4}$ which formalizes what we expect from a provability predicate and based on the mentioned soundness-completeness result we can reduce the problem of finding a formalization of the implicit BHK interpretation to the problem of finding a provability interpretation for $\mathbf{S4}$. Therefore, our task will be to find a concrete interpretation of this provability predicate (the connective box) in terms of the classical provability in classical theories. But, consider the fact that the problem of finding a provability interpretation for $\mathbf{S4}$ has its own importance itself, independent on its relation to the BHK interpretation.\\

The first attempt to find a concrete provability interpretation for $\mathbf{S4}$ was made by G\"{o}del himself. In a very negative way, he showed that the natural expected interpretation of the provability predicate is not sound for $\mathbf{S4}$. Let us explain his result in more detail:\\
The most natural choice to interpret the box operator is the provability predicate of a formal theory \footnote{The system $T$ is formal iff the set of its consequences is recursively enumerable.}. Let $T$ be a formal system; therefore, the meaning of 
$\Box A$
would be
$\Pr_{T}(A)$
such that
$\Pr_{T}(\cdot)$
is a provability predicate for $T$. (Notice that in this case we suppose our formal system $T$ to be sufficiently strong to be able to formalize some parts of  the meta-mathematics.) Consider the theorem $\Box \neg \Box \bot$
of $\mathbf{S4}$. Its interpretation is
$\Pr_{T}(\neg \Pr_{T}(\bot))$ and if it were true we would have
$T \vdash \neg \Pr_T(\bot)$
which contradicts G\"{o}del's second incompleteness theorem.\\
Therefore, we know that on the one hand, the natural way to formalize the concept of proof and provability in the BHK interpretation is to fix a formal system and interpret all the proofs as the proofs in that theory. And on the other hand, the logic $\mathbf{S4}$ is not sound with this natural interpretation. This is for the case of $\mathbf{S4}$. However, we could claim that the natural formalization of the BHK interpretation is not sound as well. For instance, if you try to interpret the sentence $A \wedge (A \rightarrow B) \rightarrow B$ in intuitionistic logic, you find out that it is more or less the same as the modal formula $\Box(\Box p \rightarrow p)$ and you will encounter the same problem in intuitionistic logic. In sum, we can say that the natural formalization of the BHK interpretation and also the natural interpretation of $\mathbf{S4}$ do not work. Based on these observations, we have intuition why finding a formalization of the BHK interpretation is complicated and hard to grasp. \\ 

There is a natural question to ask. If the theory $\mathbf{S4}$ is intuitively valid and we know that we can not interpret the box as a provability predicate in some formal system, then what could be a natural provability interpretation of $\mathbf{S4}$? Unfortunately, despite a lot of attempts which have been made so far, this question remains open. For instance, Kripke \cite{Kr} introduced a provability interpretation which is based on his Kripke models and just captures our provability intuition for formulas without nested modalities. Or in \cite{Bu}, Buss introduced the ``pure provability" which have the same problem with the nested modalities. Actually, the only successful attempt to find a provability interpretation, is Artemov's ``logic of proofs" which is based on the idea of introducing all explicit proofs, investigating the intended behavior of proofs in a theory (logic of proofs) and then interpreting the box as the existence of the proof. These explicitly mentioned proofs could empower us to avoid non-standard proofs which has the main role in G\"{o}del's second incompleteness theorem and all counter-intuitive theorems in meta-mathematics. In Section 9 we will come back to Artemov's logic of proofs and we will investigate its advantages and disadvantages.\\

As this long introduction shows, the main problem is to find a provability interpretation for the modal logic $\mathbf{S4}$ to formalize the BHK interpretation. In this paper, we will try to solve this problem and in the forthcoming part of the Introduction we will sketch the idea of our semantics and our key results.

\subsection{The Main Idea and the Main Results}
Why doesn't the mentioned natural proof interpretation work? The answer is the fact that this interpretation does not distinguish between languages and meta-languages. Let us illuminate this fact by an example. Suppose $p$ is an atom. What should be an intended interpretation of $p$? $p$ is an atomic sentence about the real world, it is just a description of the world and this description is in the first level. But how about $\square p$? The intended interpretation of this formula is the provability of $p$ in some theory. But, what is important here, is the level of the theory and the level of this sentence. Since $p$ is a fact about the real world, the theory in which $p$ is proved, should be a first level theory, i.e. a theory about the world. However, the sentence ($\square p$) is not about the real world; it is about the provability and hence it should be characterized as a sentence in the second level. Therefore, the intended meaning of this second level sentence is $\Pr_{T_0}(p)$. Let us ask about the interpretation of $\square \square p$. This is about the provability of the provability of $p$. The first box refers to a first level theory $T_0$. But the second box is about the provability of the provability, which has higher order, and it means the provability should be investigated in a second level theory, $T_1$. The important thing is the fact that there is no reason to assume that $T_1=T_0$. Actually, our experience in mathematical logic shows that it is genuinely important to distinguish the meta-theory and the object theory, and in some crucial cases the power of the meta-theory should be more than the theory itself. For instance, G\"{o}del's incompleteness theorems show that to answer a very basic meta-mathematical question about the system, i.e. its consistency, we need a more powerful meta-theory. Based on these investigations, the natural way to interpret boxes in a modal sentence is interpreting them in different theories with respect to the complexity of the occurrence of a box. To formalize this idea, we need two different ingredients. First, a model for the real world to interpret atoms as the facts about the world and second a hierarchy of theories which plays the role of the hierarchy of the meta-theories. Hence, the intended model would be $(M, \{T_n\}_{n=0}^{\infty})$ in which $M$ is a classical model and $T_n$ is a theory in the $n$-th level of the hierarchy. (We call these models, the provability models.) Moreover, we need a way of witnessing all boxes as the provability predicates of these theories in an appropriate way. This is the complex part of the formalization and we will talk about it in the next section. But for now, just think of the interpretation intuitively in the sense that any outer box should be interpreted as the provability predicate of a bigger theory. Therefore, our main result for modal logics is the following:
\begin{thm}\label{t1-2} 
\begin{itemize}
\item[$(i)$]
The logic $\mathbf{K4}$ is sound and complete with respect to the provability interpretation in all provability models.
\item[$(ii)$]
The logic $\mathbf{KD4}$ is sound and complete with respect to the provability interpretation in consistent provability models, i.e. $(M, \{T_n\}_{n=0}^{\infty})$ where for any $n$, $M$ thinks that $T_n$ is consistent and $T_{n+1} \vdash \Cons(T_n)$.
\item[$(iii)$]
The logic $\mathbf{S4}$ is sound and complete with respect to the provability interpretation in all reflexive provability models, i.e. $(M, \{T_n\}_{n=0}^{\infty})$ where for any $n$, $M$ thinks that $T_n$ is sound and $T_{n+1} \vdash \Rfn(T_n)$.
\item[$(iv)$]
The logic $\mathbf{GL}$ is sound and complete with respect to the provability interpretation in all constant provability models, i.e. $(M, \{T_n\}_{n=0}^{\infty})$ where for any $n$, $M$ thinks that $T_n = T_0$.
\item[$(v)$]
The logic $\mathbf{GLS}$ is sound and complete with respect to the provability interpretation in all sound constant provability models, i.e. $(M, \{T_n\}_{n=0}^{\infty})$ where for any $n$, $M$ thinks that $T_n$ is sound and $T_n = T_0$.
\item[$(vi)$]
The extensions of the logic $\mathbf{KD45}$ are not sound in any provability model.
\end{itemize}
\end{thm}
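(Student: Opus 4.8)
The plan is to reduce first to the base system. Since every extension of $\mathbf{KD45}$ proves all theorems of $\mathbf{KD45}$, it suffices to show that in an arbitrary provability model $(M,\{T_n\}_{n\ge 0})$ some theorem of $\mathbf{KD45}$ receives a false provability interpretation. The conceptual point to exploit is that inside $\mathbf{KD45}$ the axioms $\mathbf{D}$ and $\mathbf{5}$ pull in opposite directions: $\mathbf{D}$ forces every $T_n$ to be consistent, whereas the negative introspection axiom $\mathbf{5}$ forces each theory in the hierarchy to recognise the non-theorems of the one below it, and by the second incompleteness theorem these two demands are incompatible.

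The cheap part disposes of every model in which $M$ fails to believe all the $T_n$ consistent. Already $\mathbf{KD}\vdash\neg\Box^{k}\bot$ for each $k\ge 1$: for $k=1$ this is $\mathbf{D}$, and for $k\ge 2$ one obtains $\mathbf{KD}\vdash\Box(\Box^{k-1}\bot\to\bot)$ by necessitation from $\neg\Box^{k-1}\bot$, hence $\mathbf{KD}\vdash\Box^{k}\bot\to\Box\bot$ by the $\mathbf{K}$-axiom, hence $\neg\Box^{k}\bot$ using $\neg\Box\bot$. Under the provability interpretation the innermost box of $\Box^{k}\bot$ is read in $T_0$ and the $j$-th box from inside in $T_{j-1}$, so $\neg\Box^{k}\bot$ is interpreted as $\neg\Pr_{T_{k-1}}(\Pr_{T_{k-2}}(\cdots\Pr_{T_0}(\bot)\cdots))$. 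Thus if $M\not\models\Cons(T_n)$ for some $n$, then $M\models\Pr_{T_n}(\bot)$, $M$ regards $T_n$ as proving everything, and the theorem $\neg\Box^{n+1}\bot$ is refuted in $M$. Hence from now on I may assume $M\models\Cons(T_n)$ for all $n$; since $T_n\vdash\bot$ would be a true $\Sigma_1$ statement and hence believed by $M$, every $T_n$ is then genuinely consistent.

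For the remaining models the witness is the negative introspection axiom $\neg\Box p\to\Box\neg\Box p$ under a carefully chosen arithmetical realization of the atom $p$; its interpretation is $\neg\Pr_{T_0}(p^{*})\to\Pr_{T_1}(\neg\Pr_{T_0}(p^{*}))$, and I must arrange $M\models\neg\Pr_{T_0}(p^{*})$ together with $M\models\neg\Pr_{T_1}(\neg\Pr_{T_0}(p^{*}))$. In the spirit of Solovay's construction (and of the realizations used for the completeness directions of parts (i)--(v)), I would let $p^{*}$ satisfy the fixed-point equation $p^{*}\leftrightarrow\Pr_{T_1}(\neg\Pr_{T_0}(p^{*}))$, provably in the base theory. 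Writing $\theta$ for $\neg\Pr_{T_0}(p^{*})$, the derivability conditions together with the fixed-point equation give, provably in the base theory, $\Pr_{T_1}(\theta)\to\Pr_{T_0}(p^{*})$, hence (by $\Sigma_1$-completeness applied to the $\Sigma_1$-sentence $\Pr_{T_0}(p^{*})$) $\Pr_{T_1}(\theta)\to\Pr_{T_1}(\neg\theta)$, hence $\Pr_{T_1}(\theta)\to\Pr_{T_1}(\bot)$; that is, $\Cons(T_1)\to\neg\Pr_{T_1}(\theta)$, so $M\models\Cons(T_1)$ makes the consequent $\Pr_{T_1}(\neg\Pr_{T_0}(p^{*}))$ false in $M$. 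The same chain yields $p^{*}\to\neg\Cons(T_1)$ in the base theory, so $M\models\neg p^{*}$; converting this into $M\models\neg\Pr_{T_0}(p^{*})$ — using the consistency statements $M$ believes, and in models with the stronger closure $T_{m+1}\vdash\Cons(T_m)$ also $M\models\Cons(T_2)$ — gives the truth of the antecedent, so $\neg\Box p\to\Box\neg\Box p$ fails in $M$. (When $T_1=T_0$ no realization is needed: the closed theorem $\neg\Box\bot\to\Box\neg\Box\bot$ already interprets to $\Cons(T_0)\to\Pr_{T_0}(\Cons(T_0))$, refuted by $M\models\Cons(T_0)$ and the second incompleteness theorem for $T_0$.)

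The step I expect to be the main obstacle is exactly this last conversion, i.e. securing the antecedent $\neg\Pr_{T_0}(p^{*})$ in $M$ uniformly: the realization must be self-referential enough that formalized G\"{o}del reasoning blocks $T_1$ from proving $\theta$ (which is what makes the consequent fail), yet transparent enough that $M$ can see $p^{*}$ is not a theorem of $T_0$; and when $M$ is non-standard, ``$M$ can see it'' has to be replaced by a derivation, inside the base theory, from the consistency statements $M$ is known to satisfy. Balancing transparency (for the antecedent) against opacity (for the consequent) in the choice of $p^{*}$, and separating off the degenerate relations among the $T_n$, is where the real work lies; the remainder is routine manipulation of the G\"{o}del--L\"{o}b derivability conditions.
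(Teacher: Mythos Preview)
Your proposal addresses only part~(vi), which is reasonable since the other parts are proved in separate sections of the paper. However, there is a genuine gap in your argument, and it is not the one you flag at the end.

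You treat the provability interpretation as if each box were assigned a \emph{canonical} theory index: the innermost box in $\Box^{k}\bot$ gets $T_0$, the next gets $T_1$, and so on; likewise you read $\neg\Box p\to\Box\neg\Box p$ as $\neg\Pr_{T_0}(p^{*})\to\Pr_{T_1}(\neg\Pr_{T_0}(p^{*}))$. But that is not how satisfaction is defined in this paper. A formula $A$ is satisfied in $(M,\{T_n\})$ when \emph{there exist} expansions of $A$ and a witness (an order-respecting assignment of indices to boxes, not necessarily starting at $0$ or consecutive) such that for \emph{all} arithmetical substitutions $\sigma$ the resulting sentence holds in $M$. Hence to show that a theorem of $\mathbf{KD45}$ is \emph{not} satisfied you must defeat \emph{every} choice of expansion and witness, not one particular canonical choice. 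Your ``cheap part'' and your main argument both fix a single witness and therefore do not establish the required universal statement; indeed the two boxes in $\neg\Box p$ and in the inner $\Box p$ of $\Box\neg\Box p$ need not receive the same index at all.

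The paper's proof (Theorem~\ref{t7-1}) handles this by arguing contrapositively: assume $(M,\{T_n\})\vDash\mathbf{KD45}$, so for each chosen theorem some expansions and witnesses \emph{exist}, and then extract information valid for whatever those are. The key device is padding: the paper uses theorems like $\neg\Box\Box(\bot\wedge\Box^{n}\top)$ and $\Box(\Box\Box(\Box\bot\wedge\Box^{n}\top)\to\Box\bot)$, where the inert $\Box^{n}\top$ forces any witness for the adjacent box to be at least $n$. From arbitrary expansions and witnesses of these and of $\neg\Box p\to\Box(\Box p\to\Box\bot)$ the paper extracts three schematic facts true in $M$ (a weak consistency, a weak provable-consistency, and a weak axiom~$\mathbf{5}$ with unspecified indices $m,n,k$), and then reaches a contradiction by substituting the single sentence $\Pr_{T_m+\Cons(T_l)}(\bot)$ for $p$ and invoking the formalized second incompleteness theorem---no self-referential fixed point for $p$ is needed. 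Your fixed-point realization $p^{*}\leftrightarrow\Pr_{T_1}(\neg\Pr_{T_0}(p^{*}))$ is tailored to one witness and would have to be redone for each of the infinitely many possible witness patterns; absent the padding trick there is no uniform way to do this, and the gap you identify (securing the antecedent in nonstandard $M$) is secondary to this structural problem.
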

Here are some remarks about this main theorem. First of all, it shows that the use of a hierarchy of meta-theories instead of just one theory to witness the box operators, could define a brand new framework to capture different modal logics in terms of the provability interpretations. In fact, it shows that modal logics could be seen as the formal theories to describe the relation between the real world and the theories in the hierarchy of meta-theories which we use; in other words, they are theories for the whole discourse of the provability. Moreover, in the case of the logics $\mathbf{K4}$, $\mathbf{KD4}$ and $\mathbf{S4}$ it shows that they describe the relation of the model and meta-theories in a natural and expected way. For instance, in an informal reading of the axiom $\Box A \rightarrow A$ in $\mathbf{S4}$, we mean that our proofs are sound. And this is exactly one of the conditions we put on the models to capture the logic $\mathbf{S4}$. It is similar for all other axioms, logics and conditions in the aforementioned result.\\
Secondly, the result shows that if we restrict the whole hierarchy of meta-theories to just one theory, we could reconstruct Solovay's results for $\mathbf{GL}$ and $\mathbf{GLS}$. Therefore, it shows that our provability interpretation is a generalization of Solovay's interpretation and our main result is a generalization of Solovay's results.\\ 

If we combine this provability interpretations with G\"{o}del translation, we will have different BHK interpretations with respect to different powers of meta-theories. We have:
\begin{thm}\label{t1-3} 
\begin{itemize}
\item[$(i)$]
The logic $\mathbf{BPC}$ is sound and complete with respect to the BHK interpretation in all provability models.
\item[$(ii)$]
The logic $\mathbf{EBPC}$ is sound and complete with respect to the BHK interpretation in all consistent provability models, i.e. $(M, \{T_n\}_{n=0}^{\infty})$ where for any $n$, $M$ thinks that $T_n$ is consistent and $T_{n+1} \vdash \Cons(T_n)$.
\item[$(iii)$]
The logic $\mathbf{MPC}$ is sound and complete with respect to the weak BHK interpretation in all reflexive provability models, i.e. $(M, \{T_n\}_{n=0}^{\infty})$ where for any $n$, $M$ thinks that $T_n$ is sound and $T_{n+1} \vdash \Rfn(T_n)$.
\item[$(iv)$]
The logic $\mathbf{IPC}$ is sound and complete with respect to the BHK interpretation in all reflexive provability models, i.e. $(M, \{T_n\}_{n=0}^{\infty})$ where for any $n$, $M$ thinks that $T_n$ is sound and $T_{n+1} \vdash \Rfn(T_n)$.
\item[$(v)$]
The logic $\mathbf{FPL}$ is sound and complete with respect to the BHK interpretation in all constant provability models, i.e. $(M, \{T_n\}_{n=0}^{\infty})$ where for any $n$, $M$ thinks that $T_n=T_m$.
\item[$(vi)$]
The logic $\mathbf{CPC}$ does not admit any BHK interpretations.

\end{itemize}
\end{thm}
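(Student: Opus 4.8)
The plan is to derive Theorem~\ref{t1-3} from Theorem~\ref{t1-2} by composing the provability interpretation of modal logics with a G\"odel-style translation, exactly as Theorem~\ref{t1-1} reduces the BHK-adequacy of $\mathbf{IPC}$ to the modal adequacy of $\mathbf{S4}$. Concretely, for a provability model $\mathcal{M}=(M,\{T_n\})$ I would take as the \emph{definition} of the BHK interpretation of a propositional formula $A$ in $\mathcal{M}$ the provability interpretation of the modal formula $A^b$ in $\mathcal{M}$ (and, for the weak BHK interpretation, the provability interpretation of $A^{b'}$, where $b'$ is the variant of $b$ whose negation clause is $(\neg A)^{b'}=\Box\neg A^{b'}$, appropriate when $\bot$ is not assumed explosive). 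With this definition each positive clause of Theorem~\ref{t1-3} becomes the conjunction of two facts: (a) the matching clause of Theorem~\ref{t1-2}, which says that the modal formulas valid under the provability interpretation in the given class of models are exactly the relevant modal logic; and (b) a purely proof-theoretic \emph{embedding theorem} of the form $L_{\mathrm{prop}}\vdash A$ iff $L_{\mathrm{mod}}\vdash A^b$ (resp.\ $A^{b'}$), for the pairs $(\mathbf{BPC},\mathbf{K4})$, $(\mathbf{EBPC},\mathbf{KD4})$, $(\mathbf{MPC},\mathbf{S4})$ with $b'$, $(\mathbf{IPC},\mathbf{S4})$ with $b$, and $(\mathbf{FPL},\mathbf{GL})$. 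Chaining (b), (a), and (b) closes the loop: $L_{\mathrm{prop}}\vdash A$ $\Leftrightarrow$ $L_{\mathrm{mod}}\vdash A^b$ $\Leftrightarrow$ $A^b$ valid in the model class $\Leftrightarrow$ $A$ BHK-valid in the model class $\Leftrightarrow$ $L_{\mathrm{prop}}\vdash A$.

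The first real task is therefore to establish the embedding theorems (b). For $(\mathbf{IPC},\mathbf{S4})$ this is Theorem~\ref{t1-1}. For $(\mathbf{BPC},\mathbf{K4})$ and $(\mathbf{FPL},\mathbf{GL})$ I would cite (or reprove) the Visser--Ruitenburg embedding of basic logic and the analogous embedding of $\mathbf{FPL}$ into $\mathbf{GL}$: the soundness direction $L_{\mathrm{prop}}\vdash A\Rightarrow L_{\mathrm{mod}}\vdash A^b$ is a routine induction on a Hilbert (or sequent) derivation, checking that the $b$-translation of each axiom is derivable and that the rules are preserved --- here the clause $(A\to B)^b=\Box(A^b\to B^b)$ is what makes modus ponens go through using only the $\mathbf{K}$- and $\mathbf 4$-axioms; completeness is obtained from a complete relational or algebraic semantics for $L_{\mathrm{prop}}$ together with the standard map sending a propositional model to a $\mathbf{K4}$- (resp.\ $\mathbf{GL}$-) frame. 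For $(\mathbf{EBPC},\mathbf{KD4})$ one simply adds the consistency/seriality principle on both sides. For $(\mathbf{MPC},\mathbf{S4})$ with $b'$: minimal logic is $\mathbf{IPC}$ minus ex falso, and $b'$ differs from $b$ only in not forcing $\Box\bot$ to be explosive, so the induction is the $\mathbf{IPC}$ one with the ex-falso step deleted; this is precisely why $\mathbf{MPC}$ and $\mathbf{IPC}$ attach to the \emph{same} class of reflexive provability models, the distinction being carried entirely by $b'$ versus $b$.

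For the negative clause (vi) I would argue by contradiction, reducing to Theorem~\ref{t1-2}(vi). Suppose $\mathbf{CPC}$ admitted a BHK interpretation in some provability model $\mathcal{M}$, i.e.\ $A^b$ is provability-valid in $\mathcal{M}$ for every classical tautology $A$. The set $X=\{\varphi\in\mathcal{L}_\Box : \varphi\text{ is provability-valid in }\mathcal{M}\}$ is a normal modal logic containing $\mathbf{K4}$ by Theorem~\ref{t1-2} and, by hypothesis, containing $A^b$ for every $A\in\mathbf{CPC}$. Feeding in that $\mathbf{CPC}$ proves $\neg\neg p\to p$, Peirce's law and $p\vee\neg p$, one computes that $X$ must contain the $b$-images of these, which modulo $\mathbf{K4}$ normalise to the $\mathbf D$- and $\mathbf 5$-axioms for the boxed fragment (the $\mathbf 4$-axiom being already present); hence $X$ is an extension of $\mathbf{KD45}$, contradicting Theorem~\ref{t1-2}(vi). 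The delicate point, and the step I expect to be the main obstacle, is exactly this computation: one must verify that the classical laws really translate, over $\mathbf{K4}$, into genuine $\mathbf{KD45}$-principles and not into something strictly weaker, and in particular that no hidden appeal to reflexivity (unavailable in an arbitrary provability model) sneaks in --- equivalently, that the G\"odel--second-incompleteness obstruction behind Theorem~\ref{t1-2}(vi), namely the impossibility of validating $\neg\Box\bot\to\Box\neg\Box\bot$ in any provability model, is already triggered by the $b$-image of a single classical tautology. Once this is pinned down, (vi) follows; and with the embedding theorems in hand all the positive clauses follow formally as above, the soundness halves being in each case the easy induction, so the genuinely new content beyond Theorem~\ref{t1-2} is confined to the embeddings for the sub-intuitionistic logics and to this one negative computation.
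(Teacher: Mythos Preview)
Your overall architecture is exactly the paper's: define the BHK semantics via the $b$-translation into provability models, prove embedding theorems pairing each propositional logic with its modal companion, and then read off the result from Theorem~\ref{t1-2}. The embeddings you list for $(\mathbf{BPC},\mathbf{K4})$, $(\mathbf{EBPC},\mathbf{KD4})$, $(\mathbf{IPC},\mathbf{S4})$, $(\mathbf{FPL},\mathbf{GL})$ are precisely the ones the paper establishes, by the methods you indicate. Two points, however, do not go through as you have them.

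\textbf{The weak BHK translation for $\mathbf{MPC}$.} Your $b'$ with $(\neg A)^{b'}=\Box\neg A^{b'}$ cannot give a faithful embedding of $\mathbf{MPC}$ into $\mathbf{S4}$. Whatever you do with $\bot$, if $\bot^{b'}$ is $\bot$ or $\Box\bot$ (the only choices consistent with keeping the other clauses of $b$), then $\bot^{b'}$ is explosive in $\mathbf{S4}$ via axiom $\mathbf{T}$, so $\mathbf{S4}\vdash(\bot\to p)^{b'}$ while $\mathbf{MPC}\nvdash\bot\to p$, and faithfulness fails. The paper's fix is different and slightly subtle: the weak translation $w$ sends $\bot$ to $\Box q$ for a \emph{fresh} atom $q$ not occurring in the formula. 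Then $(\bot\to p)^w=\Box(\Box q\to\Box p)$ is refutable in $\mathbf{S4}$ (take a frame where $q$ is everywhere true and $p$ somewhere false), matching the $\mathbf{MPC}$ side. The freshness of $q$ is what strips $\bot$ of any special behaviour on the modal side.

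\textbf{The negative clause for $\mathbf{CPC}$.} Your plan is to show that the set $X$ of formulas provability-valid in $\mathcal{M}$ is a normal extension of $\mathbf{KD45}$ and invoke Theorem~\ref{t1-2}(vi). This cannot work: the paper in fact shows the converse direction, that in any model with $M\vDash\Pr_{n+1}(\Pr_n(\bot))$ for some $n$, \emph{every} $b$-image of a classical tautology is valid. Since no provability model validates $\mathbf{KD45}$, this means the $b$-images of classical tautologies do \emph{not} entail the $\mathbf{D}$-axiom over $\mathbf{K4}$; in particular your ``normalise to $\mathbf{D}$ and $\mathbf{5}$'' computation must fail. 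What you are missing is that ``admits a BHK interpretation'' is not merely ``all $A^b$ are valid in $\mathcal{M}$'': the paper builds a consistency-type condition $M\vDash\neg\Pr_{n+1}(\Pr_n(\bot))$ for all $n$ into the definition of a BHK model, and it is \emph{this hypothesis}, not any classical tautology, that supplies the $\mathbf{D}$-like ingredient. The paper then does not pass through $\mathbf{KD45}$ at all; it verifies directly that the three auxiliary statements used inside the proof of Theorem~\ref{t1-2}(vi) hold --- the first from the BHK-model hypothesis, the other two from the $b$-images of specific tautologies such as $p\vee\neg p$ --- and reruns that contradiction argument. Your instinct that the ``delicate point'' lives here was right, but the resolution is a definitional one you did not have access to.
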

If you are not familiar with these propositional logics, we will define them in the Preliminaries section. But for now, just assume that the propositional logics $\mathbf{BPC}$, $\mathbf{EBPC}$, $\mathbf{IPC}$ and $\mathbf{FPL}$ are the propositional counterparts of the modal systems $\mathbf{K4}$, $\mathbf{KD4}$, $\mathbf{S4}$ and $\mathbf{GL}$, respectively. Moreover, by weak BHK interpretation, we informally mean the usual BHK interpretation without the consistency condition. This is the last condition in the BHK interpretation which assumes that there is no proof for $\bot$. And finally, by $\mathbf{MPC}$, we informally mean the system $\mathbf{IPC}$ without the Ex Falso rule. The rule which makes possible to prove anything from the contradiction.\\

Some remarks about this result are in place. First of all, it shows that there are different BHK interpretations instead of just one. This observation, somehow contradicts the folklore belief and it is surprising. The reason is that the BHK interpretation just defines the meaning of a connective in terms of the provability in different levels of meta-languages. But, it is silent about what kinds of commitments we impose on our meta-theories. 

Therefore, we can impose different philosophical conditions on the behavior of meta-theories to capture different propositional logics, all of them valid under the BHK interpretation. For instance, we can choose the minimal possible commitment which means that there is no non-trivial condition on the hierarchy of meta-theories. Then the BHK interpretation leads to the logic $\mathbf{BPC}$. On the other hand, if we suppose that our meta-theories are strong enough to prove the reflection principle for lower theories and all the theories are sound, then the BHK interpretation leads to the logic $\mathbf{IPC}$. This observation shows a key fact: There is a web of different intuitionistic logics according to the BHK interpretation; the logics $\mathbf{IPC}$ and $\mathbf{BPC} $ are just two examples of these intuitionistic logics and both of them are philosophically valid. In sum, we have to talk about \textit{intuitionistic logics} instead of \textit{the} intuitionistic logic.\\
Secondly, the result shows that this framework of the provability interpretation can capture different propositional logics and just like the case of modal logics, we are able to say that propositional logics are logics to describe the behavior of the real world and the hierarchy of meta-theories. This formalizes the intuitionist claim that intuitionistic mathematics is a way to talk and only talk about proofs.\\
Thirdly, it is possible to define different kinds of G\"{o}del's translation. Hence, it is possible to capture different propositional logics via these different translations. But it is important to consider that the translation we used in the above result is the valid translation to formalize the BHK interpretation and those different kinds of translations may not be rooted in the usual BHK interpretation. However, they are still provability interpretations and could be useful.
\section{Preliminaries}
In this section we will introduce some of the preliminaries that we need in the following sections. First of all, we will introduce the sequent calculi for the modal logics $\mathbf{K4}$, $\mathbf{KD4}$ and $\mathbf{S4}$. Then we will introduce some propositional logics such as $\mathbf{BPC}$, $\mathbf{MPC}$ and $\mathbf{IPC}$ as the propositional counterparts of some of the modal logics and finally we will state the Solovay's completeness results.
\subsection{Sequent Calculi for Modal Logics}
Consider the following set of rules:
\begin{flushleft}
	\textbf{Axioms:}
\end{flushleft}
\begin{center}
	\begin{tabular}{c c}
		\AxiomC{$  A \Rightarrow A$ }
		\DisplayProof 
			&
		\AxiomC{$ \bot \Rightarrow  $}
		\DisplayProof
	\end{tabular}

\end{center}
\begin{flushleft}
 		\textbf{Structural Rules:}
\end{flushleft}
\begin{center}
	\begin{tabular}{c}
		\begin{tabular}{c c}
		\AxiomC{$\Gamma  \Rightarrow \Delta$}
		\LeftLabel{\tiny{$ (wL) $}}
		\UnaryInfC{$\Gamma,  A  \Rightarrow \Delta$}
		\DisplayProof
			&
		\AxiomC{$\Gamma  \Rightarrow \Delta$}
		\LeftLabel{\tiny{$ ( wR) $}}
		\UnaryInfC{$\Gamma \Rightarrow  \Delta, A$}
		\DisplayProof
		\end{tabular}
			\\[3 ex]
			\begin{tabular}{c c}
		\AxiomC{$\Gamma, A, A \Rightarrow \Delta$}
		\LeftLabel{\tiny{$ (cL) $}}
		\UnaryInfC{$\Gamma,  A  \Rightarrow \Delta$}
		\DisplayProof
		    &
		\AxiomC{$\Gamma \Rightarrow \Delta, A, A$}
		\LeftLabel{\tiny{$ (cR) $}}
		\UnaryInfC{$\Gamma \Rightarrow \Delta, A$}
		\DisplayProof
		\end{tabular}
        \\[3 ex]
	    
	    \AxiomC{$\Gamma_0 \Rightarrow \Delta_0, A$}
	    \AxiomC{$\Gamma_1, A \Rightarrow \Delta_1$}
		\LeftLabel{\tiny{$ (cut) $}}
		\BinaryInfC{$\Gamma_0, \Gamma_1 \Rightarrow \Delta_0, \Delta_1$}
		\DisplayProof
	\end{tabular}
\end{center}		
\begin{flushleft}
  		\textbf{Propositional Rules:}
\end{flushleft}
\begin{center}
  	\begin{tabular}{c c}
  		\AxiomC{$\Gamma_0, A  \Rightarrow \Delta_0 $}
  		\AxiomC{$\Gamma_1, B  \Rightarrow \Delta_1$}
  		\LeftLabel{{\tiny $\vee L$}} 
  		\BinaryInfC{$ \Gamma_0, \Gamma_1, A \lor B \Rightarrow \Delta_0, \Delta_1 $}
  		\DisplayProof
	  		&
	   	\AxiomC{$ \Gamma \Rightarrow \Delta, A_i$}
   		\RightLabel{{\tiny $ (i=0, 1) $}}
   		\LeftLabel{{\tiny $\vee R$}} 
   		\UnaryInfC{$ \Gamma \Rightarrow \Delta, A_0 \lor A_1$}
   		\DisplayProof
	   		\\[3 ex]
   		\AxiomC{$ \Gamma, A_i \Rightarrow \Delta$}
   		\RightLabel{{\tiny $ (i=0, 1) $}} 
   		\LeftLabel{{\tiny $\wedge L$}}  		
   		\UnaryInfC{$ \Gamma, A_0 \land A_1 \Rightarrow \Delta, C $}
   		\DisplayProof
	   		&
   		\AxiomC{$\Gamma_0  \Rightarrow \Delta_0, A$}
   		\AxiomC{$\Gamma_1  \Rightarrow  \Delta_1, B$}
   		\LeftLabel{{\tiny $\wedge R$}} 
   		\BinaryInfC{$ \Gamma_0, \Gamma_1 \Rightarrow \Delta_0, \Delta_1, A \land B $}
   		\DisplayProof
   			\\[3 ex]
   		\AxiomC{$ \Gamma_0 \Rightarrow A, \Delta_0 $}
  		\AxiomC{$ \Gamma_1, B \Rightarrow \Delta_1, C $}
  		\LeftLabel{{\tiny $\rightarrow L$}} 
   		\BinaryInfC{$ \Gamma_0, \Gamma_1, A \rightarrow B \Rightarrow \Delta_0, \Delta_1, C$}
   		\DisplayProof
   			&
   		\AxiomC{$ \Gamma, A \Rightarrow B, \Delta $}
   		\LeftLabel{{\tiny $\rightarrow R$}} 
   		\UnaryInfC{$ \Gamma \Rightarrow \Delta, A \rightarrow B$}
   		\DisplayProof
   		\\[3 ex]
   		\AxiomC{$ \Gamma \Rightarrow \Delta, A $}
   		\LeftLabel{\tiny {$\neg L$}} 
   		\UnaryInfC{$ \Gamma, \neg A \Rightarrow \Delta$}
   		\DisplayProof
   			&
   		\AxiomC{$ \Gamma, A \Rightarrow \Delta $}
   		\LeftLabel{{\tiny $\neg R$}} 
   		\UnaryInfC{$ \Gamma \Rightarrow \Delta, \neg A$}
   		\DisplayProof
	\end{tabular}
\end{center}
\begin{flushleft}
 		\textbf{Modal Rules:}
\end{flushleft}
\begin{center}
  	\begin{tabular}{c c}
		\AxiomC{$ \Gamma, \Box \Gamma \Rightarrow A$}
		\LeftLabel{\tiny{$\Box_4 R$}}
		\UnaryInfC{$\Box \Gamma \Rightarrow \Box A$}
		\DisplayProof
		&
		\AxiomC{$ \Gamma, \Box \Gamma \Rightarrow $}
		\LeftLabel{\tiny{$\Box_D R$}}
		\UnaryInfC{$\Box \Gamma \Rightarrow $}
		\DisplayProof
\end{tabular}
\end{center}		
\begin{center}
  	\begin{tabular}{c c}
  	\AxiomC{$ \Box \Gamma \Rightarrow A$}
		\LeftLabel{\tiny{$\Box_S R$}}
		\UnaryInfC{$\Box \Gamma \Rightarrow \Box A$}
		\DisplayProof
		&
        \AxiomC{$ \Gamma, A \Rightarrow \Delta$}
		\LeftLabel{\tiny{$\Box L$}}
		\UnaryInfC{$\Gamma, \Box A \Rightarrow \Delta$}
		\DisplayProof
		\\[2 ex]
	\end{tabular}
\end{center}
The system $G(\mathbf{K4})$ is the system that consists of the axioms, structural rules, propositional rules and the modal rule $\Box_{4} R$. $G(\mathbf{KD4})$ is $G(\mathbf{K4})$ plus the rule $\Box_D R$ and finally, $G(\mathbf{S4})$ is the system $G(\mathbf{K4})$ when we replace the rule $\Box_{4} R$ by $\Box_{S} R$ and add the rule $\Box L$. All of these systems have the cut elimination property. (See \cite{Po}).\\

\subsection{Propositional Logics}
The next ingredient is the propositional counterparts of the usual modal logics. The intuitionistic logic $\mathbf{IPC}$ and the minimal logic $\mathbf{MPC}$ are the well-known logics in this area, but there are also some weaker systems which are very interesting in terms of the provability interpretation. For instance, we can mention the basic propositional logic $\mathbf{BPC}$ and the formal propositional logic $\mathbf{FPL}$ defined by A. Visser in \cite{Vi} or the extended basic propositional logic $\mathbf{EBPC}$ defined by M. Ardeshir and B. Hesaam in \cite{Ard}. To define these logics, consider the following set of rules:
\begin{flushleft}
   \textbf{Propositional Rules:}
   \end{flushleft}
\begin{center}
  	\begin{tabular}{c c}
		\AxiomC{$ A$}
	   	\AxiomC{$ B$}
	   	\LeftLabel{\tiny{$\wedge I$}}
	   	\BinaryInfC{$A \wedge B $}
	   	\DisplayProof
		&
		\AxiomC{$ A \wedge B $}
		\LeftLabel{\tiny{$\wedge E$}}
		\UnaryInfC{$ A$}
		\DisplayProof
			
		\AxiomC{$ A \wedge B $}
		\LeftLabel{\tiny{$\wedge E$}}
		\UnaryInfC{$ B$}
		\DisplayProof
	   		\\[4 ex]
	   		
        \AxiomC{$  A $}
        \LeftLabel{\tiny{$\vee I$}}
   		\UnaryInfC{$ A \vee B$}
   		\DisplayProof
   		
   		\AxiomC{$  B $}
   		\LeftLabel{\tiny{$\vee I$}}
   		\UnaryInfC{$ A \vee B$}
   		\DisplayProof
			&
   		\AxiomC{$A \lor B$}
        \AxiomC{[$A$]}
        \noLine
   		\UnaryInfC{$\mathcal{D}$}
        \noLine
        \UnaryInfC{$C$}
        
        \AxiomC{[$B$]}
        \noLine
   		\UnaryInfC{$\mathcal{D'}$}
        \noLine
        \UnaryInfC{$C$}
        \LeftLabel{\tiny{$\vee E$}}
        \TrinaryInfC{$C$}
        \DisplayProof
    		\\[4 ex]
    		
        \AxiomC{[$A$]}
   		\noLine
   		\UnaryInfC{$\mathcal{D}$}
   		\noLine
   		\UnaryInfC{$B$}
   		\LeftLabel{\tiny{$\rightarrow I$}}
   		\UnaryInfC{$ A \rightarrow B$}
   		\DisplayProof 
   		&   		
   		\AxiomC{$  \bot $}
    		\LeftLabel{\tiny{$\bot$}}
   		\UnaryInfC{$ A$}
   		\DisplayProof
   \end{tabular}
   \end{center}

\begin{flushleft}
   \textbf{Formalized Rules:}
   \end{flushleft}
   
\begin{center}
\begin{tabular}{c c}
   		   		
   		\AxiomC{$A \rightarrow B$}
    		\AxiomC{$A \rightarrow C$}
    		\LeftLabel{\tiny{$(\wedge I)_f$}}
    		\BinaryInfC{$A \rightarrow B \wedge C$}
    		\DisplayProof
    		&
    		\AxiomC{$A \rightarrow C$}
    		\AxiomC{$B \rightarrow C$}
    		\LeftLabel{\tiny{$(\vee E)_f$}}
    		\BinaryInfC{$A \vee B \rightarrow C$}
    		\DisplayProof
    		\\[3 ex]
    		
\end{tabular}
\end{center}
\begin{center}
      \begin{tabular}{c}
  
    		\AxiomC{$A \rightarrow B$}
    		\AxiomC{$B \rightarrow C$}
    		\LeftLabel{\tiny{$tr_f$}}
    		\BinaryInfC{$A \rightarrow C$}
    		\DisplayProof
   		
	\end{tabular}
\end{center}
Moreover, consider the following set of rules:\\

\begin{center}
  	\begin{tabular}{c c c}
		\AxiomC{$ A$}
		\AxiomC{$ \neg A$}
		\LeftLabel{\tiny{$C$}}
		\BinaryInfC{$\bot$}
		\DisplayProof
		&	
		\AxiomC{$A $}
		\AxiomC{$A \rightarrow B$}
		\LeftLabel{\tiny{$R$}}
		\BinaryInfC{$ B$}
		\DisplayProof
		\\[3 ex]
		
		\AxiomC{}
		\LeftLabel{\tiny{$D$}}
		\UnaryInfC{$A \vee \neg A$}
		\DisplayProof
		&
        \AxiomC{$(A \wedge (A \rightarrow B)) \rightarrow B$}
        \LeftLabel{\tiny{$L$}}
        \UnaryInfC{$A \rightarrow B$}
        \DisplayProof
	\end{tabular}
\end{center}
The logic $\mathbf{BPC}$ is defined as the system consists of the propositional rules and the formalized rules. Then logic $\mathbf{EBPC}$ defined as $\mathbf{BPC}+ C$, logic $\mathbf{FPL}$ is defined as $\mathbf{BPC} + L$, $\mathbf{IPC}$ is defined as $\mathbf{BPC} + R$, $\mathbf{MPC}$ is defined as $\mathbf{IPC}$ without $\bot$ rule, and finally $\mathbf{CPC}$ is defined as $\mathbf{IPC} + D$.
\begin{rem}
Consider the following rules:
\begin{center}
  	\begin{tabular}{c c c}
		
		\AxiomC{$\top \rightarrow \bot$}
		\LeftLabel{\tiny{$C'$}}
		\UnaryInfC{$\bot$}
		\DisplayProof
		&	
		\AxiomC{$\top \rightarrow A$}
		\LeftLabel{\tiny{$R'$}}
		\UnaryInfC{$ A$}
		\DisplayProof
		&
        \AxiomC{$(\top \rightarrow A) \rightarrow A$}
        \LeftLabel{\tiny{$L'$}}
        \UnaryInfC{$\top \rightarrow A$}
        \DisplayProof
		\\[3 ex]
	\end{tabular}
\end{center}

It is possible to define $\mathbf{EBPC}$ as $\mathbf{BPC} + D'$; $\mathbf{IPC}$ as $\mathbf{BPC} + R'$ and $\mathbf{FPL}$ as $\mathbf{BPC} + L'$. It is obvious that $D'$, $R'$ and $L'$ are special cases of $D$, $R$ and $L$, respectively. Therefore it remains to show that $D'$, $R'$ and $L'$ can simulate $D$, $R$ and $L$, respectively. The following proofs show that it is the case:
\begin{center}
  	\begin{tabular}{c c}
	
		\AxiomC{$ A$}
		\UnaryInfC{$\top \rightarrow A $}
		\AxiomC{$ A \rightarrow \bot$}
		\BinaryInfC{$\top \rightarrow \bot$}
		\LeftLabel{\tiny{$C'$}}
		\UnaryInfC{$\bot$}
		\DisplayProof
		\;\;\;\;\;
		&	
		\AxiomC{$ A$}
		\UnaryInfC{$\top \rightarrow A $}
		\AxiomC{$A \rightarrow B$}
		\BinaryInfC{$\top \rightarrow B$}
		\LeftLabel{\tiny{$R'$}}
		\UnaryInfC{$ B$}
		\DisplayProof
		\\[5 ex]
	\end{tabular}
\end{center}

\begin{center}
\begin{tabular}{c}
        \AxiomC{$ $}
        \doubleLine
        \UnaryInfC{$A \rightarrow \top$}
        
        \AxiomC{$[\top \rightarrow (A \rightarrow B)]^2$}
        
	    \AxiomC{$[A]^1$}
	    \doubleLine
	    \UnaryInfC{$\top \rightarrow A$}
	    
        \BinaryInfC{$\top \rightarrow (A \wedge (A \rightarrow B))$}
        \AxiomC{$[(A \wedge (A \rightarrow B)) \rightarrow B]^3$}
        \BinaryInfC{$\top \rightarrow B$}
        
        \BinaryInfC{$A \rightarrow B$}
        \LeftLabel{\tiny{$\rightarrow I_2$}}
        \UnaryInfC{$(\top \rightarrow (A \rightarrow B)) \rightarrow (A \rightarrow B)$}
        \UnaryInfC{$\top \rightarrow (A \rightarrow B)$}
        \LeftLabel{\tiny{$(*)$}}
        \doubleLine
        \UnaryInfC{$A \rightarrow B$}
        \LeftLabel{\tiny{$\rightarrow I_1$}}
        \UnaryInfC{$A \rightarrow ((A \rightarrow B))$}
        \doubleLine
        \UnaryInfC{$A \rightarrow (A \wedge (A \rightarrow B))$}
        
        \AxiomC{$[(A \wedge (A \rightarrow B)) \rightarrow B]^3$}
        \insertBetweenHyps{\hskip -100pt}
        \BinaryInfC{$(A \rightarrow B)$}
        \DisplayProof
\end{tabular}
\end{center}
Notice that the double lines mean simple sub-proofs that we do not mention and $(*)$ is the sub-proof which proves 
\[
A, (\top \rightarrow (A \rightarrow B)), ((A \wedge (A \rightarrow B)) \rightarrow B) \vdash A \rightarrow B
\] 
\end{rem}
\subsection{Solovay's Theorems}
In this subsection we will mention the Solovay's seminal arithmetical completeness theorems. (See \cite{So} and \cite{Bo}.) They will be needed to prove some of our completeness theorems in the next sections. Note that in the case of $\mathbf{GL}$ we will state the uniform version of the completeness theorem which will have a crucial role in our proofs.
\begin{dfn}
Assume that $I\Sigma_1 \subseteq T$ is a $\Sigma_1$-sound arithmetical theory. By an arithmetical substitution $\sigma$ we mean a function from the atomic formulas in the modal language to the set of arithmetical sentences. And if $A \in \mathcal{L}_{\Box}$ is a modal formula, by $A^{\sigma}$ we mean an arithmetical sentence resulted by substituting atoms by $\sigma$, and interpreting boxes as the provability predicate of $T$.
\end{dfn}
\begin{thm}\label{t0-0}
\begin{itemize}
\item[$(i)$](First Theorem)
If $\mathbf{GL} \vdash A$ then for all arithmetical substitutions $\sigma$, $I\Sigma_1 \vdash A^{\sigma}$. Moreover, there is an arithmetical substitution $*$ such that for all modal formulas $A$, if $T \vdash A^*$, then $\mathbf{GL} \vdash A$.
\item[$(ii)$](Second Theorem)
$\mathbf{GLS} \vdash A$ iff for all arithmetical substitutions $\sigma$, $\mathbb{N} \vDash A^{\sigma}$.
\end{itemize}
\end{thm}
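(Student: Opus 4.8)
I follow Solovay's original method throughout; the only serious external input (needed below, for completeness) is the finite model property of $\mathbf{GL}$. For the first half of $(i)$ I would induct on a $\mathbf{GL}$-derivation of $A$: propositional axioms and modus ponens are immediate; the necessitation rule is handled by formalized provable $\Sigma_1$-completeness inside $I\Sigma_1$ (from $I\Sigma_1\vdash B^\sigma$ and $I\Sigma_1\subseteq T$ one gets $I\Sigma_1\vdash\Pr_T(\ulcorner B^\sigma\urcorner)$); the distribution axiom $\Box(B\to C)\to(\Box B\to\Box C)$ under $\sigma$ is the formalized modus-ponens Hilbert--Bernays--L\"ob condition; and the L\"ob axiom $\Box(\Box B\to B)\to\Box B$ under $\sigma$ is exactly the formalized L\"ob theorem for $T$, provable already in $I\Sigma_1$. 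For the forward half of $(ii)$, recall that $\mathbf{GLS}$ is axiomatized by all $\mathbf{GL}$-theorems together with all instances of $\Box B\to B$, closed under modus ponens only: every $\mathbf{GL}$-theorem is true in $\mathbb{N}$ under every $\sigma$ by the previous clause, each instance of $\Box B\to B$ becomes $\Pr_T(\ulcorner B^\sigma\urcorner)\to B^\sigma$, which is true in $\mathbb{N}$ because $T$ is sound, and $\mathbb{N}$-truth is closed under modus ponens.

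\textbf{Completeness of $\mathbf{GL}$ and the uniform $*$.} Assume $\mathbf{GL}\nvdash A$. By Segerberg's finite frame property there is a finite transitive converse-well-founded (equivalently, finite transitive irreflexive) Kripke model refuting $A$; passing to a generated submodel gives it a root, and adjoining a new bottom node gives a model on $W'=\{0,1,\dots,n\}$ in which $0$ sees everything and $A$ is refuted at some $r\ge 1$. Using the recursion theorem I would then define a primitive recursive ``Solovay function'' $h:\mathbb{N}\to W'$ with $h(0)=0$ that climbs the frame, leaving a node $i$ for a fixed $R$-successor exactly when it finds a $T$-proof that the limit $\ell:=\lim_m h(m)$ is not $i$; converse well-foundedness and finiteness make $\ell$ total, with $T\vdash\bigvee_i(\ell=i)$ and the disjuncts provably mutually exclusive. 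The Solovay lemmas to establish inside $T$ are the familiar ones (we suppress the exact forms): $\mathbb{N}\vDash(\ell=0)$; $T+(\ell=i)$ is consistent for every $i\in W'$; for $i\ge 1$, $T\vdash(\ell=i)\to\Pr_T(\ulcorner\,\bigvee_j(\ell=j)\,\urcorner)$ with $j$ ranging over the appropriate successors of $i$; and for $iRj$, $T+(\ell=i)\vdash\neg\Pr_T(\ulcorner\ell\ne j\urcorner)$. Putting $\sigma(p)=\bigvee\{(\ell=i):i\Vdash p\}$, I would prove the truth lemma by induction on the subformulas $B$ of $A$: for $i\ge 1$, $i\Vdash B$ implies $T\vdash(\ell=i)\to B^\sigma$ and $i\nVdash B$ implies $T\vdash(\ell=i)\to\neg B^\sigma$, the Boolean cases using mutual exclusivity and the $\Box$-case using the last two Solovay lemmas to pass in and out of $\Pr_T$. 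Evaluating this at $r$ gives $T\vdash(\ell=r)\to\neg A^\sigma$, so consistency of $T+(\ell=r)$ yields $T\nvdash A^\sigma$; this is the non-uniform completeness. To obtain a single substitution $*$ that works for all $A$ simultaneously I would instead run one Solovay construction over a fixed countably infinite universal frame --- for instance the tree of finite strictly $R$-decreasing sequences --- into which every finite $\mathbf{GL}$-countermodel embeds as a generated subframe, read off a single $*$ from that construction, and transport the truth lemma along each such embedding to conclude $T\nvdash A^*$ whenever $\mathbf{GL}\nvdash A$. Keeping $h$ primitive recursive and $\ell$ provably total over an infinite frame is the delicate point of the whole argument, and is precisely why the uniform version is singled out in the statement; I expect this to be the main obstacle.

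\textbf{Completeness of $\mathbf{GLS}$.} For the remaining direction of $(ii)$ I would use Solovay's reduction $\mathbf{GLS}\vdash A$ iff $\mathbf{GL}\vdash\big(\bigwedge_i(\Box B_i\to B_i)\big)\to A$, where $B_1,\dots,B_k$ lists the subformulas of $A$. If $\mathbf{GLS}\nvdash A$ then $\mathbf{GL}$ does not prove that implication, so the completeness of $\mathbf{GL}$ just established furnishes a $\sigma$ with $\mathbb{N}\nvDash\big(\bigwedge_i(\Box B_i\to B_i)\to A\big)^\sigma$; since each conjunct $(\Box B_i\to B_i)^\sigma=\Pr_T(\ulcorner B_i^\sigma\urcorner)\to B_i^\sigma$ is true in $\mathbb{N}$ by soundness of $T$, we get $\mathbb{N}\nvDash A^\sigma$, completing the equivalence.
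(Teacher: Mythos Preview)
The paper does not give its own proof of this statement: Theorem~\ref{t0-0} is placed in the Preliminaries (\S2.3, ``Solovay's Theorems'') and is simply cited, with references to Solovay's original paper and Boolos's monograph. So there is no in-paper argument to compare against; the theorem is treated as background input that the paper then \emph{uses} (for instance in Lemma~\ref{t3-5} and Theorems~\ref{t6-4}, \ref{t6-5}, \ref{t6-7}).

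That said, your outline is the standard Solovay proof and matches what those references contain: induction on $\mathbf{GL}$-derivations using the Hilbert--Bernays--L\"ob conditions and formalized L\"ob for soundness; the Solovay function on a finite rooted $\mathbf{GL}$-countermodel with an added bottom node, the usual four lemmas, and the truth lemma for completeness; and the reduction of $\mathbf{GLS}$ to $\mathbf{GL}$ via the conjunction of reflection instances over subformulas. Your plan for the uniform substitution $*$ via a single construction over an infinite universal frame is one of the known routes (another, closer to Boolos, builds $*$ by dovetailing the per-formula constructions into one self-referential definition); either way you are right that keeping the limit total and the Solovay lemmas provable over an infinite frame is exactly the delicate point, and this is precisely why the paper flags the uniform version as the form it needs.
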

\section{Provability models}
In this section we will introduce a provability model as a formalization of the intuitive combination of a model and a hierarchy of theories. Then, we will define the satisfaction relation between modal formulas and provability models. And as a conclusion, we will justify our notion of provability interpretation.
\subsection{Definitions and Examples}
Suppose that we have a modal formula $A$, and we want to interpret any box in the formula as a provability predicate. Note that when you have two boxes in $A$ such that one box is in the scope of the other box, our intuition forces us to accept that the outer box talks about the provability in the meta-theory while the inner box is just capturing the provability in the lower theories. Therefore, we can claim that the natural model for the provability interpretation of modal logics is a pair of one first order structure to interpret the atoms of the language, and a hierarchy of theories to play the role of a hierarchy of meta-theories. Moreover, we choose our structure and our theories as a model and theories for arithmetic, respectively, because in these theories we have a natural way of coding the language, the meta-language, the meta-meta-language and so on. Furthermore, we suppose that all of our theories include $I\Sigma_1$ to have enough power to formalize the basic meta-mathematics of the theories. And, for the same reason we assume $M \vDash I\Sigma_1$, because we want to have the true meta-mathematical properties obviously.
\begin{dfn}\label{t2-1} 
A provability model is a pair $(M, \{T_n\}_{n=0}^{\infty})$ where $M$ is a model of $I\Sigma_1$ and $\{T_n\}_{n=0}^{\infty}$ is a hierarchy of arithmetical r.e. theories such that for any $n$, $I\Sigma_1 \subseteq T_n \subseteq T_{n+1}$ provably in $I\Sigma_1$.
\end{dfn}
We define an expansion of a modal formula.
\begin{dfn}\label{t2-2} 
$E(A)$, the set of all expansions of $A$, is inductively defined as follows:
\begin{itemize}
\item[$\bullet$]
If $A$ is an atom, $E(A)=\{A\}$.
\item[$\bullet$]
If $A=B \circ C$, then $E(A)=\{D \circ E \mid D \in E(B) \; \text{and} \; E \in E(C)\}$ for $\circ \in \{\wedge, \vee, \rightarrow\}$.
\item[$\bullet$]
If $A=\neg B$, then $E(A)=\{\neg D \mid D \in E(B)\}$.
\item[$\bullet$]
If $A=\square B $, then $E(A)=\{\Box \bigvee_{i=1}^{k}D_i \mid \forall 1 \leq i \leq k, \; D_i \in E(B)\}$.
\end{itemize}
Moreover, if $\Gamma$ is a sequence of modal formulas, by a sequence of expansions of $\Gamma$, we mean a sequence such that for any formula in $\Gamma$, it has at least one of its expansions and at most finitely many of them. We will denote these sets by $\bar{\Gamma}$.
\end{dfn}
Informally speaking, an expansion of a formula $A$ is a formula resulted by replacing any formula after a box with disjunctions of the expansions of the formula. 
\begin{exam}\label{t2-3} 
For instance, the formula $\square(\neg \square (\square p \vee \square p) \vee \neg \square\square (p \vee p)) $ is an expansion of the formula $\square \neg \square \square p$
\end{exam}
So far, we have justified the Definition \ref{t2-1}. Let us investigate the intuitive meaning of the witnesses, as well. We claim that a natural interpretation is based on the interpretion of the outer boxes as meta-theories of the inner boxes. For simplicity, we call this kind of interpretation as the ordered interpretation. Therefore, to have an ordered interpretation we need to interpret all of the boxes in $A$ as the provability predicates of the theories in an ordered way. And, since for any theory we have a number which shows its layer in the hierarchy, it is enough to assign a natural number to a box. Consider that if we assign $n$ to a box, the intended meaning is that the interpretation of that box is the provability predicate for the theory $T_n$. This role is played by the concept of witness. In fact, a witness is just an assignment for the boxes in an ordered way.
\begin{nota}\label{t2-4} 
If $w_i$s are sequences of the natural numbers, by $(w_1, w_2, \ldots, w_n)$ we mean the concatenation of $w_i$s.
\end{nota}
\begin{dfn}\label{t2-5} 
Let $w$ be a sequence of natural numbers and $A$ be a modal formula. Then the relation $w \Vdash A$, which means $w$ is a witness for $A$, is inductively defined as follows:
\begin{itemize}
\item[$\bullet$]
If $A$ is an atom, $() \Vdash A$.
\item[$\bullet$]
If $A=B \circ C$, then $(w_1, w_2) \Vdash A$ if $w_1 \Vdash B$ and $w_2 \Vdash C$ for $\circ \in \{\wedge, \vee, \rightarrow\}$
\item[$\bullet$]
If $A=\neg B$, then $w \Vdash A$ if $w \Vdash B$.
\item[$\bullet$]
If $A=\square B $, then $(n, w) \Vdash A$ if $w \Vdash B$ and $n > m$ for all $m$ which appear in $w$.
\end{itemize}
Moreover, if $\Gamma$ is a sequence of modal formulas, by a witness for $\Gamma$, we mean a sequence of witnesses such that any witness $w_i$ in the sequence is a witness for $A_i$ in $\Gamma$.
\end{dfn}
Informally, a witness for a formula $A$ is a sequence of numbers which we assign to occurrences of the boxes in $A$ such that the number for outer box is greater than all numbers of inner boxes. This condition formalizes the idea that any outer box refers to the meta-theories in the hierarchy.
\begin{exam}\label{t2-6} 
For instance, $w=(n,m,k,r)$ is a witness for $\Box (p \rightarrow q) \vee \square( \neg \square p \rightarrow \square q)$ if $m>k,r$.
\end{exam}
The next definition is about evaluating a modal formula by an arithmetical substitution for atoms and a witness for the boxes in the formula.
\begin{dfn}\label{t2-7} 
Let $w$ be a witness for $A$ and $\sigma$ an arithmetical substitution which assigns an arithmetical sentence to a propositional variable. And also let $(M, \{T_n\}_{n=0}^{\infty})$ be a provability model. By $A^{\sigma}(w)$ we mean an arithmetical sentence which is resulted by substituting the variables by $\sigma$ and interpreting any box as the provability predicate of $T_n$ if the corresponding number in the witness for this box was $n$. The interpretation of boolean connectives are themselves. Moreover, if $\Gamma$ is a sequence of modal formulas $A_i$, and $w=(w_i)_{i}$ is its witness, by $\Gamma^{\sigma}(w)$ we mean the sequence of $A_i^{\sigma}(w_i)$.
\end{dfn}
\begin{exam}\label{t2-8} 
For the witness and the formula $A$ of the last example, $A^{\sigma}(w)$ would be $\Pr_{T_n} (p^{\sigma} \rightarrow q^{\sigma}) \vee \Pr_{T_m}( \neg \Pr_{T_k} (p^{\sigma}) \rightarrow \Pr_{T_r} (q^{\sigma}))$.
\end{exam}
We are ready to introduce the concept of the satisfiability of a formula in a provability model.
\begin{dfn}\label{t2-9} 
A sequent $\Gamma \Rightarrow \Delta$ is true in $(M, \{T_n\}_{n=0}^{\infty})$ when there are sequences of expansions $\bar{\Gamma}$ and $\bar{\Delta}$ of $\Gamma$ and $\Delta$, respectively, and witnesses $u$ and $v$ for $\bar{\Gamma}$ and $\bar{\Delta}$ respectively such that for any arithmetical substitution $\sigma$, $M \models \bar{\Gamma}^{\sigma}(u) \Rightarrow \bar{\Delta}^{\sigma}(v)$. Moreover, we say that a sequent $\Gamma \Rightarrow \Delta$ is true in a class of models $\mathcal{C}$, when there are uniform sequences of expansions and witnesses for all models. In a more precise way, we write $\mathcal{C} \vDash \Gamma \Rightarrow \Delta$, if there are sequences of expansion $\bar{\Gamma}$ and $\bar{\Delta}$ and witnesses $u$ and $v$ such that for all arithmetical substitutions $\sigma$ and all provability models $(M, \{T_n\}_{n=0}^{\infty})$ in $\mathcal{C}$, $M \models \bar{\Gamma}^{\sigma}(u) \Rightarrow \bar{\Delta}^{\sigma}(v)$.
\end{dfn}
Informally speaking, truth means the existence of expansions and witnesses such that the interpretation of a formula (or sequent) becomes true, independently of the use of the arithmetical substitutions.
\begin{rem}
Note that our definition of satisfiability allows us to use a disjunction of finitely many expansions of the formula instead of the original formula itself. In other words, if we want to show that $(M, \{T_n\}_{n=0}^{\infty}) \vDash A$, we could use finitely many expansions $B_1, B_2, \ldots, B_k$ for $A$ and find a witness for $\bigvee_{i=1}^k B_i$. The same is true for the sequents.
\end{rem}
Let us illuminate the Definition \ref{t2-9} with some examples.
\begin{exam}\label{t2-10} 
Let $(\mathbb{N}, \{T_n\}_{n=0}^{\infty})$ be a pair where $T_0=\PA$ and for any $n$, $T_{n+1}= T_n + \Rfn(T_n)$. Based on the definition, this pair is obviously a provability model. We want to show that the sentence $\Box(\Box A \rightarrow A)$ is true in the model. To do this, we need some expansions of the formula and a witness for them. For the expansions, just use the formula itself, and for a witness, first find a witness for $A$ and call it $w$; if $n$ is a number greater than all the numbers in $w$, then the sequence $(n+1, n, w, w)$ is a witness for $\Box(\Box A \rightarrow A)$. For any arithmetical substitution $\sigma$, we have $\mathbb{N} \vDash \Pr_{T_{n+1}}(\Pr_{T_n} (A^{\sigma}(w)) \rightarrow A^{\sigma}(w))$ since the theory $T_{n+1}$ can prove the reflection for $T_n$. As you can see, the idea of introducing a hierarchy to witness the boxes in modal sentences could kill the effect of G\"{o}del's second incompleteness theorem.\\
Let us illuminate the importance of the expansions with an example. Consider the sentence $\neg \Box (\neg \Box A \wedge A)$. We want to show that this sentence is true in the above mentioned provability model. (Note that this formula is provable in $\mathbf{S4}$.) Pick a witness $w$ for the sentence $A$, a number $n$ greater than all numbers in $w$ and the formula itself as its expansion. In this case we need two copies of the sentence, therefore we have to find a witness for $B= \neg \Box (\neg \Box A \wedge A) \vee \neg \Box (\neg \Box A \wedge A)$. It is easy to verify that the sequence $(n+2, n+1, w, w, n+1, n, w, w)$ is a witness for $B$. For any arithmetical substitution $\sigma$, we have 
\[
\mathbb{N} \vDash \neg \Pr_{n+2}(\neg \Pr_{n+1} (A^{\sigma}(w)) \wedge A^{\sigma}(w)) \vee \neg \Pr_{n+1}(\neg \Pr_n (A^{\sigma}(w)) \wedge A^{\sigma}(w))
\]
Because if we have both 

\[
\Pr_{n+2}(\neg \Pr_{n+1} (A^{\sigma}(w)) \wedge A^{\sigma}(w))
\]
and 
\[
\Pr_{n+1}(\neg \Pr_n (A^{\sigma}(w)) \wedge A^{\sigma}(w))
\]
then from the first part and the soundness of $T_{n+2}$ we have $\neg \Pr_{n+1} (A^{\sigma}(w))$ and from the second part and the fact that the provability predicate commutes with $\wedge$, we have $\Pr_{n+1}(A^{\sigma}(w))$, which is a contradiction. Therefore, the sentence is true in $\mathbb{N}$. It is easy to see that if we want to show the truth of the sentence $\Box(\neg \Box (\neg \Box A \wedge A))$, we should use $\Box B$ as an expansion of the formula. This observation shows the importance of the expansions, but is it possible to avoid them?
\end{exam}
\begin{exam}\label{t2-11} 
In this example we want to argue that some sentences do not have a witness in some provability models. Finding these kinds of examples is not hard. It is enough to think of formulas such as $p$ or $\Box p$. However, what we want to show here is finding an example to show the importance of the expansions in the definition. Think of the provability model of the last example and consider the formula $\neg \Box (\neg \Box p \wedge p)$. We showed that if we use two different copies of the formula, then the disjunction of those different copies have a witness in the provability model. We want to show that if we just use one copy, it is impossible to witness the formula. Assume that $w=(n,m)$ is a witness for $\neg \Box (\neg \Box p \wedge p)$ in the above mentioned provability model. Then since $w$ is a witness, we have $n>m$. On the other hand, we know that for any arithmetical substitution, we should have $\mathbb{N} \vDash \neg \Pr_n (\neg \Pr_m (p^{\sigma}) \wedge p^{\sigma})$. Use the arithmetical substitution which sends $p$ to $\Cons(T_m)$. Therefore, we have 
\[
\mathbb{N} \vDash \neg \Pr_n (\neg \Pr_m(\Cons(T_m)) \wedge \Cons(T_m))
\]
Based on the formalized G\"{o}del's second incompleteness theorem
\[
I\Sigma_1 \vdash \Cons(T_m) \rightarrow \neg \Pr_m(\Cons(T_m))
\]
since $I\Sigma_1 \subseteq T_{m+1}$ and $T_{m+1} \vdash \Cons(T_m)$ we have
\[
T_{m+1} \vdash \neg \Pr_m(\Cons(T_m))
\]
hence
$\mathbb{N} \vDash \Pr_{m+1}(\neg \Pr_m(\Cons(T_m)))$ and since $T_{m+1}$ has the reflection principle for $T_m$, $\mathbb{N} \vDash \Pr_{m+1}(\Cons(T_m))$. Since $n>m$ we have 
\[
\mathbb{N} \vDash \Pr_{n}(\neg \Pr_m(\Cons(T_m))) \wedge \Pr_{n}(\Cons(T_m))
\]
which contradicts our assumption. As you can see, our provability interpretation is sensitive to the use of expansions and also to the numbers of copies of expansions. In the following discussion, we will show that this property is an inherent property of the informal intuition behind modal formulas.
\end{exam}
\subsection{Discussion}
One of the complexities of our provability interpretation is the use of expansions and in this discussion, we want to justify its role. But before that, we need some observations. First of all, it seems that if we use the intuitive interpretation of the boxes as the provability predicates of different theories in the hierarchy of theories, meta-theories, meta-meta-theories and so on, the natural provability interpretation will be the following:
\begin{center}
\textit{A sentence $A$ is true in a provability model $(M, \{T_n\}_{n=0}^{\infty})$, if there is a witness $w$ for $A$ such that for all $\sigma$, $M \models A^{\sigma}(w)$}.
\end{center}
Which informally says that if you could witness the boxes in the formula $A$ in the provability model, then it is true. Note that this definition is simpler than ours and does not use any kinds of expansions. Let us concentrate on \textbf{S4} as the theory for our intuitive provability, and temporarily use the above definition as the definition of the truth. To interpret all axioms of the system \textbf{S4}, it is easy to see that we need two natural conditions on our model. First of all $T_{n+1}$ should be powerful enough to prove the reflection of the theory $T_n$ and secondly, all $T_n$s should be sound with respect to our model $M$ (This is what the nature of the provability in \textbf{S4} assumes; think of $\Box (\Box A \rightarrow A)$ and $\Box A \rightarrow A$, respectively.)
The sentence $\neg \Box(\neg \Box A \wedge A )$ is a theorem of \textbf{S4} and we expect that it should be true in any model with those two conditions. But in Example \ref{t2-11} we showed that there is no witness for the sentence and hence, with the definition above, the sentence is not true. The reason is the different roles of an occurrence of a box in a modal formula. To illuminate this fact, let us investigate the intuitive proof of the sentence $\neg \Box(\neg \Box A \wedge A )$ in \textbf{S4}. The proof is a proof by contradiction. Assume $\Box(\neg \Box A \wedge A )$, then because all theorems are true (axiom \textbf{T}), we have $\neg \Box A \wedge A$ and hence $\neg \Box A$. On the other hand, since the provability commutes with the conjunction (a consequence of the axiom \textbf{K}), we have $\Box A$, which is a contradiction. Consider the fact that the box in $\neg \Box A$ is inherited from the inner box in $\neg \Box A \wedge A$ and the box in $\Box A$ is inherited from the outer box in $\Box (\neg \Box A \wedge A)$. Therefore, to reach the contradiction, we need these two boxes refer to one layer in the hierarchy of theories which is impossible because the inner one is the theory and the other is the meta-theory and it is impossible to have $T_{n+1}=T_n$, because $T_{n+1}$ should prove the reflection for $T_n$.\\
What these investigations show, is actually the fact that one box in \textbf{S4} could have different roles. (In the above sentence, the outer box has two different roles, one as the meta-theory of the inner box and the other, as the theory itself.) Therefore, the natural way to interpret these boxes, is an approach which captures the different roles of a box at the same time, and this is not possible with the above simplified semantics, because it is obviously based on the assumption that any box has just one role which needs just one witness. Here is where we need expansions. In fact, the intended meaning of the expansions is using different copies of the formula in a disjunction and if you witness this disjunction, you have the power to witness one box in finitely many different ways; this technique empowers us to capture different roles of one box. (See Example \ref{t2-10} to find out how this technique works.)\\
There is another question to ask. Why do we need this kind of iterative expansion method and why is just the simple disjunction of the formula not enough? The answer is that for any fixed role available for one box, it is also possible to have different roles for inner boxes. Therefore, after any box you need a new disjunction. (Think of the sentence $\Box (\neg \Box(\neg \Box A \wedge A))$.) This is just what we call expansions.\\
As a conclusion for this discussion, let us compare our situation here in modal logic with first order logic. In first order logic, if we have a theorem of the form $\forall x \exists y A(x, y)$ where $A(x, y)$ is quantifier-free and if we want to witness $y$, Herbrand's theorem gives the answer; we can witness $y$ by terms in our language. However, we know that one term is not enough. The reason is simple. The existentially quantified $y$ could have different values (roles) and these different values (roles) can be captured by a disjunction of sentences $A(x, t(x))$ for some finite possible set of terms $t(x)$. The situation in modal logic is the same. We read boxes as the existence of theories and we want to witness them. Since there are different roles for any box, we need a disjunction to capture these different roles. In other words, we could interpret the expansions as some kind of Herbrandization of the modal formulas. 
\section{The Logic \textbf{K4}}
Intuitively, the logic $\mathbf{K4}$ is sound with respect to all kinds of provability interpretations. The reason is very simple. $\mathbf{K4}$ has two important modal axioms; the axiom $\mathbf{K}$ which means that the provability predicate is closed under modus ponens, and the axioms $\mathbf{4}$ which means that the provability of a sentence is also provable. The first axiom is a very easy fact and all strong enough meta-theories can prove it. On the other hand, if we have the minimum power in our meta-theory ($\Sigma_1$-completeness), the axiom $\mathbf{4}$ would be also easily proved. Consider the fact that these axioms are not only true but also provable and it justifies the use of the necessitation rule. Hence, $\mathbf{K4}$ is valid in all provability interpretations. In this section we want to formalize this intuitive argument and show that the logic \textbf{K4} is sound and also strongly complete with respect to the class of all provability models.
\subsection{Soundness}
If we denote the class of all provability models by $\mathbf{PrM}$, we have:
\begin{thm} \label{t3-1} (Soundness)
If $\Gamma \vdash_{\mathbf{K4}} A$ then $\mathbf{PrM} \vDash \Gamma \Rightarrow A$.
\end{thm}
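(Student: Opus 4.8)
The plan is to prove this by induction on a derivation of $\Gamma_0 \Rightarrow A$ in the sequent calculus $G(\mathbf{K4})$, where $\Gamma_0 \subseteq \Gamma$ is a finite sub-multiset corresponding to a given $\mathbf{K4}$-proof of $A$ from $\Gamma$. Instead of the bare conclusion $\mathbf{PrM}\vDash\Gamma\Rightarrow\Delta$ I would carry along the stronger inductive hypothesis: there are uniform expansions $\bar\Gamma,\bar\Delta$ and witnesses $u,v$ such that the implication $\bigwedge\bar\Gamma^{\sigma}(u)\rightarrow\bigvee\bar\Delta^{\sigma}(v)$ is not merely true in every provability model but \emph{provable in $I\Sigma_1$}, uniformly in the substitution $\sigma$ and in the (r.e.\ presentation of the) hierarchy $\{T_n\}$. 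This strengthening is harmless — by G\"odel completeness it is what ``true in all provability models'' amounts to, once one notes that the interpretation only sees the data ``$M\models I\Sigma_1$ and $I\Sigma_1\subseteq T_0\subseteq T_1\subseteq\cdots$, all r.e., provably in $I\Sigma_1$'' — and it is exactly what lets the modal rule go through, since to apply necessitation we need the premise to be \emph{provable}, not merely true.

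Before the induction I would isolate one auxiliary lemma: \emph{expansions are harmless}. For any $E\in E(B)$ there are compatible witnesses under which $I\Sigma_1\vdash E^{\sigma}\leftrightarrow B^{\sigma}$ for every $\sigma$. This is a routine induction on $B$; the only interesting clause is $B=\Box C$ with $E=\Box\bigvee_i D_i$, $D_i\in E(C)$: by the inductive hypothesis each $D_i^{\sigma}$ is $I\Sigma_1$-provably equivalent to $C^{\sigma}$, hence so is $\bigvee_i D_i^{\sigma}$, and since every index occurring inside lies strictly below the index $N$ of the outer box, the theory $T_N\supseteq I\Sigma_1$ proves this equivalence, so $\Pr_{T_N}$ respects it. This lemma is what allows me to reconcile witnesses and expansions coming from different premises, and in particular handles the contraction and cut rules (identify the two occurrences up to provable equivalence, shifting all indices uniformly upward if necessary so the witnesses line up).

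With these in place the cases are: the axioms $A\Rightarrow A$ and $\bot\Rightarrow{}$ are immediate; the structural rules are settled by weakening being trivial and contraction and cut being settled by the expansion lemma; the propositional rules are routine, their soundness reducing to the corresponding propositional tautologies being provable with the expansions/witnesses propagated compositionally through the boolean connectives. The one substantial case is $\Box_4 R$, $\frac{\Gamma,\Box\Gamma\Rightarrow A}{\Box\Gamma\Rightarrow\Box A}$. From the inductive hypothesis $I\Sigma_1\vdash\bigwedge\bar\Gamma^{\sigma}(u_1)\wedge\bigwedge\overline{\Box\Gamma}^{\sigma}(u_2)\rightarrow\bar A^{\sigma}(v)$, pick a fresh index $N$ exceeding every index in $u_1,u_2,v$; for the succedent use the expansion $\Box\bar A$ with witness $(N,v)$, and for each $\Box C_i$ in the antecedent use the expansion $\Box\bigvee(\cdots)$ collecting the expansions of $C_i$ occurring on the premise's antecedent side, with the witness inherited from $u_2$ (top index $m_i<N$). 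Reasoning inside $I\Sigma_1$: assuming the antecedent one gets each $\Pr_{T_{m_i}}$-conjunct; using that $T_{m_i}\subseteq T_N$ provably in $I\Sigma_1$ one pulls the inner un-boxed $C_i$-formulas under $\Pr_{T_N}$, and using $\Sigma_1$-completeness of $T_N$ (the ``$\mathbf{4}$'' fact $\Pr_{T_{m_i}}(x)\to\Pr_{T_N}(\Pr_{T_{m_i}}(x))$) one pulls the boxed ones under $\Pr_{T_N}$ as well; since the premise implication is $I\Sigma_1$-provable, $\Sigma_1$-completeness gives $\Pr_{T_N}$ of it, and closure of $\Pr_{T_N}$ under modus ponens (the ``$\mathbf{K}$'' fact) yields $\Pr_{T_N}(\bar A^{\sigma}(v))=(\Box\bar A)^{\sigma}((N,v))$, re-establishing the strengthened hypothesis for the conclusion.

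The step I expect to be the real obstacle is precisely the bookkeeping in $\Box_4 R$: keeping the single fresh index $N$ strictly above everything while making the conclusion's witnesses genuinely compatible with the premise's, and checking that the disjunction-of-expansions chosen for the $\Box C_i$'s is exactly what the harmlessness lemma consumes. A secondary point requiring care is that the ``strengthened to $I\Sigma_1$-provability'' invariant be formulated with enough uniformity — in particular insensitivity to a uniform upward shift of all witness indices — for it to be preserved by \emph{every} rule; that uniformity is what the contraction/cut reconciliation silently relies on.
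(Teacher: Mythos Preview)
Your overall architecture matches the paper's: induction on a $G(\mathbf{K4})$-derivation, with the strengthened hypothesis that the interpreted sequent is \emph{provable in $I\Sigma_1$} (not merely true), and the $\Box_4 R$ case handled by choosing a fresh top index $N$ and appealing to $\Sigma_1$-completeness. That part is right.

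The gap is in your treatment of contraction (and cut). Your ``harmlessness'' lemma --- that any expansion $E\in E(B)$ is $I\Sigma_1$-equivalent to $B$ under suitable witnesses --- is true in the direction ``given a witness $v$ for $B$, duplicate it to get a witness $w$ for $E$ with $E^\sigma(w)\leftrightarrow B^\sigma(v)$'', but that is not the direction you need. For contraction the induction hypothesis hands you two \emph{specific} expansion/witness pairs $(E_1,w_1)$ and $(E_2,w_2)$ for the two occurrences of $A$, and you would need a single pair $(E,w)$ with $E^\sigma(w)$ $I\Sigma_1$-equivalent to $E_1^\sigma(w_1)\wedge E_2^\sigma(w_2)$. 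This can fail: take $A=\Box p\rightarrow\Box q$ (whose only expansion is $A$ itself), $w_1=(3,7)$, $w_2=(5,6)$; the conjunction $(\Pr_{T_3}(p^\sigma)\to\Pr_{T_7}(q^\sigma))\wedge(\Pr_{T_5}(p^\sigma)\to\Pr_{T_6}(q^\sigma))$ is not $I\Sigma_1$-equivalent to any single $\Pr_{T_a}(p^\sigma)\to\Pr_{T_b}(q^\sigma)$. Uniform shifts of indices do not help here; the obstruction is that the two implications are genuinely incomparable.

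The paper avoids this entirely by exploiting what is already built into Definition~\ref{t2-2}: a ``sequence of expansions'' $\bar\Gamma$ is allowed to contain \emph{several} expansions of each formula in $\Gamma$. So when you pass from $\Gamma,A,A\Rightarrow\Delta$ to $\Gamma,A\Rightarrow\Delta$, you simply keep \emph{both} expansion/witness pairs for $A$ in $\bar\Gamma$; nothing needs to be identified. This is the point of the semantics' multi-expansion clause, and it makes contraction a one-line step. For cut the paper does not argue at all: it relies on cut elimination for $G(\mathbf{K4})$ and inducts over cut-free proofs. Your proposal to handle cut via the harmlessness lemma runs into the same obstruction as contraction (now with the cut formula appearing on opposite sides), so you should drop it and work cut-free as the paper does.
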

\begin{proof}
To prove the soundness theorem for \textbf{K4}, we will use the cut-free sequent calculus for $\mathbf{K4}$ i.e. $G(\mathbf{K4})$. To simplify the proof, we use the following conventions: Firstly, if $\Phi$ and $\Psi$ are sequences of arithmetical sentences and $T$ is an arithmetical theory, by $T \vdash \Phi \Rightarrow \Psi$, we mean $T \vdash \bigwedge \Phi \rightarrow \bigvee \Psi$. Secondly, without loss of generality, we assume that the main formulas in all of the rules, except the exchange rule, are just the rightmost formulas in the sequent. We just use this assumption for the sake of brevity and clarity of the proof.\\ 
We want to prove the following claim by induction on the length of the proof in $G(\mathbf{K4})$.\\
 
\textbf{Claim}.
If $\Gamma \Rightarrow \Delta$ is provable in $G(\mathbf{K4})$, then there are sequences of expansions $\bar{\Gamma}$ and $\bar{\Delta}$ and witnesses $w_1$ and $w_2$ for $\bar{\Gamma}$ and $\bar{\Delta}$ respectively such that for any provability model $(M, \{T_n\}_{n=0}^{\infty})$ and any arithmetical substitution $\sigma$, $I\Sigma_1 \vdash \bar{\Gamma}^{\sigma}(w_1) \Rightarrow \bar{\Delta}^{\sigma}(w_2)$.\\

1. The case of axioms and structural rules. For the axiom $A \Rightarrow A$, it is enough to use $A$ as its expansion in both sides and just an arbitrary witness for $A$ in both sides, again. \\

For the exchange rule, just use the same expansions and witnesses after the application of the corresponding exchange.\\

For the weakening rule, if we prove $\Gamma, A \Rightarrow \Delta$ from $\Gamma \Rightarrow \Delta$, by IH, we could find expansions $\bar{\Gamma}$, $\bar{\Delta}$ and witnesses $w_1$ and $w_2$. Pick an arbitrary witness $w$ for $A$. For $\Gamma, A \Rightarrow \Delta$, use the sequences $\bar{\Gamma}, A$ and $\bar{\Delta}$, and for the witnesses use $(w_1, w)$ and $w_2$. It is easy to show that $I\Sigma_1 \vdash \bar{\Gamma}^{\sigma}(w_1), A^{\sigma}(w) \Rightarrow \bar{\Delta}^{\sigma}(w_2)$. The case for the right weakening is the same.\\

For the contraction rule, if we prove $\Gamma, A \Rightarrow \Delta$ from $\Gamma, A, A \Rightarrow \Delta$, then by IH, there are sequences of expansions $\{\bar{\Gamma}, \{\bar{A}_{i1}\}_{i=0}^{r}, \{\bar{A}_{j2}\}_{j=0}^{s}\}$ and $\Delta$ and also witnesses $w_1=(u, (v_{i1})_{i=0}^{r}, (v_{j2})_{j=0}^{s})$ and $w_2$. For the sequent $\Gamma, A \Rightarrow \Delta$, use the sequences of expansions $\{\bar{\Gamma}, \{\bar{A}_{i1}\}_{i=0}^{r}, \{\bar{A}_{j2}\}_{j=0}^{s}\}$ and $\bar{\Delta}$ and for the witnesses just use the same witnesses. In this case, because of the use of a finite set of different expansions instead of just one expansion, we can say that the semantics absorbs the contraction rule. The case for the right contraction is the same.\\

2. The case of propositional rules. In this case we just prove the case that the last rule is $R\wedge$; the other rules are similar and the argument is the same. If $\Gamma_1, \Gamma_2 \Rightarrow \Delta_1, \Delta_2, A\wedge B$, is proved from $\Gamma_1 \Rightarrow \Delta_1, A$ and $\Gamma_2 \Rightarrow \Delta_2, B$ then by IH we have the sequences of expansions $\bar{\Gamma}_1$, $\{\bar{\Delta}_1, \{\bar{A}_i\}_{i=0}^{r}\}$, $\bar{\Gamma}_2$, $\{\bar{\Delta}_2, \{\bar{B}_j\}_{j=0}^{s}\}$ and witnesses $w_1$ and $w_2=(u, (x_i)_{i=0}^{r})$ and $w'_1$, $w'_2=(u', (y_j)_{j=0}^{s})$. For the sequent $\Gamma_1, \Gamma_2 \Rightarrow \Delta_1, \Delta_2, A\wedge B$ use the sequences of expansions $\{\bar{\Gamma}_1, \bar{\Gamma}_2\}$, $\{\bar{\Delta}_1, \bar{\Delta}_2, \{\bar{A}_i \wedge \bar{B}_j \}_{i=0, j=0}^{i=r, j=s}\}$ and witnesses $(w_1, w'_1)$, $(u, u', ((x_i, y_j))_{i=0, j=0}^{i=r, j=s})$.\\

3. The case of modal rules. If $\Box \Gamma \Rightarrow \Box A$ is proved from $\Gamma, \Box \Gamma \Rightarrow A$, then by IH, we have the sequences of expansions $\{\bar{\Gamma}_1, \overline{\Box \Gamma}_2\}$ and $\{A_i\}_{i=0}^{r}$ and witnesses $w_1=((u_j)_{j=0}^{s}, (v_k)_{k=0}^{t})$ and $w_2=(x_i)_{i=0}^{r}$ where $u_j$ is a witness for the $j$th formula in $\bar{\Gamma}_1$ and $v_k$ is a witness for the $k$th formula in $\overline{\Box \Gamma}_2$. Pick number $n$ greater than all the numbers in $w_1$ and $w_2$. For the sequent $\Box \Gamma \Rightarrow \Box A$ use the sequences of expansions $\{\overline{\Box \Gamma}_1, \overline{\Box \Gamma}_2\}$ and $\Box \bigvee_{i=0}^{r} A_i$ and for the witnesses use $((n, u_j)_{j=0}^{s}, (v_k)_{k=0}^{t})$ and $(n, (x_i)_{i=0}^{r})$. By IH, we know that for any arithmetical substitution $\sigma$, 
\[
I\Sigma_1 \vdash \bigwedge_{j=0}^{s} \bar{\Gamma}^{\sigma}_1(u_j) \wedge \bigwedge_{k=0}^{t} \overline{\Box \Gamma}^{\sigma}_2(v_k) \rightarrow \bigvee_{i=0}^{r} A_i^{\sigma}(x_i).
\]
Since $I\Sigma_1 \subseteq T_n$, we have 
\[
T_n \vdash \bigwedge_{j=0}^{s} \bar{\Gamma}^{\sigma}_1(u_j) \wedge \bigwedge_{k=0}^{t} \overline{\Box \Gamma}^{\sigma}_2(v_k) \rightarrow \bigvee_{i=0}^{r} A_i^{\sigma}(x_i).
\]
Therefore, by $\Sigma_1$-completeness in $I\Sigma_1$ we have
\[
I\Sigma_1 \vdash \Pr_{n} (\bigwedge_{j=0}^{s} (\bar{\Gamma}^{\sigma}_1(u_j) \wedge \bigwedge_{k=0}^{t} (\overline{\Box \Gamma}^{\sigma}_2(v_k))) \rightarrow  \bigvee_{i=0}^{r} A_i^{\sigma}(x_i)),
\]
hence
\[
I\Sigma_1 \vdash \Pr_{n} (\bigwedge_{j=0}^{s} \bar{\Gamma}^{\sigma}_1(u_j)) \wedge \Pr_n(\bigwedge_{k=0}^{t} \overline{\Box \Gamma}^{\sigma}_2(v_k)) \rightarrow \Pr_n (\bigvee_{i=0}^{r} A_i^{\sigma}(x_i)).
\]
By formalized $\Sigma_1$-completeness of $T_n$ in $I\Sigma_1$ we have
\[
I \Sigma_1 \vdash \bigwedge_{k=0}^{t} \overline{\Box \Gamma}^{\sigma}_2(v_k) \rightarrow \Pr_n(\bigwedge_{k=0}^{t} \overline{\Box \Gamma}^{\sigma}_2(v_k))
\]
and hence
\[
I\Sigma_1 \vdash \bigwedge_{j=0}^{s} \Pr_{n} (\bar{\Gamma}^{\sigma}_1(u_j)) \wedge \bigwedge_{k=0}^{t} \overline{\Box \Gamma}^{\sigma}_2(v_k) \rightarrow \Pr_n (\bigvee_{i=0}^{r} A_i^{\sigma}(x_i)),
\]
which is what we wanted to prove and it completes the proof of the claim. \qed \\ 

For the proof of the soundness theorem, if $\Gamma \vdash_\mathbf{K4} A$ then there exists a finite set $\Delta \subseteq \Gamma$ such that $\Delta \vdash_\mathbf{K4} A$. Therefore, $G(\mathbf{K4}) \vdash \Delta \Rightarrow A$. By claim there are some expansions $\bar{\Delta}$ and $\{A_i\}_{i=0}^{r}$ for $\Delta$ and $A$, respectively and witnesses $u$ and $\{w_i\}_{i=0}^{r}$ such that for any arithmetical substitution $\sigma$, we have $I\Sigma_1 \vdash \bar{\Delta}^{\sigma}(u) \Rightarrow \bigvee_{i=0}^{r} A^{\sigma}_i(w_i)$. Since $M \vDash I\Sigma_1$, we have $M \vDash \bar{\Delta}^{\sigma}(u) \Rightarrow \bigvee_{i=0}^{r} A^{\sigma}_i(w_i)$. Pick $\bar{\Gamma}$ the same as $\Gamma$ after replacing the part of $\Delta$ by $\bar{\Delta}$. Moreover, choose $v$ as a witness for $\bar{\Gamma}$ as an arbitrary expansion of $u$ to $\bar{\Gamma}$. Hence, $M \vDash \bar{\Gamma}^{\sigma}(v) \Rightarrow \bigvee_{i=0}^{r} A^{\sigma}_i(w_i)$ which completes the proof of the soundness.
\end{proof}
\subsection{Completeness}
For the completeness theorem, the idea is to reduce the completeness of \textbf{K4} to the completeness of \textbf{GL} which is the well-known Solovay's theorem. (See Preliminaries and \cite{So}.) To do that, we need a translation from \textbf{K4} to \textbf{GL} which could transfer the provability behavior of \textbf{K4} to the provability behavior of \textbf{GL}.
\begin{dfn}\label{t3-2}
Let $A$ be a modal formula with $k$ boxes and let $Q=\{q_i\}_{i=0}^{\infty}$ be a sequence of atoms which are not used in $A$. Then, a translation $t$ based on $Q$ for the modal sentence $A$, is a sequence of $k$ numbers which assigns natural numbers to boxes in $A$ such that the number assigned to the outer box is greater than all the numbers for the inner boxes. And $A^t$ is defined as follows:
\begin{itemize}
\item[$(i)$]
If $A$ is an atom, $A^t=A$.
\item[$(ii)$]
$(B \circ C)^t=B^t \circ C^t$ for all $\circ \in \{\wedge, \vee, \rightarrow \}$
\item[$(iii)$]
$(\neg B)^t=\neg B^t$.
\item[$(iv)$]
$(\Box B)^t=\Box (\bigwedge_{i=0}^{n}q_i \rightarrow B^t)$ where $n$ is the number assigned to the box in $t$.
\end{itemize}
\end{dfn}
Informally, if we interpret a box as the provability predicate for the theory $S$, then the translation $t$ is just changing the provability predicate of the theory $S$ to the provability predicate of the theory $S+\{q_0, \ldots, q_n\}$ where $n$ is the number that $t$ assigns to that box. For instance, if $t=(1, 2, 1)$ and $A=\Box p \rightarrow \Box \Box p$, then $A^t$ will be the following modal formula:
\[
\Box (q_0\wedge q_1 \rightarrow p) \rightarrow \Box (q_0\wedge q_1 \wedge q_2 \rightarrow \Box (q_0\wedge q_1 \rightarrow p)).
\]
We want to show that this translation is complete, i.e.
\begin{thm}\label{t3-3}
If $\mathbf{GL} \vdash A^t$ for some translation $t$, then $\mathbf{K4} \vdash A$.
\end{thm}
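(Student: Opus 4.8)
The plan is to prove the contrapositive using Kripke semantics: assuming $\mathbf{K4} \nvdash A$, I will show that $\mathbf{GL} \nvdash A^t$ for \emph{every} translation $t$. This is exactly the statement, since $t$ ranges over all translations. So fix an arbitrary translation $t$ and let $N$ be the largest number it assigns to a box. Since $\mathbf{K4}$ is Kripke complete with respect to (finite) transitive frames, there is a transitive model $\mathcal{M} = (W, R, V)$ with a root $r$ such that $\mathcal{M}, r \not\models A$. The goal is to manufacture from $\mathcal{M}$ a model whose frame is transitive and converse well-founded (a frame for which $\mathbf{GL}$ is sound) and which refutes $A^t$.

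First I would build a stratified model $\mathcal{M}' = (W', R', V')$. Set $W' = W \times \{0,1,\dots,N+1\}$, declare $(w,k)\,R'\,(v,j)$ iff $wRv$ and $k > j$, and define the valuation by $(w,k) \models p$ iff $w \models p$ in $\mathcal{M}$ for the original atoms $p$ of $A$, and $(w,k) \models q_i$ iff $k \ge i$. The strict decrease of the level coordinate makes $R'$ irreflexive and converse well-founded (every $R'$-chain has length at most $N+2$), while transitivity of $R$ together with transitivity of $>$ makes $R'$ transitive; hence $\mathcal{M}'$ is a $\mathbf{GL}$-frame. The level coordinate is precisely what simulates the reflexive points of $\mathcal{M}$: a reflexive world $w$ (with $wRw$) produces the genuine edges $(w,k)\,R'\,(w,j)$ for $j<k$, whereas an irreflexive world contributes no same-world edge.

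The heart of the argument is the following equivalence, proved by induction on the structure of a subformula $C$ of $A$: writing $m(C)$ for the largest number that $t$ assigns to a box inside $C$, for every world $w$ and every level $k$ with $k > m(C)$ one has $\mathcal{M}', (w,k) \models C^t$ iff $\mathcal{M}, w \models C$. The Boolean and negation cases are immediate, because the translation commutes with the connectives and the chosen valuation of the original atoms does not depend on the level. The crucial case is $C = \Box B$ with the displayed box assigned the number $n$; here $C^t = \Box(\bigwedge_{i \le n} q_i \to B^t)$, and because a translation always gives an outer box a number strictly larger than every inner one, $m(B) < n = m(C)$. For $k > n$ the definition of $R'$ and the valuation of the $q_i$ turn satisfaction of $C^t$ at $(w,k)$ into the condition: for every $R$-successor $v$ of $w$ and every level $j$ with $n \le j \le k-1$, one has $\mathcal{M}', (v,j) \models B^t$. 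Since $n > m(B)$, the inductive hypothesis applies at each such $j$ and replaces $\mathcal{M}', (v,j) \models B^t$ by the level-independent statement $\mathcal{M}, v \models B$; as the interval $[n,k-1]$ is nonempty, the whole condition collapses to ``every $R$-successor of $w$ satisfies $B$'', i.e. $\mathcal{M}, w \models \Box B$.

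Applying this equivalence to $C = A$ at the level $k = m(A)+1 \le N+1$ gives $\mathcal{M}', (r, m(A)+1) \not\models A^t$, since $\mathcal{M}, r \not\models A$. As $\mathcal{M}'$ is transitive and converse well-founded, soundness of $\mathbf{GL}$ with respect to this class of frames yields $\mathbf{GL} \nvdash A^t$; because $t$ was arbitrary, the contrapositive is proved. I expect the main obstacle to be the box case of the induction: one must verify that the filter $\bigwedge_{i \le n} q_i$ together with the strictly descending relation $R'$ makes the translated box range over precisely the genuine $R$-successors of $w$ (represented at a nonempty band of levels) rather than over spurious same-world successors, and that the bookkeeping of levels against the numbers assigned by $t$ keeps every recursive call inside the admissible range $k > m(C)$.
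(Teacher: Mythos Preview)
Your proof is correct. Both you and the paper argue by contraposition, turning a transitive Kripke countermodel for $A$ into a transitive, converse-well-founded countermodel for $A^t$, with the $q_i$'s acting as level-markers and an inductive transfer lemma of the shape ``$C^t$ holds at the new world iff $C$ holds at the old one, provided the level is above $m(C)$''. The constructions differ in detail: the paper starts from a finite transitive tree with clusters and unwinds each reflexive cluster $I$ into the set of paths in $I$ of length $\le n+2$ (leaving irreflexive nodes untouched), declaring $q_i$ true at every irreflexive node and at every path of length $\le n+2-i$; you instead take the uniform product $W\times\{0,\dots,N+1\}$ with the strictly decreasing level relation and $q_i$ true exactly at levels $\ge i$. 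Your version is cleaner---it treats reflexive and irreflexive worlds uniformly and avoids all the cluster/path bookkeeping---at the cost of a somewhat larger model; the paper's version is more surgical, replicating only where reflexivity forces it. Both buy the same thing: enough stratified copies so that the guard $\bigwedge_{i\le n}q_i$ makes the translated box range over a nonempty band of genuine $R$-successors, which is precisely what the box case of the induction needs.
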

The natural proof should be based on a technique of the transformation of transitive Kripke models to conversely well-founded transitive Kripke models, which is implemented by the following lemma.
\begin{lem}\label{t3-4}
Let $(K, R, V)$ be a finite transitive Kripke tree with clusters, $A$ a modal formula and $t$ a translation. Then there is a finite transitive irreflexive Kripke model $(K', R', V')$ such that for any node $k \in K$, there is a node $k' \in K'$ such that if $k \vDash A$ then $k' \vDash A^t$.
\end{lem}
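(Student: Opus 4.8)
The plan is to prove Lemma \ref{t3-4} by explicitly constructing the model $(K', R', V')$ out of $(K, R, V)$ using a "time-stamped unravelling" that respects the numerical data carried by the translation $t$. The key observation is that each cluster in the finite transitive tree $(K,R,V)$ is a set of mutually accessible points, and a reflexive point behaves like a degenerate one-element cluster. To make the model conversely well-founded (irreflexive and transitive, i.e.\ a \textbf{GL}-model) while still being able to force $A^t$ at a suitable point, I would replace each cluster by a finite linearly ordered stack of copies of its points, the length of the stack being governed by how many of the fresh atoms $q_0, \ldots, q_n$ we need to "burn through" — concretely, by the largest number that $t$ assigns to any box occurring in $A$. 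The fresh atoms $q_i$ are then used as markers: at the $j$-th level of a stack we make $q_i$ true exactly when $i < j$ (or some similar bookkeeping), so that the guard $\bigwedge_{i=0}^{n} q_i \to (\,\cdot\,)$ appearing in the translation of a box with label $n$ is satisfied precisely at the levels that are "deep enough", thereby simulating, inside the irreflexive model, the reflexive/clustered accessibility of the original box.

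Concretely the steps are: (1) fix $A$ and $t$, let $N$ be one more than the maximum label occurring in $t$, and define $K'$ to consist of pairs $(k, j)$ with $k \in K$ and $1 \le j \le N$ (or with $j$ ranging over a length depending on the cluster of $k$); (2) define $R'$ by putting $(k,j) \mathrel{R'} (k',j')$ iff either $k \mathrel{R} k'$ with $k,k'$ in different clusters, or $k, k'$ lie in the same cluster and $j > j'$ — this is manifestly transitive and irreflexive on a finite set, hence a legitimate \textbf{GL}-frame; (3) define $V'$ on the original atoms by $V'(k,j) = V(k)$, and on the fresh atoms $q_i$ by a rule like $(k,j) \in V'(q_i) \iff i < j$; (4) prove by induction on the structure of a subformula $B$ of $A$ a correspondence statement: for every $k$ and every sufficiently deep level $j$, $(k,j) \vDash B^t$ iff $k \vDash B$. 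The interesting inductive case is $B = \Box C$: unwinding $(\Box C)^t = \Box(\bigwedge_{i\le n} q_i \to C^t)$, one checks that the $R'$-successors $(k',j')$ of $(k,j)$ at which $\bigwedge_{i \le n} q_i$ holds are exactly the ones with $j' > n$, and these range — by the definition of $R'$ and the choice of stack height — precisely over copies of the $R$-successors of $k$ (including $k$ itself if the box's cluster is reflexive in the original model), giving the equivalence with $k \vDash \Box C$. Finally (5) take $k \vDash A$, pick the associated deep node $k' = (k, N)$, and conclude $k' \vDash A^t$.

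The main obstacle — and the place where the construction has to be set up with care — is the bookkeeping that ties the stack heights and the truth assignment of the $q_i$ together with the labels appearing in $t$. One has to guarantee simultaneously that (a) every box label $n$ used in $A^t$ is $< N$ so that the guard $\bigwedge_{i\le n} q_i$ is actually satisfiable at some level, (b) the levels at which the guard for label $n$ becomes true still see (copies of) all the $R$-successors of the current world, so that no accessibility is lost, and (c) for a box occurring inside the scope of another box the same analysis goes through — this is exactly why the nesting condition "the label of an outer box exceeds the labels of all inner boxes" was built into Definition \ref{t3-2}, and I would use it to run the induction: descending into the scope of a box strictly decreases the relevant labels, so the stack heights chosen uniformly from $N$ remain sufficient at every depth. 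A secondary, more routine point is handling clusters with more than one point (genuine non-trivial clusters versus single reflexive points); this only affects how many copies of each point we stack and the ordering within a stack, and does not change the shape of the argument.

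Once Lemma \ref{t3-4} is established, Theorem \ref{t3-3} follows by contraposition in the usual way: if $\mathbf{K4} \nvdash A$, then by the finite model property of \textbf{K4} there is a finite transitive (tree-with-clusters) Kripke model refuting $A$, i.e.\ with a node $k \vDash \neg A$; applying the lemma to $\neg A$ (or, more precisely, running the correspondence in the contrapositive direction) yields a finite irreflexive transitive model with a node $k' \nvDash A^t$, whence $\mathbf{GL} \nvdash A^t$, and this holds for every translation $t$. Thus $\mathbf{GL} \vdash A^t$ for some $t$ forces $\mathbf{K4} \vdash A$, which is the statement of Theorem \ref{t3-3}.
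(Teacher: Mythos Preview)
Your approach is the paper's idea: unwind each cluster into a finite stack, use the fresh atoms $q_i$ as depth markers, and prove by induction on subformulas that truth at $k$ matches truth at sufficiently deep copies of $k$. Your $(\,\text{node},\text{level}\,)$ encoding is in fact a clean simplification of the paper's encoding by \emph{paths} inside a cluster; since the valuation and the $q_i$'s depend only on the endpoint and the length of a path, the extra information the paper carries is not actually used in the induction, so nothing is lost by your simplification.

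Two points in your sketch need repair, though. First, $N=n_{\max}+1$ is one too small: in the refutation clause for $\Box C$ with label $m$, when the witness $l\nvDash C$ lies in the \emph{same} cluster as $k$, you need a successor $(l,j')$ with $m<j'<j$, hence the top level must be at least $m+2$; taking $N=n_{\max}+2$ (the paper uses paths of length up to $n+2$) fixes this. Second, and more substantively, your clause ``$k,k'$ lie in the same cluster and $j>j'$'' also fires for an \emph{irreflexive} singleton cluster $\{k\}$, producing $(k,j)\,R'\,(k,j')$ where there should be no accessibility at all; this breaks the induction whenever $k\vDash\Box C$ but $k\nvDash C$ (the spurious successor $(k,j')$ satisfies the guard and refutes $C^t$). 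The paper handles this by leaving irreflexive singletons as single points with no stack; equivalently, add the requirement $k\,R\,k'$ to your same-cluster clause. With these two adjustments your argument goes through exactly as you outline.
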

\begin{proof}
First of all, for all subformulas $B$ of $A$, define the complexity of $B$, $C(B)$, as follows: If $B$ is box-free, define $C(B)=-1$. Otherwise, define $C(B)$ as the maximum assigned number in $B$ by $t$. Moreover, suppose that $C(A)=n$. To simplify the proof, let us make some conventions. We will use $I$ for clusters and for any $k \in K$, by $I(k)$ we mean the cluster of $k$. By a path $p=(k_{\alpha})_{\alpha=0}^{M}$, we mean a sequence of nodes in $K$ such that for any $\alpha$, $(k_{\alpha}, k_{\alpha+1}) \in R$ and if all the nodes of the path $p$ belong to the cluster $I$, we write $p \subset I$. Moreover, we write $p \prec p'$, when $p$ is a proper initial segment of $p'$. Finally, by $e(p)$ we mean the rightmost element of $p$, or in other words, the end of $p$.\\

For any cluster $I$ define $X(I)$ as follows: If $I$ consists of one irreflexive node $k$, $X(I)=\{k\}$ and if $I$ consists of some finite reflexive nodes, define $X(I)$ as the subset of all paths $p \subset I$ with length less than or equal to $n+2$. The idea is simple. We want to transform a transitive model to a nonreflexive transitive model. To accomplish this, we will unwind the reflexive clusters by some paths of nodes in that cluster and we will use $q$'s to refer to a copy of the node instead of itself, when we check the truth of the modal formulas. \\ 

Define $K'=\bigcup_{I}X(I)$ and $R'=R_1 \cup R_2$ where
\[
R_1=\bigcup_{(k, l) \in R, I(k)\neq I(l)} \{(a, b) \mid a \in X(I(k)) \; \text{and} \; b \in X(I(l))\}
\]
and
\[
R_2= \bigcup_{I}\{((p,p') \mid p \prec p'; p, p' \subset I \}.
\]
And finally, define 
\[
V'(r)=\{p \in K' \mid e(p) \in V(r)\} \cup \{k \mid k \in V(r) \; \text{and $k$ is irreflexive} \}
\]
for all atoms $r$ in $A$, and 
\[
V(q_i)=\{k \mid \text{$k$ is irreflexive}\} \cup \{p \mid |p| \leq n+2-i\}.
\]
Informally speaking, $K'$ is just the set $K$ where you replace each reflexive cluster $I$ with all paths of length less that or equal to $n+2$ of nodes in $I$; $R'$ and $V'$ are the natural relation and valuation induced by $R$ and $V$, respectively and $q_i$ is true in all irreflexive nodes and also in all paths of nodes in reflexive clusters with length bounded by $n+2-i$. We want to prove the following two claims.\\

\textbf{Claim.1}. The model $(K', R', V')$ is a finite transitive irreflexive Kripke model.\\

The finiteness follows from the definition. For the transitivity, suppose that $a, b, c \in K'$ and $(a, b) \in R'$ and $(b, c) \in R'$. Then, there are two cases. The first case is when $a$ and $b$ come from the same cluster. Hence, by definition, this cluster should be a reflexive cluster. Therefore, $a$ and $b$ are paths in this cluster and $a \prec b$. If $c$ comes also from this cluster, we will have $b \prec c$ and since $\prec$ is transitive, we have $a \prec c$ and hence $(a, c) \in R'$. But, if $c$ comes from another cluster, then the cluster of $c$ should be above the cluster of $b$ and hence it is also above the cluster of $a$ which is the same as $b$'s and then by definition we have $(a, c) \in R'$.\\
The proof of the second case, which is when $a$ and $b$ come from different clusters, is similar to the proof of the first case.\\

For the irreflexivity, suppose $(a, a) \in R'$. If $a$ is an irreflexive node in $K$, then it is impossible, by the definition of $R'$, to have $(a, a) \in R'$. If $a$ comes from a reflexive cluster, then again by the definition of $R'$, the path $a$ should be a proper segment of itself which is impossible.\\ 

\textbf{Claim.2}. For all subformulas of $A$ such as $B$, if $k \vDash B$, then 
\[
\begin{cases}
\forall p, |p| \leq n+1-C(B) \wedge e(p)=k, \; p \vDash B^t &\text{if $k$ is reflexive.}\\
k \vDash B^t  &\text{if $k$ is irreflexive.}
\end{cases} 
\]

and if $k \nvDash B$ then
\[
\begin{cases}
\forall p, |p| \leq n+1-C(B) \wedge e(p)=k, \; p \nvDash B^t &\text{if $k$ is reflexive.}\\
k \nvDash B^t  &\text{if $k$ is irreflexive.}
\end{cases} 
\]
To prove the claim, we use induction on $B$.\\

1. Atomic case. If $B$ is an atom, the claim easily follows from the definition of $V'$.\\

2. If $B=C \wedge D$ and $k \vDash C \wedge D$ then $k \vDash C$ and $k \vDash D$. If $k$ is irreflexive, then by IH, the claim holds. If $k$ is reflexive, then by IH, for all $p$ such that $|p| \leq n+1-C(C)$ and $ e(p)=k$, we have $ p \vDash C^t$. And also for all $p$ such that $|p| \leq n+1-C(D)$ and $ e(p)=k$, we have $p \vDash D^t$, and since $C(C \wedge D)=max\{C(C), C(D)\}$, then for all $p$ such that $|p| \leq n+1-C(C \wedge D)$ and $e(p)=k$, we have $p \vDash C^t \wedge D^t$.\\
If $k \nvDash C \wedge D$, then $k \nvDash C$ or $k \nvDash D$. W.l.o.g. assume $k \nvDash C$. If $k$ is irreflexive, the claim is obvious. If $k$ is reflexive, then by IH, for all $p$ such that $|p| \leq n+1-C(B)$ and $e(p)=k$ we have $ p \nvDash C^t$, and again since $C(C \wedge D)=max\{C(C), C(D)\}$ we have $\forall p, |p| \leq n+1-C(B \wedge D) \wedge e(p)=k, \; p \nvDash (C \wedge D)^t$.\\

3. If $B= \neg C$, then for irreflexive $k$, the claim is obvious from IH. If $k$ is reflexive and $k \vDash \neg C$, then $k \nvDash C$, and by IH, $\forall p, |p| \leq n+1-C(C) \; p \nvDash C^t$. Therefore, $\forall p, |p| \leq n+1-C(C) \; p \vDash \neg C^t$ and since $C(C)=C(\neg C)$ we have what we wanted. The other case is the dual of the first case.\\

4. The case for disjunction and implication is the same as the cases for conjunction and negation and we omit them here.\\

5. The modal case. This is the most important and the most complex part of the proof.\\ 

5.1. If $B=\Box C$ and $k \vDash \Box C$ then for all $l$ which $(k, l) \in R$, $l \vDash C$. Define $C(B)=m$.\\
5.1.1. If $k$ is irreflexive, we know that the nodes above $k$ in $K'$ are of two forms. The $l$'s which are irreflexive and $(k, l) \in R$ or the $p$'s where $p$ comes from a cluster $I$ above $k$ and $e(p)=l$. For the first kind of nodes, by IH we know that $l \vDash C^t$, therefore $l \vDash \bigwedge_{i=0}^{m} q_i \rightarrow C^t$. If we were in the second case, we know that $l \vDash C$ and again by IH, for all $p$ such that $|p| \leq n+1-C(C)$ and $e(p)=l$, we have $p \vDash C^t$. Therefore, for all $p, |p| \leq n+1-C(C)$ we have $p \vDash C^t$ and hence $p \vDash \bigwedge_{i=0}^{k} q_i \rightarrow C^t$. If $|p| > n+1- C(C)$, since $C(C)<C(B)=m$, we have $|p|> n+2-m$, and then by the definition of the valuation we know that $p \nvDash q_m$ and hence $p \nvDash \bigwedge_{i=0}^{m} q_i$ and thus $p \vDash \bigwedge_{i=0}^{m} q_i \rightarrow C^t$. Therefore, for all $p$ above $k$, we have $p \vDash \bigwedge_{i=0}^{m} q_i \rightarrow C^t$. Since for all nodes above $k$, $\bigwedge_{i=0}^{m} q_i \rightarrow C^t$ is true, we have $k \vDash \Box(\bigwedge_{i=0}^{m} q_i \rightarrow C^t)$ which means $k \vDash (\Box C)^t$.\\

5.1.2. If $k$ is reflexive from the cluster $I$, pick $p$ such that $|p| \leq n+1-m$. We want to show that $p \vDash \Box(\bigwedge_{i=0}^{m} q_i \rightarrow C^t)$. We know that all nodes above $p$ are of the form irreflexive $l$'s or $p' \subset J$ where $J$ is a cluster above $I$ or $p' \subset I$ where $p \prec p'$. For the first and second kinds, by a similar proof of 5.1.1, we can show that $l \vDash \bigwedge_{i=0}^{m} q_i \rightarrow C^t$ and $p' \vDash \bigwedge_{i=0}^{m} q_i \rightarrow C^t$. For the third case, if $|p'| > n+2-m$, then $p' \nvDash q_m$ and hence $p' \nvDash \bigwedge_{i=0}^{m} q_i$ and thus $p' \vDash \bigwedge_{i=0}^{m} q_i \rightarrow C^t$. If $|p'| \leq n+2-m$ then since $C(C) \leq m-1$ we have $|p'| \leq n+1-C(C)$. On the other hand, $k \vDash \Box C$, hence all nodes in $I$ satisfies $C$, and specially we have $e(p') \vDash C$, by IH, and by the fact that $|p'| \leq n+1-C(C)$, we have $p' \vDash C^t$ and therefore $\bigwedge_{i=0}^{m} q_i \rightarrow C^t$. We proved that for all nodes above $p$, we have $\bigwedge_{i=0}^{m} q_i \rightarrow C^t$ hence $p \vDash \Box(\bigwedge_{i=0}^{m} q_i \rightarrow C^t)$ which is what we wanted.\\

5.2. If $B=\Box C$ and $k \nvDash \Box C$, then there is a node $l$ such that $l \nvDash C$. Define $C(B)=m$.\\
5.2.1. If $k$ is irreflexive, we want to show that $k \nvDash \Box(\bigwedge_{i=0}^{m} q_i \rightarrow C^t)$. Consider that since $(k, l) \in R$, and $k$ is irreflexive, then $l \neq k$ and it belongs to a cluster above $k$. If $l$ is irrefelexive then by IH, $l \nvDash C^t$ and also since it is irreflexive, for all $i$, $l \vDash q_i$; hence $l \nvDash \bigwedge_{i=0}^{m} q_i \rightarrow C^t$ since $l \neq k$ and $(k, l) \in R$, $(k, l) \in R'$. Therefore, $k \nvDash \Box (\bigwedge_{i=0}^{m} q_i \rightarrow C^t)$. If $l$ is a reflexive node of the cluster $I$, then define $p \subset I$ as a path such that $|p|=n+2-m$ and $e(p)=l$. Since $C(C) \leq m-1$ then $|p| \leq n+1-C(C)$. By IH, $p \nvDash C^t$. (Consider that $m$ is the complexity of a boxed formula and therefore $m \geq 0$, hence $n+2-m \leq n+2$ and it means such a $p$ exists.). Moreover, we know that $p \vDash \bigwedge_{i=0}^{m} q_i$ since $|p| \leq n+2-i$ for all $i \leq m$, therefore, $p \nvDash \bigwedge_{i=0}^{m} q_i \rightarrow C^t$. Since the cluster of $k$ and the cluster of $l$ are different and $(k, l) \in R$, then $(k, p) \in R'$ and it means that $k \nvDash \Box(\bigwedge_{i=0}^{m} q_i \rightarrow C^t)$.\\

5.2.2. Consider the case that $k$ is reflexive. In this case, if $l$ belongs to a cluster above $k$, then the proof is the same as 5.2.1. If the cluster of $l$ and $k$ are the same (say $I$), we have the following construction: Pick $p$ such that $e(p)=k$ and $|p| \leq n+1-m$. We want to show that $p \nVdash \Box(\bigwedge_{i=0}^{m} q_i \rightarrow C^t)$. Pick $p' \subset I$ such that $e(p')=l$, $p \prec p'$ and $|p'|=n+2-m$. (It is enough to extend $p$ to a path with the ending $l$ and the length $n+2-m$. Note that $n+2-m > n+1-m$, which guarantee the existence of the expansion with the ending $l$ possibly different from $k$. Moreover, this length is less that $n+2$ and therefore $p'$ exists in our model as a path). We know that $C(C) \leq m-1$, hence $|p'| \leq n+1-C(C)$. By IH, $p' \nvDash C^t$. On the other hand, $p \vDash \bigwedge_{i=0}^{m} q_i$ since $|p| \leq n+2-i$ for all $i \leq m$, therefore, $p \nvDash \bigwedge_{i=0}^{m} q_i \rightarrow C^t$. Since $p \prec p'$, we can conclude that $p \nvDash \Box(\bigwedge_{i=0}^{m} q_i \rightarrow C^t)$.\\

The lemmas are obvious by the claim 2. For $B$ in the claim, choose $A$ itself, then if $k \vDash A$ and $k$ is irreflexive , then $k \vDash A^t$. But if $k$ is reflexive, pick $p=k$ as a path with length one. Hence $|p|=1 \leq n+1-C(A)$, since $C(A)=n$ and therefore, $p \vDash A^t$. Therefore, for any $k \vDash A$ there is a node $k' \in K'$ such that $k' \vDash A^t$.
\end{proof}
For the proof of Theorem \ref{t3-3} we have:
\begin{proof}
If $\mathbf{K4} \nvdash A$, then there is a finite transitive Kripke tree with clusters $(K, R, V)$ and a node $k$ such that $k \vDash \neg A$. If we apply Lemma \ref{t3-4} for $\neg A$, we can construct a finite transitive irreflexive Kripke model $(K', R', V')$ and a node $k'$ such that $k' \nvDash \neg A^t$. But $(K', R', V')$ is a model of $\mathbf{GL}$ and $\mathbf{GL} \vdash A^t$. A contradiction. Hence $\mathbf{K4} \vdash A$.
\end{proof}
Based on the completeness of the translations, which we have introduced, we are able to prove the completeness theorem. But, since we want to establish a more powerful completeness result, i.e. the strong completeness, we need one more lemma.
\begin{lem}\label{t3-5}
There is a hierarchy of theories $\{T_n\}_{n=0}^{\infty}$ such that for any $n$, $I\Sigma_1 \subseteq T_n$ and $T_n \subseteq T_{n+1}$ provably in $I\Sigma_1$ and also an arithmetical substitution $*$ such that for any modal formula $A$, if there exists a witness $w$ for $A$ such that $(M, \{T_n\}_{n=0}^{\infty}) \vDash A^*(w)$ for all $M \vDash I\Sigma_1$, then $\mathbf{K4} \vdash A$.
\end{lem}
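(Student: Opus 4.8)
The plan is to reduce this lemma to the uniform half of Solovay's first theorem (Theorem \ref{t0-0}$(i)$), replaying the idea behind Theorem \ref{t3-3} but arithmetically instead of over Kripke models. First I would fix a sequence $\{q_i\}_{i=0}^{\infty}$ of fresh atoms and apply Theorem \ref{t0-0}$(i)$ to the base theory $I\Sigma_1$ (which is $\Sigma_1$-sound), obtaining a single arithmetical substitution $*$, defined on all atoms including the $q_i$, such that for every modal formula $B$, if $I\Sigma_1 \vdash B^{*}$ then $\mathbf{GL} \vdash B$, where in $B^{*}$ each box is read as $\Pr_{I\Sigma_1}$. Having fixed $*$, I would then define the hierarchy by
\[
T_n \;:=\; I\Sigma_1 + \{\, q_0^{*},\, q_1^{*},\, \ldots,\, q_n^{*}\,\}.
\]
Each $T_n$ is r.e. and $I\Sigma_1 \subseteq T_n \subseteq T_{n+1}$ holds provably in $I\Sigma_1$ (the extra axioms are listed explicitly), so $\{T_n\}_{n=0}^{\infty}$ is a legitimate provability hierarchy. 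The reason for this precise choice is that, by the formalized deduction theorem, $I\Sigma_1 \vdash \Pr_{T_n}(\varphi) \leftrightarrow \Pr_{I\Sigma_1}(\bigwedge_{i=0}^{n} q_i^{*} \rightarrow \varphi)$ for every arithmetical sentence $\varphi$, which is exactly what the clause $(\Box B)^{t} = \Box(\bigwedge_{i=0}^{n} q_i \rightarrow B^{t})$ of the translation does to a box labelled $n$.

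The next observation is that a witness $w$ for $A$ (Definition \ref{t2-5}) carries exactly the same numerical data as a translation of Definition \ref{t3-2} based on $\{q_i\}$: both attach to each occurrence of a box a natural number so that an outer box gets a strictly larger number than every box nested inside it. Writing $t_w$ for the translation determined by $w$, the key step I would prove, by induction on $A$, is the claim that for every modal formula $A$ and every witness $w$ for $A$,
\[
I\Sigma_1 \vdash A^{*}(w) \leftrightarrow (A^{t_w})^{*},
\]
where on the left a box labelled $n$ is read as $\Pr_{T_n}$ and on the right every box is read as $\Pr_{I\Sigma_1}$. The atomic and Boolean cases are immediate since both $A \mapsto A^{*}(w)$ and $A \mapsto (A^{t_w})^{*}$ commute with the connectives. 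In the box case $A = \Box B$ with $w = (n, w')$ one has $A^{*}(w) = \Pr_{T_n}(B^{*}(w'))$ and $(A^{t_w})^{*} = \Pr_{I\Sigma_1}(\bigwedge_{i=0}^{n} q_i^{*} \rightarrow (B^{t_{w'}})^{*})$; the induction hypothesis $I\Sigma_1 \vdash B^{*}(w') \leftrightarrow (B^{t_{w'}})^{*}$ can be pushed under a provability predicate (using $I\Sigma_1 \subseteq T_n$, $\Sigma_1$-completeness, and formalized modus ponens), and then the formalized deduction equivalence for $T_n$ recorded above finishes the clause.

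Once the claim is established the conclusion is immediate: if $A$ has a witness $w$ with $(M,\{T_n\}_{n=0}^{\infty}) \vDash A^{*}(w)$ for all $M \vDash I\Sigma_1$, then $A^{*}(w)$ holds in every model of $I\Sigma_1$, so $I\Sigma_1 \vdash A^{*}(w)$; by the claim $I\Sigma_1 \vdash (A^{t_w})^{*}$; by the choice of $*$ this gives $\mathbf{GL} \vdash A^{t_w}$; and Theorem \ref{t3-3} then yields $\mathbf{K4} \vdash A$.

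I expect the main obstacle to be the box clause of the claim, where one must (i) get the book-keeping between the box labels of $A$, the conjunctions $\bigwedge_{i=0}^{n} q_i$ that $t$ inserts, and the explicit axiom sets of the $T_n$ to line up exactly, and (ii) verify that every tool used to move an equivalence under a box --- $\Sigma_1$-completeness, provable commutation of $\Pr$ with $\wedge$ and $\rightarrow$, formalized modus ponens, and the formalized deduction theorem for $T_n$ over $I\Sigma_1$ --- is genuinely formalizable in $I\Sigma_1$. It is also worth flagging that the \emph{uniform} form of Solovay's theorem is indispensable here, since $*$ has to be pinned down before the hierarchy $\{T_n\}$, and hence before any modal formula $A$, is in play.
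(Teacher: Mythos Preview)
Your proposal is correct and follows essentially the same route as the paper: fix fresh atoms $q_i$, take $*$ from the uniform Solovay theorem, set $T_n = I\Sigma_1 + \{q_i^{*}\}_{i\le n}$, identify the witness $w$ with a translation $t$, prove $I\Sigma_1 \vdash A^{*}(w) \leftrightarrow (A^{t})^{*}$, and conclude via Solovay and Theorem~\ref{t3-3}. The only detail the paper makes explicit that you leave implicit is that Solovay and Theorem~\ref{t3-3} are being applied over the language extended by the $q_i$'s, so the conclusion is first $\mathbf{K4(Q)} \vdash A$, and one then substitutes $q_i \mapsto \top$ (harmless since $A$ contains no $q_i$) to obtain a proof in $\mathbf{K4}$ proper.
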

\begin{proof}
Add infinitely many new atoms $Q=\{q_n\}_{n=0}^{\infty}$ to the language of modal logics, and apply all axioms and rules of the logic $\mathbf{K4}$ to the new language to construct a new system $\mathbf{K4(Q)}$ and do the same thing for the logic $\textbf{GL}$ to construct $\mathbf{GL(Q)}$. Pick the substitution $*$ as the uniform substitution of Solovay's theorem (see Preliminaries and \cite{Bo}). It simply says that for any $A$, $I\Sigma_1 \vdash A^*$ iff $\mathbf{GL(Q)} \vdash A$, where $A^*$ means the combination of substituting any atom $p$ with $p^*$ and interpreting all boxes as the provability predicate of $I\Sigma_1$. For any $n$, define $T_n=I\Sigma_1+\{q_i^*\}_{i=0}^{n}$. We claim that this $*$ and this hierarchy $\{T_n\}_{n=0}^{\infty}$ works for the claim of the lemma. First of all, it is easy to show that the hierarchy has the claimed conditions. Secondly, we have $M \vDash A^*(w)$ for all $M \vDash I\Sigma_1$. Therefore, $I\Sigma_1 \vdash A^*(w)$. Use $q_i$'s in the translations from $\mathbf{K4}$ to $\mathbf{GL}$. Since the interpretation of a box in any formula $\Box D$ with witness $m$ is $\Pr_{T_m}(D)$, and it is provably equivalent to $\Pr_{I\Sigma_1}(\bigwedge_{i=0}^{m} q_i \rightarrow D)$, it is easy to see that there is a translation $t$, such that $I\Sigma_1 \vdash A^*(w) \leftrightarrow (A^t)^*$. (In fact $t$ equals to the witness $w$.) Therefore, $I\Sigma_1 \vdash (A^t)^*$, by the uniform version of Solovay's theorem, $\mathbf{GL(Q)} \vdash A^t$, and by Theorem \ref{t3-3}, $\mathbf{K4(Q)} \vdash A$. It means that there exists a proof for $A$ in $\mathbf{K4(Q)}$. Since $A$ does not have any $q_i \in Q$, it is enough to put $q_i=\top$ everywhere in the proof to find a proof for $A$ in $\mathbf{K4}$.
\end{proof}
We want to prove the strong completeness theorem.
\begin{thm}\label{t3-6} (Strong Completeness)
If $\mathbf{PrM} \vDash \Gamma \Rightarrow A$, then $\Gamma \vdash_{\mathbf{K4}} A$.
\end{thm}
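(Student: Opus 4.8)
The plan is to transport the semantic hypothesis to ordinary $\mathbf{K4}$-provability via Lemma~\ref{t3-5}, after collapsing the sequent $\Gamma\Rightarrow A$ together with all of its witnessing expansions into a single modal formula. Since derivability from a set of premises in $\mathbf{K4}$ is finitary, and the truth clause ``$\mathbf{PrM}\vDash\Gamma\Rightarrow A$'' of Definition~\ref{t2-9} mentions only finitely many formulas of $\Gamma$ and finitely many expansions of each, I would first extract a finite $\Delta\subseteq\Gamma$, expansions $\bar\Delta_{i,l}\in E(\Delta_i)$ and $\bar A_k\in E(A)$, and witnesses $u_{i,l}$ for $\bar\Delta_{i,l}$ and $v_k$ for $\bar A_k$, such that for every arithmetical substitution $\sigma$ and every provability model $(M,\{S_n\})$ one has $M\vDash \bigwedge_{i,l}\bar\Delta_{i,l}^{\sigma}(u_{i,l})\to\bigvee_k\bar A_k^{\sigma}(v_k)$. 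It then suffices to prove $\mathbf{K4}\vdash\bigwedge_i\Delta_i\to A$.

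Next I would package this data into the single modal formula $C:=\bigl(\bigwedge_{i,l}\bar\Delta_{i,l}\bigr)\to\bigl(\bigvee_k\bar A_k\bigr)$, which lies in the original modal language since expansions introduce no new atoms. Two routine facts make this work. First, an induction on formula complexity --- whose only non-trivial case is the box, dispatched by necessitation together with axiom $\mathbf{K}$ --- shows $\mathbf{K4}\vdash D\leftrightarrow B$ for every $D\in E(B)$; as a finite conjunction or disjunction of formulas each $\mathbf{K4}$-equivalent to $B$ is again equivalent to $B$, this yields $\mathbf{K4}\vdash C\leftrightarrow(\bigwedge_i\Delta_i\to A)$. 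Second, by Definition~\ref{t2-5} a witness for an implication, a conjunction, or a disjunction is merely the pairing or concatenation of witnesses for the immediate subformulas, so concatenating the $u_{i,l}$ and the $v_k$ in the obvious order produces a witness $w$ for $C$ with $C^{\sigma}(w)$ identical to the arithmetical sentence above. Hence $M\vDash C^{\sigma}(w)$ for every $\sigma$ and every provability model.

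Finally I would invoke Lemma~\ref{t3-5}: let $\{T_n\}_{n=0}^{\infty}$ and the substitution $*$ be as furnished there. For any $M\vDash I\Sigma_1$ the pair $(M,\{T_n\})$ is a provability model, so specializing the previous universal statement to this model and to $\sigma:=*$ gives $M\vDash C^{*}(w)$ for every $M\vDash I\Sigma_1$. Lemma~\ref{t3-5} then delivers $\mathbf{K4}\vdash C$, whence $\mathbf{K4}\vdash\bigwedge_i\Delta_i\to A$ by the equivalence, and therefore $\Delta\vdash_{\mathbf{K4}}A$ and $\Gamma\vdash_{\mathbf{K4}}A$.

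I do not expect a genuine obstacle in this argument itself: all the real work has already gone into Lemma~\ref{t3-4} (the surgery turning a finite transitive Kripke tree with clusters into a conversely well-founded transitive model via the auxiliary atoms $q_i$) and Lemma~\ref{t3-5} (which feeds that into the uniform form of Solovay's theorem). The only points needing care are bookkeeping: verifying that every expansion --- including the left-hand ones, which enter as a conjunction --- is $\mathbf{K4}$-provably equivalent to its source formula, so that $C$ may legitimately replace $\bigwedge\Delta\to A$; and checking that passing to a finite $\Delta$ and to the single substitution $*$ is sound, which it is precisely because the expansions and witnesses in Definition~\ref{t2-9} are demanded uniformly across all models and all substitutions.
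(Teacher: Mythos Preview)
Your overall strategy is exactly the paper's: package the witnessing data into a single modal formula, feed it through Lemma~\ref{t3-5}, and then use that in $\mathbf{K4}$ every expansion is equivalent to its source. There is, however, a genuine gap in the first paragraph. You claim that Definition~\ref{t2-9} ``mentions only finitely many formulas of $\Gamma$'', and on that basis you extract a finite $\Delta\subseteq\Gamma$ such that the implication $\bigwedge\bar\Delta^{\sigma}(u)\to\bigvee\bar A^{\sigma}(v)$ holds for \emph{every} provability model and \emph{every} $\sigma$. That reading of the definition is wrong: a sequence of expansions $\bar\Gamma$ must contain at least one expansion of \emph{each} formula in $\Gamma$, so when $\Gamma$ is infinite so is $\bar\Gamma$, and the satisfaction clause genuinely involves all of it. Uniformity of the expansions and witnesses across models does not let you drop infinitely many antecedents uniformly; different models could in principle use different parts of $\bar\Gamma$ to make the implication true.

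The fix is to reverse the order of two of your steps, exactly as the paper does: first specialise to the particular hierarchy $\{T_n\}$ and substitution $*$ furnished by Lemma~\ref{t3-5}, so that the antecedents $\bar\Gamma^{*}(u)$ become a fixed set of arithmetical sentences; then the hypothesis says $M\vDash\bar\Gamma^{*}(u)\Rightarrow\bigvee_k\bar A_k^{*}(v_k)$ for every $M\vDash I\Sigma_1$, i.e.\ $I\Sigma_1+\bar\Gamma^{*}(u)\vdash\bigvee_k\bar A_k^{*}(v_k)$, and \emph{now} ordinary first-order compactness yields a finite $\Delta\subseteq\bar\Gamma$ with $I\Sigma_1+\Delta^{*}(v)\vdash\bigvee_k\bar A_k^{*}(v_k)$. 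From here your argument goes through verbatim: form $C=\bigwedge\Delta\to\bigvee_k\bar A_k$, observe that the concatenated witness makes $C^{*}(w)$ the sentence just proved, apply Lemma~\ref{t3-5} to get $\mathbf{K4}\vdash C$, and collapse the expansions.
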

\begin{proof}
We know that there are the sequence of expansions $\bar{\Gamma}$, and expansions $B_1, \ldots, B_k$ of $A$ and witnesses $u$ for $\bar{\Gamma}$, and $w_1, \ldots, w_k$ for $B_1, \ldots, B_k$ such that for all provability models and all arithmetical substitution $\sigma$,
\[
M \vDash \bar{\Gamma}^{\sigma}(u) \Rightarrow \{B_i^{\sigma}(w_i)\}_{i=0}^{k}.
\] 
Pick the hierarchy and $*$ from Lemma \ref{t3-5}. Then for all $M \vDash I\Sigma_1$,
\[
M \vDash \bar{\Gamma}^{*}(u) \Rightarrow \{B_i^{*}(w_i)\}_{i=0}^{k}.
\] 
Hence
\[
I\Sigma_1 + \bar{\Gamma}^{*}(u) \vdash \bigvee_{i=0}^{k} B_i^{*}(w_i).
\]
Therefore there is a finite $\Delta \subseteq \bar{\Gamma}$ and a subset of witnesses $v$ from $u$, such that 
\[
I\Sigma_1 + \Delta^{*}(v) \vdash \bigvee_{i=0}^{k} B_i^{*}(w_i).
\]
Hence, for all $M \vDash I\Sigma_1$, we have 
\[
M \vDash \bigwedge \Delta^{*}(v) \rightarrow \bigvee_{i=0}^{k} B_i^{*}(w_i).
\]
By Lemma \ref{t3-5}, $\mathbf{K4} \vdash \bigwedge \Delta \rightarrow \bigvee_{i=0}^{k} B_i$, which means $\bar{\Gamma} \vdash_{\mathbf{K4}} \bigvee_{i=0}^{k} B_i$. Finally, since in the presence of the axiom $\mathbf{K}$, all expansions of a formula are equivalent to itself, $\Gamma \vdash_{\mathbf{K4}} A$.
\end{proof}
\section{The Logic \textbf{KD4}}
The logic $\mathbf{KD4}$ is a modal logic resulted by adding the axiom $\mathbf{D} : \Box A \rightarrow \neg \Box \neg A$ or equivalently $\neg \Box \bot$ to $\mathbf{K4}$. Therefore, intuitively, if we have the consistency of theories and also they are provable in their meta-theories, then the axioms of $\mathbf{KD4}$ should be valid. (Since we have the neccesitation rule, the sentence $\Box \neg \Box \bot$ is also provable and this is why we need the consistency statements to be provable, as well.) The formalization of these models is exactly what we will call consistent provability models and we will show that the logic $\mathbf{KD4}$ is sound and strongly complete with respect to these models.
\begin{dfn}\label{t4-1}
A provability model $(M,\{T_n\}_{n=0}^{\infty})$ is called consistent if for any $n$, $M$ thinks that $T_n$ is consistent and $T_{n+1} \vdash \Cons(T_n)$, i.e. $M \vDash \Cons(T_n)$ and $M \vDash \Pr_{T_{n+1}}(\Cons(T_n))$. Moreover, the class of all consistent provability models will be denoted by $\mathbf{Cons}$.
\end{dfn}
Let us prove the soundness theorem.
\begin{thm}\label{t4-2}(Soundness)
If $\Gamma \vdash_\mathbf{KD4} A$, then $\mathbf{Cons} \vDash \Gamma \Rightarrow A$.
\end{thm}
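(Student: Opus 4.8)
The plan is to reproduce the proof of the soundness theorem for $\mathbf{K4}$ (Theorem \ref{t3-1}) almost verbatim, now working with the cut-free calculus $G(\mathbf{KD4})$, which is $G(\mathbf{K4})$ together with the single extra modal rule $\Box_D R$. As there, I would prove by induction on the length of the derivation a claim attaching to each provable sequent expansions and witnesses that make the interpretation valid in every consistent provability model. Only the handling of $\Box_D R$ is new, and for the induction to survive modal rules stacked on top of a $\Box_D R$-step, the claim must be slightly strengthened over the one used for $\mathbf{K4}$.

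Precisely, the claim I would prove is: whenever $G(\mathbf{KD4}) \vdash \Gamma \Rightarrow \Delta$, there are sequences of expansions $\bar{\Gamma}, \bar{\Delta}$ and witnesses $w_1, w_2$, with $N$ the largest number occurring in them, such that for every consistent provability model $(M, \{T_n\}_{n=0}^{\infty})$ and every arithmetical substitution $\sigma$ one has both $M \vDash \bar{\Gamma}^{\sigma}(w_1) \Rightarrow \bar{\Delta}^{\sigma}(w_2)$ and, for every $m > N$, $M \vDash \Pr_{T_m}(\bar{\Gamma}^{\sigma}(w_1) \Rightarrow \bar{\Delta}^{\sigma}(w_2))$. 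For every rule inherited from $G(\mathbf{K4})$ the proof of Theorem \ref{t3-1} in fact delivers the stronger $I\Sigma_1 \vdash \bar{\Gamma}^{\sigma}(w_1) \Rightarrow \bar{\Delta}^{\sigma}(w_2)$, from which both clauses follow at once (the second because $I\Sigma_1 \subseteq T_m$ provably, so $M$ believes $T_m$ proves whatever $I\Sigma_1$ does); and when such a rule is applied to a premise that has already passed through $\Box_D R$ --- hence satisfies only the weaker two-clause statement --- the same bookkeeping as in the $\Box_4 R$ and propositional cases of Theorem \ref{t3-1} (formalized modus ponens, formalized $\Sigma_1$-completeness of boxed conjuncts, commutation of $\Pr$ with $\wedge$) carries both clauses to the conclusion.

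The genuinely new case is $\Box_D R$, deriving $\Box \Gamma \Rightarrow$ from $\Gamma, \Box \Gamma \Rightarrow$. I would run the bookkeeping of the $\Box_4 R$ case with an empty right-hand side: by the induction hypothesis on the premise (with $N'$ its largest number), $M$ believes $T_m$ refutes $\bigwedge_j \bar{\Gamma}^{\sigma}_1(u_j) \wedge \bigwedge_k \overline{\Box \Gamma}^{\sigma}_2(v_k)$ for all $m > N'$; pick $n > N'$ and take for the conclusion the expansions $\{\overline{\Box \Gamma}_1, \overline{\Box \Gamma}_2\}$ and witness $((n, u_j)_j, (v_k)_k)$, exactly as in the $\mathbf{K4}$ proof. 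Formalized $\Sigma_1$-completeness of the already-boxed conjuncts together with commutation of $\Pr$ with $\wedge$ upgrades the statement that $M$ believes $T_n$ refutes that conjunction to $M \vDash \bigwedge_j \Pr_{T_n}(\bar{\Gamma}^{\sigma}_1(u_j)) \wedge \bigwedge_k \overline{\Box \Gamma}^{\sigma}_2(v_k) \to \Pr_{T_n}(\bot)$, and since the model is consistent $M \vDash \neg \Pr_{T_n}(\bot)$, so $M$ refutes the antecedent --- which is exactly the conclusion's interpretation under the chosen expansions and witness, giving the first clause. For the second clause (needed when a box of number greater than $n$ sits on top), the same computation, now carried out inside $T_m$ for $m > n$, uses that $T_m$ contains $I\Sigma_1$ and, since $m \ge n+1$, proves $\Cons(T_n)$ --- which the model believes because $M \vDash \Pr_{T_{n+1}}(\Cons(T_n))$ and $T_{n+1} \subseteq T_m$ provably in $I\Sigma_1$.

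From the claim the theorem follows as in Theorem \ref{t3-1}: given $\Gamma \vdash_{\mathbf{KD4}} A$, pick a finite $\Delta \subseteq \Gamma$ with $G(\mathbf{KD4}) \vdash \Delta \Rightarrow A$, apply the claim, use its first clause, and pad $\Delta$ back up to $\Gamma$. The step I expect to be the real obstacle is not $\Box_D R$ in isolation but calibrating the inductive claim correctly: it must be weak enough to stay provable once the $\mathbf{D}$-rule has been used (one cannot retain the $I\Sigma_1$-provability of the $\mathbf{K4}$ proof), yet strong enough to be consumable by later modal rules. Carrying the extra datum that $M$ believes $T_m$ proves the interpretation for every $m$ above the witnesses, together with the second half of the definition of a consistent model ($T_{n+1} \vdash \Cons(T_n)$, not merely $M \vDash \Cons(T_n)$), is precisely what makes those two requirements compatible --- which is why the theorem needs exactly this class of models.
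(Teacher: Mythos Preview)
Your argument is correct, but it takes a different route from the paper. The paper does not redo the induction: it observes that $\Gamma \vdash_{\mathbf{KD4}} A$ implies $\Gamma, \Box\neg\Box\bot \wedge \neg\Box\bot \vdash_{\mathbf{K4}} A$, applies the $\mathbf{K4}$ soundness theorem (Theorem~\ref{t3-1}) to this augmented sequent, and then simply checks that in any consistent provability model the interpretations of the added hypotheses $\Pr_{n_i}(\bigvee_j \neg\Pr_{m_{ij}}(\bot))$ and $\neg\Pr_{k_i}(\bot)$ are automatically true, using $n_i>m_{ij}$ together with the two defining conditions of a consistent model. The extra hypotheses then drop out and one is left with $M \vDash \bar{\Gamma}^{\sigma}(u) \Rightarrow \bigvee_k A_k^{\sigma}(w_k)$.

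Your direct induction on $G(\mathbf{KD4})$ with the two-clause invariant is essentially the strategy the paper reserves for $\mathbf{S4}$ (Theorem~\ref{t5-2}), where the inductive claim carries a ``context number'' $n$ and asserts $M$-truth of $T_n \vdash \bar{\Gamma}^{\sigma}(w_1) \Rightarrow \bar{\Delta}^{\sigma}(w_2)$. Your second clause plays the same role here: it is what lets a later $\Box_4 R$ consume the output of a $\Box_D R$, and it is exactly what forces the use of the provable-consistency half of the definition of a consistent model rather than mere $M$-consistency. The paper's reduction buys brevity and reuses Theorem~\ref{t3-1} as a black box; your approach is more laborious but makes the interaction between the $\mathbf{D}$-rule and the two conditions on consistent models fully explicit, and is closer in spirit to how the paper itself handles $\mathbf{S4}$.
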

\begin{proof}
We use the soundness theorem for \textbf{K4}. If $\Gamma \vdash_\mathbf{KD4} A$, then 
\[
\Gamma + \Box \neg \Box \bot \wedge \neg \Box \bot \vdash_\mathbf{K4} A.
\]
Based on the soundness of \textbf{K4}, there are sequences $\bar{\Gamma} + \{\Box (\bigvee_{j=0}^{s_i} \neg \Box \bot) \wedge \neg \Box \bot\}_{i \in I}$ and $\{A_k\}_{k=0}^{t}$ as the expansions of $\Gamma + \Box \neg \Box \bot \wedge \neg \Box \bot$ and $A$, respectively and witnesses $u$, $(n_i,(m_{ij})_{j=0}^{s_i}, k_i)$ and $w_k$ such that for any provability model $(M, \{T_n\}_{n=0}^{\infty})$ and any arithmetical substitution $\sigma$,
\[
M \vDash \bar{\Gamma}^{\sigma}(u) + \{\Pr_{n_i} (\bigvee_{j=1}^{s_i} \neg \Pr_{m_{ij}}(\bot)) \wedge \neg \Pr_{k_i}(\bot)\}_{i \in I} \Rightarrow \bigvee_{k=0}^{t} A_k^{\sigma}(w_k))
\]
If we apply this fact on the consistent provability models, 
since $n_i>m_{ij}$ and for any $n$, $M \vDash \Pr_{n+1}(\neg \Pr_n(\bot))$, we have $M \vDash \Pr_{n_i} (\neg \Pr_{m_{ij}}(\bot)) $ for all $i \leq r$ and $j \leq s_i$. Moreover, since for any $n$, $M \vDash \neg \Pr_n(\bot)$, we have $M \vDash \neg \Pr_{k_i}(\bot)$. Therefore, for any consistent provability model $(M, \{T_n\}_{n=0}^{\infty})$ we have
\[
M \vDash \bar{\Gamma}^{\sigma}(u) \Rightarrow \bigvee_{k=0}^{t} A_k^{\sigma}(w_k)
\]
which completes the proof of the soundness for \textbf{KD4}.
\end{proof}
For the completeness theorem, the idea is reducing the completeness of \textbf{KD4} to the completeness of \textbf{K4} which was proved in the previous section.
\begin{thm}\label{t4-3}(Strong Completeness)
If $\mathbf{Cons} \vDash \Gamma \Rightarrow A$, then $\Gamma \vdash_{\mathbf{KD4}} A$.
\end{thm}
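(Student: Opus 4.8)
The plan is to mirror the strategy used for $\mathbf{K4}$ in Theorem \ref{t3-6}, but now reducing the completeness of $\mathbf{KD4}$ to the completeness of $\mathbf{K4}$ itself, which we already have. The key tool is a syntactic translation that pushes the $\mathbf{D}$-axiom into the hypotheses: exactly as in the soundness proof (Theorem \ref{t4-2}), one uses the fact that $\Gamma\vdash_{\mathbf{KD4}}A$ iff $\Gamma,\Box\neg\Box\bot\wedge\neg\Box\bot\vdash_{\mathbf{K4}}A$, so that $\mathbf{KD4}$-derivability is captured by $\mathbf{K4}$-derivability from the additional ``consistency'' premises. The direction we need to prove is the converse of soundness, so we should start from the hypothesis $\mathbf{Cons}\vDash\Gamma\Rightarrow A$ and aim to produce a $\mathbf{K4}$-derivation of $A$ from $\Gamma$ together with suitable instances of $\neg\Box\bot$ and $\Box\neg\Box\bot$, then invoke the deduction-style equivalence above.

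First I would spell out the given data: by Definition \ref{t2-9} there are expansions $\bar\Gamma$, expansions $B_1,\dots,B_k$ of $A$, and witnesses $u$, $w_1,\dots,w_k$ such that for every consistent provability model and every arithmetical substitution $\sigma$, $M\vDash\bar\Gamma^\sigma(u)\Rightarrow\{B_i^\sigma(w_i)\}_{i=0}^k$. The idea is then to exhibit a \emph{particular} hierarchy which is consistent — the natural choice is to build it on top of Solovay's uniform hierarchy from Lemma \ref{t3-5} but arrange that $T_{n+1}\vdash\Cons(T_n)$ and that the ambient model (say $\mathbb N$, or any $M\vDash I\Sigma_1$ that additionally believes the relevant consistency statements) satisfies $\Cons(T_n)$ for all $n$. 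Concretely one would like to take $T_n = I\Sigma_1 + \{q_i^*\}_{i=0}^{n}$ as in Lemma \ref{t3-5} but augmented with iterated consistency assertions $\Cons(T_0),\dots,\Cons(T_{n-1})$, checking that this augmented hierarchy is still r.e., still increasing provably in $I\Sigma_1$, and still falls under the scope of the uniform Solovay substitution (possibly after adding further fresh atoms to encode the consistency statements, in the same spirit as the $q_i$'s). Having such a model, the hypothesis gives $I\Sigma_1+\bar\Gamma^*(u)\vdash\bigvee_i B_i^*(w_i)$; compactness yields a finite $\Delta\subseteq\bar\Gamma$ and a sub-witness $v$ with $I\Sigma_1+\Delta^*(v)\vdash\bigvee_i B_i^*(w_i)$.

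The remaining step is to run this back through the translation to $\mathbf{GL}$ and then through Theorem \ref{t3-3}. The point is that the arithmetical provability of $\bigwedge\Delta^*(v)\to\bigvee_i B_i^*(w_i)$, when the boxes are read off the \emph{consistency-augmented} hierarchy, corresponds under Solovay's uniform theorem to $\mathbf{GL}$-provability of a translated formula in which each box $\Box D$ at witness-level $m$ has become $\Box(\bigwedge_{i\le m}q_i\wedge \bigwedge_{j<m}\neg\Box_{?}\bot\to D)$-style conjuncts — i.e., the consistency premises reappear as antecedents under the boxes, exactly the shape of an expansion of $\Box\neg\Box\bot$. Pulling this back via Theorem \ref{t3-3} gives a $\mathbf{K4}$-proof (in the extended language with the fresh atoms) of $\bigwedge\Delta\to\bigvee_i B_i$ \emph{from} the expansions $\Box(\bigvee\neg\Box\bot)\wedge\neg\Box\bot$; substituting $\top$ for the fresh atoms and using that, over $\mathbf{K}$, every expansion of a formula is equivalent to the formula itself, this becomes $\Gamma,\Box\neg\Box\bot\wedge\neg\Box\bot\vdash_{\mathbf{K4}}A$, hence $\Gamma\vdash_{\mathbf{KD4}}A$.

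The main obstacle I anticipate is the bookkeeping in constructing the consistency-augmented hierarchy so that it simultaneously (a) is a genuine consistent provability model in the sense of Definition \ref{t4-1}, (b) remains within reach of the \emph{uniform} Solovay substitution $*$ from Lemma \ref{t3-5} — this is delicate because adding $\Cons(T_n)$ changes the theories and one must verify these new axioms are themselves images of fresh propositional atoms under $*$, or otherwise provably equivalent to such — and (c) matches the precise shape of expansions of $\Box\neg\Box\bot$ so that the final $\mathbf{K4}$-derivation genuinely has the premise $\Box\neg\Box\bot\wedge\neg\Box\bot$ and not some stronger statement. Getting the indexing of witnesses and the nesting of the consistency antecedents to line up with Definition \ref{t2-2} is where the real care is needed; everything else is a routine re-run of the $\mathbf{K4}$ argument.
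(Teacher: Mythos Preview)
Your approach is workable in spirit but far more complicated than necessary, and the obstacles you flag in your last paragraph are genuine: building a consistency-augmented hierarchy that still falls under Lemma~\ref{t3-5} runs into a circularity (the consistency statements $\Cons(T'_n)$ refer to the very hierarchy you are constructing), and even if you resolve this via a fixed-point construction with fresh atoms, you still only get truth in those $M\vDash I\Sigma_1$ that believe infinitely many consistency statements, so you cannot directly invoke Lemma~\ref{t3-5} as stated.

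The paper sidesteps all of this with a much simpler internalization trick. Rather than building a specific consistent model, it adds to the antecedent an \emph{infinite} sequence $\Delta$ consisting of infinitely many copies of $\neg\Box\bot$ and $\Box\neg\Box\bot$, witnessed by $n$ and $(n+1,n)$ respectively for every $n$. The point is that for \emph{any} provability model $(M,\{T_n\})$ whatsoever, if $M\vDash\Delta^\sigma(v)$ then $M$ believes $\Cons(T_n)$ and $\Pr_{T_{n+1}}(\Cons(T_n))$ for all $n$, so $(M,\{T_n\})$ is automatically a consistent provability model and the hypothesis $\mathbf{Cons}\vDash\Gamma\Rightarrow A$ applies. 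Hence $\mathbf{PrM}\vDash\Gamma,\Delta\Rightarrow A$ with the same expansions and witnesses on the $\Gamma,A$ side, and now the strong completeness of $\mathbf{K4}$ (Theorem~\ref{t3-6}) gives $\Gamma,\Delta\vdash_{\mathbf{K4}}A$; since every formula in $\Delta$ is a theorem of $\mathbf{KD4}$, this yields $\Gamma\vdash_{\mathbf{KD4}}A$. No special hierarchy, no fixed points, no fresh atoms --- the infinitely many consistency conditions are carried as modal premises and discharged syntactically at the end, with compactness handled internally by the strong completeness theorem for $\mathbf{K4}$.
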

\begin{proof}
We know that there are the multiset $\bar{\Gamma}$, and expansions $B_1, \ldots, B_k$ of $A$ and witnesses $u$ for $\bar{\Gamma}$, and $w_1, \ldots, w_k$ for $B_1, \ldots, B_k$ such that for any consistent provability model and any arithmetical substitution $\sigma$,
\[
(M, \{T_n\}_{n=0}^{\infty}) \vDash \bar{\Gamma}^{\sigma}(u) \Rightarrow \{B_i^{\sigma}(w_i)\}_{i=0}^{k}.
\] 
Define $\Delta$ as a sequence which consists of an infinite number of the formula $\Box \neg \Box \bot$ and also an infinite number of the formula $\neg \Box \bot$. We claim that $\Gamma, \Delta \Rightarrow A$ is true in the class $\mathbf{PrM}$. For the expansions, use the same expansions for $\Gamma$ and $A$, and also use $\Delta$ itself, as its sequence of expansions. For witnesses, use $u$, $w_i$'s and for $\Delta$, for any number $n$, use $(n+1, n)$ for one of the formulas $\Box \neg \Box \bot$ and $n$ for one of the formulas $\neg \Box \bot$. Call this witness $v$. Let $(M, \{T_n\}_{n=0}^{\infty})$ be an arbitrary provability model. We claim that 
\[
M \vDash \bar{\Gamma}^{\sigma}(u), \Delta^{\sigma}(v) \Rightarrow \{B_i^{\sigma}(w_i)\}_{i=0}^{k}.
\]
Because when $M \vDash \bar{\Gamma}^{\sigma}(u), \Delta^{\sigma}(v)$ then $M \vDash \Delta^{\sigma}(v)$ which means for any $n$, 
\[
M \vDash \Pr_{n+1}(\neg \Pr_n (\bot)),
\]
and
\[
M \vDash \neg \Pr_n (\bot).
\]
Therefore, $(M, \{T_n\}_{n=0}^{\infty})$ is a consistent provability model and since $M \vDash \bar{\Gamma}^{\sigma}(u)$ we have, 
\[
(M, \{T_n\}_{n=0}^{\infty}) \vDash \bigvee_{i=0}^{k} B_i^{\sigma}(w_i).
\]
Therefore, for all provability models and all $\sigma$, we have
\[
M \vDash \bar{\Gamma}^{\sigma}(u), \Delta^{\sigma}(v) \Rightarrow \{B_i^{\sigma}(w_i)\}_{i=0}^{k}.
\]
Hence, by the strong completeness of $\mathbf{K4}$, we have 
$\Gamma, \Delta \vdash_{\mathbf{K4}} A$ and since all formulas in $\Delta$ are provable in $\mathbf{KD4}$, we have $\Gamma \vdash_{\mathbf{KD4}} A$.
\end{proof}
\begin{rem}\label{t4-4}
Note that the truth of a formula in a class of provability models means the existence of a \textit{uniform} sequence of expansions and also a \textit{uniform} witness for it. In other words, we have a fix sequence of natural numbers which works for all provability models in the class. Therefore, we could claim that sentences just describe the behavior of the natural numbers instead of some actual theories. What does it mean? It means that sentences do not describe the behavior of a concrete specific provability model, but instead, they talk about the roles of these ingredients in the structure (provability model) which are encoded by the natural numbers. Informally speaking, sentences just transcend the actual theories to their abstract roles in the structure of a provability model. (As an example, think of how the cardinal numbers transcend the concept of cardinality from the actual sets.) For instance, in the case of the logic $\mathbf{KD4}$, it describes the relation between a meta-theory $T_{n+1}$ and its theory $T_n$ which is the condition that the meta-theory is powerful enough to show the consistency of the theory. This is not about some actual theories which we use; it is about the power of the meta-theory in comparison to its theory. In other words, $\mathbf{KD4}$ describes the abstract condition of \textit{consistency} and \textit{provability of consistency}. This fact is true in all soundness-completeness results we propose in this paper.
\end{rem}
\section{The Logic \textbf{S4}}
Intuitively, if we have the property that all theories are sound and the soundness of theories are also provable in their meta-theories, all axioms of $\mathbf{S4}$, would be valid. The formalization of these models is exactly what we will call the reflexive provability models. In fact, we will show that the logic \textbf{S4} is sound and also strongly complete with respect to the class of all reflexive provability models.
\subsection{Soundness}
First of all we need a definition:
\begin{dfn}\label{t5-1}
A provability model $(M,\{T_n\}_{n=0}^{\infty})$ is reflexive if for any $n$, $M$ thinks that $T_n$ is sound and $T_{n+1} \vdash \Rfn(T_n)$, i.e. $M \vDash \Pr_{T_n}(A) \rightarrow A$ and $M \vDash \Pr_{T_{n+1}}(\Pr_{T_n}(A) \rightarrow A)$ for any sentence $A$. Moreover, the class of all reflexive provability models will be denoted by $\mathbf{Ref}$.
\end{dfn}
Let us prove the soundness theorem.
\begin{thm}\label{t5-2}(Soundness)
If $\Gamma \vdash_\mathbf{S4} A$, then $\mathbf{Ref} \vDash \Gamma \Rightarrow A$.
\end{thm}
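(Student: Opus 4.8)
The plan is to follow the blueprint of the soundness proof for $\mathbf{K4}$ (Theorem \ref{t3-1}): induction on cut-free $G(\mathbf{S4})$ derivations. The only rules of $G(\mathbf{S4})$ not already present in $G(\mathbf{K4})$ are the left box rule $\Box L$ and the right box rule $\Box_S R$, and these are exactly the places where reflexivity of the model is consumed --- $\Box L$ eats an instance of the reflection principle $\Pr_{T_n}(A)\to A$, while $\Box_S R$ behaves essentially like a necessitation step and is handled by the same formalized $\Sigma_1$-completeness computation as the modal case of Theorem \ref{t3-1}. The induction hypothesis must, however, be weakened relative to the $\mathbf{K4}$ case: there one proves the interpreted sequent derivable already in $I\Sigma_1$, but that is now too strong, because interpreting $\Box L$ needs soundness of $T_n$, which is unavailable in $I\Sigma_1$ and only becomes available one level up ($T_{n+1}\vdash\Rfn(T_n)$). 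So I would prove by induction on the length of the $G(\mathbf{S4})$ proof: if $\Gamma\Rightarrow\Delta$ is provable, there are expansions $\bar\Gamma,\bar\Delta$, witnesses $w_1,w_2$, and a number $N$ strictly larger than every number occurring in $w_1$ or $w_2$, such that for every reflexive provability model $(M,\{T_n\}_{n=0}^{\infty})$ and every arithmetical substitution $\sigma$, the sequent $\bar\Gamma^{\sigma}(w_1)\Rightarrow\bar\Delta^{\sigma}(w_2)$ has a standard proof in $T_N$.

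The axioms, structural rules and propositional rules are copied verbatim from Theorem \ref{t3-1}, taking $N$ to be the maximum of the bounds coming from the premises; none of these steps needs more than $I\Sigma_1\subseteq T_N$. For $\Box L$, passing from $\Gamma,A\Rightarrow\Delta$ to $\Gamma,\Box A\Rightarrow\Delta$: the inductive data give a witness $w$ for $A$ and a bound $N$ with $T_N\vdash\bar\Gamma^{\sigma}(w_1)\wedge A^{\sigma}(w)\to\bigvee\bar\Delta^{\sigma}(w_2)$; take $(N,w)$ as the witness of $\Box A$ (legal, as $N$ exceeds the numbers of $w$) and use the reflection instance $T_{N+1}\vdash\Pr_{T_N}(A^{\sigma}(w))\to A^{\sigma}(w)$, which together with $T_N\subseteq T_{N+1}$ upgrades the statement to $T_{N+1}\vdash\bar\Gamma^{\sigma}(w_1)\wedge\Pr_{T_N}(A^{\sigma}(w))\to\bigvee\bar\Delta^{\sigma}(w_2)$, with new bound $N+1$; crucially, the inner occurrence of $A$ carries the same witness $w$ on both sides of the reflection instance, so no witness mismatch arises. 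For $\Box_S R$, passing from $\Box\Gamma\Rightarrow A$ to $\Box\Gamma\Rightarrow\Box A$: box $\bigvee_i A_i$ at level $N$ itself; since the inductive proof of $\overline{\Box\Gamma}^{\sigma}(v)\to\bigvee_i A_i^{\sigma}(x_i)$ is standard, $\Sigma_1$-completeness gives $I\Sigma_1\vdash\Pr_{T_N}(\overline{\Box\Gamma}^{\sigma}(v)\to\bigvee_i A_i^{\sigma}(x_i))$, and since every conjunct of $\overline{\Box\Gamma}^{\sigma}(v)$ is a $\Pr_{T_m}$-sentence with $m<N$, monotonicity of provability (provable in $I\Sigma_1$ from $T_m\subseteq T_N$) yields $I\Sigma_1\vdash\overline{\Box\Gamma}^{\sigma}(v)\to\Pr_{T_N}(\bigvee_i A_i^{\sigma}(x_i))$ --- precisely the modal computation of Theorem \ref{t3-1} run at level $N$ --- with new bound $N+1$. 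Once the claim is proved, the theorem follows as in Theorem \ref{t3-1}: choose a finite $\Delta\subseteq\Gamma$ with $G(\mathbf{S4})\vdash\Delta\Rightarrow A$, obtain expansions, witnesses and a bound $N$ with $T_N\vdash\bar\Delta^{\sigma}(u)\Rightarrow\bigvee_i B_i^{\sigma}(w_i)$ for all reflexive models and all $\sigma$; since such a model regards $T_N$ as sound and $M\vDash I\Sigma_1$ recognizes the standard proof, $M\vDash\bar\Delta^{\sigma}(u)\Rightarrow\bigvee_i B_i^{\sigma}(w_i)$; finally extend $u$ and the expansions from $\Delta$ to all of $\Gamma$ exactly as before, giving $\mathbf{Ref}\vDash\Gamma\Rightarrow A$.

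The main obstacle is calibrating this induction hypothesis: it has to be weak enough to survive $\Box L$ --- hence ``provable in $T_N$'' for a moving bound $N$, not ``provable in $I\Sigma_1$'' --- yet still a genuine provability statement rather than mere truth in $M$, since $\Box_S R$ must internalize the premise as a $\Pr_{T_N}$-sentence and truth in $M$ would not suffice there. Keeping $N$ strictly above all witness numbers while letting it grow by one at each modal inference is the bookkeeping that makes both new rules go through; beyond that, the argument is a transcription of the $\mathbf{K4}$ proof, with reflexivity supplying the single new ingredient, the reflection instances, exactly where $G(\mathbf{S4})$ departs from $G(\mathbf{K4})$.
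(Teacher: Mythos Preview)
Your overall strategy matches the paper's exactly: induction on cut-free $G(\mathbf{S4})$ derivations with a moving ``context level'' $N$ strictly above all witness numbers, incremented by one at each modal rule. The treatment of $\Box_S R$ and the final extraction step are both correct and essentially identical to the paper's.

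There is, however, a genuine gap in the calibration of your induction hypothesis. You require the interpreted sequent to have a \emph{standard} proof in $T_N$. This is still too strong to survive $\Box L$: there you invoke ``the reflection instance $T_{N+1}\vdash\Pr_{T_N}(A^{\sigma}(w))\to A^{\sigma}(w)$'', but Definition~\ref{t5-1} only guarantees $M \vDash \Pr_{T_{N+1}}(\Pr_{T_N}(A) \to A)$, i.e., $M$ \emph{thinks} $T_{N+1}$ proves reflection. Nothing in the definition forces that proof to be standard, so for an arbitrary reflexive model you cannot conclude that $T_{N+1}$ proves the updated sequent in the real world, and your hypothesis fails to propagate.

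The paper threads exactly the needle you describe in your last paragraph, but one notch weaker than what you chose: the induction hypothesis is that $M$ thinks $T_N$ proves the interpreted sequent, i.e., $M \vDash \Pr_{T_N}\bigl(\bar\Gamma^{\sigma}(w_1) \Rightarrow \bar\Delta^{\sigma}(w_2)\bigr)$. This is still ``a genuine provability statement'' in your sense --- formalized $\Sigma_1$-completeness, being a theorem of $I\Sigma_1$ and hence true in $M$, lets $\Box_S R$ internalize it exactly as you sketch --- but now $\Box L$ goes through because both the inclusion $T_N \subseteq T_{N+1}$ and the reflection instance are facts $M$ believes about provability, and the implication chaining them together is itself formalizable in $I\Sigma_1$, hence valid in $M$. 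With this single adjustment your argument becomes the paper's.
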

\begin{proof}
To prove the soundness theorem, we will use the cut-free sequent calculus for $\mathbf{S4}$, i.e. $G(\mathbf{S4})$.
And, we will use the conventions of Theorem \ref{t3-1}. We want to prove the following claim:\\

\textbf{Claim}.
If $\Gamma \Rightarrow \Delta$ is provable in $G(\mathbf{S4})$, then there are sequences of expansions $\bar{\Gamma}$ and $\bar{\Delta}$ and also witnesses $w_1$ and $w_2$ for $\bar{\Gamma}$ and $\bar{\Delta}$, respectively and a number $n$ greater than all the numbers in $w_1$ and $w_2$, such that for any reflexive provability model $(M, \{T_n\}_{n=0}^{\infty})$ and any arithmetical substitution $\sigma$, $T_n \vdash \bar{\Gamma}^{\sigma}(w_1) \Rightarrow \bar{\Delta}^{\sigma}(w_2)$ is true in $M$. We will call the number $n$ as the context number.\\

The proof of the claim is by induction on the length of the proof of $\Gamma \Rightarrow \Delta$ and the proof for the non-modal cases are similar to the proof of Theorem \ref{t3-1}. But the difference is just the presence of the context number $n$ here. To find this number in all non-modal cases, if the case is the axiom case, any number works; for contraction and exchange, just use the same number in the induction hypothesis. For weakening, use the successor of the maximum of the context number of the induction hypothesis and the arbitrary chosen witness for the weakening formula. For the other cases, it is enough to use the maximum numbers of the induction hypothesis. We want to prove the case of the modal rules.\\

1. If $\Gamma, \Box A \Rightarrow \Delta$ is proved by $\Gamma, A \Rightarrow \Delta$, then by IH, we can find sequences of expansions $\{\bar{\Gamma}, \{A_i\}_{i=0}^{r}\}$, $\bar{\Delta}$ and witnesses $w_1=(u, (x_i)_{i=0}^{r})$ and $w_2$ and the context number $n$. For the sequent $\Gamma, \Box A \Rightarrow \Delta$, use the sequences of expansions $\{\bar{\Gamma}, \{\Box A_i\}_{i=0}^{r}\}$, $\bar{\Delta}$ and for the witnesses use $(u, ((n, x_i))_{i=0}^{r})$, $w_2$ and for the context number use $n+1$. By IH, we know that for all reflexive provability models and all arithmetical substitution $\sigma$, $M$ thinks
\[
T_n \vdash \bar{\Gamma}^{\sigma}(w_1), \{A_i^{\sigma}(x_i)\}_{i=0}^{r} \Rightarrow \bar{\Delta}^{\sigma}(w_2).
\]
We claim that there is a proof, formalizable in $I\Sigma_1$, for the following statement: If $T_n \subseteq T_{n+1}$, $T_{n+1} \vdash \Pr_n(A_i^{\sigma}(x_i)) \rightarrow A_i^{\sigma}(x_i)$ for all $q \leq i \leq r$ and 
\[
T_n \vdash \bar{\Gamma}^{\sigma}(w_1), \{A_i^{\sigma}(x_i)\}_{i=0}^{r} \Rightarrow \bar{\Delta}^{\sigma}(w_2)
\]
then
\[
T_{n+1} \vdash \bar{\Gamma}^{\sigma}(w_1), \{\Pr_n(A_i^{\sigma}(x_i))\}_{i=0}^{r} \Rightarrow \bar{\Delta}^{\sigma}(w_2).
\]
The proof is simple. We have $T_{n} \subseteq T_{n+1}$
and $T_{n+1} \vdash \Pr_n(A_i^{\sigma}(x_i)) \rightarrow A_i^{\sigma}(x_i)$.  Therefore,
\[
T_{n+1} \vdash \bar{\Gamma}^{\sigma}(w_1), \{\Pr_n(A_i^{\sigma}(x_i))\}_{i=0}^{r} \Rightarrow \bar{\Delta}^{\sigma}(w_2).
\]
The proof just uses the fact that all first order tautologies are provable and $\Pr$ is closed under modus ponens and all of these properties are provable in $I\Sigma_1$. Since $M \vDash I\Sigma_1$, $M$ thinks that this implication is true. On the other hand both of premises are true in $M$, because of IH and the condition of being a reflexive provability model. Therefore, $M$ thinks
\[
T_{n+1} \vdash \bar{\Gamma}^{\sigma}(w_1), \{\Pr_n(A_i^{\sigma}(x_i))\}_{i=0}^{r} \Rightarrow \bar{\Delta}^{\sigma}(w_2),
\]
which completes the proof.\\

2. If $\Box \Gamma \Rightarrow \Box A$ is proved by $\Box \Gamma \Rightarrow A$, then by IH we have sequences of expansions $\overline{\Box \Gamma}$ and some expansions $\{A_i\}_{i=0}^{r}$ and witnesses $w_1$ and $(x_i)_{i=0}^{r}$ and a context number $n$ such that for all arithmetical substitutions $\sigma$, $M$ thinks
\[
T_n \vdash \overline{\Box \Gamma^{\sigma}(w_1)} \Rightarrow \{A_i^{\sigma}(x_i)\}_{i=0}^{r}.
\] 
For the sequent $\Box \Gamma \Rightarrow \Box A$, use the expansion $\overline{\Box \Gamma}$ and $\Box (\bigvee_{i=0}^{r} A_i)$, and the witnesses $w_1$ and $(n, (x_i)_{i=0}^{r})$ and the context number $n+1$.\\
Based on the $\Sigma_1$-completeness available in $M$, $M$ thinks
\[
I\Sigma_1 \vdash \Pr_n( \bigwedge \overline{\Box \Gamma^{\sigma}(w_1)} \rightarrow \bigvee_{i=0}^{r} (A_i^{\sigma}(x_i))).
\]
Because the provability predicate commutes with the implications provably in $I\Sigma_1$, we have this property in $M$, hence
\[
I\Sigma_1 \vdash \Pr_n( \bigwedge \overline{\Box \Gamma^{\sigma}(w_1)}) \rightarrow \Pr_n(\bigvee_{i=0}^{r} (A_i^{\sigma}(x_i)))
\]
is true in $M$. Again by $\Sigma_1$-completeness, we have
\[
I\Sigma_1 \vdash \bigwedge (\overline{\Box \Gamma^{\sigma}(w_1)}) \rightarrow \Pr_n(\bigvee_{i=0}^{r} (A_i^{\sigma}(x_i)))
\]
true in $M$.
And finally since $T_{n+1}$ is an expansion of $I\Sigma_1$ provably in $I\Sigma_1$, we have the inclusion in $M$, hence
\[
T_{n+1} \vdash \bigwedge (\overline{\Box \Gamma^{\sigma}(w_1)}) \rightarrow \Pr_n(\bigvee_{i=0}^{r} (A_i^{\sigma}(x_i)))
\] 
is true in $M$ which completes the proof of the claim.\\
For the proof of the soundness theorem, if $\Gamma \vdash_\mathbf{S4} A$ then there exists a finite subset $\Delta$ of $\Gamma$ such that $\Delta \vdash_\mathbf{S4} A$. Then $G(\mathbf{S4}) \vdash \Delta \Rightarrow A$, then by the claim, there are sequences of expansions $\bar{\Delta}$ and $\{A_i\}_{i=0}^{r}$ and the witnesses $u$ and $(x_i)_{i=0}^{r}$ and a context number $n$ such that for all reflexive provability models $(M, \{T_n\}_{n=0}^{\infty})$ and all arithmetical substitution $\sigma$, we have $T_n \vdash \bar{\Delta}^{\sigma}(u) \Rightarrow \bigvee_{i=0}^{r} A_i^{\sigma}(x_i)$ in $M$. Therefore, by soundness of $T_n$ in $M$, we have $M \vDash \bar{\Delta}^{\sigma}(u) \Rightarrow \bigvee_{i=0}^{r} A_i^{\sigma}(x_i)$. Define $\bar{\Gamma}$ as the sequence of expansions of $\Gamma$ by using $\Gamma$ and replacing the subset $\Delta$ by $\bar{\Delta}$ and also use any arbitrary witnesses to extend $u$ to a witness for $\bar{\Gamma}$. Call this new witness $v$. We have
\[
M \vDash \bar{\Gamma}^{\sigma}(v) \Rightarrow \bigvee_{i=0}^{r} A_i^{\sigma}(x_i)
\]
which is what we wanted to prove.
\end{proof}
\subsection{Completeness}
For the completeness theorem, the idea is the same as the idea of the original proof of Solovay's theorem. We will modify the technique of encoding Kripke models in arithmetic. In this case, we need to encode transitive reflexive trees with clusters. Therefore we have two tasks. Firstly, finding a method to encode the clusters and secondly, modifying Solovay's construction to work with reflexive trees instead of irreflexive ones. 
\begin{lem}\label{t5-3}
Let $m$ be a natural number and $\{T_n\}_{n=0}^{N}$ be an increasing hierarchy of theories such that $I\Sigma_1 \subseteq T_0$, and for any $n$, $T_{n+1} \vdash \Rfn(T_n)$. Therefore, there are arithmetical sentences $A_1$, $A_2$, $\ldots$, $A_m$ such that:
\begin{itemize}
\item[$(i)$]
For any $i$ and $j$, if $i \neq j$ then $I\Sigma_1 \vdash A_i \wedge A_j \rightarrow \bot$
\item[$(ii)$]
$I\Sigma_1 \vdash \bigvee_{i=1}^{m}A_i$
\item[$(iii)$]
For any $n \leq N$, and any $i \leq m$, $T_{n+1} \vdash \neg \Pr_{T_n}(\neg A_i)$ 
\item[$(iv)$]
If we also assume that all theories in the hierarchy are consistent, then for any $n \leq N$ and any $i \leq m$, $\mathbb{N} \vDash \neg \Pr_{T_n}(\neg A_i)$ and $\mathbb{N} \vDash A_m$.
\end{itemize} 
\end{lem}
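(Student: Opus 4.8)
The plan is to run a Solovay-style fixed point construction tailored to a single reflexive cluster, carried out inside the given hierarchy. Using the recursion theorem I would define a primitive recursive ``walk'' $f$ with values in $\{1,\dots,m\}$ with $f(0)=m$, together with sentences $A_1,\dots,A_m$ obtained simultaneously by the diagonal lemma, where $A_i$ asserts ``the eventual value of $f$ is $i$''. At each stage the walk scans the finitely many pairs $(n,j)$ with $n\le N$, $j<m$ for the first one such that some $T_n$-proof of $\neg A_j$ has appeared, and the very first time it finds such a pair it jumps to node $j$ and then freezes forever. Because $f$ then changes value at most once it is eventually constant, and $I\Sigma_1$ proves this by the least number principle; that already gives $(ii)$, and since ``eventual value $i$'' and ``eventual value $j$'' are incompatible for $i\ne j$, also $(i)$. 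The ``react only once'' discipline is deliberate: it keeps each $A_i$ (for $i<m$) at complexity $\Sigma_1$ and each $\neg A_i$ at complexity $\Pi_1$, which is what makes the later arguments work with only consistency of the $T_n$ available.

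For $(iii)$ I would reason inside $T_{n+1}$ and show $\Pr_{T_n}(\neg A_i)\to\neg A_j$ for every $j\le m$; combined with the $I\Sigma_1$-provable $\bigvee_j A_j$ this yields $\neg\Pr_{T_n}(\neg A_i)$. The instance $j=i$ is just an instance of the uniform reflection schema $\Rfn(T_n)$, which $T_{n+1}$ proves by hypothesis. For $j\ne i$ one formalizes the walk inside $T_{n+1}$: a $T_n$-proof of $\neg A_i$ forces the pair $(n,i)$ to be processed, hence $f$ freezes at node $i$, contradicting ``eventual value $j$'' --- and everything except the reflection step is already $I\Sigma_1$-provable.

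For $(iv)$ I would internalize the same reasoning at $\mathbb{N}$, using the elementary fact that every consistent r.e.\ extension of $I\Sigma_1$ is $\Pi_1$-sound. If some $T_n$ proved $\neg A_i$ with $i<m$, then the first pair ever receiving a proof would make a false $\Pi_1$ sentence (``eventual value is not the node that pair jumps to'') provable in a consistent theory --- impossible. Hence no such pair exists, which is exactly $\mathbb{N}\vDash\neg\Pr_{T_n}(\neg A_i)$ for $i<m$; consequently $f$ never moves and stays at $m$, so $\mathbb{N}\vDash A_m$.

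The step I expect to be the main obstacle is the remaining case $i=m$ of $(iv)$ (and its formalized counterpart). Here $\neg A_m$ is $\Sigma_1$ rather than $\Pi_1$, so $\Pi_1$-soundness no longer suffices to rule out $T_n\vdash\neg A_m$ --- this is the exact analogue of the hardest point in Solovay's proof, namely that the walk must leave the ``artificial root'', which Solovay obtains from $\Sigma_1$-soundness of the base theory. Since here only consistency is assumed, the argument has to be squeezed out of the reflection hypotheses, reading $T_{n+1}\vdash\Rfn(T_n)$ as ``$T_{n+1}$ proves that $T_n$ is $\Sigma_1$-sound'' and propagating this carefully up the hierarchy; getting this bookkeeping exactly right, and in particular choosing the walk so that the complexity really is as claimed, is the delicate part. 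One should also bear in mind that a reflexive cluster is not a $\mathbf{GL}$-frame, so Solovay's completeness theorem cannot be quoted as a black box and the construction genuinely has to be done and checked by hand.
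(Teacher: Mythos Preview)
Your Solovay-walk approach has a gap in (iii) that is broader than the single hard case you flag. The claim ``a $T_n$-proof of $\neg A_i$ forces the pair $(n,i)$ to be processed, hence $f$ freezes at node $i$'' is not correct: the walk freezes at whichever node is associated to the \emph{first} pair receiving a proof, and that pair may be $(n'',j')$ with $j'\ne i$ and, crucially, with $n''>n$. Work in $T_{n+1}$, assume $\Pr_{T_n}(\neg A_i)$, and suppose the walk froze at some $j'\notin\{i,m\}$. From $A_{j'}$ and the walk definition you get $\Pr_{T_{n''}}(\neg A_{j'})$ for some $n''\le N$; since $A_{j'}$ is $\Sigma_1$, provable $\Sigma_1$-completeness yields $\neg\Cons(T_{n''})$. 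But $T_{n+1}$ proves $\Cons(T_{n''})$ only for $n''\le n$, so when $n''>n$ you cannot close the argument. Nothing in your design rules this out, because the walk must scan all levels $n'\le N$ for (iii) to have any chance at large $n$, yet the level of the \emph{triggering} theory is then not under control. The same obstruction reappears verbatim in the case $i=m$ after one application of reflection.

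The paper sidesteps this by not using a walk for the single-cluster case at all. It first refines the hierarchy to $\{T'_i\}_{i\le 2mN}$ with $T'_{2mn}=T_n$ and $T'_{i+1}\vdash\Cons(T'_i)$, and then takes each $A_r$ to be an explicit Boolean combination of sentences $\Cons(T'_s)$. The point of this design is that from $\Pr_{T_n}(\neg A_r)$ one can unwind, \emph{inside} $T_{n+1}=T'_{2m(n+1)}$, to an index $s<2m(n+1)$ with $T'_s\vdash\Cons(T'_s)$; since $s<2m(n+1)$, the theory $T_{n+1}$ already proves $\Cons(T'_s)$, and the formalized second incompleteness theorem finishes the contradiction. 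The construction is engineered precisely so that the ``problematic'' theory always sits strictly below level $n+1$, which is exactly what your walk cannot guarantee. The Solovay-style walk does enter the paper, but only later (Lemma~\ref{t5-4}) for the inter-cluster tree structure, with the present lemma handling each cluster as a black box.
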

\begin{proof}
First of all, we want to prove the following claim:\\

\textbf{Claim.} For any increasing reflexive hierarchy $\{T_n\}_{n=0}^{N}$ and any natural number $1 \leq p$, there is another increasing hierarchy $\{T'_n\}_{n=0}^{Np}$ such that for any $n \leq N$, $T'_{np}=T_n$ and for any $i \leq Np-1$, $T'_{i+1} \vdash \Cons(T'_i)$. Moreover, if all of the theories in the $T$ hierarchy are consistent, all of the theories in the $T'$ hierarchy will be consistent, as well.\\

To prove the claim, define $T'_i$ as follows: For $i=np$, define $T'_i=T_n$, then for the any $np \leq i < (n+1)p-1 $ define $T'_{i+1}$ inductively as the theory $T'_i+\Cons(T'_i)$. First of all, we want to show that for any $np \leq i < (n+1)p-1 $, $T'_{i+1} \subseteq T'_{(n+1)p}$ and also $T'_{(n+1)p}$ proves the reflection principle for $T'_{i+1}$. The proof is based on the induction on $i$. If $i=np$, we know that $T'_{(n+1)p}$ proves the consistency for $T'_{np}$, hence $T'_{np+1} \subseteq T'_{(n+1)p}$. Moreover, since $T'_{(n+1)p} \vdash \Cons(T'_{np})$, it is easy to check that $T'_{(n+1)p}$ can prove the reflection principle for $T'_{np+1}=T'_{np}+\Cons(T'_{np})$. Suppose that we have the claim for $i$, and we want to prove it for $i+1$. By IH, $T'_{(n+1)p}$ proves the reflection principle for $T'_{i}$, hence it proves the consistency of $T'_i$ and hence $T'_{i+1} \subseteq T'_{(n+1)p}$. Again, it is easy to show that since $T'_{(n+1)p} \vdash \Cons(T'_{i})$, $T'_{(n+1)p}$ also proves the reflection principle for  $T'_{i+1}=T'_{i}+\Cons(T'_{i})$.\\

We claim that for any $i$, $T'_i \subseteq T'_{i+1}$ and $T'_{i+1}$ proves the consistency of $T'_i$. The proof is based on two different cases of the definition of $T'_{i+1}$. If we are in the first case, then $i+1=(n+1)p$ for some $n$. Then by what we proved so far, the claim is obvious. If we are in the second case, then $T'_{i+1}=T'_i+\Cons(T'_i)$, and hence the claim is again obvious from the definition. \\ 
Moreover, if the first hierarchy is consistent, then since all $T'_i$'s are subtheories of $T'_{Np}=T_N$, the second hierarchy is consistent, as well.\\

It is time to prove the lemma. If $m=1$, pick $A_1=(0=0)$; then it is easy to verify that this sentence satisfies the conditions of the lemma. The reason is that $T_{n+1}$ proves the consistency of $T_n$ and hence $T_{n+1} \vdash \neg \Pr_n(0 \neq 0)$. Moreover, if all theories are consistent, then $\neg A_1$ is not provable in $T_n$.\\
Assume that $m>1$ and use the hierarchy $T$ from the assumption of the lemma, and also use the aforementioned construction to construct the hierarchy $T'$, for $p=2m$. We want to define the sentences $A_i$ based on this new hierarchy. Define 
\[
B_r=\bigvee_{k=1}^{N} (\Cons(T'_{2km-2r}) \wedge \neg \Cons(T'_{2km-2r+1})) 
\]
for $1 \leq r \leq m-1$. Define $A_1= B_1$ and $A_r=\bigwedge_{i=1}^{r-1} \neg B_i \wedge B_{r}$ for $2 \leq r \leq m-1$ and $A_m=\bigwedge_{i=1}^{m-1}\neg B_i$. We claim that these $A_i$'s have the property in the lemma. First of all, because of the form of $A_i$'s, it is obvious that any two different $A_i$ and $A_j$ are contradictory and also $\bigvee_{r=1}^{m}A_r$. In fact, these claims are first order tautologies and hence they are provable in $I\Sigma_1$. We want to show that 
\[
T'_{2(n+1)m} \vdash \neg \Pr_{T'_{2nm}}(\neg A_r)
\]
We will prove the cases $r \neq 1, m$, $r=1$ and $r=m$ separately. Assume $r \neq 1, m$. Let us argue in $I\Sigma_1$. If $\neg A_r$ is provable in $T'_{2nm}$, then by definition $\bigvee_{i=1}^{r-1}B_i \vee \neg B_{r}$ is provable in $T'_{2nm}$. From $B_t$, $t \leq r-1$, we could conclude 
\[
\bigvee_I (\Cons(T'_{2km - 2t})) \vee \bigvee_J (\neg \Cons(T'_{2km-2t+1}))
\]
where $I=\{k \mid 2km-2t+1 \geq 2nm+1 \}$ and $J=\{k \mid 2km-2t+1 < 2nm\}$. First of all, we know that $T'_{2nm}$ proves $\Cons(T'_{2km-2t+1})$ if $k \in J$. The reason is that if $k \in J$, then $2km-2t+1 < 2nm$ and since the consistency of any theory is provable in the higher theory in $T'$ hierarchy, we can prove the consistency of $T'_{2km-2t+1}$ in $T'_{2nm}$. Therefore, we can conclude that the following is provable in $T'_{2nm}$.
\[
\bigvee_I (\Cons(T'_{2km-2t})).
\]
On the other hand, we know that if $k \in I$, then $k \geq n+1$ because $2km-2t+1 \geq 2nm+1$ is impossible when $k \leq n$. Therefore, $2km-2t \geq 2(n+1)m-2t$. Moreover, $2(n+1)m-2t \geq 2(n+1)m-2(r-1)$ since $t \leq r-1$, and since the hierarchy is increasing, $\Cons(T'_{2km-2t})$ implies $\Cons(T'_{2(n+1)m-2(r-1)})$. Hence, $B_t$ implies $\Cons(T'_{2(n+1)m-2(r-1)})$. Furthermore, from 
\[
\neg B_r=\bigwedge_{k=1}^{N} (\Cons(T'_{2km-2r}) \rightarrow \Cons(T'_{2km-2r+1}))
\] 
we conclude 
\[
\Cons(T'_{2(n+1)m-2r}) \rightarrow \Cons(T'_{2(n+1)m-2r+1}).
\]
Therefore, we have
\[
T'_{2nm} \vdash (\Cons(T'_{2(n+1)m-2r}) \rightarrow \Cons(T'_{2(n+1)m-2r+1})) \vee \Cons(T'_{2(n+1)m-2(r-1)}).
\]
Hence
\[
T'_{2nm} + \Cons(T'_{2(n+1)m-2r}) \vdash \Cons(T'_{2(n+1)m-2r+1}) \vee \Cons(T'_{2(n+1)m-2(r-1)}).
\]
But we have $2(n+1)m-2r+1 \leq 2(n+1)m-2(r-1)$; therefore
\[
T'_{2nm} \vdash \Cons(T'_{2(n+1)m-2(r-1)}) \rightarrow \Cons(T'_{2(n+1)m-2r+1}).
\]
And hence
\[
T'_{2nm} + \Cons(T'_{2(n+1)m-2r}) \vdash \Cons(T'_{2(n+1)m-2r+1}).
\]
Since $r \leq m$, we have $2(n+1)m-2r+1 \geq 2nm$, therefore we have
\[
T'_{2(n+1)m-2r+1} \vdash \Cons(T'_{2(n+1)m-2r+1}).
\]
Note that all the parts of this argument is formalizable in $I\Sigma_1$. For the first time we want to use $T'_{2(n+1)m}$ to reach the contradiction. Since $1 \leq r$, then $2(n+1)m-2r+1 < 2(n+1)m$, hence the consistency of $T'_{2(n+1)m-2r+1}$ is provable in $T'_{2(n+1)m}$. Therefore, since we are arguing in $T'_{2(n+1)m}$, we have the consistency of $T'_{2(n+1)m-2r+1}$. On the other hand, we showed
\[
\Pr_{T'_{2(n+1)m-2r+1}}(\Cons(T'_{2(n+1)m-2r+1})).
\]
By the formalized version of the second incompleteness theorem in $I\Sigma_1$, we know that if a theory proves its own consistency it is inconsistent; hence $T'_{2(n+1)m-2r+1}$ is inconsistent. A contradiction. Therefore, $T'_{2(n+1)m}$ could show that $\neg A_r$ is not provable in $T'_{2nm}$. \\

Note that the proof uses the form of $\neg A_r$ which has some positive $B_t$'s and one negative $B_r$. But Now if we are in the cases $r=1$ or $r=m$, then $\neg A_r$ has just positive $B_t$'s or just negative $B_t$'s. In these cases it is enough to use the part of the proof which investigates the corresponding $B_t$'s. Again argue in $I\Sigma_1$. For the case, $r=1$, if $T'_{2nm}$ proves $\neg A_1$, then $T'_{2nm}$ proves $\neg B_1$. Therefore,
\[
T'_{2nm} \vdash \bigwedge_{k=1}^{N} (\Cons(T'_{2km-2}) \rightarrow \Cons(T'_{2km-1})).
\]
Hence
\[
T'_{2nm} \vdash (\Cons(T'_{2(n+1)m-2}) \rightarrow \Cons(T'_{2(n+1)m-1})).
\]
Since $m \geq 1$, we have $2(n+1)m-1 \geq 2nm$ and hence
\[
T'_{2(n+1)m-1} \vdash (\Cons(T'_{2(n+1)m-2}) \rightarrow \Cons(T'_{2(n+1)m-1}))
\]
 and then since $2(n+1)m-1 > 2(n+1)m-2$, we have 
\[
T'_{2(n+1)m-1} \vdash \Cons(T'_{2(n+1)m-1}).
\]
Argue in $T'_{2(n+1)m}$. We have the consistency of $T'_{2(n+1)m-1}$. On the other hand, $T'_{2(n+1)m-1}$ proves its own consistency, hence by the formalized second incompleteness theorem, it should be inconsistent. A Contradiction. Therefore, $T'_{2(n+1)m}$ proves that $\neg A_1$ is not provable in $T'_{2nm}$.\\

For the proof of the case $r=m$, use the idea of $I$ and $J$ for positive $B_t$'s. It is enough to use $I$ and $J$, to show that if $\neg A_m$ is provable in $T'_{2nm}$, then $\Cons(T'_{2(n+1)m-2(m-1)})$ will be provable in $T'_{2(n+1)m-2(m-1)}$. After that, reaching a contradiction is the same as for the other cases.\\ 

Since $T'_{2nm}=T_{n}$, we have a proof for the part $(iii)$. For $(iv)$, if the hierarchy $T$ is consistent, then the hierarchy $T'$ is also consistent and hence if $\neg A_r$ is provable in $T'_{2nm}$ then we have 
\[
T'_{2(n+1)m-2r+1} \vdash \Cons(T'_{2(n+1)m-2r+1})
\] 
for cases $1<r<m$, and
\[
T'_{2(n+1)m-1} \vdash \Cons(T'_{2(n+1)m-1})
\]
for $r=1$,
and 
\[
T'_{2(n+1)m-2(m-1)} \vdash \Cons(T'_{2(n+1)m-2(m-1)})
\]
for $r=m$.
consider that the arguments for these statements are formalizable in $I\Sigma_1$ and hence they are true. For $1<r<m$, by the second incompleteness theorem, $T'_{2(n+1)m-2r+1}$ should be inconsistent. A contradiction. Therefore, $T'_{2nm}$ can not prove $\neg A_r$ and hence $T_n \nvdash \neg A_r$. The cases $r=1,m$ are similar. For the second part of $(iv)$, note that we know $A_m=\bigwedge_{r=1}^{m}\neg B_r$. We want to show that all $B_r$'s are false. We have
\[
B_r= \bigvee_{k=1}^{N} ( \Cons(T'_{2km-2r}) \wedge \neg \Cons(T'_{2km-2r+1})) 
\]
and since the whole $T'$ hierarchy is consistent, all statements $( \Cons(T'_{2km-2r}) \wedge \neg \Cons(T'_{2km-2r+1})) $ are false and hence $B_r$ is false. Then $\neg B_r$ is true and hence $A_m$ is true.
\end{proof}
\begin{lem}\label{t5-4}
Let $(K, R)$ be a finite reflexive transitive tree with clusters and let $k$ be one of the nodes in the root cluster. Moreover, let $(\mathbb{N}, \{T_n\}_{n=0}^{N})$ be a reflexive provability model. Then there exists a set of arithmetical sentences $\{S_i\}_{i \in K}$ such that
\begin{itemize}
\item[$(i)$]
If $i \neq j$, $T_0 \vdash S_i \rightarrow \neg S_j$.
\item[$(ii)$]
$T_{n+1} \vdash S_i \rightarrow \Pr_{n}(\bigvee_{(i, j) \in R} S_j)$.
\item[$(iii)$]
If $(i, j) \in R$ then $T_{n+1} \vdash S_i \rightarrow \neg \Pr_{n}(\neg S_j)$.
\item[$(iv)$]
$\mathbb{N} \vDash S_k$.
\end{itemize}
\end{lem}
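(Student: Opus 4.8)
The plan is to adapt Solovay's arithmetical fixed-point construction to the stratified setting, letting it handle the tree structure \emph{between} clusters while Lemma~\ref{t5-3} handles the internal structure \emph{of} each cluster.

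First I would pass to the cluster tree. Let $\widehat K$ be the set of clusters of $(K,R)$ ordered by the strict tree order $\widehat R$ induced by $R$; since $K$ is finite this is a finite, irreflexive, transitive, conversely well-founded tree with root cluster $\widehat k \ni k$, and (as the application to completeness arranges) one may take $N$ at least the $\widehat R$-height of $\widehat K$. For each cluster $C$ apply Lemma~\ref{t5-3} to $m_C=|C|$ and the given hierarchy to get sentences $A^C_1,\dots,A^C_{m_C}$ that are pairwise contradictory and jointly exhaustive over $I\Sigma_1$, with $T_{n+1}\vdash\neg\Pr_{T_n}(\neg A^C_j)$ and, in $\mathbb N$, $\mathbb N\vDash\neg\Pr_{T_n}(\neg A^C_j)$ and $\mathbb N\vDash A^C_{m_C}$; clause $(iv)$ of Lemma~\ref{t5-3} is available because a reflexive provability model has every $T_n$ sound in $\mathbb N$, hence consistent. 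Fix bijections $C=\{c^C_1,\dots,c^C_{m_C}\}$, choosing in the root cluster the node $k$ to correspond to index $m_{\widehat k}$, i.e.\ to the sentence that Lemma~\ref{t5-3}$(iv)$ makes true in $\mathbb N$.

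Second I would run a Solovay-style construction over $(\widehat K,\widehat R)$, replacing the single ambient theory of the classical argument (\cite{So}) by the hierarchy. Via the recursion theorem, define a primitive recursive ``climbing'' function $h$ that starts in $\widehat k$ and whose passage from hierarchy level $n$ to level $n+1$ may move to an $\widehat R$-successor cluster or stay put, the move being triggered by a $\Pr_{T_n}$-proof of the relevant Solovay sentence just as in Solovay's construction, with one hierarchy step playing the role of one more unit of proof search. Let $\widehat S_C$ assert that $h$ stabilises at $C$. Carried out formalizably in $I\Sigma_1$ exactly as in the proof of Lemma~\ref{t5-3}, the classical analysis then yields: the $\widehat S_C$ are pairwise exclusive over $I\Sigma_1$; $T_{n+1}\vdash\widehat S_C\to\Pr_{T_n}\bigl(\bigvee\{\widehat S_D : C=D \text{ or } C\widehat R D\}\bigr)$; for $C=D$ or $C\widehat R D$, $T_{n+1}\vdash\widehat S_C\to\neg\Pr_{T_n}(\neg\widehat S_D)$; and $\mathbb N\vDash\widehat S_{\widehat k}$. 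Two points differ from the irreflexive case. The step that classically reads ``and $T$ sees one more stage of its own search'' becomes ``$T_{n+1}$ proves, via $\Rfn(T_n)$, one more stage of the level-$n$ search'', which is exactly where the model hypothesis $T_{n+1}\vdash\Rfn(T_n)$ enters; and the disjunct $C=D$ survives because the hierarchy is \emph{strictly} increasing, so $T_n$ cannot decide whether $h$ has already halted at $C$ or will climb further.

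Finally, set $S_i:=\widehat S_{C(i)}\wedge A^{C(i)}_{j(i)}$, where $C(i)$ is the cluster of $i$ and $A^{C(i)}_{j(i)}$ the sentence assigned to $i$ above, and verify $(i)$--$(iv)$. Clause $(i)$ splits into distinct-cluster nodes (handled by $T_0\supseteq I\Sigma_1$ and exclusivity of the $\widehat S_C$) and same-cluster nodes (handled by Lemma~\ref{t5-3}$(i)$); clause $(iv)$ follows from $\mathbb N\vDash\widehat S_{\widehat k}$, $\mathbb N\vDash A^{\widehat k}_{m_{\widehat k}}$ and the choice of $k$; clause $(ii)$ follows from the disjunction property of the $\widehat S_C$ and the $I\Sigma_1$-provable exhaustiveness of each family $\{A^D_1,\dots,A^D_{m_D}\}$, once one notes that the $R$-successors of $i$ are precisely the nodes of all clusters $D$ with $C(i)=D$ or $C(i)\widehat R D$; and clause $(iii)$ should come from $T_{n+1}\vdash\widehat S_{C(i)}\to\neg\Pr_{T_n}(\neg\widehat S_{C(j)})$ together with the non-refutability of $A^{C(j)}_{j(j)}$ supplied by Lemma~\ref{t5-3}. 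I expect this last point to be the main obstacle: since $\neg S_j$ is the \emph{disjunction} $\neg\widehat S_{C(j)}\vee\neg A^{C(j)}_{j(j)}$, one needs $T_n$ not to prove it even though $T_n$ may refute neither disjunct alone, so the two fixed points cannot simply be conjoined — they must be coordinated, e.g.\ by feeding Lemma~\ref{t5-3} a refined sub-hierarchy disjoint from the ``fuel'' consumed by $h$, or by folding the choice of node-within-a-cluster directly into the definition of $h$ (so $h$ takes values in $K$ but only changes node when it climbs to a strictly higher cluster, and $S_i$ just says $h$ stabilises at $i$). One must also check that a full climb of $\widehat K$ fits inside the $N$ hierarchy levels. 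Everything else is routine propositional bookkeeping on top of the Solovay properties.
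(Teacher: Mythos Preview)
You have the right architecture---Solovay between clusters, Lemma~\ref{t5-3} within them, then conjoin---and you correctly isolate the genuine obstacle in clause~$(iii)$: from $\neg\Pr_{T_n}(\neg\widehat S_{C(j)})$ and $\neg\Pr_{T_n}(\neg A^{C(j)}_{j(j)})$ one cannot in general conclude $\neg\Pr_{T_n}(\neg\widehat S_{C(j)}\vee\neg A^{C(j)}_{j(j)})$. But neither of your proposed fixes works, and the actual fix is different from both.

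Your second suggestion (let $h$ take values in $K$, move only when climbing to a strictly higher cluster, and set $S_i=$ ``$h$ stabilises at $i$'') fails immediately: such an $h$ visits at most one node per cluster, so for distinct $i,j$ in the same cluster the sentence $\neg S_j$ is outright provable and $(iii)$ is violated. Your first suggestion (a ``disjoint sub-hierarchy'') is too vague, and in any case the point is not to separate the fuel but to \emph{couple} the two constructions.

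What the paper does is apply Lemma~\ref{t5-3} not to the original hierarchy $\{T_n\}$ but to the \emph{relativised} hierarchy $\{T_n+P_I\}_{n=0}^N$, where $P_I$ is the Solovay sentence ``$h$ eventually stays inside the cluster $I$''. Since reflection relativises, this hierarchy is still reflexive, so Lemma~\ref{t5-3} applies and produces $A_j$'s with $T_{n+1}+P_I\vdash\neg\Pr_{T_n+P_I}(\neg A_j)$. Now set $S_j=P_{I(j)}\wedge A_j$. For $i,j$ in the same cluster $I$, $\Pr_{T_n}(\neg S_j)=\Pr_{T_n}(P_I\to\neg A_j)=\Pr_{T_n+P_I}(\neg A_j)$, and Lemma~\ref{t5-3} gives exactly $T_{n+1}+P_I\vdash\neg\Pr_{T_n+P_I}(\neg A_j)$, i.e.\ $T_{n+1}\vdash S_i\to\neg\Pr_{T_n}(\neg S_j)$. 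This is the missing idea: the intra-cluster sentences must be built relative to the inter-cluster limit sentence, and since $P_I$ depends on $h$ which depends on the $S_j$ which depend on the $A_j$ which depend on $P_I$, the whole construction is one simultaneous fixed point (the $A_j$'s of Lemma~\ref{t5-3} are explicit arithmetical formulas in the code of the hierarchy, hence in the code of $P_I$, hence in the code of $h$, so the diagonal lemma still applies).

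A secondary point: your Solovay function is not the paper's. The paper takes $h:\mathbb N\to K$ (not $\widehat K$) with the usual trigger $h(x+1)=j$ iff $(h(x),j)\in R$ and $\Prf_{T_N}(x,\neg S_j)$, using the \emph{top} theory $T_N$ for the proof search; the hierarchy levels enter only through the properties proved about $h$, not through its definition. Your version, where ``one hierarchy step plays the role of one more unit of proof search'', conflates the time index of $h$ with the hierarchy index and leaves $h$ a finite function, for which ``stabilises'' is not the right notion.
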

\begin{proof}
Define a primitive recursive function $h: \mathbb{N} \to K$ similar to the $h$ function in the Solovay's proof of the completeness of $\mathbf{GL}$.
\[
h(0)=k \; \text{and} \; h(x+1)=
\begin{cases}
j & \text{if $(i, j) \in R$ \; \text{and}\; $\Prf_N(x, \neg S_j)$}\\
h(x) & \text{otherwise}
\end{cases}
\]
where $S_j=  P_{I(j)} \wedge A_j \wedge j=j$ and $P_{I(j)}= \exists y \forall x \geq y \; h(x) \in I(j)$ in which $I(j)$ means the cluster of $j$. Moreover, $A_j$'s are the sentences constructed in Lemma \ref{t5-3} for $m=Card(I(j))$ and the hierarchy $\{T_n+P_{I(j)}\}_{n=0}^{N}$. In addition, we choose $A_k$ as the sentence $A_m$ from Lemma \ref{t5-3}. By these sentences, we mean the sentences from the proof of Lemma \ref{t5-3}, and not what the lemma claims. The reason is that we have to be sure that these sentences are definable from the code of the function $h$ which has not been defined yet. The reason is the following:\\

The function $h$ should be defined based on the classical circular argument based on the fixed point lemma in $I\Sigma_1$. The important part is that the $A_j$'s constructed in Lemma \ref{t5-3} are arithmetical formulas based on the code of $P_{I(j)}$, which makes the whole circular argument possible. It is provable in $I\Sigma_1$ that $h$ is a function. (Note that we put $j=j$ in the definition of $S_j$ to make sure that there is at most one $j$ such that $x$ would be a proof for $\neg S_j$ and it makes $h$ a function.) It is also provable that $h$ eventually stops in some cluster and since $h$ is a function, this cluster is unique. The existence of such cluster is an obvious application of the fact that $h$ is an increasing function and the tree is finite. Note that all of these facts are provable in $I\Sigma_1$. To prove $(i)$, consider two cases. If $i$ and $j$ belong to different clusters, then $P_{I(i)}$ and $P_{I(j)}$ are contradictory based on what we claimed about the uniqueness of the limit cluster. This contradiction is also provable in $I\Sigma_1$ and hence in $T_0$. If $i$ and $j$ belong to the same cluster, then by Lemma \ref{t5-3}, we know that $A_i$ and $A_j$ are contradictory, provable in $I\Sigma_1$, and hence we reach a contradiction for $S_i \wedge S_j$ in $T_0$. For $(ii)$, we argue in $T_{n+1}$. If we have $S_i$, then we have $P_{I(i)}$ and there exists $x$ such that $h(x) \in I(i)$. Since this formula is $\Sigma_1$, by $\Sigma_1$-completeness we have $\Pr_n(h(x) \in I(i))$. Moreover, $h$ is provably increasing in $I\Sigma_1$ and hence in $T_n$, and also provably in $I\Sigma_1$ we know that $h$ eventually stops in some cluster, i.e. $\Pr_n(\bigvee_{J}P_J)$. But we have $\Pr_n(h(x) \in I(i))$. Therefore, the limit should be above $i$ which means $\Pr_{n}(\bigvee_{(i, j) \in R} P_{I(j)})$. On the other hand, by Lemma \ref{t5-3} we know that $I\Sigma_1 \vdash \bigvee_{i \in I} A_i$, and we can conclude that $\Pr_n(\bigvee_{(i, j) \in R} P_{I(j)} \wedge A_j)$, hence $\bigvee_{(i, j) \in R} S_j$.\\

For $(iii)$, we will argue in $T_{n+1}$ and the proof is by contradiction. If we have $S_i$ and $\Pr_{n}( \neg S_j)$ for some $j$ which $(i, j) \in R$, then there are two possibilities. First, when the clusters of $i$ and $j$ are different. We have $S_i=P_{I(i)} \wedge A_i$, hence we have $P_{I(i)}$ which means that there is some number $z$, such that for all $y \geq z$, $h(y) \in I(i)$. Moreover, we know that $\Pr_{n}(\neg S_j)$ and since $T_n \subseteq T_N$, we have $\Pr_{N}( \neg S_j)$. Therefore, there exists some $x$ such that $\Prf_{N}(x, \neg S_j)$. It is easy to see that we can pick $x \geq z$. Hence, we can conclude that $h(x+1) \in I(i)$. Since $(i, j) \in R$, $j$ is above all nodes in $I(i)$ and $\Prf_{N}(x, \neg S_j)$, hence $h(x+1)=j$. But $h(x+1)$ should belong to $I(i)$ and $j \notin I(i)$; a contradiction. Therefore, $\neg \Pr_{n}(\neg S_j)$.\\ 
Assume that the cluster of $i$ and $j$ is $I$. Then the statement $S_i \rightarrow \Pr_n(\neg S_j)$ is equivalent to
\[
P_I \wedge A_i \rightarrow \Pr_n(P_I \rightarrow \neg A_j).
\]
Since $\{T_n\}_{n=0}^{N}$ is a reflexive hierarchy, the hierarchy $\{T_n+P_I\}_{n=0}^{N}$ is also reflexive. Moreover, $A_t$'s are constructed for this hierarchy, hence by Lemma \ref{t5-3}, we know that
\[
T_{n+1}+P_I \vdash \neg \Pr_{T_n+P_I}(\neg A_j)
\]
which proves what we wanted.\\

For $(iv)$, since $h$ eventually stops in some cluster, there is a cluster $I$, such that $\mathbb{N} \vDash P_I$. If $I \neq I(k)$, since $h(0)=k$, there should be some first element $x$, such that $h(x) \in I$. Assume $h(x)=i$. Since $x\neq 0$, and $h(x)\neq h(x-1)$, we have $\Prf_N(x-1, \neg S_i)$ and hence, $\Pr_N(P_I \rightarrow \neg A_i)$. By Lemma \ref{t5-3}, the theory $T_N+P_I$ should be inconsistent, and therefore we have $T_N \vdash \neg P_I$. On the other hand, the theory $T_N$ is sound, hence $\mathbb{N} \vDash \neg P_I$ which contradicts to our assumption. Hence, $I=I(k)$ and therefore, $\mathbb{N} \vDash P_{I(k)}$. On the other hand, $T_N+P_{I(k)}$ is consistent because it is sound, and consequently by Lemma \ref{t5-3}, $A_k$ which was chosen to be the $A_m$ from the lemma, is true; hence $S_k=P_{I(k)} \wedge A_k$ is true.  
\end{proof}
The following lemma, uses the previous lemma to transfer the truth from a Kripke model to a reflexive provability model.
\begin{lem}\label{t5-5}
Assume the conditions of Lemma \ref{t5-4} and let $\{S_i\}_{i \in K}$ be defined as in that lemma. Define $\sigma$ as the arithmetical substitution  which sends the atom $p$ to $\bigvee_{i \vDash p}S_i$. For any $i \in K$, any modal formula $A$ and any witness $w$ for $A$ with elements less than $N$, we have:
\[
\begin{cases}
T_{max(w)+1} \vdash S_i \rightarrow A^{\sigma}(w) & \text{if} \; i \vDash A\\
T_{max(w)+1} \vdash S_i \rightarrow \neg A^{\sigma}(w) & \text{if} \; i \nvDash A
\end{cases}
\]
\end{lem}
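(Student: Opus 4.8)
The plan is to prove the two clauses simultaneously by structural induction on the modal formula $A$; this is the Solovay-style ``truth lemma'', adapted so that the context number $\max(w)+1$ tracks the layer of the hierarchy in which the equivalence is proved. The only ambient facts I will need are: the hierarchy is increasing, so $T_k \subseteq T_{k'}$ whenever $k \leq k'$ (and this inclusion is true in $\mathbb{N}$); $\Sigma_1$-completeness is available provably in $I\Sigma_1$; and $\Pr_{T_n}$ commutes with implication provably in $I\Sigma_1$. Properties $(i)$, $(ii)$, $(iii)$ of the sentences $S_i$ from Lemma \ref{t5-4} will carry the modal step.

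For an atom $p$ the translation is $p^{\sigma}=\bigvee_{j \vDash p}S_j$ and the witness is empty: if $i \vDash p$ then $S_i$ is literally a disjunct, so $T_0 \vdash S_i \rightarrow p^{\sigma}$; if $i \nvDash p$ then $i \neq j$ for every $j \vDash p$, so Lemma \ref{t5-4}$(i)$ gives $T_0 \vdash S_i \rightarrow \neg S_j$ for each such $j$, whence $T_0 \vdash S_i \rightarrow \neg p^{\sigma}$. Since $T_0$ sits inside every $T_n$, this suffices. The Boolean cases are routine bookkeeping: for a binary connective split the witness as $w=(w_1,w_2)$ (keep $w$ itself for negation), apply the induction hypothesis to the immediate subformulas with $w_1,w_2$, note $\max(w) \geq \max(w_1),\max(w_2)$ so both sub-conclusions already live in $T_{\max(w)+1}$, and finish by pure propositional logic — for instance for $B \rightarrow C$ with $i \vDash B \rightarrow C$ one cases on whether $i \vDash B$, using the negative clause for $B$ or the positive clause for $C$.

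The modal case $A=\Box B$ is the heart of the matter. Here the witness is $w=(n,w')$ with $n>\max(w')$, so $\max(w)=n$ and, since all entries of $w$ are $<N$, also $n+1 \leq N$; moreover $A^{\sigma}(w)=\Pr_{T_n}(B^{\sigma}(w'))$. If $i \vDash \Box B$ then $j \vDash B$ for every $j$ with $(i,j)\in R$; the induction hypothesis gives $T_{\max(w')+1} \vdash S_j \rightarrow B^{\sigma}(w')$, and as $\max(w')+1 \leq n$ this yields the true $\Sigma_1$ fact $T_n \vdash (\bigvee_{(i,j)\in R}S_j) \rightarrow B^{\sigma}(w')$; internalizing it in $I\Sigma_1$ under $\Pr_{T_n}$, commuting with implication, and plugging in Lemma \ref{t5-4}$(ii)$ ($T_{n+1} \vdash S_i \rightarrow \Pr_{T_n}(\bigvee_{(i,j)\in R}S_j)$) gives $T_{n+1} \vdash S_i \rightarrow \Pr_{T_n}(B^{\sigma}(w'))$. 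If $i \nvDash \Box B$, pick $j$ with $(i,j)\in R$ and $j \nvDash B$; now $T_n \vdash S_j \rightarrow \neg B^{\sigma}(w')$ by the induction hypothesis, so $I\Sigma_1 \vdash \Pr_{T_n}(B^{\sigma}(w')) \rightarrow \Pr_{T_n}(\neg S_j)$, and combining with Lemma \ref{t5-4}$(iii)$ ($T_{n+1} \vdash S_i \rightarrow \neg\Pr_{T_n}(\neg S_j)$) yields $T_{n+1} \vdash S_i \rightarrow \neg\Pr_{T_n}(B^{\sigma}(w'))$. I expect the main friction to be exactly here, and purely of a bookkeeping nature: keeping the context number in step with the inductive descent so that every theory invoked genuinely lies in $\{T_n\}_{n=0}^{N}$ (the inequalities $\max(w')+1 \leq n \leq N-1$ must hold throughout), and making sure each ``provably in $I\Sigma_1$'' move is applied to an already-established \emph{true} $\Sigma_1$ provability statement coming from the induction hypothesis, not to something known only semantically.
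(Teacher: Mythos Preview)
Your proposal is correct and follows essentially the same approach as the paper's own proof: structural induction on $A$, with the atomic case handled by Lemma~\ref{t5-4}$(i)$, the Boolean cases by routine propositional reasoning inside $T_{\max(w)+1}$, and the modal case by combining the induction hypothesis (lifted from $T_{\max(w')+1}$ to $T_n$ via the inclusion), $\Sigma_1$-completeness in $I\Sigma_1$, and Lemma~\ref{t5-4}$(ii)$, $(iii)$. Your explicit tracking of the inequalities $\max(w')+1\leq n$ and $n+1\leq N$ is, if anything, slightly more careful than the paper's own presentation.
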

\begin{proof}
We prove the lemma by induction on $A$. If $A$ is an atom and $i \vDash A$, then by the definition we have $T_0 \vdash S_i \rightarrow A^{\sigma}$. If $i \nvDash A$ then all $j$'s in $A^{\sigma}=\bigvee_{j \vDash A}S_j$ are different from $i$, and by $(i)$ in Lemma \ref{t5-4}, we conclude $T_0 \vdash S_i \rightarrow \neg A^{\sigma}$. The proof for the boolean cases is easy. For the modal case, if $i \vDash \Box B$, then for all $j$ which $(i, j) \in R$, we have $j \vDash B$. Since $w$ is a witness for $\Box B$, it is equal to $(n, u)$ where $n$ is greater than all the numbers in $u$. Therefore by IH, $T_{max(u)+1} \vdash S_j \rightarrow B^{\sigma}(u)$ for all $j$ above $i$. Hence,
\[
T_{max(u)+1} \vdash \bigvee_{(i, j) \in R} S_j \rightarrow B^{\sigma}(u).
\]
Since $n\geq max(u)+1$, we have
\[
T_n \vdash \bigvee_{(i, j) \in R} S_j \rightarrow B^{\sigma}(u).
\] 
Then
\[
I\Sigma_1 \vdash \Pr_n(\bigvee_{(i, j) \in R} S_j \rightarrow B^{\sigma}(u)),
\]
and consequently, 
\[
I\Sigma_1 \vdash \Pr_n(\bigvee_{(i, j) \in R} S_j) \rightarrow \Pr_n(B^{\sigma}(u)).
\]
By $(ii)$ in Lemma \ref{t5-4}, we have 
\[
T_{n+1} \vdash S_i \rightarrow \Pr_n(B^{\sigma}(u)),
\]
and $n=max(w)$. Thus, the proof for this case is finished.\\

If $i \nvDash \Box B$, then there exists $j$ which $(i, j) \in R$ and $j \nvDash B$. Again we have $w=(n, u)$, such that $n$ is greater than all the numbers in $u$. By IH, $T_{max(u)+1} \vdash S_j \rightarrow \neg B^{\sigma}(u)$. Since $n \geq max(u)+1$,
\[
T_n \vdash S_j \rightarrow \neg B^{\sigma}(u)
\]
and
\[
I\Sigma_1 \vdash \Pr_n(B^{\sigma}(u) \rightarrow \neg S_j )
\]
and then
\[
I\Sigma_1 \vdash \neg \Pr_n(\neg S_j) \rightarrow \neg \Pr_n(B^{\sigma}(u))
\]
and by $(iii)$ in Lemma \ref{t5-4}, we have
\[
T_{n+1} \vdash S_i \rightarrow \neg \Pr_n(B^{\sigma}(u))
\]
and again since $n=max(w)$, the proof is complete.
\end{proof}
We state and prove the completeness theorem.
\begin{thm}\label{t5-6}(Completeness)
Let $(\mathbb{N}, \{T_n\}_{n=0}^{\infty})$ be a reflexive provability model. If $(\mathbb{N}, \{T_n\}_{n=0}^{\infty}) \vDash A$, then $\mathbf{S4} \vdash A$. Therefore, 
if $\mathbf{Ref} \vDash A$, we have $\mathbf{S4} \vdash A$.
\end{thm}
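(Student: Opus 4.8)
The plan is to prove the contrapositive of the first assertion: assuming $\mathbf{S4} \nvdash A$, I will show $(\mathbb{N}, \{T_n\}_{n=0}^{\infty}) \nvDash A$ for every reflexive provability model. By Definition \ref{t2-9} and the remark following it, refuting $(\mathbb{N}, \{T_n\}_{n=0}^{\infty}) \vDash A$ amounts to showing that for \emph{every} finite collection of expansions $B_1, \ldots, B_k \in E(A)$ and \emph{every} witness $w$ for $C := \bigvee_{i=1}^{k} B_i$, there is an arithmetical substitution $\sigma$ with $\mathbb{N} \nvDash C^{\sigma}(w)$. So I fix such a $C$ and $w$ first, and only then choose a natural number $N$ strictly greater than every number occurring in $w$ (this ordering of choices matters for the last lemma).

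Next I pass from the syntactic assumption to a Kripke counter-model. In $\mathbf{S4}$ every expansion of a formula is provably equivalent to that formula — an easy induction using axiom $\mathbf{K}$ and necessitation, exactly as already noted for $\mathbf{K4}$ — so $\mathbf{S4} \vdash C \leftrightarrow A$ and hence $\mathbf{S4} \nvdash C$. By the finite model property of $\mathbf{S4}$ (completeness with respect to finite reflexive transitive frames) together with the standard unravelling of such a frame into a finite reflexive transitive tree with clusters, there is a finite reflexive transitive tree with clusters $(K, R)$ and a node $k$ in its root cluster with $k \nvDash C$.

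Now I invoke the Solovay-style machinery built in the previous lemmas. Truncating the hierarchy at $N$ gives a reflexive provability model $(\mathbb{N}, \{T_n\}_{n=0}^{N})$, so Lemma \ref{t5-4} supplies sentences $\{S_i\}_{i \in K}$ with properties $(i)$--$(iv)$; in particular $\mathbb{N} \vDash S_k$. Let $\sigma$ be the substitution $p \mapsto \bigvee_{i \vDash p} S_i$ of Lemma \ref{t5-5}. Since $k \nvDash C$ and every entry of $w$ is less than $N$, Lemma \ref{t5-5} yields $T_{\max(w)+1} \vdash S_k \rightarrow \neg C^{\sigma}(w)$. Because each $T_n$ is sound in $\mathbb{N}$ (this is part of being a reflexive provability model), it follows that $\mathbb{N} \vDash S_k \rightarrow \neg C^{\sigma}(w)$, and combined with $\mathbb{N} \vDash S_k$ this gives $\mathbb{N} \vDash \neg C^{\sigma}(w)$, which is exactly the required substitution. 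This establishes the first assertion; the final claim follows since $\mathbf{Ref} \vDash A$ forces $(\mathbb{N}, \{T_n\}_{n=0}^{\infty}) \vDash A$ for the concrete reflexive provability model with $T_0 = \PA$ and $T_{n+1} = T_n + \Rfn(T_n)$.

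Regarding where the difficulty lies: the substantive work has already been packaged into Lemmas \ref{t5-3}--\ref{t5-5}, so what remains here is assembly. The only delicate point is the bookkeeping with expansions and witnesses: one must quantify over an arbitrary $C$ and $w$, pick $N$ afterwards, and verify that a finite disjunction of $\mathbf{S4}$-equivalent expansions is itself $\mathbf{S4}$-equivalent to $A$, so that the Kripke counter-model for $A$ transfers to a counter-model for $C$ and the hypothesis of Lemma \ref{t5-5} (witness entries below $N$) is met.
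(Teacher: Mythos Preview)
Your proposal is correct and follows essentially the same route as the paper's proof: both reduce to a Kripke counter-model for $C=\bigvee B_i$ via the $\mathbf{S4}$-equivalence of expansions, then invoke Lemmas~\ref{t5-4} and~\ref{t5-5} with a bound $N$ exceeding the witness entries to produce a substitution $\sigma$ with $\mathbb{N}\vDash\neg C^{\sigma}(w)$. The only cosmetic difference is that you phrase it as a contrapositive and name a concrete reflexive model for the final clause, whereas the paper argues by contradiction and merely observes that some $(\mathbb{N},\{T_n\})$ model exists in $\mathbf{Ref}$.
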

\begin{proof}
Since $(\mathbb{N}, \{T_n\}_{n=0}^{\infty}) \vDash A$, there are expansions $B_1, \ldots, B_k$ of $A$ and witnesses $w_1, \ldots, w_k$ such that for all arithmetical substitutions $\sigma$, we have $\mathbb{N} \vDash \bigvee_{i=0}^{k}B_i^{\sigma}(w_i)$. Define $C=\bigvee_{i=0}^{k}B_i$ and $w=(w_i)_{i=0}^{k}$. Therefore, we know that $w$ is a witness for $C$ in $(\mathbb{N}, \{T_n\}_{n=0}^{\infty})$. We claim that $\mathbf{S4} \vdash C$. Pick $N$ greater than all the numbers in $w$. If $\mathbf{S4} \nvdash C$ then there exists a finite reflexive transitive tree with clusters $(K, R, V)$, such that in one of the nodes in the root cluster (say $k$), $C$ is false. Then by Lemmas \ref{t5-4} and \ref{t5-5}, we can construct an arithmetical substitution, such that $T_{max(w)+1} \vdash S_k \rightarrow \neg C^{\sigma}(w)$. Since the model is a reflexive provability model, all $T_m$'s are sound and hence $\mathbb{N} \vDash S_k \rightarrow \neg C^{\sigma}(w)$. But by Lemma \ref{t5-4} we know that $\mathbb{N} \vDash S_k$, thus $\mathbb{N} \vDash \neg C^{\sigma}(w)$, which contradicts with the assumption $\mathbb{N} \vDash C^{\sigma}(w)$. Therefore, $\mathbf{S4} \vdash C$. And finally, since in the presence of the axiom $\mathbf{K}$, all the expansions of a formula are equivalent to the formula itself, we have $\mathbf{S4} \vdash A$.\\
For the second part of the theorem, it is easy to verify that if $\mathbf{Ref} \vDash A$, then at least for one of the provability models $(\mathbb{N}, \{T_n\}_{n=0}^{\infty})$ we have $(\mathbb{N}, \{T_n\}_{n=0}^{\infty}) \vDash A$. And then the claim follows from the first part.
\end{proof}
\subsection{Uniform and Strong Completeness}
In this subsection we will strengthen the completeness theorem of the last subsection to a more strong version of uniform strong completeness theorem. The proof will be just the uniform version of the previous completeness proof. Therefore, first of all we need a uniform version of Lemma \ref{t5-3}.
\begin{dfn}\label{t5-7}
A hierarchy $\{T_n\}_{n=0}^{\infty}$ of theories is called uniform if there exists a $\Sigma_1$ formula $\Prf(x, y, z)$ such that for any $n$, $m$ and $A$, $\Prf(n, m, \lceil A \rceil)$ iff $m$ is a code of a proof for $A$ in $T_n$. The hierarchy is called uniformly increasing if it is a uniform hierarchy and also we have $I\Sigma_1 \subseteq T_0$ provably in $I\Sigma_1$ and $I\Sigma_1 \vdash \forall x \forall z (\exists y \; \Prf(x, y, z) \rightarrow \exists w \; \Prf(x+1, w, z))$. And finally it is called uniformly reflexive hierarchy if it is a uniformly increasing hierarchy such that for any formula $A$, $I\Sigma_1 \vdash \forall x  \exists y \; \Prf(x+1, y, \exists w \; \Prf(x, w, A) \rightarrow A)$.
\end{dfn}
\begin{lem}\label{t5-8}
Let $\{T_n\}_{n=0}^{\infty}$ be a uniformly reflexive hierarchy of theories. Then, there is an arithmetical sentence $A(x,y)$ such that:
\begin{itemize}
\item[$(i)$]
$I\Sigma_1 \vdash \forall x,z \leq y \; (x \neq z \wedge A(x,y) \wedge A(z,y) \rightarrow \bot)$
\item[$(ii)$]
For all $m$, $I\Sigma_1 \vdash \bigvee_{i=1}^{m}A(i,m)$
\item[$(iii)$]
For any $n$, and any $i \leq m$, $T_{n+1} \vdash \neg \Pr_{T_n}(\neg A(i,m))$ 
\item[$(iv)$]
If we also assume that all theories in the hierarchy are consistent, then for any $n$, and any $i \leq m$, $\mathbb{N} \vDash \neg \Pr_{T_n}(\neg A(i,m))$ and $\mathbb{N} \vDash A(m,m)$.
\end{itemize} 
\end{lem}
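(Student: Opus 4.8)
The plan is to mimic the proof of Lemma \ref{t5-3} almost verbatim, but to carry out every step uniformly in the relevant parameters and inside $I\Sigma_1$, keeping the $\Sigma_1$ formula $\Prf(x,y,z)$ that witnesses uniformity of the hierarchy as a parameter throughout. The role played in the old proof by the finite bound $N$ and by the finitely many sentences $A_1,\dots,A_m$ is now taken over by free variables: $A(x,y)$ is meant to be the sentence ``$A_x$'' in the family of size $m=y$, and the finite disjunctions $\bigvee_{k=1}^{N}$ appearing there become existential quantifiers over $k$.

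First I would produce a \emph{uniform refinement}. Given a parameter $p\ge 1$ (to be instantiated as $p=2y$), I would define, by the fixed-point and recursion machinery available in $I\Sigma_1$, a $\Sigma_1$ formula $\Prf'(p,j,w,z)$ coding the interleaved hierarchy $\{T'_j\}$ with $T'_{np}=T_n$ and $T'_{j+1}=T'_j+\Cons(T'_j)$ for all other $j$, exactly as in the Claim inside the proof of Lemma \ref{t5-3}. The content of that Claim --- that $T'_{(n+1)p}$ proves the reflection principle for each inserted $T'_j$ with $np\le j<(n+1)p$, that $T'_j\subseteq T'_{j+1}$ and $T'_{j+1}\vdash\Cons(T'_j)$ for all $j$, and that consistency of the $T$-hierarchy transfers to the $T'$-hierarchy --- must now be established \emph{provably in $I\Sigma_1$ and uniformly in $p$ and $j$}; this is the uniform analogue of the induction on $j$ given there, and it goes through because that induction is a $\Sigma_1$-induction that tolerates the extra parameter $p$.

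Next I would set, writing everything with bounded quantifiers, $B(r,y) := \exists k\ge 1\,\bigl(\Cons(T'_{2ky-2r})\wedge\neg\Cons(T'_{2ky-2r+1})\bigr)$, the unbounded $\exists k$ replacing the old finite disjunction $\bigvee_{k=1}^{N}$, and then $A(1,y):=B(1,y)$, $A(x,y):=\forall i<x\,\neg B(i,y)\wedge B(x,y)$ for $1<x<y$, and $A(y,y):=\forall i<y\,\neg B(i,y)$, all packaged into one formula $A(x,y)$; the case $y=1$ degenerates to $A(1,1)\equiv (0=0)$, matching the $m=1$ clause of Lemma \ref{t5-3}. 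Properties $(i)$ and $(ii)$ are then propositional facts about the Boolean pattern of the $A(x,y)$'s in terms of the $B(i,y)$'s, provable in $I\Sigma_1$ uniformly. For $(iii)$ I would reproduce the three-case analysis ($r=1$, $1<r<m$, $r=m$) of Lemma \ref{t5-3}: argue inside $T_{n+1}=T'_{2(n+1)m}$, assume $\Pr_{T_n}(\neg A(r,m))$, unwind the definition of $A(r,m)$, use monotonicity of the $T'$-hierarchy and its provable consistency facts to push everything up to an index of the form $2(n+1)m-2r+1$ --- which, by the same index inequalities as before, now between variables, lies in $[\,2nm,\,2(n+1)m\,)$ --- deduce that $T'_{2(n+1)m-2r+1}$ proves its own consistency, and invoke the formalized second incompleteness theorem for a contradiction; the only genuinely new point is that the value of $k$ being used is a concrete arithmetic expression in $n$, so the unbounded $\exists k$ causes no difficulty. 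Finally, $(iv)$ follows exactly as in Lemma \ref{t5-3}: under the consistency assumption the same arguments, being formalizable and hence sound, give $\mathbb{N}\vDash\neg\Pr_{T_n}(\neg A(i,m))$, and $\mathbb{N}\vDash A(m,m)$ because every $B(r,m)$ is false once the whole $T'$-hierarchy is consistent.

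The hard part will be the uniform refinement together with the bookkeeping of the moving indices: one must set up $\Prf'$ so that $I\Sigma_1$ genuinely proves, as a single statement in the parameters $p,j$ (and in particular for $p=2y$), the reflection-transfer claim ``$T'_{(n+1)p}\vdash\Rfn(T'_j)$ for $np\le j<(n+1)p$'', and then check that every inequality between the varying quantities $2ky-2r$, $2ny$, $2(n+1)y$ used in the original case analysis survives when $y,n,r,k$ are variables and the reasoning is internalized in $T_{n+1}$. Everything else is a routine paraphrase of Lemma \ref{t5-3}.
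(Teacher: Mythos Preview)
Your proposal is correct and follows essentially the same route as the paper: uniformize the construction of Lemma~\ref{t5-3} by building a parametrized refined hierarchy $T'$ with a single proof predicate $\Prf'(p,j,w,z)$, replace the finite disjunction $\bigvee_{k=1}^{N}$ in $B_r$ by an unbounded $\exists k\ge 1$, and define $A(x,y)$ by the same Boolean pattern. You are in fact more explicit than the paper on two points---the separate clause for $A(y,y)$ and the observation that in the case analysis for $(iii)$ the relevant $k$ is a concrete arithmetic term in $n$, so the switch to an unbounded quantifier is harmless---both of which are correct refinements of what the paper leaves implicit.
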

\begin{proof}
The proof is basically the same as the proof of Lemma \ref{t5-3}. The only difference is that, here we have to define everything uniformly. First of all we need to define the hierarchy $T'$. Since $T$ is a uniformly reflexive hierarchy, it is easy to prove that the hierarchy $T'$ is a uniform hierarchy. Note that the definition of this new hierarchy is also uniform in $p$, i.e. there exists a proof predicate $\Prf(x, y, z, t)$ which means that $y$ is a proof for $z$ in $T'_x$ when we choose $t$ as our $p$. Define, $B(x,y)$ as the following:
\[
B(x, y)=\exists z\geq 1 \; (\Cons(T'_{2zy-2x}) \wedge \neg \Cons(T'_{2zy-2x+1})), 
\]
and 
\[
A(x, y)= \forall 1 \leq z \leq x-1 \; \neg B(z, y) \wedge B(x, y).
\]
Note that $A(x, y)$ and $B(x, y)$ are the uniform versions of $A_r$ and $B_r$ in which $x$ stands for the index $r$ and $y$ for the number $m$. The proof of the properties we claimed is exactly same as the proof of Lemma \ref{t5-3}. The reason is that all properties are based on the standard numbers $n$, $i$ and $m$. The only exception is $(i)$, which is easily proved from the definition.
\end{proof}
\begin{thm}\label{t5-9}(Uniform Completeness)
Let $ \{T_n\}_{n=0}^{\infty}$ be a uniform reflexive hierarchy of sound theories. Then there exists an arithmetical substitution $*$, such that for any modal formula $A$, if there exists a witness $w$ such that for all $M \vDash \bigcup_nT_n$, $(M, \{T_n\}_{n=0}^{\infty}) \vDash A^*(w)$ then $\mathbf{S4} \vdash A$.
\end{thm}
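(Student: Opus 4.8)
The plan is to run the completeness argument of Theorem~\ref{t5-6} \emph{uniformly}, over a single ``universal'' tree and with the single family of sentences supplied by Lemma~\ref{t5-8}, in exact analogy with the uniform version of Solovay's theorem already invoked in Lemma~\ref{t3-5}. First I would fix a primitive recursive enumeration $(K_e,R_e)_{e=0}^{\infty}$ of the isomorphism types of finite rooted reflexive transitive trees with clusters and glue them into one recursive reflexive transitive tree with clusters $(K,R)$: take a fresh root cluster $I_0$ (say a single node $1$), put it strictly below the root of every $K_e$, and keep the $K_e$ pairwise $R$-incomparable. Since each $K_e$ is finite, $(K,R)$ has no infinite strictly ascending chain of clusters; moreover, from the root a path can enter at most one component $K_e$, after which its cluster rank is bounded by the height of that $K_e$ (a parameter), which is what will make the associated Solovay function provably terminating in $I\Sigma_1$.

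Next I would carry out the uniform analogues of Lemmas~\ref{t5-4} and~\ref{t5-5} for the given uniformly reflexive hierarchy $\{T_n\}_{n=0}^{\infty}$. Because the hierarchy is uniform, its union $\bigcup_n T_n$ is r.e., with a $\Sigma_1$ proof predicate definable from the $\Prf$ of Definition~\ref{t5-7}; using this I define by the fixed point lemma in $I\Sigma_1$ a function $h:\mathbb{N}\to K$ with $h(0)=1$ that, at step $x+1$, jumps from $h(x)=i$ to an $R$-successor $j$ exactly when a $\bigcup_n T_n$-proof of $\neg S_j$ has shown up by stage $x$, where $S_j=P_{I(j)}\wedge A(\cdot,\cdot)\wedge j=j$, $P_{I(j)}$ says $h$ stabilizes in the cluster $I(j)$, and the $A(\cdot,\cdot)$ are the sentences of Lemma~\ref{t5-8} applied to the (still uniformly reflexive) hierarchy $\{T_n+P_{I(j)}\}_{n}$. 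The substitution $*$ sends an atom $p$ to the disjunction $\bigvee_{i\vDash p}S_i$, which is arithmetically definable since $K$ and its valuation are recursive, and is manifestly independent of the modal formula being tested. The analogue of Lemma~\ref{t5-4} then yields, in particular, $T_{n+1}\vdash S_1\to\neg\Pr_{T_n}(\neg S_j)$ for every $n$ and every immediate $R$-successor $j$ of $1$, and $\mathbb{N}\vDash S_1$ (choosing for $I_0$ the distinguished node so that its $A$-sentence is the one true in $\mathbb{N}$): each $T_n$ is sound, so $\bigcup_n T_n$ is consistent, and a jump of $h$ out of $I_0$ would, exactly as in Lemma~\ref{t5-4}(iv), force some $T_n$ inconsistent. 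The analogue of Lemma~\ref{t5-5}, proved by induction on $A$, gives for every modal $A$, every $i\in K$ and every witness $w$ for $A$: $T_{\max(w)+1}\vdash S_i\to A^{*}(w)$ when $i\vDash A$ and $T_{\max(w)+1}\vdash S_i\to\neg A^{*}(w)$ when $i\nvDash A$; the only change from the finite case is that the bound $N$ is replaced by $\max(w)$.

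Now let $A$ and $w$ be such that $(M,\{T_n\}_{n=0}^{\infty})\vDash A^{*}(w)$ for all $M\vDash\bigcup_n T_n$. By the completeness theorem for first-order logic this says $\bigcup_n T_n\vdash A^{*}(w)$, hence $T_m\vdash A^{*}(w)$ for some $m\geq\max(w)+1$, hence $\mathbb{N}\vDash A^{*}(w)$ by soundness. (If the satisfaction went through a disjunction of expansions $B_1,\dots,B_k$, replace $A$ by $\bigvee_i B_i$; as $\mathbf{K}\subseteq\mathbf{S4}$, deriving $\mathbf{S4}\vdash\bigvee_i B_i$ gives $\mathbf{S4}\vdash A$.) Suppose $\mathbf{S4}\nvdash A$. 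By the finite model property of $\mathbf{S4}$ some finite rooted reflexive transitive tree with clusters refutes $A$ at a node $k'$ of its root cluster; this tree is one of the $K_e$, hence a generated subtree of $(K,R)$, so $k'\nvDash A$ in $(K,R)$ and $(1,k')\in R$. By the uniform Lemma~\ref{t5-5}, $T_{\max(w)+1}\vdash S_{k'}\to\neg A^{*}(w)$, while by the uniform Lemma~\ref{t5-4} together with $\mathbb{N}\vDash S_1$ we get $\mathbb{N}\vDash\neg\Pr_{T_n}(\neg S_{k'})$ for all $n$, so $T_m\nvdash\neg S_{k'}$, i.e., $T_m+S_{k'}$ is consistent. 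But $T_m\vdash A^{*}(w)$ and $T_m+S_{k'}\vdash\neg A^{*}(w)$, so $T_m+S_{k'}$ is inconsistent, a contradiction. Hence $\mathbf{S4}\vdash A$.

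The step I expect to be the main obstacle is the uniform Solovay construction of the second paragraph: one must verify that $h$ is provably total and provably stabilizes \emph{inside $I\Sigma_1$}, even though $(K,R)$ is infinite and the hierarchy $\{T_n\}$ has no top, and that the sentences $P_{I(j)}$ and $S_j$ remain uniformly arithmetically definable from the code of $h$ (the same delicate fixed-point bookkeeping as in Lemmas~\ref{t5-3},~\ref{t5-4} and~\ref{t5-8}). Termination works because a path of $h$ leaves $I_0$ at most once and thereafter lives in a single finite component, so its cluster rank is bounded by that component's height; once this is secured, properties (i)--(iv) of the uniform Lemma~\ref{t5-4} and the induction of the uniform Lemma~\ref{t5-5} go through as in the non-uniform proof, with soundness of the individual $T_n$ playing the role of soundness of the single $T_N$.
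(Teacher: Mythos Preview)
Your overall strategy is essentially the paper's: glue the relevant finite $\mathbf{S4}$-trees under a single reflexive root, run a uniform Solovay construction on the resulting recursive tree with $\bigcup_n T_n$ as the jump theory and the cluster sentences of Lemma~\ref{t5-8}, and read off a single substitution $*$. Your endgame, contradicting $T_m\vdash A^{*}(w)$ against the consistency of $T_m+S_{k'}$, is a harmless reshuffling of the paper's argument, which instead shows $T_n\nvdash A^{*}(w)$ for all sufficiently large $n$ and then picks a model of $\bigcup_n T_n$ refuting $A^{*}(w)$.

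There is, however, a real gap in how you build the universal tree. You enumerate \emph{frames} $(K_e,R_e)$, not \emph{models}: no valuation is ever placed on the $K_e$, yet you write $p^{*}=\bigvee_{i\vDash p}S_i$ and later claim ``this tree is one of the $K_e$, hence $k'\nvDash A$ in $(K,R)$''. Without a valuation on $K$ the relation $i\vDash p$ is undefined, so $*$ is undefined, and the refuting Kripke \emph{model} supplied by the finite model property cannot be identified with any component $K_e$ (which carries no valuation at all). The paper closes this gap by indexing the components not by frames but by \emph{formulas}: for each code $a$ it attaches the canonical finite counter-model $g(a)=(W_a,R_a,V_a,w_a)$ produced primitive recursively by the filtration algorithm, which comes equipped with its own valuation $V_a$, and then sets $p^{*}=\exists z\,(S(z)\wedge V(z,p))$, where $V(z,p)$ is the primitive recursive predicate ``$p$ holds at node $z$ in the model $g(pr_0(z))$''. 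Your argument is easily repaired in the same way (or by enumerating finite pointed Kripke models over finite atom sets rather than bare frames), but as written the second paragraph does not actually define $*$, and the transfer of $k'\nvDash A$ from the counter-model into $(K,R)$ does not go through.
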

\begin{proof}
First, note that according to the filteration method (see \cite{Ch}), there exists a primitive recursive algorithm which reads $A$ as an input and constructs a counter model (finite transitive reflexive tree with clusters) for $A$ if $\mathbf{S4} \nvdash A$, and outputs zero, otherwise. Call this primitive recursive function, $f$. Therefore, if we use $A_a$ to emphasize that the code for $A$ is $a$, we have $f(a)=(W_a, R_a, V_a, w_a)$ in which $w_a$ is a node in the root cluster such that $w_a \nvDash A_a$. The reason why such an $f$ exists is that the size of a counter model is elementary bounded by the size of the code of the formula. (See \cite{Ch}.) Assume that the function $\langle \cdot, \cdot \rangle$ is some canonical pairing function which is primitive recursive. Define $g(a)$ as the following primitive recursive function: Compute $f(a)$, change the name of all nodes $w$ in $W_a$ to $\langle w, a \rangle$ and code the whole model again. \\
Pick all $g(a)$'s and put all of them over one new reflexive root, $k$; and for valuation, use the induced valuation of the model plus the fact that the node $k$ does not accept any atom. Then, use the technique of Lemma \ref{t5-4} and define the function $h$ on the whole new model:
\[
h(0)=k \; \text{and} \; h(x+1)=
\begin{cases}
j & \text{if $R(h(x), z)$ \; \text{and}\; $\Prf_T(x, \neg S(z))$}\\
h(x) & \text{otherwise}
\end{cases}
\]
Where firstly, $T=\bigcup_{n=0}^{\infty}T_n$. It is easy to check that since the hierarchy is uniform, its union is also a recursively enumerable theory which has the following property: $I\Sigma_1 \vdash \Pr_n(A) \rightarrow \Pr_T(A)$. Secondly, $R(y, z)$ is a primitive recursive relation ($\Delta_1$ formula in $I\Sigma_1$) which reads nodes $y$ and $z$ and if $y \neq k$, it decides whether they belong to the same model $g(pr_0(z))$, and if yes, whether $(y, z)$ belongs to the relation of $g(pr_0(z))$, i.e. $R_{g(pr_0(z))}$. And if $y=k$, then the relation $R(y, z)$ decides whether $z$ is in the $g(pr_0(z))$ or not (where $pr_0(z)$ is the index of the model which $z$ belongs to). This $R$ is a formalization of the accessibility relation of the new model. Note that we have to choose $R$ in a way that the following holds:
\begin{itemize}
\item[$(i)$]
$I\Sigma_1 \vdash \forall x, y, z \; (R(x,y) \wedge R(y, z) \rightarrow R(x, z))$
\item[$(ii)$]
For any node $i \neq k$, $I\Sigma_1 \vdash \forall x (R(i, x) \rightarrow \bigvee_{R_{g(pr_0(i))}(i, j)} x=j) $
\end{itemize}
It is easy to find such an $R$. The idea is, first using $g$ to define a primitive recursive function $H(z)$ which reads $z$ and outputs the whole set above $z$. Then define $R(x, y)$ as the existence of a sequence $w$ from $x$ to $y$ such that for any $r$, $w_{r+1}$ belongs to $H(w_r)$. The proof for these two properties are starightforward. $(i)$ holds because of our transitive definition of $R$. $(ii)$ needs the claim that if $w$ is a sequence from $i$ to $x$, then $x \in H(i)$. Use induction on the length of $w$ to prove the claim.\\

And finally, the formula 
\[
S(z)=\exists y \forall x \geq y h(x)\in I(z) \wedge A(z, Card(I(z))) \wedge z=z
\]
where $I(z)$ is a primitive recursive function, which reads $z$ and computes the whole cluster of $z$. Note that here we use a uniform version of $S_i$'s, and consequently we need the uniform version of $A_r$'s. For any $i \neq k$, the model above $w_i$ is a finite reflexive transitive tree with clusters, and hence with the same arguments, we have the following:
\begin{itemize}
\item[$(i)$]
$T_0 \vdash \forall x,y \; (x \neq y \rightarrow (S(x) \rightarrow \neg S(y)))$.
\item[$(ii)$]
$T_{n+1} \vdash S(i) \rightarrow \Pr_{n}(\bigvee_{(i, j) \in R} S(j))$ for all $i \neq k$.
\item[$(iii)$]
If $(i, j) \in R$ then $T_{n+1} \vdash S_i \rightarrow \neg \Pr_{n}(\neg S_j)$ for all $i$.
\item[$(iv)$]
$\mathbb{N} \vDash S_k$.
\end{itemize}
Since the model above any node $i\neq k$ is a finite model, the proof is the same as the proof of Lemma \ref{t5-4}, with only some minor changes. Firstly, for $(i)$, we need the uniform version of the proof of Lemma \ref{t5-4}. It is implied by the facts that $h$ is a provably total function in $I\Sigma_1$ and also the part $(i)$ in Lemma \ref{t5-8}.\\
Secondly, for $(ii)$, we need to prove that if the function reaches $i$, then the limit cluster exists and it is above the cluster $I(i)$. It should be provable in $I\Sigma_1$. The idea is based on the fact that $h$ is increasing and also the fact that if $h$ reaches $i$, we can find the elements above $i$. These simple facts are provable by two properties of $R$ which are mentioned before. \\

Define the arithmetical substitution as follows: $p^{*}= \exists z \; S(z) \wedge V(z, p)$ where $V(z, p)$ is a primitive recursive predicate (i.e. a $\Delta_1$ formula in $I\Sigma_1$) which reads $z$ and $p$ and if $z \neq k$ decides whether $p$ is true in the node $z$ in the model $g(a)$, where $a=pr_0(z)$ is the index of the model which $z$ belongs to. And if $z=k$, then rejects for all $p$. Since $g$ is primitive recursive, this primitive recursive predicate exists. Note that $V$ is a formalization of the valuation of the new model. \\
By a similar proof of Lemma \ref{t5-5} we know that for all $i \neq k$, we have
\[
\begin{cases}
T_{max(w)+1} \vdash S_i \rightarrow A^{\sigma}(w) & \text{if} \; i \vDash A\\
T_{max(w)+1} \vdash S_i \rightarrow \neg A^{\sigma}(w) & \text{if} \; i \nvDash A
\end{cases}
\]
If $\mathbf{S4} \nvdash A$, then $i=w_a \nvDash A$, where $a$ is the code of $A$. We have
\[
T_{max(w)+1} \vdash S_i \rightarrow \neg A^{*}(w).
\]
Hence for all $n \geq max(w)+1$, 
\[
T_n \vdash S_i \rightarrow \neg A^{*}(w).
\]
Then by 
\[
T_{n+1} \vdash S_k \rightarrow \neg \Pr_{n}(\neg S_i),
\]
we have 
\[
T_{n+1} \vdash S_k \rightarrow \neg \Pr_n(A^{*}(w)).
\]
Since $T_{n+1}$ is sound, $\mathbb{N} \vDash \neg \Pr_n(A^{*}(w))$ which means $T_n \nvdash A^{*}(w)$, and since $n$ could be any big number, $T \nvdash A^{*}(w)$, therefore, there is $M$, a model of $T= \bigcup_nT_n$, such that $M \nvDash A^{*}(w)$, which is a contradiction. Hence, $\mathbf{S4} \vdash A$.
\end{proof}
Using the previous lemma, we are able to prove the strong completeness theorem.
\begin{thm}\label{t5-10}(Uniform Strong Completeness)
Let $ \{T_n\}_{n=0}^{\infty}$ be a uniformly reflexive hierarchy of sound theories. Then there exists an arithmetical substitution $*$, such that for any modal sequent $\Gamma \Rightarrow A$, if there exist witnesses $u$ and $v$ such that for all $M \vDash \bigcup_nT_n$, $(M, \{T_n\}_{n=0}^{\infty}) \vDash \Gamma^{*}(u) \Rightarrow A^{*}(v)$, then $\mathbf{S4} \vdash \Gamma \Rightarrow A$. Moreover, If $\mathbf{Ref} \vDash \Gamma \Rightarrow A$, then $\Gamma \vdash_{\mathbf{S4}} A$.
\end{thm}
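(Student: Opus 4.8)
The plan is to bootstrap from the single--formula uniform completeness theorem, Theorem \ref{t5-9}, to sequents --- in the same way that the strong completeness of $\mathbf{K4}$ (Theorem \ref{t3-6}) was extracted from its single--formula counterpart, Lemma \ref{t3-5} --- and then to obtain the class--version by exhibiting one concrete uniformly reflexive hierarchy of sound theories. For the first assertion, fix such a hierarchy $\{T_n\}_{n=0}^{\infty}$ and let $*$ be the arithmetical substitution produced by Theorem \ref{t5-9}. Suppose $u, v$ are witnesses with $(M, \{T_n\}_{n=0}^{\infty}) \vDash \Gamma^*(u) \Rightarrow A^*(v)$ for every $M \vDash \bigcup_n T_n$; as in the proofs of Theorems \ref{t3-6} and \ref{t5-6}, it does no harm to let $\Gamma$ and $A$ already stand for sequences of expansions, since the axiom $\mathbf{K}$, present in $\mathbf{S4}$, makes every expansion provably equivalent to its original formula. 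Unwinding Definition \ref{t2-9}, the hypothesis says precisely $\bigcup_n T_n \cup \Gamma^*(u) \vdash A^*(v)$, so by compactness there is a finite $\Delta \subseteq \Gamma$, with the corresponding sub--witness $u'$ of $u$, such that $\bigcup_n T_n \vdash \bigwedge \Delta^*(u') \to A^*(v)$. Setting $C := \bigwedge \Delta \to A$ and letting $w$ be the witness for $C$ obtained by concatenating $u'$ and $v$, the sentence $C^*(w)$ is literally $\bigwedge \Delta^*(u') \to A^*(v)$, hence provable in, and true in every model of, $\bigcup_n T_n$; Theorem \ref{t5-9} then yields $\mathbf{S4} \vdash C$, i.e.\ $\Delta \vdash_{\mathbf{S4}} A$, whence $\Gamma \vdash_{\mathbf{S4}} A$. (When $\Gamma$ is finite the compactness step is unnecessary and $w=(u,v)$.)

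For the ``moreover'' part, assume $\mathbf{Ref} \vDash \Gamma \Rightarrow A$; by Definition \ref{t2-9} there are fixed expansions $\bar\Gamma, \bar A$ and fixed witnesses $u, v$ with $M \vDash \bar\Gamma^\sigma(u) \Rightarrow \bar A^\sigma(v)$ for every reflexive provability model and every arithmetical substitution $\sigma$, so it suffices to exhibit one uniformly reflexive hierarchy of sound theories to which the first assertion applies. Take $T_0 = I\Sigma_1$ and $T_{n+1} = T_n + \Rfn(T_n)$. This hierarchy is sound, since local reflection over a sound r.e.\ theory is again sound, and it is uniformly reflexive in the sense of Definition \ref{t5-7} because its defining recursion is primitive recursive --- one reads off the natural $\Sigma_1$ proof predicate and checks the $I\Sigma_1$--provable inclusions and the uniform reflection statement by inspection. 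The crucial point is that for \emph{every} $M \vDash \bigcup_n T_n$ the pair $(M, \{T_n\}_{n=0}^{\infty})$ is itself a reflexive provability model: $M \vDash I\Sigma_1$; $\bigcup_n T_n$ proves every instance of $\Rfn(T_n)$, so $M$ thinks each $T_n$ is sound; $T_{n+1} \vdash \Rfn(T_n)$ holds by construction; and uniform reflexivity gives $M \vDash \Pr_{T_{n+1}}(\lceil \Pr_{T_n}(B) \to B \rceil)$ for every sentence $B$. Hence, taking $\sigma = *$ for the substitution of Theorem \ref{t5-9} attached to this hierarchy, $M \vDash \bar\Gamma^*(u) \Rightarrow \bar A^*(v)$ for all $M \vDash \bigcup_n T_n$; the first assertion gives $\mathbf{S4} \vdash \bar\Gamma \Rightarrow \bar A$, and absorbing the expansions through axiom $\mathbf{K}$ as above yields $\Gamma \vdash_{\mathbf{S4}} A$.

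Essentially all the genuine content here is already in Theorem \ref{t5-9} --- the uniform Solovay--style construction built on Lemma \ref{t5-8} and the techniques of Lemmas \ref{t5-4} and \ref{t5-5} --- so the remaining work is bookkeeping of two kinds, and these are the places that require care. First, one must make the compactness plus deduction--theorem reduction of a sequent to the single implication $\bigwedge \Delta \to A$ mesh correctly with the expansion/witness apparatus; this is what ultimately lets a single $*$ serve every sequent and so yields the \emph{uniform} strong completeness. Second, in the ``moreover'' part, one must verify that the concrete iterated--reflection hierarchy genuinely meets the \emph{uniform} conditions of Definition \ref{t5-7}, and --- the step that justifies the reduction to the first assertion --- that every model of its union is a reflexive provability model. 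I expect the second of these to be the more delicate point, since it is exactly where the abstract hypothesis $\mathbf{Ref} \vDash \Gamma \Rightarrow A$ must be turned into something Theorem \ref{t5-9} can consume.
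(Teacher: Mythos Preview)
Your proposal is correct and follows essentially the same route as the paper: reduce the sequent to a finite conjunction via compactness, feed the resulting single formula into Theorem~\ref{t5-9}, and for the ``moreover'' clause specialize to one concrete uniformly reflexive sound hierarchy. You are in fact more careful than the paper on two points it leaves implicit --- packaging $\Delta \Rightarrow A$ as the single formula $\bigwedge\Delta\to A$ so that Theorem~\ref{t5-9} literally applies, and explicitly exhibiting the iterated-reflection hierarchy together with the verification that every $M\vDash\bigcup_n T_n$ yields a reflexive provability model --- but the underlying argument is the same.
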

\begin{proof}
Use the arithmetical substitution from the uniform completeness. Since 
\[
(M, \{T_n\}_{n=0}^{\infty}) \vDash \Gamma^*(u) \Rightarrow A^*(v)
\]
for all $M \vDash \bigcup_nT_n$, then $\bigcup_nT_n + \Gamma^*(u) \vdash A^*(v)$. Therefore, there is a finite subset $\Delta \subseteq \Gamma$ and a witness $w$, a subset of $u$, such that $\bigcup_nT_n + \Delta^*(w) \vdash A^*(v)$. Thus, for all $M \vDash \bigcup_nT_n$, we have 
\[
(M, \{T_n\}_{n=0}^{\infty}) \vDash \Delta^*(u) \Rightarrow A^*(v).
\]
By uniform completeness, we have $\mathbf{S4} \vdash \Delta \Rightarrow A$ and hence, $\mathbf{S4} \vdash \Gamma \Rightarrow A$.\\
The second part of the theorem, is obvious from the first part; because if $\mathbf{Ref} \vDash \Gamma \Rightarrow A$, then the assumption of the first part is true for some sequence of expansions $\bar{\Gamma}$ and $B_1, B_2, \ldots , B_r$. Hence $\bar{\Gamma} \vdash_{\mathbf{S4}} \bigvee_{i=0}^{r}B_i$. Since in the presence of the axiom $\mathbf{K}$, the expansions of a formula are equivalent to the formula itself, we have $\Gamma \vdash_{\mathbf{S4}} A$.
\end{proof}

\section{The Logics \textbf{GL} and \textbf{GLS}}
As Solovay showed in his pioneering work, \cite{So}, the logic $\mathbf{GL}$ is sound and complete for the interpretation that interprets all boxes as provability predicates in some appropriate theory. Moreover, he showed that if we change the definition slightly, we can also capture the logic $\mathbf{GLS}$. We translate his results into our framework and after defining constant and sound-constant provability models, we will show the soundness and completeness of \textbf{GL} and \textbf{GLS} for the classes of all constant provability models and all sound-constant provability models, respectively. In fact, the soundness-completeness theorems of these logics are just a new representation of Solovay's results. Consequently, we can claim that our provability interpretation is actually a generalization of Solovay's provability interpretation.
\subsection{The Case \textbf{GL}}
First of all the definition of the constant and sound-constant provability models:
\begin{dfn}\label{t6-1}
A provability model, $(M, \{T_n\}_{n=0}^{\infty})$ is constant if for any $n$ and $m$, $(M, \{T_n\}_{n=0}^{\infty})$ thinks that $T_n=T_m$, i.e. $M \vDash \Pr_{T_m}(A) \leftrightarrow \Pr_{T_n}(A)$ and $M \vDash \Pr_{T_0}(\Pr_{T_m}(A) \leftrightarrow \Pr_{T_n}(A))$ for any sentence $A$; and it is called a sound-constant model when it is constant and for any $n$, $M$ thinks that $T_n$ is sound, i.e. $M \vDash \Pr_{T_n}(A) \rightarrow A$ for any sentence $A$. The class of all constant provability models and the class of all sound-constant provability models will be denoted by $\mathbf{Cst}$ and $\mathbf{sCst}$, respectively. 
\end{dfn}
\begin{rem}\label{t6-2}
In the previous definition we used a notion for the equality of theories which seems ad-hoc and artificial. Here in this remark, we will justify that definition. Intuitively, $M$ thinks that two theories are equal, when their provability-based properties are the same. In a more precise way, we say that $M$ thinks $T_n$ and $T_m$ are equal, when for any modal sentence $\phi(p)$, any witness $w$ and any arithmetical substitution $\sigma$ for all atoms except $p$, $M \vDash \phi^{\sigma}(\Pr_m(A))(w) \leftrightarrow \phi^{\sigma}(\Pr_n(A))(w)$. We will show that this definition of equality is equivalent to the original one. First of all, if we use $\phi(p)=p$, we will have $M \vDash \Pr_{T_m}(A) \leftrightarrow \Pr_{T_n}(A)$. Moreover, if we use $\phi(p)= \Box (p \leftrightarrow q)$, $w=(0)$ and $\sigma$ where $q^{\sigma}=\Pr_n(A)$, we have $M \vDash \Pr_{T_0}(\Pr_{T_m}(A) \leftrightarrow \Pr_{T_n}(A))$. For the converse, we use induction on $\phi$ to show the following claim.\\

\textbf{Claim.} For any formula $\phi(p)$, any witness $w$ and any arithmetical substitution $\sigma$ for all atoms except $p$, $M$ thinks that both of the following statements are true: $\phi^{\sigma}(\Pr_m(A))(w) \leftrightarrow \phi^{\sigma}(\Pr_n(A))(w)$ and $T_0 \vdash \phi^{\sigma}(\Pr_m(A))(w) \leftrightarrow \phi^{\sigma}(\Pr_n(A))(w)$.\\

The atomic case and the boolean case are obvious. For the modal case, it is an easy consequence of the fact that $\Sigma_1$-completeness and some basic facts about the provability predicate are true in $M$.
\end{rem}
We are ready to prove the soundness-completeness result for $\mathbf{GL}$. First of all, a technical lemma.
\begin{lem}\label{t6-3}
Let $(M, \{T_n\}_{n=0}^{\infty})$ be a constant provability model. Then for any modal formula $A$, any witness $w$ and any arithmetical substitution $\sigma$, if $\mathbf{0}$ assigns zero to all the boxes of $A$, then $M$ thinks that both of the following statements are true: $A^{\sigma}(w) \leftrightarrow A^{\sigma}(\mathbf{0})$ and $T_0 \vdash A^{\sigma}(w) \leftrightarrow A^{\sigma}(\mathbf{0})$.
\end{lem}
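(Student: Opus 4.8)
\emph{Proof proposal.} The plan is to prove both halves of the statement simultaneously by induction on the structure of $A$, in essentially the same style as the Claim inside Remark \ref{t6-2}, of which this lemma is the natural generalization: there one compares two fixed theories occurring at a single distinguished box, while here one compares an arbitrary witness $w$ against the assignment $\mathbf{0}$ that sends every box to $0$. (Recall that $\mathbf{0}$ need not be a legal witness in the sense of Definition \ref{t2-5}; the notation $A^{\sigma}(\mathbf{0})$ is read simply as the arithmetical sentence obtained by interpreting every box of $A$ as $\Pr_{T_0}$, which is exactly the ``constant'' interpretation we wish to compare with.) Throughout I would use three facts available in every provability model: (a) since $M \vDash I\Sigma_1$ and $I\Sigma_1 \subseteq T_0$ provably in $I\Sigma_1$, we have $M \vDash \Pr_{T_0}(\theta)$ for every $I\Sigma_1$-provable $\theta$, and, by formalized $\Sigma_1$-completeness, $M \vDash \theta \to \Pr_{T_0}(\theta)$ for every $\Sigma_1$ sentence $\theta$; (b) $\Pr_{T_0}$ is closed under modus ponens provably in $I\Sigma_1$, so inside $M$ the operator ``$T_0$ proves'' respects the propositional connectives; (c) since the model is constant, for every sentence $C$ and every $n$, $M \vDash \Pr_{T_n}(C) \leftrightarrow \Pr_{T_0}(C)$ and, in its boxed form, $M \vDash \Pr_{T_0}\big(\Pr_{T_n}(C) \leftrightarrow \Pr_{T_0}(C)\big)$.

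For the atomic case $A^{\sigma}(w)$ and $A^{\sigma}(\mathbf{0})$ are literally the same sentence $p^{\sigma}$, so both statements reduce to $M \vDash \Pr_{T_0}(p^{\sigma} \leftrightarrow p^{\sigma})$, which is (a). For the Boolean cases $A = B \circ C$ or $A = \neg B$ I would split $w$ as in Definition \ref{t2-5}; the syntactic identity $A^{\sigma}(w) = B^{\sigma}(w_1) \circ C^{\sigma}(w_2)$, the induction hypothesis for $B$ and $C$, and propositional reasoning give the first statement, and the same reasoning carried out under $\Pr_{T_0}$ (using (a) and (b) to pass $T_0$-provability through the connective) gives the second.

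The modal case is the heart of the argument. Write $A = \Box B$, so $w = (n, u)$ with $u \Vdash B$ and $n$ larger than every number in $u$; then $A^{\sigma}(w) = \Pr_{T_n}(B^{\sigma}(u))$ while $A^{\sigma}(\mathbf{0}) = \Pr_{T_0}(B^{\sigma}(\mathbf{0}))$. For the first statement: by (c) applied to $C := B^{\sigma}(u)$ we get $M \vDash \Pr_{T_n}(B^{\sigma}(u)) \leftrightarrow \Pr_{T_0}(B^{\sigma}(u))$; by the $T_0$-provable half of the induction hypothesis for $B$, $M \vDash \Pr_{T_0}\big(B^{\sigma}(u) \leftrightarrow B^{\sigma}(\mathbf{0})\big)$, which by (b) yields $M \vDash \Pr_{T_0}(B^{\sigma}(u)) \leftrightarrow \Pr_{T_0}(B^{\sigma}(\mathbf{0}))$; splicing the two biconditionals gives $M \vDash A^{\sigma}(w) \leftrightarrow A^{\sigma}(\mathbf{0})$. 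For the second statement one repeats this one level up: the boxed form of (c) gives $M \vDash \Pr_{T_0}\big(\Pr_{T_n}(B^{\sigma}(u)) \leftrightarrow \Pr_{T_0}(B^{\sigma}(u))\big)$; and from the induction hypothesis $M \vDash \Pr_{T_0}\big(B^{\sigma}(u) \leftrightarrow B^{\sigma}(\mathbf{0})\big)$ — a true-in-$M$ $\Sigma_1$ sentence — formalized $\Sigma_1$-completeness (a) together with the $I\Sigma_1$-provable fact that $\Pr_{T_0}$ respects biconditionals gives $M \vDash \Pr_{T_0}\big(\Pr_{T_0}(B^{\sigma}(u)) \leftrightarrow \Pr_{T_0}(B^{\sigma}(\mathbf{0}))\big)$; chaining these two $\Pr_{T_0}$- of-biconditionals inside $\Pr_{T_0}$, which is legitimate by (b), produces $M \vDash \Pr_{T_0}\big(A^{\sigma}(w) \leftrightarrow A^{\sigma}(\mathbf{0})\big)$.

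I expect the only genuine obstacle to be the bookkeeping in the modal case: one must be careful that the $T_0$-provable conclusion really needs the \emph{boxed} form of the constant-model hypothesis (which is precisely why Definition \ref{t6-1} builds it in) and that lifting the induction hypothesis one extra level requires formalized $\Sigma_1$-completeness rather than anything about the soundness of the $T_n$ in $M$ (the lemma holds for all constant models, not just sound ones). The atomic and Boolean cases are routine, and the whole argument is otherwise a direct transcription of the reasoning already sketched in Remark \ref{t6-2}.
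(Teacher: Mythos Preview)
Your proposal is correct and follows essentially the same approach as the paper's proof: induction on $A$, trivial atomic and Boolean cases, and in the modal case combining the $T_0$-provable half of the induction hypothesis (lifted via $\Sigma_1$-completeness) with the two clauses of the constant-model definition. The only cosmetic difference is the order in which you splice the biconditionals---you first apply the constant-model swap $\Pr_{T_n}\leftrightarrow\Pr_{T_0}$ and then the inductive swap $B^{\sigma}(u)\leftrightarrow B^{\sigma}(\mathbf{0})$, whereas the paper does it the other way around---but the ingredients and the logic are identical.
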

\begin{proof}
Use induction on $A$. The case for the atoms and the boolean connectives are easy. For the modal case, if $A=\Box B$, and $w=(n, u)$, then by IH, $M$ thinks $T_0 \vdash B^{\sigma}(u) \leftrightarrow B^{\sigma}(\mathbf{0})$. Hence $ T_n \vdash B^{\sigma}(u) \leftrightarrow B^{\sigma}(\mathbf{0})$
and by $\Sigma_1$-completeness, 
$
M \vDash \Pr_n(B^{\sigma}(u) \leftrightarrow B^{\sigma}(\mathbf{0}))
$.
Thus $\Pr_n(B^{\sigma}(u)) \leftrightarrow \Pr_n(B^{\sigma}(\mathbf{0}))$
is true in $M$. Since $\Pr_n(B^{\sigma}(\mathbf{0}))$ and $\Pr_0(B^{\sigma}(\mathbf{0}))$ are equivalent in $M$, we have
\[
M \vDash \Pr_n(B^{\sigma}(u)) \leftrightarrow \Pr_0(B^{\sigma}(\mathbf{0})).
\]
For the other part of the claim, for $\Box B$, we have 
$
M \vDash \Pr_n(B^{\sigma}(u) \leftrightarrow B^{\sigma}(\mathbf{0})).
$
Therefore by $\Sigma_1$-completeness, $M$ thinks
$
T_0 \vdash \Pr_n(B^{\sigma}(u) \leftrightarrow B^{\sigma}(\mathbf{0})).
$
Hence $T_0 \vdash \Pr_n(B^{\sigma}(u)) \leftrightarrow \Pr_n(B^{\sigma}(\mathbf{0}))$ is true in $M$. But we know that $M$ thinks that
\[
T_0 \vdash \Pr_n(B^{\sigma}(\mathbf{0})) \leftrightarrow \Pr_0(B^{\sigma}(\mathbf{0})),
\]
therefore, $M$ thinks that
\[
T_0 \vdash \Pr_n(B^{\sigma}(u)) \leftrightarrow \Pr_0(B^{\sigma}(\mathbf{0})).
\]
\end{proof}
\begin{thm}\label{t6-4}(Soundness)
If $\Gamma \vdash_{\mathbf{GL}} A$, then $\mathbf{Cst} \vDash \Gamma \Rightarrow A$.
\end{thm}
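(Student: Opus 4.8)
The plan is to mimic the soundness proof for $\mathbf{K4}$ (Theorem \ref{t3-1}), but now using the constancy condition on the model to collapse the hierarchy so that the $\mathbf{GL}$ provability is reflected faithfully. First I would reduce to the cut-free sequent calculus $G(\mathbf{GL})$ (equivalently, work with a cut-free system for $\mathbf{GL}$, or just use Solovay's first theorem directly on $I\Sigma_1$) and adopt the same conventions as in Theorem \ref{t3-1}: the main formula of each rule is the rightmost one, and $T \vdash \Phi \Rightarrow \Psi$ abbreviates $T \vdash \bigwedge \Phi \to \bigvee \Psi$. The key observation is Lemma \ref{t6-3}: in a constant provability model, for any witness $w$ the interpretation $A^{\sigma}(w)$ is provably (in $T_0$, as thought by $M$) equivalent to $A^{\sigma}(\mathbf{0})$, where $\mathbf{0}$ assigns $0$ to every box. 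So it suffices to witness every box by $0$ and then appeal to Solovay's first theorem for $\mathbf{GL}$ relative to $T_0$.

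Concretely, I would argue as follows. Suppose $\Gamma \vdash_{\mathbf{GL}} A$; then some finite $\Delta \subseteq \Gamma$ has $\Delta \vdash_{\mathbf{GL}} A$, i.e.\ $\mathbf{GL} \vdash \bigwedge \Delta \to A$. By Solovay's first theorem (Theorem \ref{t0-0}(i)), for every arithmetical substitution $\sigma$ we have $I\Sigma_1 \vdash (\bigwedge \Delta \to A)^{\sigma}$, where boxes are interpreted as $\Pr_{I\Sigma_1}$. The point is now that in a constant provability model $M \vDash I\Sigma_1$ and, by constancy, $M$ thinks $\Pr_{T_n} = \Pr_{T_0}$ for all $n$; but we need the interpretation to be via $\Pr_{T_0}$, not $\Pr_{I\Sigma_1}$. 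Here I must be slightly careful: Solovay's theorem as stated uses $I\Sigma_1$ on the derivability side. The clean route is to use the \emph{uniform} substitution $*$ from Theorem \ref{t0-0}(i), but actually for soundness we want it the other way — so instead I would simply take $A^{\sigma}(\mathbf{0})$ to interpret each box as $\Pr_{T_0}$ and observe that whatever $\mathbf{GL}$ proves, $I\Sigma_1$ (hence $T_0 \supseteq I\Sigma_1$, hence $M$) proves under the $\Pr_{T_0}$-interpretation, because the Solovay translation only uses the provability-logic axioms of $\mathbf{GL}$, all of which ($\mathbf{K}$, $\mathbf{4}$, Löb) are provable in $I\Sigma_1$ about $\Pr_{T_0}$ whenever $T_0$ is $\Sigma_1$-sound and r.e. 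Thus $M \vDash (\bigwedge \Delta \to A)^{\sigma}(\mathbf{0})$, i.e.\ $M \vDash \Delta^{\sigma}(\mathbf{0}) \Rightarrow A^{\sigma}(\mathbf{0})$.

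Then I would promote this from the single witness $\mathbf{0}$ and the subset $\Delta$ back to $\Gamma$: take $\bar{\Gamma}$ to be $\Gamma$ itself (no expansions needed, or trivial ones), take the uniform witness assigning $0$ to every box everywhere, and use Lemma \ref{t6-3} to pass between any other witness and $\mathbf{0}$ if desired — but simplest is to just fix $\mathbf{0}$ throughout. Since $\Delta \subseteq \Gamma$, weakening on the left gives $M \vDash \Gamma^{\sigma}(\mathbf{0}) \Rightarrow A^{\sigma}(\mathbf{0})$ for every $\sigma$ and every constant provability model $(M,\{T_n\})$, and the witnesses/expansions chosen ($\mathbf{0}$, identity) are visibly uniform over the whole class $\mathbf{Cst}$. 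This is exactly $\mathbf{Cst} \vDash \Gamma \Rightarrow A$.

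The main obstacle I anticipate is the bookkeeping around \emph{which base theory} the Solovay translation lands in and making the $\Pr_{I\Sigma_1}$-versus-$\Pr_{T_0}$ discrepancy disappear cleanly: one needs that the $\mathbf{GL}$-provable formula, translated with $\Pr_{T_0}$, is provable in $I\Sigma_1$ (not just in $T_0$), which follows because the derivation in $\mathbf{GL}$ only ever invokes the provability-logic principles, and $I\Sigma_1$ verifies all of those for the r.e.\ theory $T_0$ (provability is $\Sigma_1$, $\Sigma_1$-completeness is available, and formalized Löb holds). A secondary, purely cosmetic obstacle is deciding whether to route through a cut-free calculus for $\mathbf{GL}$ and redo the induction of Theorem \ref{t3-1} verbatim — which also works, with the modal-rule step now using that the context number can be taken to be $0$ by constancy — or to invoke Solovay's theorem as a black box as above; the latter is shorter and I would present that, mentioning Lemma \ref{t6-3} as the device that legitimizes collapsing all witnesses to $\mathbf{0}$.
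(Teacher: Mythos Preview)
Your approach is essentially the paper's: reduce to a finite $\Delta$, invoke the soundness direction of Solovay's theorem (Theorem~\ref{t0-0}) with boxes read as $\Pr_{T_0}$ to obtain $I\Sigma_1 \vdash \Delta^{\sigma}(\mathbf 0) \to A^{\sigma}(\mathbf 0)$, hence $M \vDash \Gamma^{\sigma}(\mathbf 0) \Rightarrow A^{\sigma}(\mathbf 0)$, and then appeal to Lemma~\ref{t6-3}.

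Two small corrections. First, you cannot ``just fix $\mathbf{0}$ throughout'': by Definition~\ref{t2-5} a witness must assign to each box a number \emph{strictly greater} than every number assigned to inner boxes, so the all-zero assignment is not a witness for any formula with nested modalities. Lemma~\ref{t6-3} is therefore not optional cosmetics but the actual last step---exactly as the paper does it: pick arbitrary \emph{legitimate} witnesses $w_\Gamma$, $w_A$ and use the lemma to transport truth from the (non-witness) assignment $\mathbf 0$ to them. Second, your side condition ``whenever $T_0$ is $\Sigma_1$-sound'' is misplaced in both directions: it is neither needed (the arithmetical soundness of $\mathbf{GL}$, i.e.\ $I\Sigma_1$ verifying $\mathbf K$, $\mathbf 4$, L\"ob, and necessitation for $\Pr_{T_0}$, holds for every r.e.\ $T_0 \supseteq I\Sigma_1$ regardless of soundness) nor available (the definition of a constant provability model imposes no soundness constraint on $T_0$).
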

\begin{proof}
If $\Gamma \vdash_{\mathbf{GL}} A$ then there exists a finite $\Delta \subseteq \Gamma$ such that $\mathbf{GL} \vdash \bigwedge \Delta \rightarrow A$. Then by Theorem \ref{t0-0}, we have $I\Sigma_1 \vdash \Delta^{\sigma}(\mathbf{0}) \rightarrow A^{\sigma}(\mathbf{0})$. Thus for any model $M$, $M \vDash \Gamma^{\sigma}(\mathbf{0}) \Rightarrow A^{\sigma}(\mathbf{0})$. Pick any arbitrary witnesses for $\Gamma$ and $A$ say $w_{\Gamma}$ and $w_A$. By using the Lemma \ref{t6-3} we will have $M \vDash \Gamma^{\sigma}(w_{\Gamma}) \Rightarrow A^{\sigma}(w_A)$.
\end{proof}
For the completeness of $\mathbf{GL}$ we have:
\begin{thm}\label{t6-5}(Uniform Strong Completeness)
Let $I \Sigma_1 \subseteq T$ be an r.e. $\Sigma_1$-sound theory and $\{T_n\}_{n=0}^{\infty}$ be a hierarchy of theories such that for any $n$, $T_n=T$, then there is an arithmetical substitution $*$ such that for any modal sequent $\Gamma \Rightarrow A$, if for all $M \vDash T$, we have $(M, \{T_n\}_{n=0}^{\infty}) \vDash \Gamma \Rightarrow A$, then $\Gamma \vdash_\mathbf{GL} A$. And especially, if $\mathbf{Cst} \vDash \Gamma \Rightarrow A$ then $\Gamma \vdash_\mathbf{GL} A$.
\end{thm}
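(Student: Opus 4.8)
The plan is to reduce this to Solovay's uniform arithmetical completeness for $\mathbf{GL}$ (Theorem~\ref{t0-0}$(i)$), just as the soundness direction, Theorem~\ref{t6-4}, was a repackaging of Solovay's soundness. The crucial observation is that whenever $T_n=T$ for every $n$, the pair $(M,\{T_n\}_{n=0}^{\infty})$ is a constant provability model for every $M\vDash T$: the equivalence $M\vDash\Pr_{T_m}(\cdot)\leftrightarrow\Pr_{T_n}(\cdot)$ is trivial, and $M\vDash\Pr_{T_0}(\Pr_{T_m}(\cdot)\leftrightarrow\Pr_{T_n}(\cdot))$ holds because $I\Sigma_1\subseteq T_0$. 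Hence Lemma~\ref{t6-3} applies to such models, and it tells us that inside them every witness is $M$-provably equivalent to the all-zero assignment $\mathbf{0}$, and that $(\cdot)^{\sigma}(\mathbf{0})$ is literally the standard Solovay interpretation reading every box as $\Pr_T=\Pr_{T_0}$; in particular $D^{\sigma}(\mathbf{0})$ coincides with Solovay's $D^{\sigma}$.

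I would take $*$ to be the uniform Solovay substitution of Theorem~\ref{t0-0}$(i)$, for which $T\vdash D^{*}$ implies $\mathbf{GL}\vdash D$ for every modal formula $D$. Now assume that $(M,\{T_n\}_{n=0}^{\infty})\vDash\Gamma\Rightarrow A$ for all $M\vDash T$. Unwinding Definition~\ref{t2-9}, each such $M$ comes with a sequence of expansions $\bar{\Gamma}_M$ of $\Gamma$, finitely many expansions $B_1^M,\dots,B_{k_M}^M$ of $A$, and witnesses $u_M$ and $v_M$, such that $M\vDash\bigwedge\bar{\Gamma}_M^{*}(u_M)\to\bigvee_i(B_i^M)^{*}(v_{M,i})$. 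Applying Lemma~\ref{t6-3} to every conjunct and every disjunct replaces $u_M,v_M$ by $\mathbf{0}$, giving $M\vDash\bigwedge\bar{\Gamma}_M^{*}(\mathbf{0})\to\bigvee_i(B_i^M)^{*}(\mathbf{0})$. Next, each formula occurring in $\bar{\Gamma}_M$ is an expansion of a formula $\gamma\in\Gamma$ and each $B_i^M$ is an expansion of $A$; since $\mathbf{GL}$ contains the axiom $\mathbf{K}$, every expansion is $\mathbf{GL}$-equivalent to its source formula, so by the soundness half of Theorem~\ref{t0-0}$(i)$ (if $\mathbf{GL}\vdash C$ then $I\Sigma_1\vdash C^{*}$) the interpretation of each expansion is $I\Sigma_1$-provably, hence $M$-, equivalent to $\gamma^{*}(\mathbf{0})$, resp.\ to $A^{*}(\mathbf{0})$. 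Therefore every $M$ that models $T\cup\{\gamma^{*}(\mathbf{0}):\gamma\in\Gamma\}$ satisfies $A^{*}(\mathbf{0})$.

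By first-order completeness this yields $T\cup\{\gamma^{*}(\mathbf{0}):\gamma\in\Gamma\}\vdash A^{*}(\mathbf{0})$, and compactness produces a finite $\Delta\subseteq\Gamma$ with $T\vdash\bigl(\bigwedge\Delta\to A\bigr)^{*}$. The uniform Solovay theorem then gives $\mathbf{GL}\vdash\bigwedge\Delta\to A$, hence $\Gamma\vdash_{\mathbf{GL}}A$. For the last clause, if $\mathbf{Cst}\vDash\Gamma\Rightarrow A$, fix any $\Sigma_1$-sound r.e.\ $T\supseteq I\Sigma_1$ (e.g.\ $T=\PA$) and set $T_n=T$ for all $n$; then the models $(M,\{T_n\}_{n=0}^{\infty})$ with $M\vDash T$ form a subclass of $\mathbf{Cst}$, so the uniform expansions and witnesses supplied by $\mathbf{Cst}\vDash\Gamma\Rightarrow A$ already witness the hypothesis of the first part, and $\Gamma\vdash_{\mathbf{GL}}A$ follows.

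I do not expect a genuine obstacle here: the real content sits in Lemma~\ref{t6-3} and in Solovay's theorem, both of which are available. The points needing attention are (i) verifying that a constant hierarchy really produces constant provability models so that Lemma~\ref{t6-3} is applicable; (ii) carrying out the witness- and expansion-normalization \emph{uniformly in} $M$, so that the arithmetical implication obtained no longer mentions witnesses or expansions and can be fed to first-order completeness; and (iii) the compactness step trimming a possibly infinite $\Gamma$ down to a finite antecedent before invoking Theorem~\ref{t0-0}$(i)$.
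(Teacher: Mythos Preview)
Your proposal is correct and follows essentially the same approach as the paper: pick Solovay's uniform substitution $*$, use the fact that in a constant hierarchy every witness collapses to the all-zero assignment (you invoke Lemma~\ref{t6-3}; the paper argues directly from $T_n=T$), reduce expansions to their source formulas via $\mathbf{K}$, then apply first-order completeness, compactness, and the uniform Solovay theorem. The only cosmetic difference is that the paper takes $T=I\Sigma_1$ in the final clause while you take $T=\PA$; either works.
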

\begin{proof}
Pick $*$ as the uniform arithmetical substitution in Solovay's completeness theorem for $T$ (see Preliminaries and \cite{Bo}). Pick $M \vDash T$, arbitrarily. We have $(M, \{T_n\}_{n=0}^{\infty}) \vDash \Gamma \Rightarrow A$, hence there are a sequence of expansions $\bar{\Gamma}$ and expansions $\{A_i\}_{i=0}^{r}$ of $A$ and witnesses $u$ and $w_i$ such that 
\[
M \vDash \bar{\Gamma}^*(u) \Rightarrow \bigvee_{i=0}^{r} A_i^{*}(w_i).
\] 
Since all the theories are equal, we can easily verify that for any formula $B$ and any witness $v$, $B^{*}(v)$ is equivalent to $B^{*}$, where $B^{*}$ means a combination of substituting all the atoms by $*$ and interpreting any box as the provability predicate for $T$. Then we have
\[
M \vDash \bar{\Gamma}^* \Rightarrow \bigvee_{i=0}^{r} A_i^{*}.
\]
Moreover, it is easy to prove that if $B$ is an expansion of $C$, then $B^*$ is equivalent to $C^*$ in $I\Sigma_1$ and hence
$
M \vDash \Gamma^* \Rightarrow A^*
$.
Since $M$ is arbitrary, we have 
$
T + \Gamma^* \vdash A^*
$,
therefore, there is a finite subsequence $\Delta \subseteq \Gamma$ such that
$
T + \Delta^* \vdash A^*
$.
Then by Solovay's uniform completeness theorem, we have $\Delta \vdash_{\mathbf{GL}} A$, thus
$
\Gamma \vdash_{\mathbf{GL}} A
$. 
For the second part of the theorem, it is easy to show that if $\mathbf{Cst} \vDash \Gamma \Rightarrow A$, then the assumption of the first part for $T=I\Sigma_1$ is met, and hence $\Gamma \vdash_{\mathbf{GL}} A$.
\end{proof}
\subsection{The Case \textbf{GLS}}
For the case of $\mathbf{GLS}$ we have:
\begin{thm}\label{t6-6}(Soundness)
If $\Gamma \vdash_{\mathbf{GLS}} A$, then $\mathbf{sCst} \vDash \Gamma \Rightarrow A$.
\end{thm}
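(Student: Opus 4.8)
Since $M$ need not be $\mathbb{N}$, one cannot directly invoke part $(ii)$ of Theorem \ref{t0-0} here; instead the plan is to follow the soundness proof of $\mathbf{GL}$ (Theorem \ref{t6-4}) and to account for the extra reflection axioms $\Box B \rightarrow B$ of $\mathbf{GLS}$ by a syntactic reduction to $\mathbf{GL}$. As $\mathbf{GLS}$ has modus ponens as its only rule, from $\Gamma \vdash_{\mathbf{GLS}} A$ I would first extract, by the deduction theorem, a finite $\Delta \subseteq \Gamma$ with $\mathbf{GLS} \vdash \bigwedge \Delta \rightarrow A$. A $\mathbf{GLS}$-derivation of $\bigwedge \Delta \rightarrow A$ uses only finitely many axioms, each of which is either a theorem of $\mathbf{GL}$ or an instance $\Box B \rightarrow B$; applying the deduction theorem again --- which is legitimate because $\mathbf{GL}$ contains all propositional tautologies and modus ponens is the sole rule --- yields modal formulas $B_1, \dots, B_k$ with
\[
\mathbf{GL} \vdash \bigwedge_{i=1}^{k}(\Box B_i \rightarrow B_i) \rightarrow \Big(\bigwedge \Delta \rightarrow A\Big).
\]

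Fixing trivial expansions (the formulas themselves) and the assignment $\mathbf{0}$ sending every box to $0$, as in Theorem \ref{t6-4}, the soundness half of part $(i)$ of Theorem \ref{t0-0}, applied with the common theory $T_0$ of the constant model, gives, for every arithmetical substitution $\sigma$,
\[
I\Sigma_1 \vdash \Big(\bigwedge_{i=1}^{k}\big(\Pr_{T_0}(B_i^{\sigma}(\mathbf{0})) \rightarrow B_i^{\sigma}(\mathbf{0})\big)\Big) \rightarrow \Big(\bigwedge \Delta^{\sigma}(\mathbf{0}) \rightarrow A^{\sigma}(\mathbf{0})\Big).
\]
Since $M \vDash I\Sigma_1$ this implication holds in $M$, and because $(M,\{T_n\}_{n=0}^{\infty})$ is sound-constant (Definition \ref{t6-1}), $M$ thinks $T_0$ is sound, so $M \vDash \Pr_{T_0}(B_i^{\sigma}(\mathbf{0})) \rightarrow B_i^{\sigma}(\mathbf{0})$ for each $i$. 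Discharging this antecedent leaves $M \vDash \bigwedge \Delta^{\sigma}(\mathbf{0}) \rightarrow A^{\sigma}(\mathbf{0})$, and since $\Delta \subseteq \Gamma$, also $M \vDash \Gamma^{\sigma}(\mathbf{0}) \Rightarrow A^{\sigma}(\mathbf{0})$.

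Finally I would transport this to arbitrary witnesses exactly as in the $\mathbf{GL}$ case: a sound-constant model is constant, so Lemma \ref{t6-3} applies, and for any witnesses $w_{\Gamma}$ for $\Gamma$ and $w_A$ for $A$ the sentences $\Gamma^{\sigma}(w_{\Gamma})$, $A^{\sigma}(w_A)$ are equivalent in $M$ to $\Gamma^{\sigma}(\mathbf{0})$, $A^{\sigma}(\mathbf{0})$; hence $M \vDash \Gamma^{\sigma}(w_{\Gamma}) \Rightarrow A^{\sigma}(w_A)$. As the chosen expansions and witnesses depend neither on $\sigma$ nor on the model, Definition \ref{t2-9} yields $\mathbf{sCst} \vDash \Gamma \Rightarrow A$. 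I expect the only delicate point to be the reduction in the first paragraph --- rewriting a $\mathbf{GLS}$-proof as a $\mathbf{GL}$-proof of an implication whose hypotheses are finitely many reflection instances --- where care is needed that the deduction theorem is correctly applied in a modus-ponens-only calculus; the rest is routine or already packaged in Lemma \ref{t6-3} and Theorem \ref{t0-0}.
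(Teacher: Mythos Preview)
Your proposal is correct and follows essentially the same approach as the paper's proof: reduce $\mathbf{GLS}$-provability to $\mathbf{GL}$-provability of an implication with finitely many reflection instances as hypothesis, apply the $\mathbf{GL}$ soundness argument (via Theorem~\ref{t0-0}(i)) with the all-zero witness $\mathbf{0}$, discharge the reflection hypotheses using the soundness of $T_0$ in $M$, and then invoke Lemma~\ref{t6-3} to pass to arbitrary witnesses. The paper states the reduction step in the form $\Gamma \vdash_{\mathbf{GL}} \bigwedge_{i=1}^{k}(\Box B_i \rightarrow B_i) \rightarrow A$ without first extracting a finite $\Delta$, but this is cosmetic since the $\mathbf{GL}$-soundness proof it appeals to (Theorem~\ref{t6-4}) extracts such a $\Delta$ anyway.
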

\begin{proof}
If $\Gamma \vdash_{\mathbf{GLS}} A$, then there are formulas $B_1, B_2, \ldots , B_k$ such that $\Gamma \vdash_{\mathbf{GL}} \bigwedge_{i=1}^{k}(\Box B_i \rightarrow B_i) \rightarrow A$. By the proof of the soundness of $\mathbf{GL}$, we know that for any constant provability model and any arithmetical substitution $\sigma$, $M \vDash \Gamma^{\sigma}(\mathbf{0}) + \bigwedge_{i=1}^{k}(\Pr_0 (B_i^{\sigma}(\mathbf{0})) \rightarrow B_i^{\sigma}(\mathbf{0})) \Rightarrow A^{\sigma}(\mathbf{0})$. Since $M \vDash \Pr_0 (\phi) \rightarrow \phi$ for any arithmetical $\phi$, we have $M \vDash \Gamma^{\sigma}(\mathbf{0}) \Rightarrow A^{\sigma}(\mathbf{0})$. Use Lemma \ref{t6-3} to change the index of the theories from zero to any arbitrary witness.  
\end{proof}
Moreover, we have the completeness theorem.
\begin{thm}\label{t6-7}(Completeness)
Let $I \Sigma_1 \subseteq T$ be a sound r.e. theory and $\{T_n\}_{n=0}^{\infty}$ be a hierarchy of theories such that for any $n$, $T_n=T$. If $(\mathbb{N}, \{T_n\}_{n=0}^{\infty}) \vDash A$, then $\mathbf{GLS} \vdash A$; and especially, if $\mathbf{sCst} \vDash A$, then $ \mathbf{GLS} \vdash A$.
\end{thm}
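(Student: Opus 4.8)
The plan is to mirror the completeness proof for $\mathbf{GL}$ (Theorem~\ref{t6-5}), replacing Solovay's First Theorem by the Second one, i.e.\ Theorem~\ref{t0-0}$(ii)$. Fix a sound r.e.\ theory $T \supseteq I\Sigma_1$ (soundness gives $\Sigma_1$-soundness, which is all that Solovay's framework needs) and a hierarchy with $T_n = T$ for every $n$. Assume $(\mathbb{N}, \{T_n\}_{n=0}^{\infty}) \vDash A$. Unwinding Definition~\ref{t2-9} yields expansions $B_1,\dots,B_k$ of $A$ and witnesses $w_1,\dots,w_k$ such that $\mathbb{N} \vDash \bigvee_{i=1}^{k} B_i^{\sigma}(w_i)$ holds for \emph{every} arithmetical substitution $\sigma$. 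The goal is to deduce that $\mathbb{N} \vDash A^{\sigma}$ for every $\sigma$, where $A^{\sigma}$ interprets each box by $\Pr_T$; then Theorem~\ref{t0-0}$(ii)$ gives $\mathbf{GLS} \vdash A$.

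First I would collapse the witnesses. Since all the $T_n$ coincide, $(\mathbb{N}, \{T_n\}_{n=0}^{\infty})$ is a constant provability model and $\mathbb{N}$ is a model of $I\Sigma_1$, so Lemma~\ref{t6-3} applies and gives $\mathbb{N} \vDash B_i^{\sigma}(w_i) \leftrightarrow B_i^{\sigma}(\mathbf{0})$; and because every $T_n$ equals $T$, the sentence $B_i^{\sigma}(\mathbf{0})$ is literally $B_i^{\sigma}$. Next I would collapse the expansions exactly as in the proof of Theorem~\ref{t6-5}: a routine induction on formulas shows that if $B$ is an expansion of $C$ then $I\Sigma_1 \vdash B^{\sigma} \leftrightarrow C^{\sigma}$ (the modal step uses that $\Pr_T$ respects provable equivalence and that a finite disjunction of inductively-equivalent expansions of a subformula is provably equivalent to that subformula). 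Hence $I\Sigma_1 \vdash \bigvee_{i=1}^{k} B_i^{\sigma} \leftrightarrow A^{\sigma}$, and therefore $\mathbb{N} \vDash A^{\sigma}$ for all $\sigma$. Applying Theorem~\ref{t0-0}$(ii)$ finishes the main statement.

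For the final clause, observe that $(\mathbb{N}, \{I\Sigma_1\}_{n=0}^{\infty})$ is a sound-constant provability model (all theories are $I\Sigma_1$, which is sound, and $\mathbb{N}$ correctly sees this). Thus $\mathbf{sCst} \vDash A$ entails $(\mathbb{N}, \{I\Sigma_1\}_{n=0}^{\infty}) \vDash A$, and the first part with $T = I\Sigma_1$ yields $\mathbf{GLS} \vdash A$. I do not anticipate a genuine obstacle: every component already appears earlier (Lemma~\ref{t6-3}, the expansion-collapsing argument from Theorem~\ref{t6-5}, and Theorem~\ref{t0-0}$(ii)$); the only points deserving a little care are checking that ``sound r.e.'' meets the hypotheses of Solovay's theorem and that Lemma~\ref{t6-3} may legitimately be invoked with $M = \mathbb{N}$, both of which are immediate.
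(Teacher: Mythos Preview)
Your proposal is correct and follows essentially the same route as the paper: unwind the satisfaction definition to get expansions and witnesses, collapse the witnesses (you cite Lemma~\ref{t6-3}; the paper just notes that since all $T_n=T$ the interpretation is literally independent of the witness), collapse the expansions via the $I\Sigma_1$-provable equivalence argument from Theorem~\ref{t6-5}, and then apply Solovay's Second Theorem, with the $\mathbf{sCst}$ clause handled by specializing to $T=I\Sigma_1$. The only cosmetic difference is your explicit appeal to Lemma~\ref{t6-3} where the paper argues directly, which is harmless.
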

\begin{proof}
By the assumption, we have $(\mathbb{N}, \{T_n\}_{n=0}^{\infty}) \vDash A$. Hence, there are expansions $\{A_i\}_{i=0}^{r}$ of $A$ and witnesses $w_i$ such that for all arithmetical substitutions $\sigma$,
$
\mathbb{N} \vDash \bigvee_{i=0}^{r} A_i^{\sigma}(w_i)
$. 
Since all the theories are equivalent, it is easy to show that for any formula $B$ and any witness $v$, $B^{\sigma}(v)$ is equivalent to $B^{\sigma}$, where $B^{\sigma}$ means a combination of substituting any atom by $\sigma$ and interpreting any box as the provability predicate for $T$. Therefore,
$
\mathbb{N} \vDash \bigvee_{i=0}^{r} A_i^{\sigma}
$. 
Moreover, it is easy to prove that if $B$ is an expansion of $C$, then $B^{\sigma}$ is equivalent to $C^{\sigma}$ in $I\Sigma_1$, hence
$
\mathbb{N} \vDash A^{\sigma}
$. 
Since $\sigma$ is arbitrary, based on Solovay's second completeness theorem, $\mathbf{GLS} \vdash A$.\\
For the second part of the theorem, it is easy to verify that if $\mathbf{sCst} \vDash A$ then the assumption of the first part for $T=I\Sigma_1$ is met and hence $\mathbf{GLS} \vdash A$.
\end{proof}
\section{The Extensions of \textbf{KD45}}
Intuitively, the logic $\mathbf{S5}$ does not admit any provability interpretation. The informal reason is as follows: The axiom $\mathbf{5}: \neg \Box A \rightarrow \Box \neg \Box A$ simply states that if $A$ is not provable in a theory $T_n$, then this fact will be provable in $T_{n+1}$, i.e.
\[
T_n \nvdash A \Rightarrow T_{n+1} \vdash \neg \Pr_n(A).
\]
Moreover, the axiom $\mathbf{T}$ asserts that all theories are sound, hence
\[
T_n \nvdash A \Leftrightarrow T_{n+1} \vdash \neg \Pr_n(A).
\]
We can use the last equivalence and the fact that the theory $T_{n+1}$ is recursively enumerable to find a decision procedure for the provability in the theory $I\Sigma_1 \subseteq T_n$, which is impossible. \\
The above argument is based on the axiom $\mathbf{5}$ and the fact that all theories are sound. But it is possible to weaken the soundness part to some kind of consistency assumption which generalizes the above argument to all extensions of the logic $\mathbf{KD45}$.
\begin{thm}\label{t7-1}
There is no provability model $(M, \{T_n\}_{n=0}^{\infty})$ such that 
\[
(M, \{T_n\}_{n=0}^{\infty}) \vDash \mathbf{KD45}.
\]
Hence, there are no provability models for any extension of the logic $\mathbf{KD45}$. Specially, $\mathbf{S5}$ does not have any provability interpretation.
\end{thm}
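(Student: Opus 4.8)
The plan is to run the decidability argument sketched just above the statement, with the appeal to soundness replaced by an appeal to the formalized second incompleteness theorem (available since every $T_n$ contains $I\Sigma_1$). Suppose, towards a contradiction, that $(M,\{T_n\}_{n=0}^{\infty})\vDash\mathbf{KD45}$. I would work with two theorems of $\mathbf{KD45}$ and their necessitations: the instance $\neg\Box p\to\Box\neg\Box p$ of axiom $\mathbf{5}$ with $p$ an atom, and axiom $\mathbf{D}$, i.e.\ $\neg\Box\bot$. From the validity of $\neg\Box p\to\Box\neg\Box p$ --- using the prescribed expansions and witness, and collapsing a disjunctive witness to its extremal indices (minimum on the inner boxes, maximum elsewhere) --- I would extract standard numbers $m\le n$ and, via a diagonal substitution $\sigma$ tailored to the (now fixed, finite) witness, the ``positive half'': for every arithmetical sentence $\phi$, if $T_m\nvdash\phi$ then $T_n\vdash\neg\Pr_m(\phi)$. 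From the validity of $\neg\Box\bot$ I would extract a standard index $c$ with $M\vDash\neg\Pr_c(\bot)$; since $\Pr_c(\bot)$ is $\Sigma_1$ and $M\vDash I\Sigma_1$, this forces $T_c$, hence every $T_i$ with $i\le c$, to be genuinely consistent and $M$-consistent --- and the real work (see below) is to arrange that $n$ lies in this range, so that $T_n$ is genuinely consistent.

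Granting that, the two halves combine into a decision procedure for $\{\phi : T_m\vdash\phi\}$: given $\phi$, dovetail a search for a $T_m$-proof of $\phi$ with a search for a $T_n$-proof of $\neg\Pr_m(\phi)$. By the positive half one of the two searches halts; and they cannot both succeed, for if $T_m\vdash\phi$ and $T_n\vdash\neg\Pr_m(\phi)$ then, since $T_m\subseteq T_n$ and $I\Sigma_1$ proves its own $\Sigma_1$-completeness, $T_n$ proves $\Pr_m(\phi)$ as well, so $T_n$ is inconsistent --- against what we arranged. Hence the procedure correctly decides $\{\phi:T_m\vdash\phi\}$. But $T_m$ is a consistent r.e.\ extension of $I\Sigma_1$ (consistent because $T_m\subseteq T_n$), so by the essential undecidability of $I\Sigma_1$ (Church's theorem) its set of theorems is not decidable, a contradiction. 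This rules out provability models for $\mathbf{KD45}$; since axioms $\mathbf{D}$, $\mathbf{4}$, $\mathbf{5}$ and the necessitation rule all survive in any extension of $\mathbf{KD45}$, and $\mathbf{KD45}\subseteq\mathbf{S5}$, the remaining clauses follow at once.

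I expect the main obstacle to be exactly the index bookkeeping. The witnesses for $\neg\Box\bot$, for $\neg\Box p\to\Box\neg\Box p$, and for their necessitations are chosen independently, so nothing a priori places the meta-index $n$ of axiom $\mathbf{5}$ inside the consistency range supplied by axiom $\mathbf{D}$; worse, since $M$ may be non-standard, an adversary could try to let $\Pr_n$ refer to a theory that $M$ itself regards as inconsistent, making the consequent of axiom $\mathbf{5}$ vacuously true and collapsing the argument. The crux is to block this: one uses that expansions allow passing to maxima and minima of the witness indices, that the necessitated instances $\Box^{s}\neg\Box\bot$ together with the formalized second incompleteness theorem constrain how far ``$M$-inconsistency'' can reach down the hierarchy, and that the finite axiom-$\mathbf{5}$ witness is fixed before $\sigma$ is chosen and can therefore be diagonalised against --- the upshot being a genuine (not merely $M$-internal) undecidability of some $T_m$, which is the contradiction we need.
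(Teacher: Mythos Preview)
Your outlined argument has a genuine gap at exactly the point you flag as ``the main obstacle,'' and the sketch you offer there does not close it. The decision procedure you describe is a real-world algorithm: you dovetail a search for a genuine $T_m$-proof of $\phi$ with a search for a genuine $T_n$-proof of $\neg\Pr_m(\phi)$, and you need that for every standard $\phi$ at least one of these searches halts in the standard sense. But the satisfaction relation only gives you $M$-truth. From the validity of $\neg\Box p\to\Box\neg\Box p$ you obtain, after collapsing the disjunctive witness, indices $a,b,c$ with $b>c$ and
\[
M\vDash \neg\Pr_a(\phi)\ \to\ \Pr_b(\neg\Pr_c(\phi))\qquad\text{for every arithmetical }\phi,
\]
i.e.\ $M\vDash \Pr_a(\phi)\vee\Pr_b(\neg\Pr_c(\phi))$. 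When $M$ is non-standard neither disjunct transfers to the real world: $M\vDash\Pr_a(\phi)$ may hold via a non-standard proof while $T_a\nvdash\phi$, and likewise $M\vDash\Pr_b(\neg\Pr_c(\phi))$ does not yield a genuine $T_b$-proof. So the ``positive half'' you need --- that one of the two searches actually halts --- simply does not follow, and Church's theorem cannot be invoked. Nothing in ``the finite axiom-$\mathbf 5$ witness is fixed before $\sigma$ is chosen and can therefore be diagonalised against'' addresses this: you never specify a diagonal sentence or show how it would force standardness of the relevant proofs. There is also a secondary index problem you glide over: the two occurrences of $\Box p$ in axiom $\mathbf 5$ receive unrelated witnesses $a$ and $c$, so the ``$m$'' in ``$T_m\nvdash\phi$'' and the ``$m$'' in ``$\neg\Pr_m(\phi)$'' need not coincide, and max/min tricks do not align them.

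The paper avoids both problems by staying entirely inside $M$. It never tries to extract genuine provability; instead it isolates three $M$-internal facts --- a weak consistency statement, a weak formalized-consistency statement, and the weak axiom-$\mathbf 5$ implication above --- and then plugs in the single specific sentence $\phi=\Pr_{T_m+\Cons(T_l)}(\bot)$ and appeals to the \emph{formalized} second incompleteness theorem (provable in $I\Sigma_1$, hence true in $M$) to derive a contradiction in $M$. The trick for controlling the indices is to pad the relevant $\mathbf{KD45}$-theorems with $\Box^n\top$ before taking witnesses, which forces the witness numbers to be at least $n$. If you want to salvage your idea you would have to internalize the undecidability argument in $M$ (a Rosser-style fixed point rather than an external appeal to Church), at which point the argument becomes a variant of the paper's rather than a genuinely different route.
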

\begin{proof}
The proof we present here is more complex than the natural proof of this theorem, because we use weaker assumptions than what is available in $\mathbf{KD45}$. The reason of our interest in this more complex proof is that we will use the same proof for the case of the classical propositional logic, and in that case we just have access to these weaker assumptions.\\
We prove the claim by contradiction. Suppose that there is a provability model $(M, \{T_n\}_{n=0}^{\infty})$ such that $(M, \{T_n\}_{n=0}^{\infty}) \vDash \mathbf{KD45}$. First, we show that the following three statements are true in $M$, then we will use these statements to reach the contradiction.
\begin{itemize}
\item[$(i)$]
For any $n$, $M$ thinks that $T_{n+1} \nvdash \Pr_n(\bot)$. (Weak version of the consistency assumption.)
\item[$(ii)$]
For any $n$, there exist $N>n$ and $s<N$ such that $M$ thinks that $T_N \vdash \Pr_{n+1}(\Pr_n(\bot)) \rightarrow \Pr_s(\bot)$. (Weak version of the provability of the consistency assumption.)
\item[$(iii)$]
There are $m$, $n$ and $k$ such that $M$ thinks that for any arithmetical statement $\phi$,
\[
\neg \Pr_n(\phi) \rightarrow \Pr_{m+1}(\Pr_k(\phi) \rightarrow \Pr_m(\bot)).
\]
(Weak version of the axiom $\mathbf{5}$).
\end{itemize}
To prove $(i)$, for any number $n$, define $\Box^n \top$ as follows:
$\Box^0 \top=\top$ and $\Box^{n+1}\top=\Box \Box^n \top$. Consider the formula $\neg \Box \Box (\bot \wedge \Box^n \top) $, which is a theorem of $\mathbf{KD45}$. Therefore, we have expansions of this formula, of the form $\neg \Box\bigvee_{j=0}^{s_i} \Box \bigvee_{k=0}^{t_{ij}} (\bot \wedge B_{ijk})$ for $0 \leq i \leq r$, where $B_{ijk}$ is an expansion of $\Box^n \top$. Moreover, there are witnesses $w_i=(n_i, (m_{ij}, (u_{ijk})_{k=0}^{t_{ij}})_{j=0}^{s_i})$ for any of these expansions such that for any arithmetical substitution $\sigma$, we have
\[
M \vDash  \bigvee_{i=0}^{r}\neg \Box \bigvee_{j=0}^{s_i} \Box (\bigvee_{k=0}^{t_{ij}} (\bot \wedge B_{ijk}))^{\sigma}(w_i).
\]
Since the number of the boxes in $\Box^n \top$ is $n$, and witnesses for these boxes should be increasing, we have $m_{ij} \geq n$  and hence $n_i \geq n+1$. Define $M=min_{ij}(m_{ij})$ and $N=min_{i}(n_i)$. Since $B_{ijk}$ is an expansion of the theorem $\Box^n \top$, we can easily show that $B_{ijk}(u_{ijk})$ is provable in $I\Sigma _1$. Hence, it is easy to see that $M \vDash \neg \Pr_N(\Pr_M(\bot))$ and $N>M \geq n$. Therefore, if $M \vDash \Pr_{n+1}(\Pr_n(\bot))$, and since $N>M \geq n$, we have $\Pr_N(\Pr_M(\bot))$, which is a contradiction.\\

For $(ii)$, apply the same method to the formula $\Box (\Box \Box (\Box \bot \wedge \Box^n \top) \rightarrow \Box \bot)$ which is again a theorem of $\mathbf{KD45}$. Then there are expansions of the form $\Box \bigvee_{j=0}^{q_j} (\Box (\bigvee_{k=0}^{p_{ij}} \Box \bigvee_{l=0}^{t_{ijk}} B_{ijkl}) \rightarrow \Box \bot)$ where $B_{ijkl}$ is an expansion of $\Box \bot \wedge \Box^n \top$ and there are witnesses $w_i=(n_i, (m_{ij}, (r_{ijk}, (u_{ijkl})_{l=0}^{t_{ijk}})_{k=0}^{p_{ij}}, s_{ij})_{j=0}^{q_i})$ such that 
\[
M \vDash  \bigvee_{i=0}^{r} (\Box ( \bigvee_{j=0}^{q_j} (\Box (\bigvee_{k=0}^{p_{ij}} \Box \bigvee_{l=0}^{t_{ijk}} B_{ijkl}) \rightarrow \Box \bot))^{\sigma}(w_i).
\]
Once more, with the same reason as in the case $(i)$, $n \leq r_{ijk} < m_{ij} < n_i$. Define $N=max_{i}(n_i)$, $r=min_{ijk}(r_{ijk})$, $m=min_{ij}(m_{ij})$ and $s=max_i(s_i)$. Hence $N>m, r, s$ and $m>r \geq n$. Since the theories in the hierarchy $\{T_n\}_{n=0}^{\infty}$ is provably increasing, it is easy to prove
\[
M \vDash \Pr_N(\Pr_m(\Pr_r(\bot)) \rightarrow \Pr_s(\bot)).
\]
Because $m>r \geq n$, we have
\[
M \vDash \Pr_N(\Pr_{n+1}(\Pr_n(\bot)) \rightarrow \Pr_s(\bot)).
\]
Since $n$ is arbitrary, we have proved that for any $n$, there exists $N>n$, $s<N$ such that 
\[
M \vDash \Pr_N(\Pr_{n+1}(\Pr_n(\bot)) \rightarrow \Pr_s(\bot)),
\]
and this is what we wanted.\\

For $(iii)$ we know that $\neg \Box p \rightarrow \Box (\Box p \rightarrow \Box \bot)$ is provable in $\mathbf{KD45}$ and consequently it is true in the model. Therefore, there are some expansions of the formula $\neg \Box p \rightarrow \Box \bigvee_{j=0}^{s_i}( \Box p \rightarrow \Box \bot)$, and some witnesses $(n_i, m_i, (k_{ij}, l_{ij})_{j=0}^{s_i})$ for them, such that for any arithmetical substitution $\sigma$,
\[
M \vDash \bigvee_{i=0}^{r}(\neg \Box p \rightarrow \Box \bigvee_{j=0}^{s_i}( \Box p \rightarrow \Box \bot))^{\sigma}(n_i, m_i, (k_{ij}, l_{ij})_{j=0}^{s_i}).
\]
Define $n=max_{i}(n_i)$, $k=min_{ij}(k_{ij})$, $m=max_{i}(k_i)$ and $l=max_{ij}(l_{ij})$. It is easy to show that 
\[
M \vDash \neg \Pr_n(p^{\sigma}) \rightarrow \Pr_m( \Pr_k(p^{\sigma}) \rightarrow \Pr_l(\bot)).
\]
It is easily verified that we can increase $m$ and $l$; therefore, w.l.o.g. we can assume that $m=l+1$.
Send $p$ to $\phi$ to prove the claim, and this completes the proof of the statement $(iii)$.\\

For the proof of Theorem $\ref{t7-1}$, we want to use these three statements to reach a contradiction. First of all, to simplify the proof, use the following notation. For any $a$ and $b$, define the theory $T_{b_a}=T_b+\Cons(T_a)$. Thus, by $\Pr_{b_a}(A)$, we mean $\Pr_{T_{b_a}}$. Now, $(iii)$ would be equivalent to
\[
M \vDash \neg \Pr_n(p^{\sigma}) \rightarrow \Pr_{m_l}(\neg  \Pr_k(p^{\sigma})).
\]
Put $\phi=\Pr_{m_l}(\bot)$; therefore,
\[
M \vDash \neg \Pr_n(\Pr_{m_l}(\bot)) \rightarrow \Pr_{m_l}(\neg \Pr_k(\Pr_{m_l}(\bot))).
\]
On other hand by the formalized $\Sigma_1$-completeness, we have 
\[
I\Sigma_1 \vdash \neg \Pr_k(\Pr_{m_l}(\bot)) \rightarrow \neg \Pr_{m_l}(\bot),
\]
hence,
\[
T_{m_l} \vdash \neg \Pr_k(\Pr_{m_l}(\bot)) \rightarrow \neg \Pr_{m_l}(\bot).
\]
Moreover, by $\Sigma_1$-completeness, we have 
\[
I\Sigma_1 \vdash \Pr_{m_l}(\neg \Pr_k(\Pr_{m_l}(\bot)) \rightarrow \neg \Pr_{m_l}(\bot)).
\]
Therefore,
\[
I\Sigma_1 \vdash \Pr_{m_l}(\neg \Pr_k(\Pr_{m_l}(\bot))) \rightarrow \Pr_{m_l}(\neg \Pr_{m_l}(\bot)).
\]
And since $M \vDash I\Sigma_1$, we have
\[
M \vDash \neg \Pr_n(\Pr_{m_l}(\bot)) \rightarrow \Pr_{m_l}(\neg \Pr_{m_l}(\bot)).
\]
Based on G\"{o}del's second incompleteness theorem formalized in $I\Sigma_1$, we can conclude
\[
I\Sigma_1 \vdash \neg \Pr_{m_l}(\bot) \rightarrow \neg \Pr_{m_l}(\neg \Pr_{m_l}(\bot)).
\]
However, by $(i)$, we have 
\[
M \vDash \neg \Pr_{l+1}(\Pr_l(\bot)),
\]
hence $M \vDash \neg \Pr_{m_l}(\bot)$. Since $M \vDash I\Sigma_1$, 
\[
M \vDash \neg \Pr_{m_l}(\neg \Pr_{m_l}(\bot)).
\]
Therefore,
\[
M \vDash \Pr_n(\Pr_{m_l}(\bot)),
\]
and thus by definition of $T_{m_l}$ we have
\[
M \vDash \Pr_n(\Pr_m(\Pr_l(\bot))).
\]
By $(ii)$, there is some $N \geq l$ such that $M \vDash \Pr_N(\Pr_{l+1}(\Pr_l(\bot)) \rightarrow \Pr_s(\bot))$. W.l.o.g. pick this $N \geq n$. Since $N \geq n$, $M \vDash \Pr_N(\Pr_m(\Pr_l(\bot)))$, and therefore, $M \vDash \Pr_N(\Pr_s(\bot))$. Because $N>s$, we have $M \vDash \Pr_{N}(\Pr_{N-1}(\bot))$, which contradicts with $(i)$, and the proof follows.
\end{proof}
\section{A Remark on the Logic of Proofs}
As we mentioned in the Introduction, and as far as we know, the only successful attempt to find a natural provability interpretation for $\mathbf{S4}$ and hence, a formalization of the BHK interpretation is done by Artemov \cite{Art} and is called the logic of proofs. In this section, we will look into this approach and investigate some of its advantages and disadvantages.\\

The main idea of the logic of proofs, $\mathbf{LP}$, is using explicit proofs to avoid the non-standard proofs and hence to eliminate the incompleteness phenomenon. Let us give a more detailed account of this result. The language of $\mathbf{LP}$ is two sorted; one sort is for the explicit proofs and the other for the propositions. The first sort consists of proof terms constructed by the proof variables, proof constants and the proof connectives $+$, $\cdot$ and $!$, while the second sort contains terms constructed by the propositional variables, propositional connectives and the predicate $t:A$ in which $t$ is a proof term and $A$ is a proposition. Let us explain the intuitive meaning of these operations:\\

First of all we have to emphasize that in this interpretation, despite the usual case in mathematics, proofs can be multi-conclusion. To find a natural candidate for these multi-conclusion proofs, it is enough to consider any usual proof as a proof for all intermediate statements it uses to prove the conclusion. For instance, the usual proof $A_1, A_2, \ldots, A_n$ of $A_n$ will be interpreted as a proof for all $A_i$'s.\\
\\
1. The operation ``$!$". If $t$ is a proof for $A$, then $!t$ is a proof for the fact that ``$t$ is a proof for $A$". Therefore, the operator $!$ is the proof checker and could be interpreted as a self-awareness operator.\\
2. The operation ``$\cdot$". If $t$ is a proof for $A \rightarrow B$, and $s$ is a proof for $A$, then $t\cdot s$ is a proof for $B$. Intuitively, $\cdot$ means the application of Modus Ponens on the proofs.\\
3. The operation ``$+$". $t+s$ means the union of the proofs $t$ and $s$. Recall that our proofs are multi-conclusion and $t+s$ can be served as a proof for all conclusions of $t$ and $s$. Therefore if $t$ is a proof for $A$ and $s$ is a proof for $B$, then $t+s$ is a proof for both $A$ and $B$. To gain a better understanding, if we use the canonical way of changing usual proofs to multi-conclusion proofs, i.e. reading a usual proof as a proof for all intermediate statements in the proof, then $t+s$ just means putting $t$ and $s$ together. This is exactly what the symbol $+$ suggests.\\ 
4. The predicate ``$\; : \;$". The intuitive meaning of $t:A$ is that $t$ is a proof for $A$.\\

The formal system $\mathbf{LP}$ is a theory in this language to capture the intended meaning of the symbols defined above. The axioms are the following:\\
\\
1. A finite complete set of axioms for the classical propositional logic for the language of $\mathbf{LP}$,\\
2. $t:A \rightarrow A$,\\
3. $t:A \rightarrow B \rightarrow (s:A \rightarrow t\cdot s:B)$,\\
4. $t:A \rightarrow !t:t:A$,\\
5. $t:A \rightarrow s+t:A$,\\
6. $s:A \rightarrow s+t:A$.\\

The rules are the modus ponens and the neccesitation rule. The latter means that for any axiom $A$, we have $ \vdash c_A:A$, where $c_A$ is an appropriate constant exclusively used for $A$.\\

The natural interpretation for $\mathbf{LP}$ would be based on the usual proofs in Peano arithmetic. To formalize this idea, first of all we need a proof predicate: A proof predicate is a provably $\Delta_1$ formula (in $\mathbf{PA}$) $\Prf(x,y)$ with some natural basic properties (which we skip here. See \cite{Art}), and the following fundamental property:
\[
\mathbf{PA} \vdash A \;  \Leftrightarrow \exists x \Prf(x, \lceil A \rceil ).
\]

We want to interpret the language of $\mathbf{LP}$ with this natural provability interpretation. Define an arithmetical substitution $*$ as the following: Firstly, it interprets $\cdot$, $!$, $+$ and constants as the recursive functions on proofs in $\mathbf{PA}$ in the intended way. For instance, the function for $\cdot$ i.e., $\cdot^*$, will be the recursive function which reads the codes of the proofs for $A$ and $A \rightarrow B$ and replies the code of a proof for $B$. Why can we define such recursive functions? To show the fact that these functions exist, we need a proof; but here we just want to explain the main idea instead of a formal proof. For this reason, let us limit ourselves to the canonical proof predicate of $\mathbf{PA}$. In this case, it can be easily shown that we can define these functions in a recursive way. For instance, if $x$ and $y$ are proofs for $A \rightarrow B$ and $A$ respectively, for $\cdot^* (x, y)$ it is enough to put $y$ after $x$ and add the formula $B$ at the end. This is obviously a proof for $B$ and this process is clearly a recursive function. Moreover, note that for any $c_A$, $c_A^*$ is one of the proofs for the axiom $A^*$. The existence of such a $c_A^*$ also needs a proof, which we skip here. (See \cite{Art}.)\\

Up to this point, we have interpreted all the proof connectives as recursive functions. Use these interpretations to interpret all proof terms $t$. Note that for interpreting proof variables we use arbitrary natural numbers as the code of proofs. Extend the interpretation $*$ to formulas. The idea is just interpreting all atoms as arithmetical sentences, reading $t:A$ as the proof predicate $\Prf(t^*, \lceil A^* \rceil)$ and commute $*$ with all boolean connectives. For instance, the interpretation of $!x:p \rightarrow p$ would be $\Prf(!^*(n), \lceil \phi \rceil) \rightarrow \phi$ where the interpretations of $x$ and $p$ are $n$ and $\phi$, respectively.\\

These arithmetical interpretations are the natural and concrete interpretations of the proofs, and in \cite{Art} Artemov proved that $\mathbf{LP}$ is sound and complete with respect to the class of these arithmetical interpretations.
\begin{thm}
$\mathbf{LP} \vdash A$ iff $A^*$ is true for all arithmetical interpretations $*$.
\end{thm}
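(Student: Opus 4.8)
The plan is to prove the two directions separately. The soundness direction — if $\mathbf{LP}\vdash A$ then $A^*$ is true for every arithmetical interpretation $*$ — is a routine induction on $\mathbf{LP}$-derivations. The classical propositional axioms are sent to classical tautologies and modus ponens preserves truth. For axiom $2$, $(t{:}A\to A)^*$ is $\mathrm{Prf}(t^*,\lceil A^*\rceil)\to A^*$; if the antecedent is true then $t^*$ genuinely codes a $\mathbf{PA}$-proof of $A^*$, so $A^*$ is true by the soundness of $\mathbf{PA}$ in the standard model — in fact, since $t^*$ is a fixed numeral and $\mathrm{Prf}$ is $\Delta_1$, $\mathbf{PA}$ itself decides the antecedent, so one even gets $\mathbf{PA}\vdash(t{:}A\to A)^*$. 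Axioms $3$–$6$ translate into the defining closure properties of the recursive functions realizing $\cdot$, $!$ and $+$, which hold by the way those functions act on proofs, and the necessitation rule is sound because $c_A^*$ is chosen to be an actual $\mathbf{PA}$-proof of $A^*$. Hence every $\mathbf{LP}$-theorem maps to a true sentence.

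The completeness direction is the substantial one: assuming $\mathbf{LP}\nvdash A$, we must exhibit an arithmetical interpretation $*$ with $A^*$ false. First I would pass through the symbolic single-world semantics for $\mathbf{LP}$ (Mkrtychev models) and use the known completeness of $\mathbf{LP}$ with respect to them: there is a model $\mathcal{M}=(v,\mathcal{E})$, with $v$ a propositional valuation and $\mathcal{E}$ an admissible evidence function assigning to each proof term a set of formulas closed under $\cdot,+,!$ and containing the axiom justifications, such that $\mathcal{M}\nvDash A$. Only finitely many proof terms and formulas are relevant, so we may work with a finite fragment of $\mathcal{E}$. The core step is then to convert $\mathcal{M}$ into an arithmetical interpretation: choose the atom assignment $*$ to mimic $v$, and — this is the heart — build a proof predicate $\mathrm{Prf}^*(x,y)$ together with recursive functions realizing $\cdot,+,!$ and constants $c_A^*$ so that, for the finitely many relevant terms $t$ and subformulas $B$ of $A$, $\mathrm{Prf}^*(t^*,\lceil B^*\rceil)$ holds exactly when $B\in\mathcal{E}(t)$. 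This is a Solovay-style construction: one starts from the canonical proof predicate of $\mathbf{PA}$ and, via the fixed-point lemma, modifies it to install precisely the evidence prescribed by $\mathcal{M}$ while preserving $\Delta_1$-definability, the basic structural properties of a proof predicate, and the fundamental equivalence $\mathbf{PA}\vdash B \Leftrightarrow \exists x\,\mathrm{Prf}^*(x,\lceil B\rceil)$. A straightforward induction on subformulas of $A$ then gives $\mathcal{M}\vDash B \iff \mathbb{N}\vDash B^*$, and in particular $\mathbb{N}\nvDash A^*$, as required.

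I expect the main obstacle to be exactly the construction of this tuned proof predicate. Two points require care. The definition of $\mathrm{Prf}^*$ must refer to the Gödel numbers of formulas that themselves mention $\mathrm{Prf}^*$ — because the evidence to be installed concerns formulas of the form $t{:}B$ — so the construction is genuinely self-referential and must be set up through the fixed-point lemma; and for the installed evidence to be compatible with the fundamental property of proof predicates, one must use the internal admissibility conditions on $\mathcal{E}$ together with $\mathcal{M}\nvDash A$ to guarantee that one never declares $t^*$ to be a proof of a $\mathbf{PA}$-refutable sentence. Once the bookkeeping for this finite amount of installed data is organized — and it is finite precisely because only the subterms and subformulas of $A$ matter — the remaining work (soundness of the other axioms under $*$, closure of the operation functions, and the inductive truth lemma relating $\mathcal{M}$ to $*$) is routine.
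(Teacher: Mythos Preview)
The paper does not prove this theorem at all: it is stated without a proof environment and is attributed to Artemov with the citation \cite{Art}. So there is nothing to compare your argument against inside the paper.

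Your sketch is a faithful outline of Artemov's actual proof. The soundness direction is exactly as you say. For completeness, the route through single-world (Mkrtychev) models followed by a self-referential construction of a tailored proof predicate is the standard argument, and your identification of the two delicate points --- the fixed-point nature of the definition of $\mathrm{Prf}^*$ because the installed evidence involves formulas mentioning $\mathrm{Prf}^*$ itself, and the need to ensure that no installed ``proof'' targets a $\mathbf{PA}$-refutable sentence --- is accurate. One small correction: in the soundness case for axiom~2 you should not claim that $\mathbf{PA}\vdash (t{:}A\to A)^*$ in general, since $t$ may contain proof variables and then $t^*$ is not a numeral; the axiom is nonetheless \emph{true} under any assignment of numbers to the variables, which is all that is required.
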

So far, we have found a natural proof interpretation for the system $\mathbf{LP}$. Finding a natural interpretation for $\mathbf{S4}$ into $\mathbf{LP}$ would be the next step. Subsequently, we can use the composition of these interpretations to find a proof interpretation for $\mathbf{S4}$ and hence for $\mathbf{IPC}$. We do not go into detail about the interpretation of the modal language into the system $\mathbf{LP}$, but the basic idea is the following: Interpret any box as the existence of a proof; thus, any modal sentence will be equivalent to a first order formula in the language of $\mathbf{LP}$. Therefore, we have quantifiers everywhere and specially in the scope of the predicate ``$:$". We know that there is no way to exchange the quantifiers with the proof predicate (which is the reason why the incompleteness phenomenon and non-standard proofs appear), but since we require all the codes of the proofs to be standard numbers, we extract all the quantifiers and convert the translated formula into the prenex form. Use the Skolemization technique to witness the existential quantifiers by the universal ones. These witnesses are called realizations. (This is where we essentially need ``$+$". It is important to note that by using Skolemization, we usually find a finite set of different witnesses and then we can roughly use $+$ to merge these finite witnesses into one.) Note that this is not how Artemov argues in \cite{Art}; however, we explained the realizations in the way that we think is more accessible and to show that why it is natural to have such a concept in the heart of the interpretation of the modal sentences. Let us illuminate the above interpretation by an example.
\begin{exam}\label{8}
Consider the modal formula $(\Box (p \rightarrow p) \wedge \neg \Box p) \rightarrow \Box \neg \Box p$. First, we have to interpret all of the boxes as the existence of the proofs. Hence, we have
\[ 
(\exists w: (p \rightarrow p) \wedge \neg \exists x:p \rightarrow \exists y: (\neg \exists z:p) .
\]
Then, by extracting the quantifiers, we have
\[
(\exists w: (p \rightarrow p) \wedge \forall x \neg x:p) \rightarrow \exists y \forall z \; y:\neg z:p,
\]
which is equivalent to
\[
\forall w \exists x \exists y \forall z  ((w: (p \rightarrow p) \wedge  \neg x:p) \rightarrow y:\neg z:p)).
\]
And finally by witnessing $y$ and $x$ by some terms $t(w, z)$ and $s(w, z)$, we have
\[
(w: (p \rightarrow p) \wedge \neg s(w, z):p) \rightarrow t(w, z):\neg z:p.
\]
This new formula is \textit{a} realization for the modal formula $(\Box (p \rightarrow p) \wedge \neg \Box p) \rightarrow \Box \neg \Box p$. Note that this realization is just \textit{one} possible realization of the formula and if we change the witnessing terms $t(w, z)$ and $s(w, z)$, we can find different realizations for the same formula.
\end{exam}

After introducing the realizations, Artemov proved the following: (See \cite{Art}.)
\begin{thm}
$\mathbf{S4} \vdash A$ iff there exists some realization $r$ such that $\mathbf{LP} \vdash A^r$.
\end{thm}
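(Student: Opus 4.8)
The plan is to prove the two directions separately, the left-to-right (``realization'') direction being the substantial one; this is Artemov's realization theorem, and the argument I would give follows the cut-elimination route.

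For the easy direction, I would introduce the \emph{forgetful projection} $(\cdot)^{\circ}:\mathcal{L}_{\mathbf{LP}}\to\mathcal{L}_{\Box}$ that erases proof terms, sending each subformula $t:B$ to $\Box B^{\circ}$ and commuting with the Booleans. A one-line induction shows that every axiom of $\mathbf{LP}$ projects to a theorem of $\mathbf{S4}$: axiom $2$ projects to an instance of $\mathbf{T}$, axiom $3$ to an instance of $\mathbf{K}$, axiom $4$ to an instance of $\mathbf{4}$, and axioms $5$ and $6$ to instances of $\Box B\rightarrow\Box B$. The rules are preserved as well, the necessitation clause $c_A:A$ projecting to $\Box A^{\circ}$, which is available in $\mathbf{S4}$ by necessitation since $A$ is an axiom. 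Because $(A^r)^{\circ}=A$ for every realization $r$, $\mathbf{LP}\vdash A^r$ gives $\mathbf{S4}\vdash A$.

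For the hard direction I would work with a cut-free proof. Since $G(\mathbf{S4})$ admits cut elimination (as recalled in the Preliminaries), fix a cut-free derivation $\pi$ of $\Rightarrow A$. Classify every occurrence of $\Box$ appearing in $\pi$ as positive or negative according to its polarity in the sequent read as $\bigwedge\Gamma\rightarrow\bigvee\Delta$; polarities are respected by every rule, so one can trace each boxed occurrence through $\pi$ and group the occurrences into families, the \emph{distinguished} families being those whose box is principal for some application of $\Box_S R$. To every negative occurrence, and to every positive occurrence that is never principal for $\Box_S R$ (i.e.\ introduced by weakening), assign a fresh proof variable; to each distinguished family assign provisionally a fresh variable $v_j$. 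This yields a provisional realization $r_0$. The engine is the Lifting Lemma (internalization of $\mathbf{LP}$): if $\mathbf{LP},x_1{:}B_1,\dots,x_n{:}B_n\vdash C$ then there is a term $t(x_1,\dots,x_n)$ with $\mathbf{LP},x_1{:}B_1,\dots,x_n{:}B_n\vdash t:C$, proved by induction on the $\mathbf{LP}$-derivation using $c_A$ for axioms, $!x_i$ for hypotheses (axiom $4$), and the operation $\cdot$ for modus ponens. Using it, I would prove by induction on $\pi$ that each sequent $\Gamma\Rightarrow\Delta$ of $\pi$ satisfies $\mathbf{LP}\vdash\bigwedge\Gamma^{r_0}\rightarrow\bigvee\Delta^{r_0}$: structural and propositional rules are handled by propositional reasoning inside $\mathbf{LP}$; the rule $\Box L$ uses the reflexivity axiom $t:B\rightarrow B$ to discharge the variable placed on the new negative box; and the rule $\Box_S R$, passing from $\Box\Gamma\Rightarrow B$ to $\Box\Gamma\Rightarrow\Box B$, is precisely where the Lifting Lemma applies, since the realized antecedent $\Box\Gamma^{r_0}$ is a list of formulas of the form $x{:}E$, so lifting produces a concrete term $t$ with $\mathbf{LP}\vdash\bigwedge\Box\Gamma^{r_0}\rightarrow t:B^{r_0}$, which is recorded as a candidate term for the family of that box.

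The main obstacle, and the reason the operation $+$ is in the language, is that one distinguished family may be principal for $\Box_S R$ at several passages of $\pi$ (and contraction can further duplicate a boxed formula), so the induction produces finitely many candidate terms $t_j^{(1)},\dots,t_j^{(m_j)}$ for family $j$ rather than a single one. I would close the argument by performing, after the induction, the uniform substitution $v_j\mapsto t_j^{(1)}+\dots+t_j^{(m_j)}$, obtaining the final realization $r$; one checks that this substitution preserves $\mathbf{LP}$-provability, and that by the monotonicity axioms $s:B\rightarrow s+t:B$ and $t:B\rightarrow s+t:B$ each local conclusion $t_j^{(i)}:B^{r_0}$ is upgraded to $(t_j^{(1)}+\dots+t_j^{(m_j)}):B^{r}$; applying the realized endsequent then yields $\mathbf{LP}\vdash A^r$. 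The delicate bookkeeping point is to make the commutation of this substitution with the whole derivation precise and to rule out circular dependence of a family's candidate terms on $v_j$ itself; this is resolved by ordering the distinguished families by the nesting depth of their boxes and substituting from the innermost family outward.
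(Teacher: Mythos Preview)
The paper does not prove this theorem at all: it merely states it and cites Artemov's original paper \cite{Art}. So there is no ``paper's own proof'' to compare against beyond the informal discussion of realizations the author gives in Section~9.

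Your sketch is the standard Artemov argument and is essentially correct. The forgetful projection handles the right-to-left direction exactly as you describe, and the left-to-right direction via a cut-free $G(\mathbf{S4})$ derivation, polarity-tracking of box occurrences into families, the Lifting Lemma for the $\Box_S R$ rule, and the use of $+$ to merge the finitely many candidate terms for a single family is precisely Artemov's route. One small point: in Artemov's original presentation the substitution of sums for the provisional variables is typically threaded through the induction rather than performed in a single pass at the end, which sidesteps the circularity worry you raise; your proposed fix of ordering families by nesting depth and substituting inside-out works as well, but you should be explicit that the candidate terms produced at a $\Box_S R$ step depend only on the (already fixed) variables of strictly shallower families together with the negative-occurrence variables, so the inside-out order terminates without self-reference.
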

In sum, we can say that Artemov used two ingredients to find a provability interpretation for $\mathbf{S4}$. The first one is the interpretation of modal sentences via realizations into the system $\mathbf{LP}$. (Here the main idea is the interpretation of the boxes as the existence of the standard proofs.) And the second ingredient is the interpretation of the system $\mathbf{LP}$ via natural arithmetical proof interpretations. Therefore, the main idea of what Artemov did, is to use the system $\mathbf{LP}$ as a bridge to interpret $\mathbf{S4}$ via arithmetical proof interpretations.\\

Let us explain the advantages of this approach. First of all, it uses the explicit proofs and by the method of using realizations, it makes sure that everything is a standard proof in this context. Therefore, this approach actually kills the effect of G\"{o}del's incompleteness theorems and makes the proof interpretation more intuitive. Note that naturally, we do not count infinite non-standard proofs as proofs. Moreover, regardless of the relation between modal logics and explicit proofs, the system $\mathbf{LP}$ has its own applications. In fact, since it is a formal system for explicit proofs, it can be used as a theory to investigate the concept of proof and its natural calculus. Consequently, these formal systems are appropriate to investigate the formal verification in computer science or the behavior of justifications in formal epistemology.\\

However, this utopia of explicit proofs comes at a price. The price is a combination of two unintended properties: The first one is related to the fundamental change in the interpretation of the concept of provability and the second one is about the role of $\mathbf{LP}$ as an unbiased bridge. The problem is that the bridge is not neutral and somehow reflects its own behavior, which is not what we wanted.\\ 

Let us explain the first property by a simple example: Consider the modal sentence $\Box \neg \Box p$. The intended meaning of this sentence is the existence of a proof that shows $p$ is not provable. In other words, it states that there \textit{exists} a proof which shows that for \textit{any} possible proof $x$ for $p$, $x$ is not a proof for $p$. Let us use the logic of proofs' interpretation of the sentence. Since the occurrences of the inner and the outer box are negative and positive respectively, the meaning of the sentence is the existence of a term $t(x)$ such that $t(x): \neg x: p$. Forgetting the condition that the term $t(x)$ should be a term in the language, it means that for all $x$, there exists a proof $y=t(x)$ which proves  $\neg x: p$. In other words, it says that for \textit{any} possible proof $x$ for $p$, there \textit{exists} a proof which shows that $x$ is not a proof for $p$. It is easy to check that while the first interpretation is an $\exists \forall$ statement, the second one is a $\forall \exists$ statement, and it is obviously weaker than the first one. In fact, when we claim that we have a proof for unprovability of $p$, we mean a fixed uniform proof of the fact and we do not mean a machine (term) to transform a possible proof of $x$ to a proof $y$ that shows $x$ is not a proof for $p$.\\
What we showed above is just the difference for one statement. Nevertheless, the argument actually works for different kinds of sentences. The reason is simple: Logic of proofs needs to kill the presence of non-standard numbers. For this matter, it pushes out all the quantifiers. (It also changes the order of quantifiers to find a functional interpretation of proofs.) Since quantifiers do not commute with proof predicates, the sentence before pushing out the quantifiers is different from the sentence after that. The first sentence is the intended interpretation of provability and the latter is what the logic of proofs interprets as the meaning of provability. While this new interpretation is interesting and useful, it is not the intended interpretation of the informal provability and hence not \textit{the} interpretation of $\mathbf{S4}$.\\

In the following, we accept the functional interpretation of provability as what the logic of proofs proposed and we want to investigate the role of terms which we ignored in the previous argument. Let us explain the second property by a thought experiment: Think of the situation that you have another binary connective ``$?$" in the language of $\mathbf{LP}$ with the following intuitive meaning: If $s$ is a proof for $A \rightarrow A$ and $t$ is not a proof for $A$, then $?(s, t)$ is a proof of the proposition that ``$t$ is not a proof for $A$". Add the axiom
\[
(s:(A \rightarrow A) \wedge \neg t:A) \rightarrow ?(s, t):\neg t:A
\]
to the system $\mathbf{LP}$ and call it $\mathbf{LP?}$. It is clear that the connective $?$ and the above sentence are the negative versions of the connective $!$ and its corresponding axiom, respectively. What is not clear is the use of the seemingly useless part $s:A \rightarrow A$. We can explain this issue as the following: Assume that we have a non-proof $t$ for $A$ and we want to construct a proof of the sentence $\neg t:A$. We call this proof $r$. The important fact is that the sole access to $t$ is not enough to construct $r$ because the code of $A$ is also needed and this is actually where $s$ plays its role: $s$ is a proof for $A \rightarrow A$, hence we can use $s$ to compute the code of $A$ and now we have enough information to construct $r$.\\
Our method here seems ad-hoc and is certainly ugly, but remember that our goal is to perform an experiment about $\mathbf{LP}$ and fortunately this ad-hoc example is good enough to make our point. Now, let us be more formal about the natural arithmetical interpretation of this connective and this new system. Since we used explicit standard proofs, we know that there exists a recursive function which reads $t$ and the code of $A$ and if $t$ is not a proof for $A$, finds a proof of this fact. The reason is as follows: We know that $\Prf(x, y)$ is provably $\Delta_1$, hence if $\neg \Prf(t, \lceil A \rceil)$, we have 
\[
\mathbf{PA} \vdash \neg \Prf(t, \lceil A \rceil).
\]
Therefore, by the definition of a proof predicate we have 
\[
\exists r \Prf(r, \lceil \neg \Prf(t, \lceil A \rceil) \rceil).
\]
Use an unbounded search to find this $r$. Since it exists, our program halts and finds it. Now interpret $?(s, t)$ as the recursive function which reads $s$, finds the code of $A$ and then by the above-mentioned method finds the intended proof $r$. Thus, based on this new natural arithmetical interpretation, we can interpret the new axiom $(s:(A \rightarrow A) \wedge \neg t:A) \rightarrow ?(s, t) : \neg t:A$. Hence, we have a natural arithmetical interpretation for the system $\mathbf{LP?}$. On the other hand, one of the instances of the new axiom, i.e. $(w: (A \rightarrow A) \wedge \neg z:A) \rightarrow ?(w, z) : \neg z:A$, where $z$ and $w$ are proof variables, is the realization of the modal statement $\mathbf{5}': (\Box (A \rightarrow A) \wedge \neg \Box A) \rightarrow \Box \neg \Box A$ in this new language. (Simply, put $t(w, z)=?(w, z)$ and $s(w, z)=z$ in the Example \ref{8}.) The above discussion means that we can find a very natural provability interpretation of a variant of the axiom  $\mathbf{5}$. Recall that this axiom is not provable in $\mathbf{S4}$ and it seems contradictory with Artemov's completeness result. However, there is no contradiction. The reason is that ``$?$" is not in the original language of $\mathbf{LP}$, and hence you can not use it as a witness in the realization.\\
This observation shows that the arithmetical interpretation actually interprets a variant of the axiom $\mathbf{5}$, but the lack of the appropriate symbol in $\mathbf{LP}$ interferes with this fact. Therefore, the system $\mathbf{LP}$ does not reflect the whole power of the explicit proofs; it just chooses the appropriate part to witness all the theorems of $\mathbf{S4}$ and nothing more than that. In other words, the formalization of the provability interpretation via the explicit proofs is very sensitive to the language we use. If we change the language, then with the same arithmetical interpretation, we will capture different modal logics. Therefore, we can conclude that the soundness-completeness result for $\mathbf{S4}$ with respect to this kind of arithmetical interpretations is a soundness-completeness result for the language we use and not the natural arithmetical interpretation we choose. Now, a natural question would be the following: If we eliminate this language barrier and make the relation between modal logics and arithmetical interpretations as ``direct" as possible, then which modal logic corresponds to the whole power of the arithmetical interpretations of the proofs? By the direct connection, we roughly mean the following: For any modal sentence $A$, write it in the prenex form in a way that we defined before. Then, instead of witnessing the existential quantifiers by some terms in some language, witness them by some \textit{natural recursive functions} on the proofs in Peano arithmetic. Define the logic $E$ as the logic of all statements which are valid for this kind of arithmetical interpretations. Clearly, the question mentioned above is informal, but it is easy to verify that the answer is not $\mathbf{S4}$. The reason is that we can find an appropriate way to interpret a variant of $\mathbf{5}$ as we have shown above. It is appropriate because there is no \textit{a priori} reason to accept the recursive function $!$ and reject $?$. The first one finds a proof for $\Prf(m, n)$ if $\Prf(m, n)$ is true and the second function finds a proof for $\neg \Prf(m, n)$ if $\Prf(m, n)$ is false. Both of them are recursive and hence accessible for us as human beings. Note that $\Prf$ is a provably recursive predicate, and hence finding a proof for $\Prf(m,n)$ or a proof for its negation are similar computational tasks. (In the modal setting, the axioms $\mathbf{4}$ and $\mathbf{5}$ are intuitively different because we read $\Box A$ as $\exists x \Prf(x, A)$. This interpretation makes the sentence $\Sigma_1$ which is different from its negation.)\\

To sum up, the explicit proofs approach first kills all the quantifiers and puts some explicit witnesses for them. Therefore, it ignores the order of quantifiers and changes the canonical meaning of sentences and then as a consequence, it eliminates the computability based difference between provability and unprovability ($\Sigma_1$ vs $\Pi_1$) and maps both predicates to the boolean combinations of the explicit proof predicate $\Prf$, which belongs to the class $\Delta_1$. Consequently, the axioms $\mathbf{4}$ and $\mathbf{5}$ become similar and hence arithmetical interpretations can interpret a variant of $\mathbf{5}$ in a very natural way. Finally, to avoid this fact, the logic of proofs uses the language of $\mathbf{LP}$ to regain the difference between $\mathbf{4}$ and $\mathbf{5}$ by choosing what we need for $\mathbf{S4}$ and ignore the other natural functions which in this case is the function $?$. This argument shows that the approach of explicit proofs does not distinguish $\mathbf{4}$ from $\mathbf{5}$ in a \textit{natural} and \textit{essential} way and hence, it can not be considered as a formalization of the provability interpretation of $\mathbf{S4}$.\\

As the final part of this section, let us compare what we do in this paper with the approach of the explicit proofs. First of all, we use the canonical meaning of provability instead of the logic of proofs' functional interpretation. Moreover, we do not use any language as a bridge. Therefore, our soundness-completeness results represent the provability behavior of our arithmetical interpretations in a direct way. Secondly, to capture different modal logics, we impose different natural conditions on our provability models, specifically on the hierarchy of the theories. Therefore, we can claim that our approach can characterize different modal logics based on their different provability natures. Thirdly, our interpretation is based on the implicit proofs approach and hence it is a natural generalization of Solovay's work on $\mathbf{GL}$. But since the L\"{o}b axiom is based on the incompleteness phenomenon, the explicit approach does not capture it and thus does not accept Solovay's provability interpretation as a special case. Hence, the explicit approach can not serve as the general framework for provability interpretations.
\section{BHK Interpretations}
Briefly, what we are going to do in this section, is to introduce a formalization of the BHK interpretation. Indeed, we will generalize this goal to make a framework to formalize different kinds of provability interpretations which includes the BHK interpretation as a special case. Note that the usual BHK interpretation is not the unique provability interpretation of the propositional language; in fact, there are many of them. Some of them, can be characterized as the variants of the original BHK interpretation, and some can't. The reason is that those provability interpretations do not satisfy the intended philosophical conditions which we want to have, but they are still provability interpretations and they need an exact formalization if we want to use them. Let us illuminate the idea by two examples. The first one is a controversial variant of the BHK interpretation; it is obtained from the original BHK interpretation after relaxing the condition which says that there does not exist a proof for $\bot$. This interpretation informally corresponds to the minimal propositional logic, $\mathbf{MPC}$. The second example of the provability interpretation is also obtained from the original BHK interpretation, but now we read $\bot$ as the inconsistency, instead of the provability of the inconsistency. More precisely, and using the notation of G\"{o}del's translation, we have $\bot^g=\bot$, where $g$ stands for this new translation (which is different from what we used in the Introduction). This provability interpretation can not be characterized as a variant of the BHK interpretation because of some philosophical reasons, which we do not get into here. \\

In this section, we try to justify the claim that our provability interpretation can prepare an appropriate framework to formalize these different provability interpretations of the propositional logics. To implement this idea, we need two steps. First, we have to interpret all the connectives as what the provability interpretation demands; this step is done by the G\"{o}del's translation. The second step is interpreting the provability predicates (i.e. boxes in the modal translation) as the classical provability of the classical theories. For that reason, we need a hierarchy of theories to formalize the hierarchy of the intuitive provabilities in the definition of the provability interpretation and also a model to evaluate the truth value of our statements. This second step is done by the provability models.\\

What we discussed above is the general framework. Let us come back to the specific case, which is the original BHK interpretation. Is there a \textit{right} formalization of this interpretation? As we will show later, for different kinds of provability models, we have different BHK interpretations and these interpretations could show inherently different provability behaviors. Consequently, there are different formalizations for the BHK interpretation, instead of just a canonical one. The reason is that the BHK interpretation just interprets propositional connectives in a discourse of provability, but it does not say anything about the internal structure of the concept of provability. For instance, it does not say anything related to the power of the meta-theories compared to the lower theories. Since the BHK interpretation is the intended semantics for the intuitionistic logic, we have to accept that there could be different intuitionistic logics in terms of different interpretations of the power of our model and our theories. All of them are equally intuitionistic if we have just the BHK interpretation as the criterion.\\

The natural question is that what these intuitionistic logics are if we impose some natural conditions on the behavior of our model and our theories.\\
In the following, we will show that for some natural classes of the provability models such as the class of all models or the class of all reflexive models, we can characterize some propositional logics such as $\mathbf{BPC}$ and $\mathbf{IPC}$, respectively. For instance, in the case of reflexive models, the result shows that if we use the BHK interpretation with the philosophical commitment which states that all of the theories, meta-theories, meta-meta-theories and so on are sound and also, any meta-theory is powerful enough to prove the soundness of the lower theories, then the logic of the formulas which are valid under this kind of BHK interpretation, is the usual propositional intuitionistic logic. But, if we choose the minimal power, which does not assume any non-trivial condition on the hierarchy of the meta-theories, then the logic will change to $\mathbf{BPC}$. However, what is important here is that all of these logics could be characterized as intuitionistic logics. This fact can explain the reason behind the disputes about finding the correct formalization of the intuitionistic logic. For instance, in \cite{Ru}, Ruitenburg argues that the \textit{truly} intuitionistic logic is not $\mathbf{IPC}$ and he proposed $\mathbf{BPC}$ as the right one. Our approach here has a plural nature, and it tries to explain why with the same informal semantics (the BHK interpretation) there are different proposed logics.\\

Finally, a remark about classical logic. Since we have the axiom of the excluded middle in classical logic, we should have the following condition on provability models: \textit{Either the ``provability of $p$" is provable or it is provable that the provability of $p$ implies the provability of $\bot$}. This means that the meta-theory should be powerful enough to prove the unprovability of almost all unprovable formulas. As we saw in the case of the logic $\mathbf{S5}$, it contradicts with the natural condition that all the theories should be recursively enumerable. Therefore, intuitively speaking, we have to say that classical logic is beyond the scope of the BHK interpretation. In the following, we will prove this fact in a precise way. 
\begin{dfn}\label{t9-1}
A provability interpretation for the propositional language is a translation from the propositional language to the language of modal logics.
\end{dfn}
To illuminate the Definition \ref{t9-1}, let us introduce three provability interpretations as examples.
\begin{dfn}\label{t9-2}
The BHK interpretation $b$ is the following translation:
\begin{description}
\item[$(i)$]
$ p^{b}=\Box p$
and
$ \bot^{b}=\Box \bot$
\item[$(ii)$]
$(A\wedge B)^b= A^{b} \wedge B^{b}$
\item[$(iii)$]
$(A\vee B)^{b}=A^{b}\vee B^{b}$
\item[$(iv)$]
$(A\to B)^{b}=\Box(A^{b} \to B^{b})$
\item[$(v)$]
$(\neg A)^{b}=\Box (A^{b} \rightarrow \Box \bot)$
\end{description}
\end{dfn}
Our translation is the same as the usual one, except for the case of $\bot$, which is translated to $\bot$ in the usual translation. (The negation of a formula $A$ is considered as $A \rightarrow \bot$ and it inherits this change in the translation from $\bot$. ) The reason for slightly changing the definition of the translation is because the usual translation can not capture the intended intuition of the BHK interpretation. Actually, the intended intuitionistic meaning of $\bot$, similar to the other atomic formulas, is its provability. Therefore, the natural interpretation of $\bot
$ is $\Box \bot$. On the other hand, we know that the BHK interpretation claims that there is not any proof of $\bot$, which means $\neg \Box \bot$. Based on these two observations, we can justify the usual translation of $\bot$ as $\Box \bot \wedge \neg \Box \bot$, which is the same as $\bot$. Nevertheless, we have to emphasize that the condition of the unprovability of the inconsistency is not related to the meaning of the connectives, and hence it should not interfere in the BHK interpretation; it is actually a commitment we impose on the discourse of the  provability. In our terms, the unprovability of the inconsistency asserts that the theories and meta-theories are consistent and it is obviously a property of the provability model and not a property of the connectives which we want to define. Hence, to formalize the original BHK interpretation, we need two ingredients; one is the $b$ translation which is the formalization of the implicit BHK interpretation, and the second is the consistency condition on the provability models. The following definition formally states the second condition.
\begin{dfn}\label{t9-3}
A provability model $(M, \{T_n\}_{n=0}^{\infty})$ is called a BHK model if for any $n$, $M \vDash \neg \Pr_{n+1}(\Pr_n(\bot))$.
\end{dfn}
\begin{rem}\label{t9-4}
It seems that the natural consistency condition would be the consistency of all the theories. Yet, it is not enough. For instance, it is possible that all the theories in the hierarchy are consistent, but some meta-theory thinks that the lower theory is inconsistent, which contradicts with what an intuitionist assumes. For the intuitionist, the hierarchy of theories are just different layers of the story of the mind, and obviously these stories must be consistent in accordance with the BHK interpretation. However, this condition should be mentioned in the story itself. One way is assuming that any meta-theory actually proves the consistency of the lower theories. This is a natural condition, but it imposes a strong commitment on our theories. To keep the commitments as minimal as possible, we believe that the right condition to impose on the theories is the weaker condition which states that any meta-theory does not think that the lower theory is inconsistent. As we will see, this weaker condition widens the horizon of the BHK interpretation to capture the basic propositional logic on the one hand, and avoid artificial and degenerate models in which we could capture classical logic, on the other. 
\end{rem}
Based on the aforementioned considerations, when we talk about the formalization of the BHK interpretation, we always refer to the BHK models. Let us formalize what we will call the weak BHK interpretation.
\begin{dfn}\label{t9-5}
Let $q$ be a new atom which does not belong to the propositional language. The weak BHK interpretation, $w$, is the following translation:
\begin{description}
\item[$(i)$]
$ p^w=\Box p$
and
$ \bot^w=\Box q$
\item[$(ii)$]
$(A\wedge B)^w= A^w \wedge B^w$
\item[$(iii)$]
$(A\vee B)^w=A^w \vee B^w$
\item[$(iv)$]
$(A\to B)^w=\Box(A^w \to B^w)$
\item[$(v)$]
$(\neg A)^w=\Box (A^w \rightarrow \Box q)$
\end{description}
\end{dfn}
The translation is based on the idea that in this variant of the BHK interpretation, we eliminate the consistency condition from the discourse of the provability. As a result, with this interpretation the intuitionist can not distinguish the inconsistency statement from any other statements. Therefore, in her viewpoint, $\bot$ is just a new atomic sentence which could be provable.\\ 
 
And finally, we will define G\"{o}del's translation to show that there could be different provability models apart from the BHK interpretations.
\begin{dfn}\label{t9-6}
G\"{o}del's provability interpretation, $g$, is the following translation:
\begin{description}
\item[$(i)$]
$ p^g=\Box p$
and
$ \bot^g= \bot$
\item[$(ii)$]
$(A\wedge B)^g= A^g \wedge B^g$
\item[$(iii)$]
$(A\vee B)^g=A^g \vee B^g$
\item[$(iv)$]
$(A\to B)^g=\Box(A^g \to B^g)$
\item[$(v)$]
$(\neg A)^g=\Box (\neg A^g)$
\end{description}
\end{dfn}
It is time to define the satisfaction of a propositional formula in a provability model with respect to some provability interpretation $i$.
\begin{dfn}\label{t9-7}
Let $i$ be a provability interpretation. Then, by an expansion of a propositional formula $A$, and a witness for $A$ under the interpretation $i$, we mean an expansion and a witness for $A^i$. And by $(M, \{T\}_{n=0}^{\infty}, i) \vDash \Gamma \Rightarrow A$ we mean $(M, \{T\}_{n=0}^{\infty}) \vDash \Gamma^i \Rightarrow A^i$. Moreover, if $C$ is a class of provability models, by $(C, i)$ we mean $\{(M, \{T\}_{n=0}^{\infty}, i) \mid (M, \{T\}_{n=0}^{\infty}) \in C\}$ and by $(C, i) \vDash \Gamma \Rightarrow A$ we mean $C \vDash \Gamma^i \Rightarrow A^i$. 
\end{dfn}
The next step is establishing the soundness-completeness theorem for the provability interpretations we defined. But first, we need a technical lemma.
\begin{lem}\label{t9-8}
If $\Gamma^b \vdash_{\mathbf{KD4}} A^b$, then $\mathbf{EBPC} \vdash \Gamma \Rightarrow A$. 
\end{lem}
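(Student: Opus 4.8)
The plan is to prove the contrapositive-flavoured statement by relating derivability in $\mathbf{EBPC}$ to derivability in $\mathbf{KD4}$ through the translation $b$, and then invoking the completeness machinery for $\mathbf{KD4}$ already established (Theorems~\ref{t4-2} and~\ref{t4-3}). The key algebraic fact I would isolate first is a \emph{faithfulness} lemma for the G\"{o}del-style translation $b$ restricted to $\mathbf{EBPC}$ and $\mathbf{KD4}$: for propositional $\Gamma$ and $A$, one has $\mathbf{EBPC}\vdash \Gamma\Rightarrow A$ iff $\mathbf{KD4}\vdash \Gamma^b\Rightarrow A^b$. The soundness direction of that equivalence (from $\mathbf{EBPC}$ to $\mathbf{KD4}$) is the routine induction on $\mathbf{EBPC}$-derivations, checking each propositional rule, each formalized rule $(\wedge I)_f$, $(\vee E)_f$, $tr_f$, and the rule $C$: e.g. $(tr_f)$ translates to the $\mathbf{K4}$-provable $\Box(A^b\to B^b)\wedge\Box(B^b\to C^b)\to\Box(A^b\to C^b)$, and the rule $C$ (which is what distinguishes $\mathbf{EBPC}$ from $\mathbf{BPC}$) translates to needing $\Box(A^b\to\Box\bot)\wedge A^b\to\Box\bot$-style reasoning, which is exactly where the $\mathbf{D}$-axiom $\neg\Box\bot$ of $\mathbf{KD4}$ is consumed. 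So the statement to be proved, $\Gamma^b\vdash_{\mathbf{KD4}}A^b\Rightarrow\mathbf{EBPC}\vdash\Gamma\Rightarrow A$, is precisely the \emph{completeness} (hard) direction of this faithfulness lemma.

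For that hard direction I would argue semantically via Kripke models rather than syntactically. $\mathbf{EBPC}$ has a known Kripke semantics on transitive (not necessarily reflexive) frames with a persistent valuation, in the style of Visser's basic logic extended by Ardeshir--Hesaam's consistency condition; correspondingly $\mathbf{KD4}$ is complete for serial transitive frames. The plan is: assuming $\mathbf{EBPC}\nvdash\Gamma\Rightarrow A$, take a finite $\mathbf{EBPC}$-countermodel $(K,R,V)$ with a node $k$ where $\bigwedge\Gamma$ holds but $A$ fails; then convert it into a $\mathbf{KD4}$-Kripke model $(K',R',V')$ and a node $k'$ that refutes $\Gamma^b\Rightarrow A^b$. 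The conversion is the standard ``add a box/unwind'' trick adapted to the $b$-translation: since $b$ inserts a $\Box$ exactly at implications and negations and at atoms, the basic-logic accessibility relation (which already behaves like the ``strict'' part) matches the modal $R'$, and one must ensure seriality, which is available because $\mathbf{EBPC}$ includes the formalized rules forcing the frames to be the ones where $\bot$ is never forced — this is the $\mathbf{EBPC}$ analogue of the $\mathbf{D}$-condition. A truth-preservation claim ``$l\vDash B$ in the $\mathbf{EBPC}$-model iff $l\vDash B^b$ in the modal model'' would be proved by induction on $B$, the implication and negation cases being where the inserted $\Box$ is checked against the strict successors.

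The alternative, and perhaps cleaner, route avoids Kripke semantics and instead chains the completeness results already in the paper: combine Lemma~\ref{t9-8}'s hypothesis with the fact that $\mathbf{KD4}$ is sound for $\mathbf{Cons}$ (Theorem~\ref{t4-2}) to get that $\Gamma^b\Rightarrow A^b$ is true in all consistent provability models; then one needs a ``BHK-completeness for $\mathbf{EBPC}$'' statement saying that if $(\mathbf{Cons},b)\vDash\Gamma\Rightarrow A$ then $\mathbf{EBPC}\vdash\Gamma\Rightarrow A$ — but that is essentially a later theorem (Theorem~\ref{t1-3}(ii)) whose proof presumably \emph{uses} this very lemma, so to avoid circularity I would commit to the direct syntactic/Kripke argument above. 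In fact, the most economical honest proof is purely proof-theoretic: analyse a cut-free $G(\mathbf{KD4})$-derivation of $\Gamma^b\Rightarrow A^b$ and show by induction on it that every sequent occurring in it, when it is in the image of $b$ (or close enough to it after the expansions forced by $\Box_4R$/$\Box_DR$), corresponds to an $\mathbf{EBPC}$-derivable propositional sequent; the modal rules $\Box_4R$ and $\Box_DR$ get simulated by $tr_f$, $(\wedge I)_f$, $(\vee E)_f$ and the rule $C$ respectively.

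\textbf{Main obstacle.} The genuine difficulty is the modal-rule step in either formulation: the $b$-image of a propositional formula is a \emph{specific} nesting of boxes, but a cut-free $\mathbf{KD4}$-proof of $\Gamma^b\Rightarrow A^b$ will in general pass through sequents that are \emph{not} $b$-images of anything (the intermediate sequents after a left-implication rule break $A^b\to B^b$ out of its surrounding $\Box$), so the induction hypothesis has to be stated for a broader class of sequents — built from $b$-images by the subformula-closure of the calculus — and one must show this class is closed under all the rules while still projecting down to $\mathbf{EBPC}$-derivability. Managing this invariant, and in particular checking that the seriality rule $\Box_DR$ projects exactly onto the strength added by the rule $C$ (and nothing more, so that we land in $\mathbf{EBPC}$ and not something stronger), is where the real work lies; the Kripke-model route trades this for the equally delicate construction of the frame conversion and the truth lemma for the $b$-translation.
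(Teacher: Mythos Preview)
Your final paragraph lands on the paper's actual route: induction on a cut-free $G(\mathbf{KD4})$-derivation of $\Gamma^b\Rightarrow A^b$, with an invariant stated for the subformula-closure of the $b$-image rather than for $b$-images alone. You also name the genuine obstacle correctly. What is missing from your proposal is the \emph{shape} of that invariant, and this is the one nontrivial idea. The paper observes that every formula in the cut-free proof is of one of three kinds, $B^b$, $B^b\to C^b$, or an atom $p$, and then proves the following strengthened claim: whenever such a sequent
\[
\Gamma^b,\ \{B_i^b\to C_i^b\}_{i\in I},\ \{p_j\}_{j\in J}\ \Rightarrow\ \Delta^b,\ \{D_r^b\to E_r^b\}_{r\in R},\ \{q_s\}_{s\in S}
\]
is derivable (with the atoms on the two sides disjoint and $\bot\notin\{p_j\}$), then \emph{for every} $X\subseteq I$,
\[
\Gamma,\ \{D_r\}_{r\in R},\ \{C_i\}_{i\in X}\ \vdash_{\mathbf{EBPC}}\ \bigvee\bigl(\Delta\cup\{E_r\}_{r\in R}\cup\{B_i\}_{i\notin X}\bigr).
\]
The universal quantification over $X$ is exactly what absorbs the left-implication rule (the case you worried about, where $B^b\to C^b$ is split out of its box), and it is also what makes the $\Box_4R$ case go through: one uses all choices of $X$ together with the hypotheses $B_i\to C_i$ and the formalized rules $(\wedge I)_f$, $(\vee E)_f$, $tr_f$ to eliminate the $B_i$/$C_i$ pairs one by one. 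The rule $\Box_DR$ is handled the same way with $D=\top$, $E=\bot$, yielding $\top\to\bot$ and then $\bot$ via the rule $C$---so your intuition that $\Box_DR$ projects onto $C$ is right, though it is $C$ (not the formalized rules) that carries the seriality content.

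Your Kripke-model alternative is a genuinely different route and should work: $\mathbf{EBPC}$-countermodels are transitive persistent frames that are serial (this is what the rule $C$, equivalently $\top\to\bot\vdash\bot$, enforces---your phrasing ``frames where $\bot$ is never forced'' conflates two things), and on such a frame one checks $l\Vdash B$ iff $l\Vdash B^b$ by induction on $B$, whence a refutation of $\Gamma\Rightarrow A$ transfers to a $\mathbf{KD4}$-refutation of $\Gamma^b\Rightarrow A^b$. This trades the combinatorics of the $X$-invariant for the need to cite or reprove the Kripke completeness of $\mathbf{EBPC}$ (Ardeshir--Hesaam); the paper's syntactic argument is self-contained but requires discovering the invariant above. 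Your second alternative via Theorems~\ref{t4-2}--\ref{t4-3} is, as you note, circular.
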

\begin{proof}
If $\Gamma^b \vdash_{\mathbf{KD4}} A^b$ then there is a cut-free proof for $\Gamma^b \Rightarrow A^b$ in $G(\mathbf{KD4})$. Call it $\pi$. It is clear that all formulas occurring in $\pi$ are sub-formulas of $A^b$ or sub-formulas of formulas in $\Gamma^b$. We know that all of these sub-formulas have the following forms: $B^b$; $B^b \rightarrow C^b$ and atoms $p$. ($\top$ and $\bot$ are considered atomic formulas in this proof.) Therefore, every sequent in $\pi$ has the following form:
\[
\Gamma^b, \{B^b_i \rightarrow C^b_i\}_{i \in I}, \{p_j\}_{j \in J} \Rightarrow \Delta^b, \{D^b_r \rightarrow E^b_r\}_{r \in R}, \{q_s\}_{s \in S} 
\]
Now we will prove the following claim:\\
 
\textbf{Claim.} If 
\[
G(\mathbf{KD4}) \vdash \; \Gamma^b, \{B^b_i \rightarrow C^b_i\}_{i \in I}, \{p_j\}_{j \in J} \Rightarrow \Delta^b, \{D^b_r \rightarrow E^b_r\}_{r \in R}, \{q_s\}_{s \in S} 
\]
where $\{p_j\}_{j \in J} \cap \{q_s\}_{s \in S}=\emptyset$ and $\bot \notin \{p_j\}_{j \in J}$ then for any $X \subseteq I$
\[
\Gamma, \{D_r\}_{r \in R}, \{C_i\}_{i \in X} \vdash_{\mathbf{EBPC}} \bigvee \{\Delta, \{E_r\}_{r \in R}, \{B_i\}_{i \notin X}\}
\]

The proof is by induction on the length of the cut-free proof in $G(\mathbf{KD4})$. To simplify the proof, we will call a sequent with the conditions $\{p_j\}_{j \in J} \cap \{q_s\}_{s \in S}=\emptyset$ and $\bot \notin \{p_j\}_{j \in J}$, a good sequent.\\

The case for axioms and structural rules are easy to check. If the last rule is a conjunction or disjunction rule, then the main formula has the first form. Then since it is possible to simulate all conjunction and disjunction rules in $\mathbf{EBPC}$, the case of conjunction and disjunction rules are also easy to check. If the last rule is an implication rule, since we define our claim up to using implicational rules, there is nothing to prove in this case. Moreover, notice that if the consequent sequent is good then the premises are so. Therefore, it is possible to use the induction hypothesis for them. Finally, if the last rule is a modal rule, then, we have the following two cases:\\

1. If the last rule is a modal rule $\Box_4 R$, based on the form of formulas and the fact that in those three forms a boxed formula should be of the first kind, we have two cases. The first case is when the boxed formula in the right side has the form $\Box(D^b \rightarrow E^b)$. The second case is when the formula has the form $\Box p$. For the first case, the last rule is like the following:
\begin{center}
  	\begin{tabular}{c}
		\AxiomC{$ \{p_j, \Box p_j \}_{j \in J}, \{ B_i^b \rightarrow C_i^b, \Box(B_i^b \rightarrow C_i^b) \}_{i \in I} \Rightarrow D^b \rightarrow E^b$}
		\UnaryInfC{$ \{\Box p_j \}_{j \in J}, \{ \Box(B_i^b \rightarrow C_i^b) \}_{i \in I} \Rightarrow \Box (D^b \rightarrow E^b)$}
		\DisplayProof
		\end{tabular}
\end{center}
and we want to prove
\[
\{p_j \}_{j \in J}, \{ B_i \rightarrow C_i \}_{i \in I} \vdash_{\mathbf{EBPC}} D \rightarrow E
\]
Since every formula in the consequent sequent are boxed, it is a good sequent. Moreover, the only way for the premise sequent to not be good is that for some $j$, $p_j=\bot$. Therefore the claim is obvious from the $\bot$ rule in $\mathbf{EBPC}$. Hence, we can also assume that the premise sequent is a good one. Then, by IH we know that for any $X \subseteq I$ we have
\[
\{p_j \}_{j \in J}, \{ B_i \rightarrow C_i \}_{i \in I}, \{C_i\}_{i \in X}, D \vdash_{\mathbf{EBPC}} \{B_i\}_{i \notin X}, E
\]
By the rule $\rightarrow I$ the following is provable by $\Sigma=\{p_j \}_{j \in J} \cup \{ B_i \rightarrow_{n_i} C_i \}_{i \in I}$
\[
\bigwedge \{C_i\}_{i \in X} \wedge D \rightarrow \bigvee \{B_i\}_{i \notin X} \vee E
\]
Fix $i \in I$ and also fix some $Z \subseteq I-\{i\}$. Both of the following statements are theorems of $\Sigma$:
\[
\bigwedge \{C_i\}_{i \in Z} \wedge D \rightarrow \bigvee \{B_i\}_{i \notin Z} \vee \mathbf{B_i} \vee E
\]
and
\[
\bigwedge \{C_i\}_{i \in Z} \wedge \mathbf{C_i} \wedge D \rightarrow \bigvee \{B_i\}_{i \notin Z} \vee E
\]
Since $\Sigma \vdash \mathbf{B_i} \rightarrow \mathbf{C_i}$. Then by using appropriate formalized rules we will have
\[
\bigwedge \{C_i\}_{i \in Z} \wedge D \rightarrow  \bigvee \{B_i\}_{i \notin Z}\vee E
\]
provable by $\Sigma$ in $\mathbf{EBPC}$.
By iterating this method we can eliminate all elements in $I$. Therefore we will have
\[
\Sigma \vdash_{\mathbf{EBPC}} D \rightarrow E
\]
which is what we wanted to prove.\\

If the boxed formula in the right side of the rule is $\Box p$, then the last rule has the form 
\begin{center}
  	\begin{tabular}{c}
		\AxiomC{$ \{p_j, \Box p_j \}_{j \in J}, \{ B_i^b \rightarrow C_i^b, \Box(B_i^b \rightarrow C_i^b) \}_{i \in I} \Rightarrow p$}
		\UnaryInfC{$ \{\Box p_j \}_{j \in J}, \{ \Box(B_i^b \rightarrow C_i^b) \}_{i \in I} \Rightarrow \Box p$}
		\DisplayProof
		\end{tabular}
\end{center}
and we want to prove
\[
\{p_j \}_{j \in J}, \{ B_i \rightarrow C_i \}_{i \in I} \vdash_{\mathbf{EBPC}} p
\]
There are two different cases. The first case is when $p \in \{p_j \}_{j \in J}$ or $\bot \in \{p_j \}_{j \in J}$. In this case the claim is an obvious consequence of an axiom in $\mathbf{EBPC}$. The second case is when $p \notin \{p_j \}_{j \in J}$ and $\bot \notin \{p_j \}_{j \in J}$. Therefore, the premise sequent is a good one. Hence by IH and for any $X \subseteq I$ we have
\[
\{p_j \}_{j \in J}, \{ B_i \rightarrow C_i \}_{i \in I}, \{C_i\}_{i \in X} \vdash_{\mathbf{EBPC}}  \{B_i\}_{i \notin X}
\]
with the same method as above we can deduce
\[
\{p_j \}_{j \in J}, \{ B_i \rightarrow C_i \}_{i \in I} \vdash_{\mathbf{EBPC}} \top \rightarrow  \bot
\] 
Then by the rule $C$, we will have
\[
\{p_j \}_{j \in J}, \{ B_i \rightarrow C_i \}_{i \in I} \vdash_{\mathbf{EBPC}} \bot
\]
which is what we wanted.\\

2. If the last rule is $\Box_{D}R$, then everything in the proof is the same as the proof for the case 1 when we put $D=\top$ and $E=\bot$. Therefore, we will have
\[
\{p_j \}_{j \in J}, \{ B_i \rightarrow C_i \}_{i \in I} \vdash_{\mathbf{EBPC}} \top \rightarrow  \bot
\] 
Then by the rule $C$, we will have
\[
\{p_j \}_{j \in J}, \{ B_i \rightarrow C_i \}_{i \in I} \vdash_{\mathbf{EBPC}} \bot
\]
which is what we wanted.\\
After proving the claim, the theorem is an easy consequences of the claim. Since there is a proof of $\Gamma^b \Rightarrow A^b$ in $G(\mathbf{KD4})$ then the sequent is obviously a good one and hence by the claim we will have $\Gamma \vdash_{\mathbf{EBPC}} A$. \\
\end{proof}
\begin{thm}\label{t9-9}
\begin{itemize}
\item[$(i)$]
$\Gamma \vdash_{\mathbf{BPC}} A$ iff $\Gamma^b \vdash_{\mathbf{K4}} A^b$
\item[$(ii)$]
$\Gamma \vdash_{\mathbf{EBPC}} A$ iff $\Gamma^b \vdash_{\mathbf{KD4}} A^b$
\item[$(iii)$]
$\Gamma \vdash_{\mathbf{IPC}} A$ iff $\Gamma^b \vdash_{\mathbf{S4}} A^b$
\item[$(iv)$]
$\Gamma \vdash_{\mathbf{FPL}} A$ iff $\Gamma^b \vdash_{\mathbf{GL}} A^b$
\item[$(v)$]
$\Gamma \vdash_{\mathbf{MPC}} A$ iff $\Gamma^w \vdash_{\mathbf{S4}} A^w$
\end{itemize}
\end{thm}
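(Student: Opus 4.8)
The five statements are Gödel--McKinsey--Tarski--style embedding theorems, and the natural plan is to prove each ``iff'' as a \emph{soundness} direction ($\Rightarrow$: the translation preserves derivability) and a \emph{faithfulness} direction ($\Leftarrow$: it reflects derivability), exploiting the fact that the five pairs are organised uniformly. On the propositional side $\mathbf{EBPC}$, $\mathbf{IPC}$, $\mathbf{FPL}$ are $\mathbf{BPC}$ together with the rule $C$, $R$, $L$ respectively, $\mathbf{MPC}$ is $\mathbf{IPC}$ minus the $\bot$-rule; on the modal side $\mathbf{KD4}$, $\mathbf{S4}$, $\mathbf{GL}$ are $\mathbf{K4}$ together with the matching modal principle ($\mathbf{D}$, $\mathbf{T}$, Löb). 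So I would prove $(i)$ first and bootstrap the rest.

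For the soundness directions I would induct on the length of the propositional derivation. The propositional rules, the $\bot$-rule, and the formalized rules $(\wedge I)_f$, $(\vee E)_f$, $tr_f$ all translate to derivable sequents already in $\mathbf{K4}$: this uses only that $\Box$ distributes over $\to$ and $\wedge$ (axiom $\mathbf{K}$ with necessitation) and that $\Box A\to\Box\Box A$ (axiom $\mathbf{4}$), which is exactly what the clause $(A\to B)^b=\Box(A^b\to B^b)$ needs in order to iterate; this yields $(i)_\Rightarrow$. The extra rules are then handled one modal principle at a time: rule $C$ translates using $\mathbf{D}$ (from $\neg\Box\bot$ and its necessitation $\Box\neg\Box\bot$ one gets that $\Box\Box\bot$ collapses to $\Box\bot=\bot^b$), giving $(ii)_\Rightarrow$; rule $R$ (modus ponens) translates using $\mathbf{T}$, since $\Box(A^b\to B^b)\to(A^b\to B^b)$ lets one pass from $A^b$ and $(A\to B)^b$ to $B^b$ \emph{even under open assumptions}, giving $(iii)_\Rightarrow$; rule $L$ translates using Löb's axiom, since $\Box\big((A^b\wedge\Box(A^b\to B^b))\to B^b\big)$ is equivalent in $\mathbf{K}$ to $\Box(\Box\psi\to\psi)$ for $\psi=A^b\to B^b$, and Löb then delivers $\Box\psi=(A\to B)^b$; this gives $(iv)_\Rightarrow$. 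Finally $(v)_\Rightarrow$ is $(iii)_\Rightarrow$ read through $w$ instead of $b$: every $\mathbf{IPC}$-rule except the $\bot$-rule is still available and needs exactly the same argument, and since $\mathbf{MPC}$ is $\mathbf{IPC}$ without the $\bot$-rule this is precisely what is wanted; the fresh atom $q=\bot^w$ does nothing but block ex falso.

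The faithfulness directions are the substantive part. For $(ii)_\Leftarrow$ there is nothing to do: it is Lemma~\ref{t9-8}. For $(i)_\Leftarrow$ I would rerun the proof of Lemma~\ref{t9-8} over the cut-free calculus $G(\mathbf{K4})$ in place of $G(\mathbf{KD4})$: the subformula analysis (every formula in a cut-free proof of $\Gamma^b\Rightarrow A^b$ has the form $B^b$, $B^b\to C^b$, $\Box$ of such, or an atom) is unchanged, and the induction goes through verbatim except that the case of the rule $\Box_D R$ never arises, so the $C$-step is never invoked and one lands in $\mathbf{BPC}$ rather than $\mathbf{EBPC}$. For $(iii)_\Leftarrow$ and $(v)_\Leftarrow$ I would run the same argument over $G(\mathbf{S4})$; now the rules $\Box L$ and $\Box_S R$ occur, and the point is that a $\Box L$ step, translated back, strips $(C\to D)^b=\Box(C^b\to D^b)$ in the antecedent to the ordinary propositional implication $C^b\to D^b$, i.e.\ it supplies exactly the modus ponens rule $R$, so the reconstructed propositional derivation lives in $\mathbf{IPC}$ (and, for $w$, in $\mathbf{MPC}$, because the antecedent occurrence $p_j=\bot$ that triggered the $\bot$-rule in Lemma~\ref{t9-8} can no longer appear; the fresh $q$ never occurs in $\Gamma,A$, so passing back to the $q$-free language is harmless).

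The one place where I expect genuine extra work is $(iv)_\Leftarrow$, since the excerpt provides no cut-free sequent calculus for $\mathbf{GL}$. Here I would argue semantically instead: $\mathbf{FPL}$ is complete for finite irreflexive transitive (conversely well-founded) tree models and $\mathbf{GL}$ for finite transitive irreflexive trees, and the $b$-translation is the standard truth-preserving-and-reflecting correspondence between the two classes of frames, so a Kripke counter-model for $\Gamma\Rightarrow A$ in $\mathbf{FPL}$ transforms into one for $\Gamma^b\Rightarrow A^b$ in $\mathbf{GL}$ and conversely; alternatively one may simply cite the known embedding of $\mathbf{FPL}$ into $\mathbf{GL}$. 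Thus the main obstacles are the bookkeeping of the cut-free-proof induction in the $\mathbf{S4}$ case (pinning down the admissible subformulas and the precise role of $\Box L$) and supplying the missing proof-theoretic or semantic input for $\mathbf{GL}$; everything else is routine once $(i)$ is in place.
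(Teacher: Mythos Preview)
Your soundness sketch and the handling of $(ii)$ and $(iv)$ are in line with the paper. For $(iii)$ the paper simply cites the classical result, and for $(v)$ it gives a short semantic argument rather than a syntactic one: take an $\mathbf{MPC}$ Kripke counter-model for $\Gamma\Rightarrow A$, keep the same frame and valuation, and declare the fresh atom $q$ true exactly at the nodes where $\bot$ was true; one then checks that $B$ and $B^w$ agree at every node, so the same model refutes $\Gamma^w\Rightarrow A^w$ in $\mathbf{S4}$. Your plan to push the cut-free analysis of Lemma~\ref{t9-8} through $G(\mathbf{S4})$ is a genuinely different route; it can be made to work, but it is considerably more bookkeeping than the two-line model transformation the paper uses.

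There is, however, a real gap in your argument for $(i)$. You assert that in Lemma~\ref{t9-8} the rule $C$ is invoked only in the $\Box_D R$ case, so that deleting that case lands you in $\mathbf{BPC}$. That is not what the lemma does: rule $C$ is \emph{also} used in the $\Box_4 R$ case, in the subcase where the principal formula on the right is $\Box p$ for an atom $p$ with $p\notin\{p_j\}$ and $\bot\notin\{p_j\}$. There the elimination argument produces only $\Sigma\vdash\top\to\bot$, and one needs $C'$ (equivalently $C$) to pass to $\bot$ and then to $p$; in $\mathbf{BPC}$ this step is simply unavailable. To rescue the syntactic approach you would have to argue separately that this subcase is vacuous for $G(\mathbf{K4})$---for instance by observing that the offending premise is refuted at a single irreflexive world, where every formula of the shape $B^b$ evaluates to true---but you do not do this, and your text suggests you have overlooked the issue. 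The paper avoids the problem entirely: it cites Visser's proof of $(iv)$ and remarks that the same (Kripke-semantic) argument carries over to $(i)$.
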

\begin{proof}
The proof of the soundness part is easy and routine. For the completeness part, the case $(iv)$ is proved by Visser in \cite{Vi}. The same proof also works for $(i)$. $(iii)$ is a well-known result. (See \cite{Tr} for instance.) $(ii)$ is proved by Lemma \ref{t9-8}. 
For the case $(v)$, we know that $\mathbf{MPC}$ and $\mathbf{S4}$ are sound and strongly complete with respect to the class of reflexive transitive Kripke models. (For $\mathbf{MPC}$ the model should also be persistent.) However, in the case of $\mathbf{MPC}$, the nodes can also satisfy $\bot$. Soundness is again easy. For the completeness part, if we have a counter $\mathbf{MPC}$-Kripke model for $\Gamma \Rightarrow A$, we can construct a counter $\mathbf{S4}$-model for $\Gamma^w \Rightarrow A^w$ in the following way: Use the same Kripke model, with the same values, but assume that $q$ is true in a node, if $\bot$ is true in that node. Then, it is easy to show that for any propositional formula $B$, $B$ is true in the node $l$ iff $B^w$ is so. Therefore, if the first model is a counter example for $\Gamma \Rightarrow A$, then the new one is a counter example for $\Gamma^w \Rightarrow A^w$. This construction proves the completeness part.
\end{proof}
We can use the soundness and completeness of these translations to transfer our results from the modal setting to the propositional one.
\begin{dfn}\label{t9-10}
The class $\mathbf{BHK}$ is the class of all BHK models and the class $\mathbf{cBHK}$ is the class of all BHK models which are constant. 
\end{dfn}
\begin{thm}\label{t9-11}
\begin{itemize}
\item[$(i)$]
$\Gamma \vdash_{\mathbf{BPC}} A$ iff $(\mathbf{PrM}, b) \vDash \Gamma \Rightarrow A $. And $\mathbf{BPC} \vdash A$ iff $(\mathbf{BHK}, b) \vDash A$.
\item[$(ii)$]
$\Gamma \vdash_{\mathbf{EBPC}} A$ iff $(\mathbf{Cons}, b) \vDash \Gamma \Rightarrow A $.
\item[$(iii)$]$\Gamma \vdash_{\mathbf{IPC}} A$ iff $(\mathbf{Ref}, b) \vDash \Gamma \Rightarrow A $.
\item[$(iv)$]
$\Gamma \vdash_{\mathbf{FPL}} A$ iff $(\mathbf{Cst}, b) \vDash \Gamma \Rightarrow A $. And $\mathbf{FPL} \vdash A$ iff $(\mathbf{cBHK}, b) \vDash A$.
\item[$(v)$]
Let $(M, \{T_n\}_{n=0}^{\infty})$ be a provability model. Then $(M, \{T_n\}_{n=0}^{\infty}, b) \vDash \mathbf{CPC}$ iff there exists $n$ such that $M \vDash \Pr_{n+1}(\Pr_n(\bot))$. Therefore, there is not any BHK interpretation for classical logic.
\end{itemize}
\end{thm}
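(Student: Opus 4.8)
The parts (i)--(iv) are obtained by transferring the corresponding modal results through Theorem~\ref{t9-9} and Definition~\ref{t9-7}. For instance, for (i): $\Gamma\vdash_{\mathbf{BPC}}A$ iff $\Gamma^b\vdash_{\mathbf{K4}}A^b$ (Theorem~\ref{t9-9}), iff $\mathbf{PrM}\vDash\Gamma^b\Rightarrow A^b$ (soundness and strong completeness of $\mathbf{K4}$, Theorems~\ref{t3-1} and \ref{t3-6}), iff $(\mathbf{PrM},b)\vDash\Gamma\Rightarrow A$; and similarly for (ii)--(iv) via the $\mathbf{KD4}$, $\mathbf{S4}$, $\mathbf{GL}$ and $\mathbf{GLS}$ theorems and the translation $b$. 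The only point needing extra care is the refinement to $\mathbf{BHK}$ in (i) and to $\mathbf{cBHK}$ in (iv): one direction is soundness (using $\mathbf{BHK}\subseteq\mathbf{PrM}$ and $\mathbf{cBHK}\subseteq\mathbf{Cst}$), and for the converse one checks that the hierarchies used in the $\mathbf{K4}$/$\mathbf{GL}$ completeness proofs are themselves BHK (resp.\ constant BHK) models; this holds because those hierarchies sit inside a $\Sigma_1$-sound theory (Solovay's substitution is $\Sigma_1$-sound), so $\Cons(T_n)$ and $\neg\Pr_{n+1}(\Pr_n(\bot))$ are true $\Pi_1$-sentences that hold in the standard model used there. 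So the heart of the statement is the biconditional (v), and its corollary is then immediate: by Definition~\ref{t9-3} a BHK model is exactly one for which \emph{no} $n$ satisfies $M\vDash\Pr_{n+1}(\Pr_n(\bot))$, so by (v) no BHK model satisfies $\mathbf{CPC}$ under $b$, i.e.\ $\mathbf{CPC}$ admits no BHK interpretation.

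For the direction ``$\Leftarrow$'' of (v) I would argue as follows. Assume $M\vDash\Pr_{n_0+1}(\Pr_{n_0}(\bot))$ for some $n_0$. Using ex falso, the monotonicity $T_{n_0}\subseteq T_m$, and formalized $\Sigma_1$-completeness, all provable in $I\Sigma_1$ and hence true in $M$, one first obtains the key fact
\[
(\star)\qquad M\vDash\Pr_N(\Pr_m(X)) \quad\text{for all } N\ge n_0+1,\ m\ge n_0,\ \text{and every arithmetical sentence } X .
\]
If moreover $M\vDash\Pr_{n_0+1}(\bot)$ then $M\vDash\Pr_l(Y)$ for all $l\ge n_0+1$ and all $Y$, so with any witness using only levels $\ge n_0+1$ every boxed subformula of $A^b$ is interpreted by a sentence true in $M$ and $(M,\{T_n\})\vDash A^b$ for \emph{every} propositional $A$. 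Otherwise $M\vDash\neg\Pr_{n_0+1}(\bot)$; given a $\mathbf{CPC}$-theorem $A$, choose the witness for $A^b$ that puts every occurrence of $\Box\bot$ at level $n_0+1$, assigns each propositional variable $p$ a single fixed level $l_p\ge n_0+1$, and gives each implication- and negation-box a level $\ge n_0+1$ large enough to exceed those it encloses (a bottom-up pass). Writing $B^{\dagger}$ for the interpretation of $B^b$ under this witness and an arithmetical substitution $\sigma$, and $[\varphi]\in\{0,1\}$ for the truth value of $M\vDash\varphi$, the plan is to prove by induction on the subformula $B$ of $A$: (a) $M\vDash\Pr_N(B^{\dagger})$ for every $N$ exceeding all levels used in $B^b$'s witness; and (b) $[B^{\dagger}]\ge\mathrm{cl}(B)$, where $\mathrm{cl}$ is the two-valued valuation with $\mathrm{cl}(p)=[(\Box p)^{\dagger}]$, $\mathrm{cl}(\bot)=0$, and $\mathrm{cl}$ classical on $\wedge,\vee,\to,\neg$. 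In (a) the atomic and $\bot$ cases are instances of $(\star)$, the $\wedge,\vee$ cases use that $\Pr_N$ commutes with $\wedge$ and is monotone, and the $\to,\neg$ cases use that $B^{\dagger}$ is then a $\Sigma_1$-sentence already true in $M$ (its outer $\Pr_{N_B}$ proves the consequent by (a) for the relevant subformula, resp.\ proves $\Pr_{n_0+1}(\bot)$ by $(\star)$), whence $M\vDash\Pr_N(B^{\dagger})$ by $\Sigma_1$-completeness. Consequently, in (b), every implication- and negation-subformula has $B^{\dagger}$ true in $M$, so behaves as the constant $\top$, which dominates its classical value, while $\wedge,\vee$ pass through by monotonicity of $\min,\max$ on $\{0,1\}$, and the atom and $\bot$ cases are equalities. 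Since $A$ is a classical tautology, $\mathrm{cl}(A)=1$ for the valuation $p\mapsto\mathrm{cl}(p)$; hence $M\vDash A^{\dagger}$, and since being a tautology makes this hold for every $\sigma$, we get $(M,\{T_n\})\vDash A^b$ with this single, $\sigma$-independent witness.

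For the direction ``$\Rightarrow$'' of (v) I would argue contrapositively, following the proof of Theorem~\ref{t7-1} almost verbatim: assuming $M$ is a BHK model, derive a contradiction from $(M,\{T_n\},b)\vDash\mathbf{CPC}$. The BHK condition is exactly statement (i) of that proof (and it also yields $M\vDash\Cons(T_n)$ for every $n$). Applying $(M,\{T_n\},b)\vDash p\vee\neg p$: the definition of truth supplies fixed numbers $n,k,l$ (after harmlessly enlarging, with the outer box of the negated disjunct at level $l+1$) such that for every arithmetical $\phi$, $M\vDash\Pr_n(\phi)\vee\Pr_{l+1}(\Pr_k(\phi)\to\Pr_l(\bot))$; this is precisely statement (iii). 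For statement (ii) one feeds $\mathbf{CPC}$ a family of theorems playing the role that $\Box(\Box\Box(\Box\bot\wedge\Box^{n}\top)\to\Box\bot)$ plays in Theorem~\ref{t7-1}, using the $j$-fold iterated negation $\neg\cdots\neg\bot$ (a $\mathbf{CPC}$-theorem for odd $j$, whose $b$-translation has box-nesting depth growing with $j$) as the padding that forces the witnessed levels above $n$; evaluating the $b$-translation as in Theorem~\ref{t7-1} then yields, for each $n$, numbers $N>m>r\ge n$ and $s<N$ with $M\vDash\Pr_N(\Pr_m(\Pr_r(\bot))\to\Pr_s(\bot))$, which is statement (ii). With (i), (ii), (iii) in hand the closing computation of Theorem~\ref{t7-1} (substitute $\phi=\Pr_{l+1}(\Pr_l(\bot))$ into (iii), use the formalized second incompleteness theorem and (i) to obtain $M\vDash\Pr_n(\Pr_{l+1}(\Pr_l(\bot)))$, then (ii) to obtain $M\vDash\Pr_N(\Pr_s(\bot))$ and hence $M\vDash\Pr_N(\Pr_{N-1}(\bot))$, contradicting (i)) goes through without change.

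The step I expect to be the main obstacle is the extraction of statement (ii) in the ``$\Rightarrow$'' direction. In Theorem~\ref{t7-1} the convenient formula carrying the $\Box^{n}\top$ padding is itself a theorem of $\mathbf{KD45}$; here one must instead manufacture a family of \emph{classical tautologies} whose $b$-translations simultaneously have enough box-nesting to force the relevant witness levels above $n$ \emph{and}, after evaluation, collapse to the consistency-transfer statement at level $n$ --- note the endgame needs (ii) at exactly the level $l$ produced by (iii), not merely for large levels. Making this padding-plus-collapse work uniformly in $n$ is the delicate part; by contrast, the ``$\Leftarrow$'' direction and parts (i)--(iv) are essentially bookkeeping on top of results already established in the paper. (A secondary, lighter point is checking, for the $\mathbf{BHK}$/$\mathbf{cBHK}$ clauses of (i) and (iv), that Solovay's witnessing hierarchies land inside the relevant subclass, which follows from the $\Sigma_1$-soundness of his substitution.)
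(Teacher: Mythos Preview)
Your handling of the main biconditionals in (i)--(iv) and of the two directions of (v) is broadly right, and your $\Leftarrow$ argument for (v) is a clean alternative to the paper's CNF-based proof (the paper instead shows $I\Sigma_1\vdash \Pr_{n_0}(\bot)\to (B^w)^\sigma$ by induction, deduces that every implicational/negation literal is true, and then uses a CNF normal form to finish). But two points deserve attention.

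\textbf{The BHK/cBHK refinements in (i) and (iv).} Your proposed shortcut---checking that the Solovay hierarchy used in the $\mathbf{K4}$/$\mathbf{GL}$ completeness proofs is itself a BHK (resp.\ constant BHK) model in the standard model---does not close the argument. Lemma~\ref{t3-5} and Theorem~\ref{t6-5} require the interpretation to hold in \emph{every} $M\vDash I\Sigma_1$ (resp.\ every $M\vDash T$), not just in $\mathbb{N}$; and for a nonstandard $M$ there is no reason that $M\vDash\neg\Pr_{n+1}(\Pr_n(\bot))$, since $I\Sigma_1$ cannot prove such $\Pi_1$ consistency statements. The paper's route is genuinely different: from $(\mathbf{BHK},b)\vDash A$ one forms the sequent $\Gamma\Rightarrow A^b$ with $\Gamma$ an infinite supply of $\neg\Box\Box\bot$ (witnessed by $(n{+}1,n)$), observes that any provability model satisfying $\Gamma$ is a BHK model, applies \emph{strong} completeness for $\mathbf{K4}$ to get $\neg\Box\Box\bot\vdash_{\mathbf{K4}}A^b$, rewrites this as $\mathbf{K4}\vdash((\top\to\bot)\vee A)^b$, transfers via Theorem~\ref{t9-9} to $\mathbf{BPC}\vdash(\top\to\bot)\vee A$, and finishes with the disjunction property of $\mathbf{BPC}$. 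The cBHK clause of (iv) is handled the same way using $\mathbf{GL}$ and $\mathbf{FPL}$. This is not a ``lighter secondary point''; it is where the real content of those clauses lives.

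\textbf{The padding step for (ii) in the $\Rightarrow$ direction of (v).} You correctly flag this as the obstacle, and your idea (force high box-depth via an iterated-negation conjunct) is the right shape. The paper makes it concrete: treating $\top$ as a logical constant with $\top^b=\top$, define $\top^0=\top$ and $\top^{k+1}=\top\to\top^k$, so that $(\top^k)^b$ is provably equivalent to $\Box^k\top$. Then the classical tautology
\[
\neg\bigl(\top\to(\top\to(\bot\wedge\top^k))\bigr)
\]
has $b$-translation provably equivalent to $\Box\bigl(\Box\Box(\Box\bot\wedge\Box^k\top)\to\Box\bot\bigr)$, which is exactly the modal formula used in Theorem~\ref{t7-1} to extract statement (ii). With that identification in hand, your reduction of (v)$\Rightarrow$ to the endgame of Theorem~\ref{t7-1} goes through verbatim (and (iii) indeed comes from $(p\vee\neg p)^b=\Box p\vee\Box(\Box p\to\Box\bot)$ as you say).
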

\begin{proof}
Based on Theorem \ref{t9-9}, the strong soundness-completeness parts are just easy consequences of the soundness-completeness results for the corresponding modal logics. For the BHK completeness part for $(i)$, if $(\mathbf{BHK}, b) \vDash A$, then there are expansions $B_i$'s for $A^w$ and a witness for $\bigvee B_i$, such that for all arithmetical substitutions $\sigma$, and all BHK models $(M, \{T_n\}_{n=0}^{\infty})$, we have $M \vDash (\bigvee_{i=0}^{r} B_i)^{\sigma}(w)$. Let $\Gamma$ be a sequence of infinite copies of $\neg \Box \Box \bot$ and $u$ a witness, which witnesses each of these formulas by $(n+1, n)$. We claim that for any provability model $(M, \{T_n\}_{n=0}^{\infty})$ and any arithmetical substitution $\sigma$, we have $M \vDash \Gamma^{\sigma}(u) \Rightarrow (\bigvee_{i=0}^{r} B_i)^{\sigma}(w)$. If $M \vDash \Gamma^{\sigma}(u)$, then for any $n$, we have $M \vDash \neg \Pr_{n+1}(\Pr_n(\bot))$. Hence, $(M, \{T_n\}_{n=0}^{\infty})$ is a BHK model and therefore, $M \vDash (\bigvee_{i=0}^{r} B_i)^{\sigma}(w)$. We know $\mathbf{PrM} \vDash \Gamma \Rightarrow A^b$; therefore, by strong completeness for $\mathbf{K4}$, we have $\Gamma \vdash_{\mathbf{K4}} A^b$. Thus, $\mathbf{K4} \vdash \neg \Box \Box \bot \rightarrow A^b$ and then, $\mathbf{K4} \vdash ((\top \rightarrow \bot) \vee A)^b$. By Theorem \ref{t9-9}, $\mathbf{BPC} \vdash (\top \rightarrow \bot) \vee A$, and therefore by the disjunction property of $\mathbf{BPC}$, we know that $\mathbf{BPC} \vdash A$ or $\mathbf{BPC} \vdash \top \rightarrow \bot$. The latter is impossible by simple facts about $\mathbf{BPC}$, therefore $\mathbf{BPC} \vdash A$.\\
The case $(iv)$ also needs an argument exactly similar to the case $(i)$. Moreover, since the consistent and reflexive models admit the consistency condition of the BHK interpretation, the cases $(ii)$ and $(iii)$ are just a combination of Theorem \ref{t9-9} and the completeness results for the corresponding theories.\\ 

For $(v)$ we need some justification. First of all we want to show that if for any $n$, $M \vDash \neg \Pr_{n+1}(\Pr_n(\bot))$, then $(M, \{T_n\}_{n=0}^{\infty})$ is not a model for $\mathbf{CPC}$. We prove this claim by contradiction. Assume that for any $n$, $M \vDash \neg \Pr_{n+1}(\Pr_n(\bot))$ and $(M, \{T_n\}_{n=0}^{\infty}, b) \vDash \mathbf{CPC}$. We want to show that all three statements of the proof of Theorem \ref{t7-1} is also true in our case. Firstly, $(i)$ is true by assumption. Secondly, consider the formula $\Box^n \top$ which is a translation of the propositional classical theorem $\top^n$ with the definition $\top^0=\top$ and $\top^{n+1}=\top \rightarrow \top^n$. Therefore, the formula $\Box (\Box \Box(\Box \bot \wedge \Box^n \top) \rightarrow \Box \bot)$ is the translation of the tautology $((\top \rightarrow (\top \rightarrow (\bot \wedge \top^n))) \rightarrow \bot)$. Thus,
\[
(M, \{T_n\}_{n=0}^{\infty}) \vDash \Box (\Box \Box(\Box \bot \wedge \Box^n \top) \rightarrow \Box \bot).
\]
Since we used this formula to show $(ii)$, we can claim that we also have $(ii)$ here. Thirdly, we know that $p \vee \neg p$ is a theorem of $\mathbf{CPC}$. Hence, $(M, \{T_n\}_{n=0}^{\infty}) \vDash (p \vee \neg p)^b$, which means $(M, \{T_n\}_{n=0}^{\infty}) \vDash (\Box p \vee \Box (\Box p \rightarrow \Box \bot)$. Therefore, $(iii)$ is also true in $M$. Thus, we have a contradiction and it proves the claim. \\

For the converse, assume that there is some $n$ such that $M \vDash \Pr_{n+1}(\Pr_n(\bot))$; we will show that $(M, \{T_n\}_{n=0}^{\infty}, w) \vDash \mathbf{CPC}$. First of all, to simplify the proof, define the complexity of any box as the maximum depth of the nested boxes in front of that box. For instance, the complexity of the inner box in $\Box(\Box p \wedge q)$ is zero, and the complexity of the outer box is one. Define the canonical witness starting from $n$, as follows: Witness any box by its complexity plus $n$. It is easy to show that this witness is an ordered one, because the witness for any outer box is bigger than the witness for the inner boxes. Define $A^{\sigma}$ as the formula resulted by substituting all the atoms by $\sigma$ and witnessing all the boxes by the canonical witness starting from $n$. It is easy to verify that for any propositional formula $A \rightarrow B$, $M \vDash ((A \rightarrow B)^w)^{\sigma}$. To show this, firstly, note that the following claim holds: For any propositional formula $B$,
\[
I \Sigma_1 \vdash \Pr_n(\bot) \rightarrow (B^w)^{\sigma}.
\] 
The proof of the claim is based on induction on $B$ and easily follows. 
Assume that the complexity of the outmost box in $\Box(A^w \rightarrow B^w)$ is $k\geq n+1$. (Since witnesses begin with $n$ and there is at least one box in $A^w$, $k$ is at least $n+1$.) By $\Sigma_1$-completeness we have
\[
I\Sigma_1 \vdash \Pr_k(\Pr_n(\bot) \rightarrow (B^w)^{\sigma}),
\]
and hence,
\[
I\Sigma_1 \vdash \Pr_k(\Pr_n(\bot)) \rightarrow \Pr_k((B^w)^{\sigma}).
\]
Then since $M \vDash I\Sigma_1$, then
\[
M \vDash \Pr_k(\Pr_n(\bot)) \rightarrow \Pr_k((B^w)^{\sigma}).
\]
We know that 
$M \vDash \Pr_{n+1}(\Pr_n(\bot)) $ and $k \geq n+1$; hence $M \vDash \Pr_{k}(\Pr_n(\bot)) $. Therefore,
\[
M \vDash \Pr_k((B^w)^{\sigma}),
\]
and thus,
\[
M \vDash \Pr_k((A^w)^{\sigma} \rightarrow (B^w)^{\sigma}),
\]
and the proof follows.\\

It is easy to check that for any formula $B$, there exists another formula $C$ such that $C$ is in the CNF form, in which all the literals are implicational formulas, positive atoms and $\bot$ and classically equivalent to $B$. Note that the process of constructing this $C$ just uses the classical rules for conjunction and disjunction. Since $w$ and the canonical witness respect the conjunction and disjunction and their basic rules, $(B^w)^{\sigma}$ and $(C^w)^{\sigma}$ are equivalent in $M$. Suppose that $\mathbf{CPC} \vdash B$; we want to show that $M \vDash (B^w)^{\sigma}$. It is enough to show that $M \vDash (C^w)^{\sigma}$. Considering that all the literals in $C$ are implicational formulas, positive atoms and $\bot$, the literals of $C^b$ are translations of implications, boxed atoms or $\Box \bot$. If $M \nvDash (C^w)^{\sigma}$, there must be some clause in which all the literals are false. Since the translations of the implications are true in $M$, there has to be a clause in $C$ consisting of atoms and $\bot$. Therefore, $C$ can not be a classical tautology and hence $B$ will not be, as well. But $\mathbf{CPC} \vdash B$; a contradiction. Thus, $M \vDash (B^w)^{\sigma}$.\\
So far, we have shown that if $\mathbf{CPC} \vdash B$, then $M \vDash (B^w)^{\sigma}$. If we send $q$ in the definition of $\bot^w=\Box q$, to $\bot$, then we have $M \vDash (B^b)^{\sigma}$, which proves the theorem.
\end{proof}
There is another type of the BHK interpretation in which there is not any kind of assumption on the non-existence of a proof of the contradiction. 
\begin{thm}\label{t9-12}
\begin{itemize}
\item[$(i)$]
$\Gamma \vdash_{\mathbf{MPC}} A$ iff $(\mathbf{Ref}, w) \vDash \Gamma \Rightarrow A $.
\item[$(ii)$]
Let $(M, \{T_n\}_{n=0}^{\infty})$ be a provability model. Then $(M, \{T_n\}_{n=0}^{\infty}, w) \vDash \mathbf{IPC}$ iff $(M, \{T_n\}_{n=0}^{\infty}, w) \vDash \mathbf{CPC}$ iff there exists $n$ such that $M \vDash \Pr_{n+1}(\Pr_n(\bot))$.
\end{itemize}
\end{thm}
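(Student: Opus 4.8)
The plan is to reduce both parts to results already in hand. For $(i)$, I would simply chain: by Theorem~\ref{t9-9}$(v)$ we have $\Gamma\vdash_{\mathbf{MPC}}A$ iff $\Gamma^w\vdash_{\mathbf{S4}}A^w$; by the soundness of $\mathbf{S4}$ for reflexive provability models (Theorem~\ref{t5-2}) the right-hand side implies $\mathbf{Ref}\vDash\Gamma^w\Rightarrow A^w$, and by uniform strong completeness (Theorem~\ref{t5-10}) the converse implication holds as well; finally $\mathbf{Ref}\vDash\Gamma^w\Rightarrow A^w$ is by Definition~\ref{t9-7} the same as $(\mathbf{Ref},w)\vDash\Gamma\Rightarrow A$. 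The extra atom $q$ introduced by $w$ is harmless: it is just another propositional atom of the modal language, so no $\mathbf{S4}$-result has to be adapted. This part is bookkeeping.

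For $(ii)$ write P1, P2, P3 for ``$(M,\{T_n\}_{n=0}^{\infty},w)\vDash\mathbf{IPC}$'', ``$(M,\{T_n\}_{n=0}^{\infty},w)\vDash\mathbf{CPC}$'', ``$\exists n\ M\vDash\Pr_{n+1}(\Pr_n(\bot))$'', and prove P1$\Rightarrow$P3$\Rightarrow$P2$\Rightarrow$P1. The implication P2$\Rightarrow$P1 is immediate because $\mathbf{IPC}\subseteq\mathbf{CPC}$ as sets of theorems ($\mathbf{CPC}=\mathbf{IPC}+D$), so validating every $\mathbf{CPC}$-theorem under $w$ in particular validates every $\mathbf{IPC}$-theorem. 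The implication P3$\Rightarrow$P2 is, essentially verbatim, the converse half of the proof of Theorem~\ref{t9-11}$(v)$: under the hypothesis that $M\vDash\Pr_{n+1}(\Pr_n(\bot))$ for some $n$, that proof already establishes $M\vDash(B^w)^\sigma$ (with the canonical witness) for every $\mathbf{CPC}$-theorem $B$ — which is exactly $(M,\{T_n\}_{n=0}^{\infty},w)\vDash\mathbf{CPC}$, before the final substitution $q\mapsto\bot$ that was used there only to pass to the $b$-translation. So the one genuinely new ingredient is P1$\Rightarrow$P3.

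For P1$\Rightarrow$P3 the guiding idea is that the single schema distinguishing $\mathbf{IPC}$ from $\mathbf{MPC}$, namely ex falso $\bot\to p$, already forces a weak inconsistency of the hierarchy. Assuming P1, I have in particular $(M,\{T_n\}_{n=0}^{\infty})\vDash(\bot\to p)^w=\Box(\Box q\to\Box p)$ for a propositional atom $p$ (automatically distinct from $q$). Unwinding Definition~\ref{t2-9}, this supplies a finite expansion $\Box\bigvee_i(\Box Q_i\to\Box P_i)$, with each $Q_i$ a disjunction of copies of $q$ and each $P_i$ a disjunction of copies of $p$, together with a witness assigning some $l$ to the outer box and $k_i,j_i<l$ to the inner boxes, so that for every arithmetical $\sigma$, $M\vDash\Pr_l\!\left(\bigvee_i(\Pr_{k_i}((Q_i)^\sigma)\to\Pr_{j_i}((P_i)^\sigma))\right)$; since the disjunctions of copies are $I\Sigma_1$-provably equivalent to $q$ and to $p$, we may read $(Q_i)^\sigma,(P_i)^\sigma$ as $q^\sigma,p^\sigma$. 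Now take $\sigma$ with $q^\sigma=\top$ and $p^\sigma=\bot$. As $I\Sigma_1\vdash\Pr_{k_i}(\top)$, each disjunct $I\Sigma_1$-provably collapses to $\Pr_{j_i}(\bot)$, and then, by provable monotonicity of the hierarchy, the whole disjunction collapses to $\Pr_{j^\ast}(\bot)$ with $j^\ast=\max_i j_i$; since $M\vDash I\Sigma_1$ and $\Pr_l$ provably respects $I\Sigma_1$-equivalences, $M\vDash\Pr_l(\Pr_{j^\ast}(\bot))$. Finally $j^\ast<l$, so one further use of provable monotonicity lifts the inner index to $l-1$, giving $M\vDash\Pr_l(\Pr_{l-1}(\bot))$, i.e.\ P3 with $n=l-1$. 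Composing the three implications gives P1$\Leftrightarrow$P2$\Leftrightarrow$P3, so $\mathbf{CPC}$ and $\mathbf{IPC}$ coincide under the weak BHK interpretation precisely on the weakly inconsistent models.

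I expect the main obstacle to be exactly the P1$\Rightarrow$P3 step, and within it the care needed to handle the \emph{general} expansion and witness that Definition~\ref{t2-9} permits — one does not get to use the formula as its own expansion, and the inner indices $k_i,j_i$ are only known to be $<l$, not contiguous with $l$ — together with the discipline of checking that every reduction (the disjunct $\Pr_{k_i}(\top)\to\Pr_{j_i}(\bot)$ becoming $\Pr_{j_i}(\bot)$, the merging of the disjuncts, and the final shift of the inner index to $l-1$) is carried out provably in $I\Sigma_1$ so that it survives inside the outer $\Pr_l$. An alternative, if this direct route turns out awkward, is to imitate the three-statement structure of the proof of Theorem~\ref{t7-1}, re-deriving its items $(i)$–$(iii)$ from P1 rather than from $\mathbf{KD45}$; but the ex-falso route above is shorter, since ex falso alone already manufactures a nested-$\Pr$ inconsistency without any need to reconstruct an analogue of axiom $\mathbf{5}$.
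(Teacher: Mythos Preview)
Your proposal is correct and follows essentially the same route as the paper: part $(i)$ via Theorem~\ref{t9-9}$(v)$ and the $\mathbf{S4}$ soundness--completeness theorems, and part $(ii)$ via the cycle P3$\Rightarrow$P2$\Rightarrow$P1$\Rightarrow$P3, with the key P1$\Rightarrow$P3 step extracted from $(\bot\to p)^w=\Box(\Box q\to\Box p)$ under the substitution $q^\sigma=\top$, $p^\sigma=\bot$. One small point: Definition~\ref{t2-9} allows a finite disjunction of expansions, so in P1$\Rightarrow$P3 you should really start from $\bigvee_{i}\Box\bigl(\bigvee_{j}(\Box q\to\Box p)\bigr)$ with outer witnesses $l_i$ rather than a single $\Box\bigvee_i(\cdots)$; the paper handles this by taking $n=\max_i l_i$ and using monotonicity, exactly as you do for the inner indices.
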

\begin{proof}
For $(i)$, use Theorem \ref{t9-9} and the soundness-completeness results for $\mathbf{S4}$. For $(ii)$, if there exists $n$ such that $M \vDash \Pr_{n+1}(\Pr_n(\bot))$, then by the proof of Theorem \ref{t9-11} part $(v)$, we know that $(M, \{T_n\}_{n=0}^{\infty}, w) \vDash \mathbf{CPC}$. Moreover, if $(M, \{T_n\}_{n=0}^{\infty}, w) \vDash \mathbf{CPC}$, then we can easily verify that we have $(M, \{T_n\}_{n=0}^{\infty}, w) \vDash \mathbf{IPC}$. It remains to show that if $(M, \{T_n\}_{n=0}^{\infty}, w) \vDash \mathbf{IPC}$, then there exists $n$ such that $M \vDash \Pr_{n+1}(\Pr_n(\bot))$.\\
Assume that $(M, \{T_n\}_{n=0}^{\infty}, w) \vDash \mathbf{IPC}$ and for any $n$, $M \vDash \neg \Pr_{n+1}(\Pr_n(\bot))$. We want to reach a contradiction. We know that $\mathbf{IPC} \vdash \bot \rightarrow p$. Hence, $(M, \{T_n\}_{n=0}^{\infty}) \vDash (\bot \rightarrow p)^w$. Thus, $(M, \{T_n\}_{n=0}^{\infty}) \vDash \Box (\Box q \rightarrow \Box p)$. Consequently, there are expansions of the form, $\Box(\bigvee_{j=0}^{s_i} (\Box q \rightarrow \Box p))$ for $0 \leq i \leq r$ and witnesses $w_i=(n_i, (m_{ij}, k_{ij})_{j=0}^{s_i})$ such that for any arithmetical substitution $\sigma$,
\[
M \vDash \bigvee_{i=0}^{r} \Box(\bigvee_{j=0}^{s_i} (\Box q \rightarrow \Box p))^{\sigma}(w_i).
\]
Define $k=max_{ij}(k_{ij})$, $m=min_{ij}(m_{ij})$ and $n=max_{i}(n_i)$. It is easy to see that
\[
M \vDash \Pr_n((\Pr_m (q^{\sigma}) \rightarrow \Pr_k (p^{\sigma}))).
\]
And if we choose a substitution $\sigma$ such that $q^{\sigma}= (0=0)$ and $p^{\sigma}=(0=1)$, then we have
\[
M \vDash \Pr_n((\Pr_m (0=0) \rightarrow \Pr_k (0=1)),
\]
and hence $M \vDash \Pr_n(\Pr_k(\bot))$. Thus, for some number $N>n,k$, we have $M \vDash \Pr_{N+1}(\Pr_N(\bot))$ which is a contradiction.
\end{proof}
\vspace{4pt}
\textbf{Acknowledgment.} We are indebted to Pavel Pudl\'{a}k for the helpful discussions, his careful reading of the earlier draft, and his invaluable comments. We wish to thank Mohammad Ardeshir for his helpful suggestions and specially introducing G\"{o}del's problem and its key role to us. We are also grateful to Emil Je\v{r}\'{a}bek and Lev Beklemishev for pointing out some errors in the earlier proofs and arguments. And we are thankful to Raheleh Jalali for her careful technical and language editing.

\end{document}